\newcommand{\C}{\mathbb{C}}
\newcommand{\Z}{\mathbb{Z}}
\def\GL{\mathop{\rm  GL}}
\renewcommand{\to}{\longrightarrow}
\newtheorem{Theorem}{Theorem}[section]
\newtheorem{Definition}[Theorem]{Definition}
\newtheorem{Lemma}[Theorem]{Lemma}
\newtheorem{Proposition}[Theorem]{Proposition}
\newtheorem{Corollary}[Theorem]{Corollary}
\newtheorem{Remark}[Theorem]{Remark}
\newtheorem{Example}[Theorem]{Example}
\newtheorem{introthm}{Theorem}
\DeclareMathOperator{\Der}{Der}
\DeclareMathOperator{\Ann}{Ann}
\DeclareMathOperator{\Aut}{Aut}
\DeclareMathOperator{\image}{image}
\DeclareMathOperator{\coker}{coker}
\DeclareMathOperator{\trace}{trace}
\DeclareMathOperator{\Spec}{Spec}
\DeclareMathOperator{\Gr}{Gr}
\DeclareMathOperator{\codim}{codim}
\begin{document}

\title{Deformations of free and linear free divisors}

\begin{abstract}
We investigate deformations of free and linear free divisors. We introduce a complex similar to the de Rham complex whose cohomology calculates deformation spaces. This cohomology turns out to be zero for all reductive linear free divisors and to be constructible for Koszul and weighted homogeneous free divisors.
\end{abstract}

\author{Michele Torielli} 

\address{\begin{flushleft}\rm {\texttt{M.Torielli@warwick.ac.uk \; (http://www.maths.warwick.ac.uk/$\sim$marhan), } } \\ Mathematics Institute, 
 University of Warwick \\ Coventry CV4 7AL, UK \end{flushleft}}
\date{\today}
\maketitle

\tableofcontents

\section{Introduction}
In this article, we develop some ideas of a deformation theory for germs of free and linear free divisors. Free divisors were introduced by K. Saito in \cite{saito} and linear free divisors by R.-O. Buchweitz and D. Mond in \cite{quiver}. Free divisors are quite fundamental in singularity theory, for example, the discriminants of the versal unfoldings of isolated hypersurfaces and complete intersection singularities are always free divisors. 

A reduced divisor $D=V(f)\subset  \C^n$ is \emph{free} if the sheaf $\Der(-\log D):=\{\delta\in\Der_{\C^n}~|~\delta(f)\in(f)\mathcal{O}_{\C^n}\}$  of logarithmic vector fields is a locally free $\mathcal{O}_{\C^n}$-module, where $\Der_{\C^n}$ denote the space of vector fields on $\C^n$. It is \emph{linear} if, furthermore, $\Der(-\log D)$ is globally generated by a basis consisting of vector fields all of whose coefficients, with respect to the standard basis $\partial/\partial x_1,\dots,\partial/\partial x_n$ of the space $\Der_{\C^n}$, are linear functions. The simplest example is the normal crossing divisor, but the main source of examples, motivating Saito's definition, has been deformation theory, where discriminants and bifurcation sets are frequently free divisors.

These objects have been studied for the past 30 years but there is still a lot to learn and discover about them. One interesting fact is that there are no examples of linear free divisors in non-trivial family. One possible approach is to deform this object in such way that each fiber of the deformation is a (linear) free divisor and that the singular locus is deformed flatly. However, not much is known on the behavior of (linear) free divisors under these kind of deformations. 

The aim of this article is to describe the spaces of infinitesimal deformations and obstructions of a germ of a (linear) free divisor and to perform calculations for some concrete examples. It turns out that the property of being a free divisor for a hypersurface $D$ has a strong influence on its deformations, in fact all free divisors $D\subset\C^n$, with $n\ge3$, are non-isolated singularities and so their space of first order infinitesimal deformations is infinite dimensional, but in what follows we will show examples of free divisors which have a finite dimensional versal deformation space as free divisors.

We now give an overview on the paper. The first part recalls the notions of free and linear free divisors, and describes some of their properties. In the second, we define the notion of (linearly) admissible deformations for a germ of a (linear) free divisor and we introduce a complex similar to the de Rham complex whose cohomology calculates deformations spaces. In this section we also prove our main result:
\begin{introthm} All germs of reductive linear free divisors are formally rigid.
\end{introthm}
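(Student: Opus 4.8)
The plan is to reduce the cohomology of the deformation complex of a germ of a linear free divisor $D=V(f)\subset\C^n$ to the Lie algebra cohomology of $\mathfrak g_D$, the $n$-dimensional Lie algebra of linear logarithmic vector fields, and then to feed in reductivity via Whitehead's lemma.

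First I would unwind the structure of a linear free divisor. The module $\Der(-\log D)$ is free with a basis $\delta_1,\dots,\delta_n$ of linear vector fields; these span a Lie algebra $\mathfrak g_D\subseteq\mathfrak{gl}_n(\C)$ which is exactly the space of all linear logarithmic fields (an $\mathcal O_{\C^n}$-combination of the $\delta_i$ that is again linear must have constant coefficients). The associated connected group $G_D\subseteq\GL_n(\C)$ acts on $\C^n$ with the complement $U=\C^n\setminus D$ as its unique open orbit and, since $\dim G_D=n$, with finite generic isotropy. All the data involved --- $f$, $\Der(-\log D)$ and the deformation complex $\mathscr C^\bullet$ --- is homogeneous for the Euler grading, so $\mathscr C^\bullet$ splits as a direct sum over weights; on the summand of a nonzero weight the Euler field $E$ acts invertibly, and Cartan's formula $L_E=d\,\iota_E+\iota_E\,d$ (the feature that makes the complex ``de Rham--like'') provides a contracting homotopy. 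Hence the cohomology of $\mathscr C^\bullet$ is concentrated in weight zero, where the complex is the finite-dimensional Chevalley--Eilenberg complex $C^\bullet\bigl(\mathfrak g_D,\,\mathfrak{gl}_n(\C)/\mathfrak g_D\bigr)$ --- the coefficient module being weight-zero ambient infinitesimal deformations modulo the logarithmic ones --- with differential given by the structure constants of $\mathfrak g_D$. In particular $T^1$ and the obstruction space are $H^1$ and $H^2$ of this complex.

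Next I would show that the coefficient module has no invariants, $\bigl(\mathfrak{gl}_n(\C)/\mathfrak g_D\bigr)^{\mathfrak g_D}=0$; equivalently, that $\mathfrak g_D$ is self-normalising in $\mathfrak{gl}_n(\C)$. This step uses the prehomogeneous geometry of $D$ but not reductivity: if a linear vector field $\eta$ normalises $\mathfrak g_D$, i.e. $[\eta,\mathfrak g_D]\subseteq\mathfrak g_D$, then its linear flow $\phi^\eta_t$ normalises $G_D$, hence permutes $G_D$-orbits preserving their dimensions, hence preserves the open orbit $U$ and therefore $D=\C^n\setminus U$; as $D$ is a reduced hypersurface this forces $\eta(f)\in(f)$, so $\eta\in\Der(-\log D)$, and being linear $\eta\in\mathfrak g_D$.

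Then I would invoke reductivity. When $G_D$ is reductive the finite-dimensional $\mathfrak g_D$-module $M:=\mathfrak{gl}_n(\C)/\mathfrak g_D$ is semisimple, and since $M^{\mathfrak g_D}=0$ it is a sum of nontrivial irreducibles; writing $\mathfrak g_D=\mathfrak z_D\oplus\mathfrak s_D$ with $\mathfrak s_D$ semisimple, the Hochschild--Serre spectral sequence together with Whitehead's vanishing $H^\bullet(\mathfrak s_D,N)=0$ for nontrivial irreducible $N$ (the Casimir acts invertibly) and the exactness of the Koszul complex attached to a nonzero character of $\mathfrak z_D$ gives $H^\bullet(\mathfrak g_D,M)=H^\bullet(\mathfrak g_D,\C)\otimes M^{\mathfrak g_D}=0$ in all degrees. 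Hence $\mathscr C^\bullet$ is acyclic; in particular $T^1=0$, so $D$ is formally rigid (and the whole deformation complex has vanishing cohomology, as recorded in the abstract).

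The step I expect to be the main obstacle is the first one: pinning down the deformation complex $\mathscr C^\bullet$ precisely enough to (a) justify the Euler--Cartan contracting homotopy on the nonzero-weight summands and (b) identify its weight-zero part as exactly $C^\bullet(\mathfrak g_D,\mathfrak{gl}_n(\C)/\mathfrak g_D)$, matching trivial deformations with Chevalley--Eilenberg coboundaries and being careful about the precise meaning of ``(linearly) admissible''. The remaining steps are essentially formal once the standard prehomogeneity of linear free divisors (open orbit $\C^n\setminus D$, finite generic isotropy, after Buchweitz--Mond) is in hand.
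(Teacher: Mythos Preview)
Your overall strategy coincides with the paper's: reduce infinitesimal admissible deformations to Chevalley--Eilenberg cohomology of $\mathfrak g_D=\Der(-\log D)_0$ with coefficients in $(\Der_{\C^n}/\Der(-\log D))_0\cong\mathfrak{gl}_n/\mathfrak g_D$, show this coefficient module has no invariants, and then kill the cohomology using reductivity and the Hochschild--Serre theorem. The differences are in how each of the three steps is carried out.

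For the reduction to weight zero, the paper does not use your Cartan homotopy $L_E=d\iota_E+\iota_E d$. Instead it works directly with the weight-zero complex $\mathcal C_0^\bullet$ governing \emph{linearly} admissible deformations, proves $\mathcal{LFT}^1(D)=H^1(\mathcal C_0^\bullet)$, and only afterwards, by a separate argument on the degree of the perturbation $f'$ (lifting the Euler field), shows $\mathcal{FT}^1(D)\cong(\mathcal H^1(\mathcal C^\bullet))_0\cong\mathcal{LFT}^1(D)$. Your homotopy argument is cleaner and immediately gives concentration in weight zero for \emph{all} $\mathcal H^i(\mathcal C^\bullet)$, not just $i=1$; the paper's route is more elementary but yields only the $H^1$ statement needed for rigidity. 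For self-normalisation, the paper argues by a direct local computation at a smooth point of $D$ (reducing to $D=\{x_1=0\}$), rather than your global flow/orbit argument via the prehomogeneous structure; both are valid, yours is more conceptual, the paper's more hands-on. For semisimplicity of the coefficient module, the paper does not invoke complete reducibility of rational $G_D^\circ$-modules directly but instead shows (via the reductive group) that the centre of $\mathfrak g_D$ acts by diagonalisable endomorphisms and then quotes Dixmier's criterion; this is logically equivalent to what you do but makes the passage from ``$\mathfrak g_D$ reductive'' to ``representation semisimple'' explicit. Finally, the paper simply cites the Hochschild--Serre vanishing theorem for reductive $\mathfrak g$ acting semisimply with $M^{\mathfrak g}=0$, whereas you sketch its proof via the decomposition $\mathfrak g_D=\mathfrak z_D\oplus\mathfrak s_D$, Whitehead, and the Koszul complex for a nonzero character.

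Your worry about the ``main obstacle'' is well placed: the identification of $\mathcal{FT}^1$ (resp.\ $\mathcal{LFT}^1$) with $H^1$ of the Lie algebroid (resp.\ Lie algebra) complex is indeed the part of the paper requiring the most care, and is done by first characterising admissible deformations over $T_\epsilon$ via a relative Saito criterion and then matching cocycles and coboundaries to deformations and trivialisations.
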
 
This is equivalent saying that for a germ of a reductive linear free divisor, there are no non-trivial families, at least on the level of formal power series.

Then, we analyse the weighted homogeneous case and we prove our second result:

\begin{introthm} If $(D,0)\subset(\C^n,0)$ is a germ of a weighted homogeneous free divisor, then it has a hull, i.e., it has a formally versal deformation.
\end{introthm}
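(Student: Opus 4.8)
The plan is to verify Schlessinger's criterion for the existence of a hull. Write $\mathrm{Def}_{(D,0)}$ for the functor of admissible deformations of the germ $(D,0)$, defined on the category of Artinian local $\C$-algebras with residue field $\C$. By Schlessinger's theorem this functor has a hull as soon as it satisfies the conditions (H1), (H2) and (H3), and the weighted homogeneity hypothesis enters only in (H3); so the proof splits into these three verifications.

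Conditions (H1) and (H2) do not use weighted homogeneity. Concretely, an admissible deformation over an Artinian base $A$ is a flat deformation of $f$ together with a compatible flat deformation of the submodule $\Der(-\log D)\subset\Der_{\C^n}$, the admissibility being flatness of $D$ and of its singular locus together with the open condition that the deformed logarithmic module stay locally free. Given admissible deformations over $A'$ and $A''$ inducing the same deformation over $A$, with $A''\to A$ a small extension, one assembles a deformation over the fibre product $A'\times_A A''$ by lifting the defining data fibrewise and noting that the flatness and freeness conditions are inherited; this standard patching argument yields the surjectivity required in (H1), and in the base case $A=\C$, $A''=\C[\epsilon]$ it is moreover bijective, so (H2) holds. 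In addition, the tangent space $t=\mathrm{Def}_{(D,0)}(\C[\epsilon])$ is canonically the first cohomology $H^1$ of the complex introduced in the previous section, which in particular equips it with a $\C$-vector space structure.

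The content of the theorem is (H3): finite-dimensionality of $t=H^1$. Here one uses that $f$ is weighted homogeneous, of weighted degree $d$ for positive weights $w_1,\dots,w_n$ on $x_1,\dots,x_n$. This gives a good $\C^*$-action, under which $\mathcal O_{\C^n,0}$, $\Der_{\C^n}$ and $\Der(-\log D)$ --- which then has a Saito basis of weighted homogeneous logarithmic fields --- become graded, and the deformation complex becomes a complex of graded $\C$-vector spaces with differential of degree zero; hence $H^1=\bigoplus_{k\in\Z}H^1_k$ is graded. The Euler field $E=\sum_i w_i x_i\,\partial/\partial x_i$ lies in $\Der(-\log D)$, and contraction with $E$ produces an operator of degree $-1$ on the complex whose anticommutator with the differential is an isomorphism on each graded piece of nonzero weight --- the analogue for this complex of Cartan's formula $L_E=d\,\iota_E+\iota_E\,d$, which on weight-$k$ forms is multiplication by $k$. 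It follows that $H^1_k=0$ for every $k\neq 0$, so $H^1$ is concentrated in weight $0$. Since the weight-$0$ part of each term of the complex is finite-dimensional over $\C$ --- for instance the weight-$0$ part of $\mathcal O_{\C^n,0}$ is $\C$, and that of $\mathcal O_{\C^n,0}\,\partial/\partial x_i$ is the finite-dimensional space of polynomials of weighted degree $w_i$, and likewise for the finitely many module generators that occur --- the space $H^1_0=H^1$ is finite-dimensional. With (H1), (H2) and (H3) in hand, Schlessinger's theorem produces a complete local $\C$-algebra $R$ carrying a formally versal admissible deformation of $(D,0)$, i.e.\ a hull.

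The step I expect to demand the most care is the Cartan-type identity behind the Euler homotopy: the complex is only modelled on the de Rham complex, so one cannot invoke the classical Poincar\'e lemma directly but must exhibit explicitly a degree $-1$ operator $h$ with $dh+hd$ equal to multiplication by $k$ on the weight-$k$ component, matched against the actual differential of the deformation complex. Once this identity is available the weight-decomposition argument for (H3) is immediate, and the verifications of (H1) and (H2) are purely formal.
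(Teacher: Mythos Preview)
Your argument is correct, and for (H1)–(H2) it is essentially the same as the paper's. For (H3), however, you take a genuinely different route. The paper does not run a Cartan homotopy on the complex $\mathcal{C}^\bullet$; instead it argues directly with the equation of the first–order deformation. Given an infinitesimal admissible deformation $f+\epsilon f'$, admissibility guarantees that the Euler field $\chi$ with $\chi(f)=f$ lifts to $\chi+\epsilon\chi'$ in $\Der(-\log X/T_\epsilon)$. Expanding $(\chi+\epsilon\chi')(f+\epsilon f')\in(f+\epsilon f')$ and using $\chi(f')=\beta f'$ for a weighted homogeneous $f'$ yields $(1-\beta)f'\in J(D)$; hence either $f'$ already lies in the Jacobian ideal (and the deformation is trivial), or $f'$ has the same weighted degree as $f$. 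This gives the explicit bound $\dim_\C\mathcal{FT}^1(D)\le \dim_\C\C[x_1,\dots,x_n]_k/(J(D)\cap\C[x_1,\dots,x_n]_k)$, and (H3) follows.

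Your approach via the Cartan identity $L_E=d\,\iota_E+\iota_E\,d$ on Lie algebroid cochains is the cleaner, more conceptual argument: it shows in one stroke that all of $\mathcal{H}^i(\mathcal{C}^\bullet)$, not just $\mathcal{H}^1$, is concentrated in weight~$0$, and it does not require translating back and forth between the cochain description and the equation $f+\epsilon f'$. The paper's approach, on the other hand, is more elementary (no homotopy formula to verify) and yields a concrete numerical upper bound for $\dim\mathcal{FT}^1(D)$ in terms of the Jacobian ideal, which is useful for computations (and is exploited in the paper's examples). One small point to tighten in your write-up: since $\mathcal{O}_{\C^n,0}$ consists of convergent germs rather than polynomials, the terms of $\mathcal{C}^\bullet$ are not literally direct sums of their weight pieces; you should phrase the conclusion as ``every closed cochain is cohomologous to its weight-$0$ component'' and check that the sum $\sum_{k\neq 0}\frac{1}{k}\iota_E\psi_k$ converges, which it does because the weights are positive and bounded below.
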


In the last part, we describe some properties of this cohomology and we prove our third result:
\begin{introthm} If $(D,0)\subset(\C^n,0)$ is a germ of a Koszul free divisor such that we can put a logarithmic connection on $\Der_{\C^n}$ and $\Der(-\log D)$, then it has a hull.
\end{introthm}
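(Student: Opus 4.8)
\emph{Proof plan.} The strategy is the same as in the weighted homogeneous case (Theorem~B): reduce, via Schlessinger's criteria, the existence of a hull to a single finite-dimensionality statement, and then make that statement true by using the Koszul hypothesis together with the given logarithmic connections. Recall that the functor $\mathrm{Def}_{(D,0)}$ of admissible deformations of $(D,0)$ has tangent space $t=H^1(\mathcal C^\bullet)_0$, where $\mathcal C^\bullet$ is the de Rham-like complex introduced above, and that Schlessinger's conditions (H1) and (H2) hold by the usual gluing arguments for flat families, using the flatness requirements built into the definition of admissibility. Consequently a hull exists as soon as condition (H3) holds, i.e. as soon as $\dim_{\C}H^1(\mathcal C^\bullet)_0<\infty$; the hull is then constructed order by order in the standard way, the obstructions living in $H^2(\mathcal C^\bullet)_0$, which may perfectly well be infinite dimensional without this being an obstacle to the existence of a hull.

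Next I would identify $\mathcal C^\bullet$ with a logarithmic de Rham complex with coefficients. The complex $\mathcal C^\bullet$ is the Lie algebroid (Chevalley--Eilenberg) complex of the Lie algebroid $\Der(-\log D)$ with values in a coefficient module $M$ built from $\Der_{\C^n}$ and $\Der(-\log D)$ --- such as $\mathcal{H}om_{\mathcal O_{\C^n}}\big(\Der(-\log D),\Der_{\C^n}\big)$ or a natural sub/quotient thereof governing the deformations of the inclusion $\Der(-\log D)\hookrightarrow\Der_{\C^n}$ --- so that concretely $\mathcal C^p=\Omega^p(\log D)\otimes_{\mathcal O_{\C^n}}M$. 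The hypothesis that $\Der_{\C^n}$ and $\Der(-\log D)$ carry logarithmic connections equips $M$ with an integrable logarithmic connection $\nabla$, equivalently a representation of the Lie algebroid $\Der(-\log D)$ on $M$, and makes the differential of $\mathcal C^\bullet$ the Cartan--de Rham differential of $(M,\nabla)$. Thus $\mathcal C^\bullet$ becomes the logarithmic de Rham complex $\Omega^\bullet(\log D)\otimes(M,\nabla)$ of an $\mathcal O_{\C^n}$-coherent integrable logarithmic connection.

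Finally I would invoke the constructibility of such complexes over Koszul free divisors. Because $D$ is Koszul, the symbols of a basis of $\Der(-\log D)$ form a regular sequence, so the logarithmic characteristic variety has the expected dimension $n$ and the logarithmic stratification of $(\C^n,D)$ is holonomic; by the results of Calder\'on Moreno and of Narv\'aez-Macarro on logarithmic $\mathcal D$-modules, the logarithmic de Rham complex of an $\mathcal O_{\C^n}$-coherent integrable logarithmic connection then has $\C$-constructible cohomology (the associated logarithmic $\mathcal D_{\C^n}$-module is log-holonomic, and one applies Kashiwara's constructibility theorem). Constructibility forces each $\mathcal H^p(\mathcal C^\bullet)$ to have finite-dimensional stalks; in particular $\dim_{\C}H^1(\mathcal C^\bullet)_0<\infty$, which is precisely condition (H3), and Schlessinger's theorem then yields the hull.

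The main obstacle I expect is the step producing constructibility. One must verify carefully that the module $M$ with the connection assembled from the given logarithmic connections on $\Der_{\C^n}$ and $\Der(-\log D)$ is genuinely an \emph{integrable} logarithmic connection: the identity $\nabla^2=0$ is a compatibility condition between the two connections and the Lie algebroid structure and is not automatic. One must then check that the Koszul property really does let one pass to the associated logarithmic $\mathcal D_{\C^n}$-module and quote the constructibility theorem in the twisted-coefficients form needed here. A secondary, more routine point is matching the germ-level cohomology $H^1(\mathcal C^\bullet)_0$ with the stalk of the constructible complex, and confirming (H1) and (H2) for the admissible-deformation functor; both should follow directly from the definitions already in place.
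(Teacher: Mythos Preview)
Your overall architecture matches the paper's: verify Schlessinger's (H1)--(H2) directly, reduce (H3) to finite-dimensionality of $\mathcal{H}^1(\mathcal{C}^\bullet)_0$, and obtain the latter from constructibility of a logarithmic de Rham complex via the Calder\'on--Narv\'aez machinery under the Koszul hypothesis. The coefficient module is simply $M=\Der_{\C^n}/\Der(-\log D)$, and the identification $\mathcal{C}^\bullet\cong\Omega^\bullet(\log D)(M)$ is Theorem~\ref{isomcompl}.

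Where you drift from the paper is in the role of the hypothesis, and this is a genuine gap in the argument as written. The quotient $M$ \emph{always} carries an integrable logarithmic connection, namely $\nabla_\delta=[\delta,-]$ (Example~\ref{connectder}); no extra assumption is needed for that, so your sentence ``the hypothesis \dots\ equips $M$ with an integrable logarithmic connection'' mislocates the difficulty. What can fail is $\mathcal{O}$-linearity of $\delta\mapsto[\delta,-]$ on $\Der_{\C^n}$ and on $\Der(-\log D)$ individually. The paper needs these two connections not to build one on $M$, but to turn the short exact sequence $0\to\Der(-\log D)\to\Der_{\C^n}\to M\to 0$ into a resolution of the $\mathcal{V}_0(\mathcal{D}_{\C^n})$-module $M$ by \emph{locally free} $\mathcal{O}$-modules that are themselves $\mathcal{V}_0(\mathcal{D}_{\C^n})$-modules. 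Calder\'on--Narv\'aez's Proposition~1.2.3, which is stated for locally free logarithmic connections, then shows (after twisting by $\mathcal{O}_{\C^n}[D]$) that $\mathcal{D}_{\C^n}\stackrel{L}{\otimes}_{\mathcal{V}_0}\mathcal{E}_i[D]$ is concentrated in degree~$0$ and holonomic for $\mathcal{E}_1=\Der_{\C^n}$ and $\mathcal{E}_2=\Der(-\log D)$; this transfers to $M$ via the resolution, and constructibility of $\mathcal{C}^\bullet$ follows (Theorem~\ref{constructcohomol}). Your proposed direct route---quote constructibility for the coherent but non-locally-free $M$ itself---would need the cited results in a generality they are not stated in; the free resolution is doing real work, and it is precisely what the hypothesis on $\Der_{\C^n}$ and $\Der(-\log D)$ buys.
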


This theory owes a lot  to the theory of deformations of Lagrangian singularities as developed in \cite{seven}, \cite{sevendipl} and \cite{deflagrangian}.

The material in this article is part of the author Ph.D. thesis \cite{Mythesis}.

\paragraph{\textbf{Acknowledgements}}
The author is grateful to Michel Granger, David Mond, Luis Narv{\'a}ez Macarro, Brian Pike, Miles Reid, Christian Sevenheck and Duco van Straten for helpful discussions on the subject of this article. We thank the anonymous referee of the Annales de l'Institut Fourier for a careful reading of our draft versions and a number of very helpful remarks.

\section{Basic notions}

Fix coordinates $x_1,\dots,x_n$  on $\C^n$ .

\begin{Definition} A reduced divisor $D=V(f)\subset \C^n$ is called \emph{free} if the sheaf $$\Der(- \log D):=\{\delta\in\Der_{\C^n}~|~\delta(f)\in(f)\mathcal{O}_{\C^n}\}$$ of logarithmic vector fields is a locally free $\mathcal{O}_{\C^n}$-module.
\end{Definition}
\begin{Definition} Let $D=V(f)\subset \C^n$ be a reduced divisor. Then for $q=0,\dots,n$, we define the sheaf $$\Omega^q(\log D):=\{\omega\in\Omega_{\C^n}^q[\star D]~|~f\omega\in\Omega_{\C^n}^q, fd\omega\in\Omega_{\C^n}^{q+1}\} $$ of $q$-forms with logarithmic poles along $D$.
\end{Definition}

Note that by definition, $\Omega^{0}(\log D)=\Omega_{\C^n}^{0}$ and $\Omega^n(\log D)=\dfrac{1}{f}\Omega_{\C^n}^n$.
\begin{Lemma} \emph{(\cite{saito}, Lemma 1.6)} By the natural pairing $$\Der_{p}(-\log D)\times \Omega_{p}^{1}(\log D)\to \mathcal{O}_{\C^n,p}~ \text{ defined by }~ (\delta,\omega)\mapsto \delta \cdot \omega,$$ each module is the $\mathcal{O}_{\C^n,p}$-dual of the other.
\end{Lemma}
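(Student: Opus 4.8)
The plan is to work locally, over the regular local ring $R:=\mathcal{O}_{\C^n,p}$, using that $\Der_{\C^n,p}$ and $\Omega^1_{\C^n,p}$ are free $R$-modules dual to one another under contraction, and that $f$ is reduced, so $\mathrm{Sing}(D)$ has codimension $\geq 2$ and at a general point of $D$ there are analytic coordinates with $f=x_1$. First I would check that the contraction pairing, which is $R$-bilinear and a priori valued in $R[\star D]$, in fact lands in $R$. For $\delta\in\Der_p(-\log D)$ and $\omega\in\Omega^1_p(\log D)$ the form $f\omega$ is holomorphic, so $f\,(\delta\cdot\omega)=\delta\cdot(f\omega)\in R$, whence $\delta\cdot\omega\in\frac1f R$. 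Localising at a minimal prime $\mathfrak q$ of $(f)$, the ring $R_{\mathfrak q}$ is a discrete valuation ring, $D$ is smooth there, and in the normal form $f=x_1$ the modules $\Der(-\log D)_{\mathfrak q}$ and $\Omega^1(\log D)_{\mathfrak q}$ are free on $x_1\partial_{x_1},\partial_{x_2},\dots,\partial_{x_n}$ and on $\frac{dx_1}{x_1},dx_2,\dots,dx_n$, with identity pairing matrix, so $\delta\cdot\omega\in R_{\mathfrak q}$. Since $f$ is reduced, every associated prime of $\frac1f R/R$ is such a minimal prime of $(f)$; hence the class of $\delta\cdot\omega$ in $\frac1f R/R$ is zero, i.e. $\delta\cdot\omega\in R$.

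Next I would show $\delta\mapsto(\delta\cdot-)$ identifies $\Der_p(-\log D)$ with $\operatorname{Hom}_R(\Omega^1_p(\log D),R)$. Holomorphic forms satisfy the defining pole conditions, so $\Omega^1_{\C^n}\subseteq\Omega^1(\log D)$, and restriction to this submodule together with $\operatorname{Hom}_R(\Omega^1_{\C^n,p},R)=\Der_{\C^n,p}$ gives injectivity. For surjectivity, let $\phi\in\operatorname{Hom}_R(\Omega^1_p(\log D),R)$ and let $\delta\in\Der_{\C^n,p}$ be the vector field to which $\phi$ restricts on $\Omega^1_{\C^n}$. Since $\frac{df}{f}\in\Omega^1_p(\log D)$ (because $f\cdot\frac{df}{f}=df$ and $f\,d(\frac{df}{f})=0$), we get $\delta(f)=\delta\cdot df=\phi(df)=\phi(f\cdot\frac{df}{f})=f\,\phi(\frac{df}{f})\in(f)$, so $\delta\in\Der_p(-\log D)$. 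Finally, for any $\omega\in\Omega^1_p(\log D)$ the holomorphy of $f\omega$ gives $f\,\phi(\omega)=\phi(f\omega)=\delta\cdot(f\omega)=f\,(\delta\cdot\omega)$, and cancelling $f$ in the domain $R$ yields $\phi(\omega)=\delta\cdot\omega$.

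For the reverse identification I would first check that $\Omega^1(\log D)$ is reflexive over $R$: it is torsion-free of rank $n$ (lying between $f\,\Omega^1_{\C^n}$ and $\Omega^1_{\C^n}[\star D]$), by the normal-form computation it is locally free on $\C^n\setminus\mathrm{Sing}(D)$ and hence at every height-one prime, and since the conditions ``$f\omega\in\Omega^1_{\C^n}$'' and ``$f\,d\omega\in\Omega^2_{\C^n}$'' may be tested prime by prime in codimension one, $\Omega^1(\log D)_p=\bigcap_{\operatorname{ht}\mathfrak q=1}\Omega^1(\log D)_{\mathfrak q}$; over the regular ring $R$ these facts force reflexivity. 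Applying $\operatorname{Hom}_R(-,R)$ to the isomorphism just established and using reflexivity,
\[
\operatorname{Hom}_R\big(\Der_p(-\log D),R\big)=\Omega^1_p(\log D)^{\vee\vee}=\Omega^1_p(\log D),
\]
and checking that this isomorphism is the one induced by the contraction pairing finishes the proof.

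I expect the real work to be the first paragraph together with the reflexivity claim: each requires controlling the pairing, respectively the module $\Omega^1(\log D)$, across the singular locus of $D$, and each genuinely uses that $f$ is reduced and $R$ regular (hence normal). One could instead mimic the second paragraph with the roles of $\Der(-\log D)$ and $\Omega^1(\log D)$ exchanged, via $f\,\Der_{\C^n}\subseteq\Der(-\log D)$, but showing the resulting form satisfies $f\,d\omega\in\Omega^2_{\C^n}$ needs the same codimension-one input, so the reflexivity route seems the cleanest.
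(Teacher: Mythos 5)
The paper does not prove this lemma at all: it is quoted with the citation (\cite{saito}, Lemma 1.6), so there is no internal proof to measure your argument against. Judged on its own, your proof is correct and is essentially the classical duality argument. Step one (the pairing lands in $\mathcal{O}_{\C^n,p}$ because $\delta\cdot\omega\in\frac{1}{f}\mathcal{O}_{\C^n,p}$, is holomorphic in codimension one, and all associated primes of $\frac{1}{f}R/R\cong R/(f)$ are minimal over $(f)$ since $f$ is reduced), step two (the surjectivity argument using $\frac{df}{f}\in\Omega^1_p(\log D)$ to show that any functional on $\Omega^1_p(\log D)$ is contraction against a logarithmic vector field), and step three (reflexivity of $\Omega^1(\log D)$ via the codimension-one/normality criterion, then double dualization) are all sound, and your aside that the module-theoretic input is concentrated in steps one and three is accurate. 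The one point I would tighten is the localization at a minimal prime $\mathfrak{q}$ of $(f)$: the coordinates with $f=x_1$ exist at a smooth point of the corresponding component of $D$, not at the non-closed point $\mathfrak{q}$ itself. Either phrase the step as a computation at a generic smooth point of each component, from which $\delta\cdot\omega\in R_{\mathfrak{q}}$ follows, or argue directly in the discrete valuation ring $R_{\mathfrak{q}}$: since $\mathrm{Sing}(D)$ has codimension at least two, some $\partial f/\partial x_i$ is a unit in $R_{\mathfrak{q}}$, so after renumbering $df,dx_2,\dots,dx_n$ is a basis of $(\Omega^1_{\C^n})_{\mathfrak{q}}$ and the normal-form bases you wrote hold with $x_1$ replaced by $f$. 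With that wording fixed the proof is complete, and in outline it coincides with Saito's original argument rather than with anything in the present paper.
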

\begin{Corollary}\label{planecurvefree} $\Omega_{p}^1(\log D)$ and $\Der_p(-\log D)$ are reflexive $\mathcal{O}_{\C^n,p}$-modules. In particular, when $n=2$, then $\Omega_p^1(\log D)$ and $\Der_p(-\log D)$ are free $\mathcal{O}_{\C^2,p}$-modules. 
\end{Corollary}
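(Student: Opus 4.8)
The plan is to read off both assertions from Saito's duality lemma above together with standard commutative algebra over the regular local ring $R:=\mathcal O_{\C^n,p}$. Write $M:=\Der_p(-\log D)$, $N:=\Omega^1_p(\log D)$, and $L^*:=\operatorname{Hom}_R(L,R)$. By the lemma the pairing $M\times N\to R$, $(\delta,\omega)\mapsto\delta\cdot\omega$, is perfect: the maps $\alpha\colon M\to N^*$, $\delta\mapsto(\omega\mapsto\delta\cdot\omega)$, and $\beta\colon N\to M^*$, $\omega\mapsto(\delta\mapsto\delta\cdot\omega)$, are isomorphisms. Dualizing $\beta$ gives an isomorphism $\beta^*\colon M^{**}=(M^*)^*\to N^*$, so $\alpha^{-1}\circ\beta^*\colon M^{**}\to M$ is an isomorphism; a quick computation on elements shows it is inverse to the canonical map $\rho_M\colon M\to M^{**}$ (for $\delta\in M$, $\omega\in N$ one has $(\beta^*\rho_M(\delta))(\omega)=\rho_M(\delta)(\beta(\omega))=\beta(\omega)(\delta)=\delta\cdot\omega=\alpha(\delta)(\omega)$, i.e.\ $\beta^*\rho_M=\alpha$). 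Hence $\rho_M$ is an isomorphism, $M$ is reflexive, and by symmetry so is $N$. (Alternatively one may invoke the general fact that the $R$-dual of a finitely generated module over a normal Noetherian domain is reflexive.)

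For the case $n=2$, $R=\mathcal O_{\C^2,p}$ is a two-dimensional regular local ring, and it suffices to show that a finitely generated reflexive $R$-module $L$ is free. The module $L^*$ is finitely generated, so choose a presentation $F_1\to F_0\to L^*\to0$ by finite free modules; applying $\operatorname{Hom}_R(-,R)$ and using $L\cong L^{**}$ yields an exact sequence $0\to L\to F_0^*\xrightarrow{\varphi}F_1^*$. If $L$ is not already free then $F_0^*$ and $C:=\operatorname{im}\varphi$ are nonzero, and $C\subset F_1^*$ is torsion-free, so $\depth_R F_0^*=2$ and $\depth_R C\ge1$; the depth lemma applied to $0\to L\to F_0^*\to C\to0$ gives $\depth_R L\ge\min(2,\depth_R C+1)=2$, hence $\operatorname{pd}_R L=\depth_R R-\depth_R L=0$ by the Auslander--Buchsbaum formula, i.e.\ $L$ is free. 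Applying this to $L=M$ and $L=N$ completes the argument.

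The whole thing is essentially bookkeeping: the substantive inputs are Saito's lemma (already available) and the structure theory of modules over regular local rings (Auslander--Buchsbaum and the depth lemma). The only step needing genuine care is verifying that the isomorphism $M^{**}\cong M$ obtained by dualizing the Saito pairing coincides with the canonical biduality map, which is the short element-wise check above.
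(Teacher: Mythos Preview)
Your argument is correct. The paper itself supplies no proof for this corollary: it is stated as an immediate consequence of Saito's duality lemma (each module being the $\mathcal{O}_{\C^n,p}$-dual of the other), with the reflexivity and the $n=2$ freeness left to the reader. What you have written is precisely the standard unpacking of that implicit step --- the element-wise check that $\beta^*\rho_M=\alpha$ (so that the abstract isomorphism $M\cong M^{**}$ really is the canonical biduality map) and the Auslander--Buchsbaum/depth-lemma argument that reflexive modules over a two-dimensional regular local ring are free. There is nothing to compare against in the paper beyond this; your proof is the natural one.
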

\begin{Definition} A free divisor $D$ is \emph{linear} if there is a basis for $\Gamma(\C^n , \Der(-\log D))$ as $\C[\C^n]$-module consisting of vector fields all of whose coefficients, with respect to the standard basis $\partial / \partial x_1, \dots, \partial / \partial x_n$ of the space $\Der_{\mathbb{C}^{n}}$, are linear functions, i.e. they are all homogeneous polynomials of degree $1$.  \end{Definition}
 \begin{Remark} With respect to the standard grading of $\Der_{\mathbb{C}^{n}}$, i.e., $\deg x_i = 1$ and $\deg \partial / \partial x_i = -1$ for every $i = 1, \dots, n$, such vector fields have weight zero.
 \end{Remark}
 \begin{Definition} We denote by $\Der(-\log D)_0$ the finite dimensional Lie subalgebra of $ \Der(-\log D)$ consisting of the weight zero logarithmic vector fields.
 \end{Definition}
 
There is a nice criterion to understand easily if a divisor is free or not:
\begin{Proposition}\label{saitocrit} \emph{(SAITO'S CRITERION) (\cite{saito}, Theorem 1.8)} i) The hypersurface $D \subset  \mathbb{C}^{n}$ is a free divisor in the neighbourhood of a point $p$ if and only if $\bigwedge^n \Omega_{p}^1(\log D)=\Omega_{p}^n(\log D)$, i.e. if there exist n elements $\omega_1,\dots,\omega_n \in \Omega_{p}^1(\log D)$ such that $$\omega_1\wedge \cdots \wedge \omega_n=\alpha \dfrac{dx_1\wedge \cdots \wedge dx_n}{f}$$ where $\alpha$ is a unit. Then the set of forms $\{\omega_1,\dots,\omega_n \}$ form a basis for $\Omega_{p}^1(\log D)$. Moreover, we have $$\Omega_{p}^q(\log D)=\bigoplus_{i_1<\dots<i_q}\mathcal{O}_{\C^n,p}\omega_{i_1}\wedge \cdots \wedge \omega_{i_q}$$ for $q=1,\dots,n$.

ii) The hypersurface $D \subset  \mathbb{C}^{n}$ is a free divisor in the neighbourhood of a point $p$ if and only if there exist germs of vector fields $\chi_1, \dots, \chi_n \in \Der_p(-\log D)$ such that the determinant of the matrix of coefficients $[\chi_1, \dots, \chi_n]$, with respect to some, or any, $\mathcal{O}_{ \mathbb{C}^{n}, p}$-basis of $\Der_{\mathbb{C}^{n},p}$ is a reduced equation for $D$ at $p$ i.e. it is a unit multiple of $f_p$. In this case, $\chi_1, \dots, \chi_n$ form a basis for $\Der_p(-\log D)$.
\end{Proposition}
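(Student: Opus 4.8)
The plan is to reduce both halves of the statement to a single determinantal lemma and then pass between them using Saito's perfect pairing recalled above. Throughout write $R=\mathcal{O}_{\C^n,p}$, a regular local ring, hence a UFD, and factor $f=\pi_1\cdots\pi_m$ into pairwise non-associate primes (possible since $f$ is reduced). Two preliminary remarks. First, $\Der_p(-\log D)$ and $\Omega^1_p(\log D)$ both have rank $n$ over $R$: the former contains $f\,\partial/\partial x_1,\dots,f\,\partial/\partial x_n$ and is contained in $\Der_{\C^n,p}\cong R^n$, and the latter is its $R$-dual. Second, I use the standard fact that $\Omega^\bullet(\log D)$ is closed under exterior product (see \cite{saito}), so wedges of logarithmic forms stay logarithmic, together with $\Omega^n(\log D)=\tfrac1f\Omega^n_{\C^n}$ noted above. \textbf{The determinantal lemma} is this: for $\chi_1,\dots,\chi_n\in\Der_p(-\log D)$ with coefficient matrix $A$ (the $j$-th column being the coordinate vector of $\chi_j$) one has $\det A\in(f)$, and dually $\omega_1\wedge\cdots\wedge\omega_n\in\Omega^n_p(\log D)=\tfrac1f\Omega^n_{\C^n,p}$ for any $\omega_1,\dots,\omega_n\in\Omega^1_p(\log D)$. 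For the first claim, $\chi_j(f)=\sum_i a_{ij}f_{x_i}\in(f)$ for all $j$ says $A^{\mathrm t}\,\nabla f\equiv 0\pmod f$; multiplying by the adjugate of $A^{\mathrm t}$ yields $\det(A)\,f_{x_i}\in(f)$ for all $i$, i.e.\ $\overline{\det A}$ annihilates the Jacobian ideal in the reduced ring $R/(f)$; since $D$ is reduced this ideal lies in no minimal prime of $R/(f)$, and an element of a reduced ring killed by an ideal not contained in any minimal prime is $0$. The form version follows from wedge-closure.

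\textbf{The ``if'' directions and the basis assertions.} Suppose $\chi_1,\dots,\chi_n\in\Der_p(-\log D)$ have $\det A=uf$ with $u$ a unit. For arbitrary $\delta\in\Der_p(-\log D)$, Cramer's rule gives $\delta=\sum_j b_j\chi_j$ over $\operatorname{Frac}(R)$ with $b_j=\det(A^{(j)})/\det A$, where $A^{(j)}$ is $A$ with its $j$-th column replaced by the coordinate vector of $\delta$; the numerator lies in $(f)$ by the lemma, so $b_j\in R$. Hence $\chi_1,\dots,\chi_n$ generate $\Der_p(-\log D)$, and $\det A\neq 0$ makes them $R$-independent, so they form a basis and $D$ is free. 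Running the same Cramer argument in $\Omega^1_p(\log D)$ — using $\eta\wedge\omega_1\wedge\cdots\widehat{\omega_i}\cdots\wedge\omega_n\in\Omega^n_p(\log D)=\tfrac1f\Omega^n_{\C^n,p}$ for the numerators and that $\alpha$ is a unit — shows that any $\omega_1,\dots,\omega_n\in\Omega^1_p(\log D)$ with $\omega_1\wedge\cdots\wedge\omega_n=\alpha\,\tfrac{dx_1\wedge\cdots\wedge dx_n}{f}$, $\alpha$ a unit, form a basis of $\Omega^1_p(\log D)$; that module, hence its $R$-dual $\Der_p(-\log D)$, is then free, so $D$ is free. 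This establishes the ``if'' half of both (i) and (ii) together with the assertions that the $\chi_j$, resp.\ the $\omega_i$, form bases.

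\textbf{The converse.} Assume $D$ is free; by the rank count $\Der_p(-\log D)$ is free of rank $n$, so fix a basis $\chi_1,\dots,\chi_n$ with matrix $A$. By the lemma $\det A=hf$ for some $h\in R$, and it remains to show $h$ is a unit. Since $f\,\partial/\partial x_i\in\Der_p(-\log D)=\operatorname{im}(A\colon R^n\to R^n)$, there is a matrix $B$ with $AB=fI_n$; taking determinants, $h\det B=f^{n-1}$, so $h$ divides $f^{n-1}$ in the UFD $R$ and its prime factors lie among $\pi_1,\dots,\pi_m$. Now fix $i$ and localize at the height-one prime $\mathfrak p_i=(\pi_i)$, a DVR: the evaluation map $R_{\mathfrak p_i}^n\to R_{\mathfrak p_i}/(f)$, $\delta\mapsto\overline{\delta(f)}$, has image the ideal generated by the $\overline{f_{x_j}}$, and $R_{\mathfrak p_i}/(f)$ is a field in which not all $\overline{f_{x_j}}$ vanish (as $D$ is reduced, $\mathfrak p_i$ is not in the Jacobian locus), so the map is onto with kernel $\Der(-\log D)_{\mathfrak p_i}$; hence $\operatorname{coker}(A)_{\mathfrak p_i}\cong R_{\mathfrak p_i}/(f)$ has length $1$, while the length of $\operatorname{coker}(A)$ over the DVR $R_{\mathfrak p_i}$ equals the $\pi_i$-order of $\det A=hf$. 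Thus $\operatorname{ord}_{\pi_i}(h)=0$ for every $i$, so $h$ is a unit and $\det A=hf$ is a reduced local equation of $D$; this is the ``only if'' part of (ii). For (i), take the dual basis $\omega_1,\dots,\omega_n\in\Omega^1_p(\log D)$ of $\Der_p(-\log D)^\ast$: its coefficient matrix is (up to transpose) $A^{-1}$, so $\omega_1\wedge\cdots\wedge\omega_n=\det(A)^{-1}dx_1\wedge\cdots\wedge dx_n$ is a unit times $\tfrac{dx_1\wedge\cdots\wedge dx_n}{f}$, giving the ``only if'' part of (i).

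\textbf{Higher forms and the main obstacle.} Given a basis $\omega_1,\dots,\omega_n$ of $\Omega^1_p(\log D)$ with $\omega_1\wedge\cdots\wedge\omega_n=\alpha\,\tfrac{dx_1\wedge\cdots\wedge dx_n}{f}$ ($\alpha$ a unit), the products $\omega_I=\omega_{i_1}\wedge\cdots\wedge\omega_{i_q}$ ($I=\{i_1<\cdots<i_q\}$) lie in $\Omega^q_p(\log D)$ by wedge-closure, are $R$-independent (wedge a hypothetical relation with $\omega_{I^{\mathrm c}}$ to isolate one coefficient), and generate: for $\theta\in\Omega^q_p(\log D)$ each $\theta\wedge\omega_{I^{\mathrm c}}\in\Omega^n_p(\log D)=\tfrac1f\Omega^n_{\C^n,p}$ equals $\pm c_I\,\omega_1\wedge\cdots\wedge\omega_n$ with $c_I\in R$ ($\alpha$ being a unit), and then $\theta-\sum_I c_I\omega_I$ pairs to zero with every $\omega_{I^{\mathrm c}}$, hence vanishes since the $\omega_I$ span $\bigwedge^q\operatorname{Frac}(R)^n$. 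This gives $\Omega^q_p(\log D)=\bigoplus_{i_1<\cdots<i_q}\mathcal{O}_{\C^n,p}\,\omega_{i_1}\wedge\cdots\wedge\omega_{i_q}$. The crux is the converse: the determinantal lemma only delivers $\det A\in(f)$, and upgrading this to ``$\det A$ is a \emph{unit} multiple of $f$'' genuinely needs the extra relation $AB=fI_n$ (forced by $f\,\partial/\partial x_i\in\Der_p(-\log D)$) together with unique factorization in $R$ and the length computation at the generic points of the components of $D$. A secondary point not to be skipped is the wedge-closure of $\Omega^\bullet(\log D)$, which is exactly what legitimizes running Cramer's rule on forms.
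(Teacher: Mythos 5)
Your proof is correct. Note that the paper itself gives no argument for this statement --- it is quoted directly from Saito's original article --- so there is nothing internal to compare against except the relative version the author proves later (Lemma \ref{detrelvect} and Proposition \ref{generalizsaitocrit1}). Your argument is complete modulo the two external facts you flag explicitly, namely the perfect pairing between $\Der_p(-\log D)$ and $\Omega^1_p(\log D)$ (which the paper records just before the statement) and the wedge-closure of $\Omega^\bullet(\log D)$; both are legitimately quotable from \cite{saito}. Where you genuinely diverge from the standard route is the ``only if'' direction: Saito, and the paper in its relative analogue, show that $\det A/f$ is non-vanishing off $D_{\mathrm{sing}}$ (a unit at smooth points of $D$ by reduction to $D=\{x_1=0\}$, non-zero off $D$ trivially) and then invoke $\codim(D_{\mathrm{sing}})\ge 2$ to conclude it is a unit; you instead use $AB=fI_n$ to bound the prime factors of $h=\det A/f$ and then compute the length of $\coker(A)$ over the discrete valuation ring $R_{(\pi_i)}$ at each component, getting $\operatorname{ord}_{\pi_i}(h)=0$ directly. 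The two arguments exploit the same geometric input (generic smoothness of a reduced divisor along each component), but yours is purely commutative-algebraic and avoids any appeal to the geometry of the complement of the singular locus; the geometric version is shorter if one is willing to choose adapted coordinates at a smooth point. Your derivation of the determinantal lemma via the adjugate and the annihilator of the Jacobian ideal in the reduced ring $R/(f)$ is also cleaner than the dimension-of-fibres argument used in Lemma \ref{detrelvect}, and your treatment of the higher wedge powers and of the passage between (i) and (ii) through the dual basis is exactly what is needed.
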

\begin{Definition}
In the notation of Proposition \ref{saitocrit}, the matrix $[\chi_1, \dots, \chi_n]$ is called a \emph{Saito matrix}.
\end{Definition}
\begin{Lemma}\label{saitolemma}\emph{(\cite{saito}, Lemma 1.9)} Let $\delta_i=\sum_{j=1}^n a_i^j(x)\partial / \partial x_j$, $i=1, \dots, n$, be a system of holomorphic vector fields at p such that
\begin{enumerate}
	\item $[\delta_i,\delta_j]\in \sum_{k=1}^n \mathcal{O}_{ \mathbb{C}^{n}, p} \delta_k$ for $i,j=1, \dots, n$;
	\item $\det(a_i^j)=f$ defines a reduced hypersurface $D$.
\end{enumerate}
Then for $D=\{ f(x)=0\}$, $\delta_1, \dots, \delta_n$ belong to $\Der_p(-\log D)$, and hence $\{ \delta_1, \dots, \delta_n\}$ is a free basis of $\Der_p(-\log D)$.
\end{Lemma}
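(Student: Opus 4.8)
The plan is to reduce everything to Saito's criterion (Proposition~\ref{saitocrit}, part ii). Indeed, once we know that each $\delta_i$ belongs to $\Der_p(-\log D)$, then, since by hypothesis~(2) the determinant $\det(a_i^j)=f$ is a reduced equation for $D$ at $p$, part ii of that Proposition tells us at once that $\{\delta_1,\dots,\delta_n\}$ is a free basis of $\Der_p(-\log D)$. So the entire task is to prove the membership $\delta_i(f)\in (f)\mathcal{O}_{\C^n,p}$ for each $i$; hypothesis~(1) has not yet been used and must enter here.

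I would work with the $\mathcal{O}_{\C^n,p}$-submodule $M:=\sum_{k=1}^{n}\mathcal{O}_{\C^n,p}\,\delta_k\subseteq\Der_{\C^n,p}$ generated by the $\delta_k$. Since $\det(a_i^j)=f\neq 0$, multiplying the relations $\delta_i=\sum_j a_i^j\,\partial/\partial x_j$ by the adjugate of the matrix $A=(a_i^j)$ (Cramer's rule) expresses $f\,\partial/\partial x_j$ as an $\mathcal{O}_{\C^n,p}$-combination of the $\delta_k$ with coefficients among the $(n-1)$-minors of $A$; hence $f\,\partial/\partial x_j\in M$ for all $j$, i.e. $f\cdot\Der_{\C^n,p}\subseteq M$. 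Now fix $i,j$ and evaluate the Lie bracket $[\delta_i,\,f\,\partial/\partial x_j]$ in two ways. On one side, the Leibniz rule for brackets of vector fields gives $[\delta_i,\,f\,\partial/\partial x_j]=\delta_i(f)\,\partial/\partial x_j+f\,[\delta_i,\partial/\partial x_j]$, and the last term lies in $f\cdot\Der_{\C^n,p}\subseteq M$. On the other side, writing $f\,\partial/\partial x_j=\sum_k g_k\delta_k$ with $g_k\in\mathcal{O}_{\C^n,p}$, expanding, and invoking hypothesis~(1) (so that $[\delta_i,\delta_k]\in M$), we find $[\delta_i,\,f\,\partial/\partial x_j]\in M$. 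Comparing the two expressions, $\delta_i(f)\,\partial/\partial x_j\in M$ for every $j$, hence $\delta_i(f)\cdot\Der_{\C^n,p}\subseteq M$, that is, $\delta_i(f)\in\Ann_{\mathcal{O}_{\C^n,p}}\!\big(\Der_{\C^n,p}/M\big)$.

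Finally I would close with a Fitting-ideal computation. The module $N:=\Der_{\C^n,p}/M$ is finitely presented with presentation matrix $A$ (up to transpose), so its zeroth Fitting ideal is $\operatorname{Fitt}_0(N)=(\det A)=(f)$; using the general inclusions $\operatorname{Fitt}_0(N)\subseteq\Ann(N)\subseteq\sqrt{\operatorname{Fitt}_0(N)}$ together with hypothesis~(2), which says that $(f)$ is a radical ideal, we obtain $\Ann_{\mathcal{O}_{\C^n,p}}(N)\subseteq\sqrt{(f)}=(f)$. Combined with the previous paragraph this yields $\delta_i(f)\in(f)\mathcal{O}_{\C^n,p}$, so $\delta_i\in\Der_p(-\log D)$, and Saito's criterion finishes the proof. (A more geometric substitute for this last step: $\delta_i(f)/f$ is meromorphic with at most a simple pole along $D$, it is holomorphic near any point where some $(n-1)$-minor of $A$ is a unit, and since $f$ is reduced the common zero locus of these minors meets $D$ in codimension $\ge 2$, so $\delta_i(f)/f$ extends holomorphically by normality of $\mathcal{O}_{\C^n,p}$.) I expect the main obstacle to be the double evaluation of $[\delta_i,\,f\,\partial/\partial x_j]$ in the middle step, since this is precisely the point where the integrability hypothesis~(1) is brought to bear; the reducedness hypothesis~(2), by contrast, is exactly what licenses the passage from the annihilator down to $(f)$.
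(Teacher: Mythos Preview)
Your argument is correct. The two-way evaluation of $[\delta_i,f\,\partial/\partial x_j]$ cleanly isolates $\delta_i(f)\,\partial/\partial x_j\in M$, and the Fitting-ideal step $\operatorname{Fitt}_0(N)\subseteq\Ann(N)\subseteq\sqrt{\operatorname{Fitt}_0(N)}=\sqrt{(f)}=(f)$ is valid in the UFD $\mathcal{O}_{\C^n,p}$ once $f$ is squarefree. Saito's criterion then finishes as you say.

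This is, however, a genuinely different route from the one the paper takes. The paper does not prove Lemma~\ref{saitolemma} directly but proves the relative version (Lemma~\ref{relatversaitlem}) and observes that specialising $S$ to a point recovers it. There the membership $\delta_k(f)\in(f)$ is obtained by a direct determinant expansion: writing $f=\det[\delta_1,\dots,\delta_n]$ and using multilinearity together with the identity $\delta_k(\delta_j)=[\delta_k,\delta_j]+\delta_j(\delta_k)$, one splits $\delta_k(f)$ into a sum of determinants with a column replaced by $[\delta_k,\delta_j]$ (in $(f)$ by hypothesis~(1)) and a sum handled by the trace identity of Lemma~\ref{molteachcolumntrace}, giving $\bigl(\sum_i\partial a_k^i/\partial x_i\bigr)f$. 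Notably, that computation does \emph{not} use reducedness of $f$ to obtain $\delta_k(f)\in(f)$; reducedness enters only so that Saito's criterion applies at the end. Your approach, by contrast, uses reducedness essentially (to pass from $\Ann(N)$ down to $(f)$), but in return it avoids the somewhat ad hoc trace lemma and packages the linear algebra into a single Fitting-ideal statement. Either method is perfectly acceptable; the paper's has the mild advantage of showing that hypothesis~(1) alone already forces $\delta_k(f)\in(f)$.
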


There is also an algebraic version of Saito's criterion that does not refer to vector fields directly but characterizes the Taylor series of the function $f$ defining a free divisor: 
\begin{Proposition}\label{otherfreediv}\emph{(\cite{quiver}, Proposition 1.3)} A formal power series $f \in R=\C[[x_1, \dots, x_n]]$ defines a free divisor, if it is reduced, i.e. squarefree, and there is an $n\times n$ matrix $A$ with entries from R such that $$\emph{det } A=f ~ \text{  and  } ~ (\nabla f)A \equiv (0, \dots , 0)\text{\emph{ mod }}f,$$ where $\nabla f=(\partial f/ \partial x_1, \dots , \partial f/ \partial x_n)$ is the gradient of $f$, and the last condition just expresses that each entry of the vector $(\nabla f)A$ is divisible by $f$ in R. The columns of A can then be viewed as the coefficients of a basis, with respect to the derivations $\partial /\partial x_i$, of the logarithmic vector fields along the divisor $f=0$.
\end{Proposition}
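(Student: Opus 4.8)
The plan is to recognize Proposition \ref{otherfreediv} as the matrix transcription of Saito's criterion, Proposition \ref{saitocrit}(ii), carried out over the formal power series ring $R=\C[[x_1,\dots,x_n]]$ in place of the local ring of convergent germs. The key observation is the dictionary between an $n\times n$ matrix $A=(a_i^j)$ over $R$ and the $n$-tuple of formal vector fields $\delta_j=\sum_{i=1}^n a_i^j\,\partial/\partial x_i$ obtained by reading the columns of $A$ as coefficient vectors with respect to $\partial/\partial x_1,\dots,\partial/\partial x_n$. Under this correspondence the $j$-th entry of the row vector $(\nabla f)A$ is precisely $\delta_j(f)=\sum_i a_i^j\,\partial f/\partial x_i$, so the hypothesis $(\nabla f)A\equiv 0\bmod f$ says exactly that every $\delta_j$ lies in $\Der_R(-\log V(f)):=\{\delta\mid\delta(f)\in(f)R\}$, while $\det A=f$ says that the determinant of the coefficient matrix of $\delta_1,\dots,\delta_n$ is the reduced equation $f$ of $D=V(f)$.

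First I would treat the ``if'' direction. Assuming $f$ reduced and such a matrix $A$ given, the dictionary produces logarithmic vector fields $\delta_1,\dots,\delta_n$ whose coefficient determinant equals the reduced function $f$; the ``if'' half of Saito's criterion (Proposition \ref{saitocrit}(ii)), which has no further hypotheses, then gives that $D$ is free and that $\delta_1,\dots,\delta_n$ --- i.e. the columns of $A$ --- form a basis of $\Der_R(-\log D)$, which is the assertion. Then I would do the ``only if'' direction: if $D=V(f)$ is free then $f$ is reduced by the very definition of a reduced divisor, and by the ``only if'' half of Proposition \ref{saitocrit}(ii) there is a basis $\chi_1,\dots,\chi_n$ of $\Der_R(-\log D)$ whose coefficient matrix $A_0$ satisfies $\det A_0=u\cdot f$ for some unit $u\in R^\times$. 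Replacing $\chi_1$ by $u^{-1}\chi_1$ yields a matrix $A$ with $\det A=f$ whose columns are still logarithmic vector fields, so each entry of $(\nabla f)A$ lies in $(f)R$, i.e. $(\nabla f)A\equiv 0\bmod f$.

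The one genuine subtlety --- and the point I would be most careful about --- is that Saito's criterion is recalled in the excerpt (Proposition \ref{saitocrit}, and Lemma \ref{saitolemma}) for germs of holomorphic functions, the local ring $\mathcal O_{\C^n,p}$, whereas Proposition \ref{otherfreediv} concerns the formal power series ring $R$. So I would check that the proof of Saito's criterion is valid verbatim over $R$: it uses only that $R$ is a regular local ring in which $f$ is a squarefree non-unit, Cramer's rule over the fraction field, and the fact that for such $f$ the ideal $(f)$ is radical --- which is precisely what licenses the divisibility step identifying $\Der_R(-\log D)$ with the module spanned by the $\delta_j$. None of these ingredients uses convergence, so the formal statement follows; alternatively one simply invokes that \cite{saito}, Theorem 1.8 and Lemma 1.9, hold over any regular local ring containing $\C$, which is the form in which \cite{quiver} uses them.
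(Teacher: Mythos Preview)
The paper does not give its own proof of this proposition: it is quoted verbatim as \cite{quiver}, Proposition~1.3, and immediately followed by Example~\ref{ncrdiv} with no intervening argument. So there is nothing in the paper to compare your proposal against line by line.

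That said, your proposal is correct and is exactly the intended derivation: the proposition is nothing but Saito's criterion (Proposition~\ref{saitocrit}(ii)) rewritten in matrix language over $R=\C[[x_1,\dots,x_n]]$, via the dictionary you describe between the $j$-th column of $A$ and the vector field $\delta_j=\sum_i a_i^j\,\partial/\partial x_i$, under which the $j$-th entry of $(\nabla f)A$ is $\delta_j(f)$. Two minor remarks. First, the proposition as stated is only the ``if'' direction, so your converse paragraph is superfluous (though correct). Second, your care about the passage from $\mathcal{O}_{\C^n,0}$ to $R$ is well placed; the proof of Saito's criterion uses only Cramer's rule and that $f$ is a reduced non-unit in a regular local ring, and so carries over unchanged --- this is precisely how \cite{quiver} formulates it.
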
 
\begin{Example}\label{ncrdiv}
The normal crossing divisor $D$ = $\{x_1 \cdots x_n  = 0\} \subset \mathbb{C}^{n}$ is a linear free divisor; $\Der(-\log D)$ has basis $x_1\partial / \partial x_1, \dots, x_n\partial / \partial x_n$. Up to isomorphism it is the only example among hyperplane arrangements, see  \cite{orlterao}, Chapter 4.
\end{Example}
\begin{Remark} Let $D \subset \C^n$ be a divisor defined by a homogeneous polynomial $f\in \C[x_1, \dots x_n]$ of degree n. Then for each $\delta\in \Der(-\log D)_0$, there is a $n\times n$ matrix $A$ with entries in $\C$, such that $\delta=xA\partial^t$, where $\partial^t$ is the column vector $(\partial / \partial x_1, \dots, \partial / \partial x_n)^t$.
\end{Remark}
\begin{Remark} Let $D\subset \C^n$ be a free divisor. $D$ is a linear if and only if $\Der(-\log D)=\mathcal{O}_{\C^n}\cdot\Der(-\log D)_0$. 
\end{Remark}
\begin{Definition} Let $D=V(f) \subset \C^n$ be a linear free divisor. Define the subgroup $$G_D:=\{A \in GL_{n} (\C)~|~A(D)=D\} = \{A \in GL_n(\C)~|~f\circ A \in \C \cdot f \} $$ with identity component $G^\circ_D$ and Lie algebra $\mathfrak{g}_D$.
\end{Definition}
\begin{Lemma}\label{gdlemma}\emph{(\cite{grmondsch}, Lemma 2.1)} $G^\circ_D$ is an algebraic subgroup of  $\GL_n(\C)$ and $\mathfrak{g}_D=\{A ~| ~xA^t\partial^t \in \Der(-\log D)_0 \}$.
\end{Lemma}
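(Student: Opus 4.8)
The plan is to treat the two assertions separately: first that $G_D$ (and hence $G_D^\circ$) is an algebraic group, then the identification of its Lie algebra. Throughout I would use that, since $D$ is a linear free divisor, it has a defining equation $f$ that is homogeneous of degree $n$ — by Saito's criterion (Proposition \ref{saitocrit}, part ii) one may take $f=\det[\delta_1,\dots,\delta_n]$ for a basis $\delta_1,\dots,\delta_n$ of $\Der(-\log D)$ with linear coefficients — so that any weight-zero vector field sends $f$ to a homogeneous polynomial of degree $n$.

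\emph{Algebraicity.} Writing $f\circ A=\sum_\alpha c_\alpha(A)\,x^\alpha$ and $f=\sum_\alpha f_\alpha\,x^\alpha$, each $c_\alpha$ is a polynomial in the entries of $A$, and the condition $f\circ A\in\C\cdot f$ is equivalent to the simultaneous vanishing of the quadratic expressions $c_\alpha(A)f_\beta-c_\beta(A)f_\alpha$; since $A$ is invertible and $f\neq0$, this automatically forces $f\circ A=\lambda f$ for a genuine unit $\lambda\in\C^*$. Hence $G_D$ is cut out inside $\GL_n(\C)$ by polynomial equations, so it is a linear algebraic group; its Zariski identity component $G_D^\circ$ has finite index in $G_D$ and, over $\C$, coincides with its topological identity component, so $G_D^\circ$ is an algebraic subgroup of $\GL_n(\C)$. (That the two displayed descriptions of $G_D$ agree uses only that $f$ is reduced, so that $V(f\circ A^{-1})=V(f)$ implies $f\circ A^{-1}\in\C^*\cdot f$, together with the fact that $G_D$ is closed under inversion.)

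\emph{The Lie algebra.} I would compute $\mathfrak g_D=T_{\Id}G_D$ directly. For $A\in\mathfrak{gl}_n(\C)$ set $\delta_A:=xA^t\partial^t=\sum_{i,j}A_{ij}x_j\,\partial/\partial x_i$, the linear vector field generated by the infinitesimal linear action $v\mapsto Av$ on column vectors. If $A\in T_{\Id}G_D$, choose a curve $g(t)\in G_D$ with $g(0)=\Id$ and $g'(0)=A$; since $g(t)\in G_D$ one has $f\circ g(t)=\lambda(t)f$ with $\lambda(0)=1$, and differentiating at $t=0$ gives $\delta_A(f)=\lambda'(0)\,f$, which lies in $\C\cdot f$. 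Conversely, if $\delta_A(f)=cf$ for some $c\in\C$, then the one-parameter subgroup $\phi_t$ with infinitesimal generator $v\mapsto Av$ satisfies $\tfrac{d}{dt}f(\phi_t(x))=\delta_A(f)(\phi_t(x))=c\,f(\phi_t(x))$, whence $f\circ\phi_t=e^{ct}f\in\C^*\cdot f$ and $\phi_t\in G_D$, so $A\in\mathfrak g_D$. Therefore $\mathfrak g_D=\{A\mid\delta_A(f)\in\C\cdot f\}$.

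\emph{Matching the statement.} Since $\delta_A$ has weight zero and $f$ is homogeneous of degree $n$, $\delta_A(f)$ is homogeneous of degree $n$; hence $\delta_A(f)$ is divisible by $f$ in $\mathcal O_{\C^n}$ if and only if $\delta_A(f)\in\C\cdot f$ (the quotient would have to be homogeneous of degree $0$, i.e. constant). Thus $A\in\mathfrak g_D$ iff $xA^t\partial^t\in\Der(-\log D)$, and because such a field automatically lies in the weight-zero part, iff $xA^t\partial^t\in\Der(-\log D)_0$, which is the claimed formula. The only real obstacles are bookkeeping — pinning down the transpose convention in $A\mapsto\delta_A$, and turning ``$f\circ A$ proportional to $f$'' into honest polynomial equations — rather than anything deep; it is the homogeneity of $f$ that makes the proportionality constant appear and that makes the answer land exactly on $\Der(-\log D)_0$.
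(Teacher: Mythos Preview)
Your proof is correct and self-contained. Note, however, that the paper does not actually prove this lemma: it is stated with a citation to \cite{grmondsch}, Lemma~2.1, and no proof environment follows. So there is no in-paper argument to compare against; your write-up supplies what the paper defers to the reference.

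A couple of minor remarks on presentation. In the algebraicity step, the equations $c_\alpha(A)f_\beta-c_\beta(A)f_\alpha=0$ cut out the closed subvariety of $\GL_n(\C)$ where $f\circ A$ is a scalar multiple of $f$; as you say, invertibility of $A$ then forces the scalar to be nonzero, so this really is $G_D$. In the Lie-algebra step, your use of the analytic exponential $e^{tA}$ is legitimate over $\C$, where the algebraic and analytic Lie algebras of a linear algebraic group coincide; if you wanted to stay purely algebraic you could instead compute the tangent space via $\C[\epsilon]/(\epsilon^2)$-points, but nothing is lost your way. Finally, the homogeneity observation that $\delta_A(f)\in(f)\Leftrightarrow\delta_A(f)\in\C\cdot f$ is exactly the mechanism that makes the description land on $\Der(-\log D)_0$ rather than merely $\Der(-\log D)$, and you have identified it correctly.
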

\begin{Definition} Let $D\subset \C^n$ be a linear free divisor. We call $D$ \emph{reductive} if $\mathfrak{g}_D$ is a reductive Lie algebra. 
\end{Definition}
From $\S$2 of \cite{ignacmond}, we can deduce the following: 
\begin{Lemma}\label{reductrace} Let $D=V(f)\subset\C^n$ be a reductive linear free divisor. Then $\Aut(f) \subset SL_n(\C)$. This means that if $\chi \in \Ann(D):=\{\delta \in \Der(-\log D)~|~\delta(f)=0 \}$ then $\trace(\delta)=0$.
\end{Lemma}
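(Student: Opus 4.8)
The plan is to reduce the statement to an identity on the finite-dimensional Lie algebra $\mathfrak{g}_D=\Der(-\log D)_0$ and then invoke the reductivity hypothesis. Because $D$ is linear free, $f$ is homogeneous of degree $n$, so the Euler field $E=\sum_{i}x_i\,\partial/\partial x_i$ lies in $\Der(-\log D)_0$ and $E(f)=nf$. For any $\delta\in\Der(-\log D)_0$ the function $\delta(f)$ is homogeneous of degree $n$ and lies in $(f)$, hence $\delta(f)=\lambda(\delta)\,f$ for a scalar $\lambda(\delta)\in\C$; this defines a linear form $\lambda\colon\mathfrak{g}_D\to\C$ with $\lambda(E)=n$, and $\Der(-\log D)_0\cap\Ann(D)=\ker\lambda$. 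By Lemma \ref{gdlemma} this kernel is the Lie algebra of $\Aut(f)^\circ$, so it is enough to prove that $\lambda$ agrees with the trace form $\delta\mapsto\trace(\delta)$ on $\mathfrak{g}_D$, where $\trace(\delta)=\trace(A)$ for $\delta=xA\partial^t$ (equivalently, $\trace(\delta)$ is the divergence of $\delta$); this forces $\ker\lambda\subseteq\mathfrak{sl}_n(\C)$, which is exactly the assertion.

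The heart of the matter is the identity
$$\lambda(\delta)\;=\;\trace(\delta)\;+\;\trace\big(\ad_\delta\colon\mathfrak{g}_D\to\mathfrak{g}_D\big),\qquad \delta\in\mathfrak{g}_D .$$
To obtain it I would fix a basis $\delta_1,\dots,\delta_n$ of $\Der(-\log D)_0$, which, $D$ being linear, is simultaneously an $\mathcal{O}_{\C^n}$-basis of $\Der(-\log D)$, and take the $\mathcal{O}_{\C^n}$-dual basis $\omega_1,\dots,\omega_n$ of $\Omega^1(\log D)$. One first checks that each $\omega_j$ has weight $0$ (any higher-weight component would pair to zero with every $\delta_i$, hence be zero), so that $\omega_1\wedge\cdots\wedge\omega_n=\alpha\,(dx_1\wedge\cdots\wedge dx_n)/f$ for a nonzero constant $\alpha$, and the Lie derivatives $L_{\delta_i}\omega_j=\sum_k c_{ij}^k\,\omega_k$ have constant coefficients $c_{ij}^k$. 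Applying $L_{\delta_i}$ to both sides of $\omega_1\wedge\cdots\wedge\omega_n=\alpha\,(dx_1\wedge\cdots\wedge dx_n)/f$ and using $L_{\delta_i}(dx_1\wedge\cdots\wedge dx_n)=\trace(\delta_i)\,dx_1\wedge\cdots\wedge dx_n$ together with $\delta_i(f)=\lambda(\delta_i)f$ yields $\sum_j c_{ij}^j=\trace(\delta_i)-\lambda(\delta_i)$; on the other hand, differentiating the relations $\langle\delta_j,\omega_k\rangle=\delta_{jk}$ along $\delta_i$ identifies $\sum_j c_{ij}^j$ with $-\trace(\ad_{\delta_i})$. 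Comparing the two expressions gives the identity.

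Finally, a reductive Lie algebra is unimodular: writing $\delta=z+s$ with $z$ central and $s\in[\mathfrak{g}_D,\mathfrak{g}_D]$ one has $\ad_\delta=\ad_s$, and $\trace(\ad_s)=0$ because the semisimple algebra $[\mathfrak{g}_D,\mathfrak{g}_D]$ equals its own derived algebra and traces of commutators vanish. Hence $\lambda(\delta)=\trace(\delta)$ for every $\delta\in\mathfrak{g}_D$, so $\Der(-\log D)_0\cap\Ann(D)=\mathfrak{g}_D\cap\mathfrak{sl}_n(\C)$ and $\Aut(f)^\circ\subseteq SL_n(\C)$. I expect the main obstacle to be the bookkeeping in the displayed identity, in particular checking that the dual forms $\omega_j$ really are weight zero and that the two computations of $\sum_j c_{ij}^j$ match with the correct signs; once the identity is in place, reductivity finishes the proof in one line. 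The weight-zero restriction is essential: already for the normal crossing divisor $\Ann(D)$ contains logarithmic vector fields of positive weight whose divergence is a nonzero function, so the clean statement is the one about $\Der(-\log D)_0$, that is, about the identity component of $\Aut(f)$.
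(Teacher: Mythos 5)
Your proof is correct. Note that the paper itself gives no argument for this lemma --- it is quoted from \S 2 of the reference \texttt{[ignacmond]} --- so what you have written is a genuine self-contained proof rather than an alternative to one in the text. Your route is essentially the one used in that source: both rest on the identity $\lambda(\delta)=\trace(\delta)+\trace(\ad_\delta)$ for $\delta\in\mathfrak{g}_D$ (where $\delta(f)=\lambda(\delta)f$), followed by unimodularity of reductive Lie algebras; the difference is that you derive the identity by Lie-differentiating the dual basis $\omega_1,\dots,\omega_n$ of $\Omega^1(\log D)$ and the top form $\alpha\,dx_1\wedge\cdots\wedge dx_n/f$, whereas the reference differentiates $\det$ of the Saito matrix directly. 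Both computations are equivalent (your sign bookkeeping checks out: $\sum_j c_{ij}^j=-\trace(\ad_{\delta_i})$ from the duality relations and $\sum_j c_{ij}^j=\trace(\delta_i)-\lambda(\delta_i)$ from the wedge identity), and the dual-form version has the small advantage of making the weight-zero constraints transparent. Your closing caveat is also a correct and worthwhile precision: as literally stated the lemma is about $\Der(-\log D)_0\cap\Ann(D)$, not all of $\Ann(D)$ (which contains positive-weight fields of nonzero divergence even for normal crossings), and the group-level inclusion should be read for $\Aut(f)^\circ$, since e.g.\ odd permutation matrices lie in $\Aut(x_1\cdots x_n)$ but not in $\SL_n(\C)$; this weight-zero reading is exactly how the lemma is applied later in the paper.
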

\begin{Example}
\begin{enumerate}
	\item[i)] The normal crossing divisor of Example \ref{ncrdiv} is a reductive linear free divisor because $\mathfrak{g}_D=\C^n$.
	\item[ii)] Consider the divisor $D=V((y^2+xz)z) \subset \C^3$. This is a linear free divisor because we can take the matrix 
	\begin{equation*} A=
\begin{bmatrix}
		x&4x&-2y\\
		y&y&z\\
		z&-2z&0\\
\end{bmatrix}
\end{equation*}
as its Saito matrix. Moreover, if we consider $\sigma$ the second column of $A$, i.e. $\sigma=4x\partial/\partial x+ y \partial/\partial y -2z \partial/\partial z$, we have that $\sigma \in \Ann(D)$ and $\trace(\sigma)=3$ and hence by Lemma \ref{reductrace}, $D$ is a non-reductive linear free divisor.
\end{enumerate}
\end{Example}
\begin{Lemma}\label{reductgroup}\emph{(\cite{grmondsch}, Lemma 3.6, (4))}  Let $D\subset\C^n$ be a linear free divisor. If $\mathfrak{g}_D$ is reductive then $G^\circ_D$ is reductive as algebraic group.
\end{Lemma}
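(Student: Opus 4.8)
The plan is to reduce the statement to a property of the centre of $\Der(-\log D)_0$ and then to invoke Saito's criterion. Throughout, via Lemma \ref{gdlemma} I regard $\mathfrak{g}_D$ as a Lie subalgebra of $M_n(\C)$ and $G^\circ_D$ as acting on $\C^n$ in the tautological way, so that $\mathrm{Lie}(G^\circ_D)=\mathfrak{g}_D$.

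First I would recall that $G^\circ_D$ is reductive exactly when its unipotent radical $R_u(G^\circ_D)$ is trivial, and, $R_u(G^\circ_D)$ being connected, this holds iff $\mathfrak{u}:=\mathrm{Lie}\,R_u(G^\circ_D)=0$. Now $\mathfrak{u}$ is an ideal of $\mathfrak{g}_D$, is contained in the radical of $\mathfrak{g}_D$, and consists of nilpotent matrices. If $\mathfrak{g}_D$ is reductive its radical equals its centre $\mathfrak{z}(\mathfrak{g}_D)$, so $\mathfrak{u}\subseteq\mathfrak{z}(\mathfrak{g}_D)$; in particular $[\mathfrak{g}_D,\mathfrak{u}]=0$, so $R_u(G^\circ_D)$ is central in $G^\circ_D$, and $\mathfrak{u}$ consists of nilpotent matrices lying in the centre of $\Der(-\log D)_0$. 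Hence it suffices to show that the centre of $\Der(-\log D)_0$ contains no non-zero nilpotent matrix.

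So let $N$ be a nilpotent element of the centre of $\Der(-\log D)_0$ and let $\delta_N\in\Der(-\log D)_0$ be the associated weight-zero logarithmic vector field. Since $D$ is linear we may take $f$ homogeneous of degree $n$, so $\delta_N(f)=\lambda f$ for a constant $\lambda$; as $\exp(tN)\in G^\circ_D$ is unipotent, the character $g\mapsto(f\circ g)/f$ of $G_D$ is trivial on it, which forces $\lambda=0$, i.e. $\delta_N\in\Ann(D)$. Moreover $\dim_\C\Der(-\log D)_0=n=\dim G^\circ_D$, and by Saito's criterion the Saito determinant of a basis $A_1,\dots,A_n$ of $\Der(-\log D)_0$ is a non-zero constant times $f$; since that determinant, evaluated at $v$, is $\det[\,A_1v\mid\cdots\mid A_nv\,]$, the map $\mathfrak{g}_D\to\C^n$, $A\mapsto Av$, is a linear isomorphism for every $v$ with $f(v)\neq0$, and $G^\circ_D$ acts on $\C^n$ with the single dense orbit $\Omega=\C^n\setminus D$. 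In particular $Nv\neq0$ for $v\in\Omega$, so $\ker N\subseteq D$; and if $Nv\in\Omega$ for some $v$, pick $A\in\mathfrak{g}_D$ with $A(Nv)=v$: since $A$ and $N$ commute, $AN$ is nilpotent, whence $v=(AN)v=0$, a contradiction. So $N$ maps $\Omega$ into $D$, and $f$ vanishes on $\image N$ as well.

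It remains to contradict the existence of a non-zero such $N$: we would then have $\ker N$ and $\image N$ proper $G^\circ_D$-invariant subspaces of $\C^n$ on which $f$ vanishes. Here I would go back to Saito's criterion. Completing $E,\delta_N$ to a basis $E=\chi_1,\ \chi_2=\delta_N,\ \chi_3,\dots,\chi_n$ of $\Der(-\log D)_0$ gives a Saito matrix of the form $S(x)=[\,x\mid Nx\mid A_3x\mid\cdots\mid A_nx\,]$ with $\det S$ a non-zero constant times $f$, and $S(\exp(sN)x)=\exp(sN)\,S(x)$ for all $s$. The aim is to show that the nilpotence of $N$ forces $\det S$ to acquire a repeated irreducible factor along $\image N$ (or along $\ker N$): if $N$ has corank $1$ this is immediate, for then $\image N$ is a hyperplane and $\det S$ vanishes on it to order $n$; in general one must work harder, e.g. by tracking the multiplicities in $\det S$ under the unipotent flow $\exp(sN)$ and using that the $N$-cyclic flag of $\C^n$ is $G^\circ_D$-invariant by centrality of $N$, together with the decomposition of $\mathfrak{g}_D$ into its centre and semisimple part. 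Either way this contradicts $D$ being reduced, forcing $N=0$, and by the first step $G^\circ_D$ is then reductive. I expect this final step — promoting "$N$ a non-zero central nilpotent with $\ker N,\image N\subseteq D$" to a genuine repeated factor of $f$, uniformly in the corank of $N$ — to be the main obstacle, the corank-one case being essentially a computation.
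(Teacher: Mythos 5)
Your reduction is sound and most of the intermediate steps are correct: over $\C$ a connected algebraic group is reductive iff its unipotent radical is trivial; $\mathrm{Lie}\,R_u(G^\circ_D)$ is a nilpotent ideal of $\mathfrak{g}_D$ consisting of nilpotent matrices, hence (for $\mathfrak{g}_D$ reductive) lies in the centre; so everything hinges on showing that the centre of $\mathfrak{g}_D\cong\Der(-\log D)_0$ contains no nonzero nilpotent $N$. Your observations that such an $N$ would satisfy $\delta_N\in\Ann(D)$, that $A\mapsto Av$ is an isomorphism $\mathfrak{g}_D\to\C^n$ for $v\notin D$ so that $\C^n\setminus D$ is a single open orbit, and that consequently $\ker N\subseteq D$ and $\image N\subseteq D$, are all correct and correctly argued.

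But the proof is not finished, and the missing step is the actual content of the lemma. You still have to rule out a nonzero central nilpotent $N$ with $\ker N,\ \image N\subseteq D$, and the one case you declare immediate is not: for corank-one $N$ the hyperplane $H=\image N=V(\ell)$ is $\mathfrak{g}_D$-invariant (by centrality), so $\ell(Ax)=\chi(A)\ell(x)$ for a character $\chi$ with $\chi(N)=0$; in your Saito matrix $S(x)=[\,x\mid Nx\mid A_3x\mid\cdots\,]$ the row corresponding to $\ell$ is then $\ell(x)\cdot(1,0,\chi(A_3),\dots)$, which exhibits $\det S$ as vanishing to order \emph{one} along $H$ --- exactly what reducedness allows --- and I see no justification for the claimed order-$n$ vanishing. (By contrast, the opposite extreme, rank one, does work: writing $N=v_0\otimes\xi$, centrality forces $Av_0=c_Av_0$, and $0=\delta_N(f)=\xi\cdot\partial_{v_0}f$ gives $\partial_{v_0}f=0$; hence $\partial_{v_0}\in\Der(-\log D)$ has weight $-1$ and cannot lie in $\mathcal{O}_{\C^n}\cdot\Der(-\log D)_0$, contradicting linearity of $D$. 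This suggests the order of vanishing of $\det S$ is not the right invariant to pursue.) The general-rank step, "tracking the multiplicities in $\det S$ under the unipotent flow", is a plan rather than an argument. Note finally that the paper gives no proof of this lemma --- it quotes it from \cite{grmondsch}, Lemma 3.6(4) --- so the entire burden of this last step falls on you, and as written it is a genuine gap.
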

\begin{Definition} Let $S$ be a complex space. Then $\Der_{\C^n \times S/S}$ is the set of vector fields on $\C^n \times S$ without components in the $S$ direction. It is a submodule of $\Der_{\C^n\times S}$. 
\end{Definition}
\begin{Definition} Let $S$ be a complex space and let $D \subset \C^n \times S$ be a divisor. Then $\Der(-\log D/S):=\{\delta \in \Der(-\log D)~|~\delta \in \Der_{\C^n \times S/S} \}=\Der(-\log D)\cap\Der_{\C^n\times S/S}$.
\end{Definition}
\begin{Remark} $\Der_{\C^n \times S/S}$ and $\Der(-\log D/S)$ are both coherent sheaves of $\mathcal{O}_{\C^n\times S}$-modules.
\end{Remark}

\section{Deformation theory for free divisors}

The aim of this section is to introduce the notion of (linearly) admissible deformation for germs of (linear) free divisors and then study infinitesimal ones in order to prove that reductive linear free divisors are formally rigid.


\subsection{Admissible and linearly admissible deformations}

\begin{Definition}\label{defdeform} Let $(D,0)=(V(f),0)\subset  (\C^n,0)$ be a germ of a free divisor and let $(S,s)$ be a complex space germ. An \emph{admissible deformation} of $(D,0)$ over $(S,s)$ consists of a flat morphism $\phi\colon(X,x)\to (S,s)$ of complex space germs, where $(X,x)\subset(\C^n\times S,(0,s))$, together with an isomorphism from $(D,0)$ to the central fibre of $\phi, (D,0) \to (X_s,x) :=(\phi^{-1}(s),x)$, such that
\begin{equation}\label{relderadmisdef}
\Der(-\log X/S)/\mathfrak{m}_{S,s}\Der(-\log X/S)=\Der(-\log D)
\end{equation}
where $\mathfrak{m}_{S,s}$ is the maximal ideal of $\mathcal{O}_{S,s}$.

Moreover, if $(D,0)$ is linear, we define a \emph{linearly admissible deformation} of $(D,0)$ over $(S,s)$ as an admissible deformation of $(D,0)$ over $(S,s)$ such that there exists a basis of  $\Der(-\log X/S)$ as $\mathcal{O}_{\C^n \times S,(0,s)}$-module consisting of vector fields all of whose coefficients are linear in $x_1, \dots, x_n$.
\end{Definition}
\begin{Definition} In Definition \ref{defdeform}, $(X,x)$ is called the \emph{total space}, $(S,s)$ the \emph{base space} and $(X_s,x) \cong (D,0)$ the \emph{special fibre} of the (linearly) admissible deformation.
\end{Definition}
We can write a (linearly) admissible deformation as a commutative diagram
\begin{equation}
\xymatrix{(D,0) \ar[d]  \ar@{^{(}->}[r]^i &(X,x) \ar[d]^\phi \\
 \{\ast \} \ar@{^{(}->}[r] &(S,s)} \label{defdiag} \end{equation}
 where $i$ is a closed embedding mapping $(D,0)$ isomorphically onto $(X_s,x)$. We will denote a (linearly) admissible deformation by
 $$(i,\phi)\colon \xymatrix{(D,0)\ar@{^{(}->}[r]^i &(X,x)\ar[r]^\phi &(S,s)}.$$
 \begin{Definition} Given two (linearly) admissible deformations $(i,\phi)\colon D\hookrightarrow X\to S$ and $(j,\psi)\colon D\hookrightarrow Y\to T$, of $D$ over $S$ and $T$ respectively. A \emph{morphism of (linearly) admissible deformations} from $(i,\phi)$ to $(j,\psi)$ is a morphism of the diagram \eqref{defdiag} being the identity on $D \to \{\ast \}$. Hence, it consists of two morphisms $(\tau,\sigma)$ such that the following diagram commutes
$$\xymatrix{&D \ar@{^{(}->}[ld]_i \ar@{^{(}->}[rd]^j\\
X \ar[d]_\phi \ar[rr]^\tau &&Y \ar[d]^\psi\\
S \ar[rr]^\sigma &&T}$$
\end{Definition}
 \begin{Definition} Two (linearly) admissible deformations over the same base space $S$ are \emph{isomorphic} if there exists a morphism $(\tau,\sigma)$ with $\tau$ an isomorphism and $\sigma$ the identity map.
 \end{Definition}
 We denote by $\mathbf{Art}$ the category of local Artin rings with residue field $k$ and by $\mathbf{Set}$ the category of pointed sets with distinguished element $\ast$.
\begin{Definition} Let $(D,0)\subset (\C^n,0)$ be a germ of a free divisor. Define the functor $\mathbf{FD}_D\colon \mathbf{Art} \to \mathbf{Set}$ by setting\begin{equation*}
\mathbf{FD}_D(A):=\left\{
\begin{array}{ccc}
\text{Isomorphism classes of admissible} \\
\text{deformations of } (D,0) \text{ over } \Spec A 
\end{array} \right \}.
\end{equation*}
If $(D,0)\subset (\C^n,0)$ is a germ of a linear free divisor, we define similarly the functor $\mathbf{LFD}_D\colon \mathbf{Art} \to \mathbf{Set}$ by setting 
\begin{equation*}
\mathbf{LFD}_D(A):=\left\{
\begin{array}{ccc}
\text{Isomorphism classes of linearly } \\
\text{admissible deformations of } (D,0) \text{ over } \Spec A 
\end{array} \right \}.
\end{equation*}
\end{Definition}
\begin{Theorem}\label{isdeffunct} Let $(D,0)\subset (\C^n,0)$ be a germ of a free divisor. Then the functor $\mathbf{FD}_D$ satisfies Schlessinger's conditions (H1) and (H2) from \cite{schless}. Moreover, if $(D,0)$ is linear, then also the functor $\mathbf{LFD}_D$ satisfies conditions (H1) and (H2). 
\end{Theorem}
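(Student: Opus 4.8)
The plan is to verify Schlessinger's axioms (H1) and (H2) directly from the definition of the deformation functors, following the standard strategy for such functors (as in the Lagrangian-singularities work cited). Recall that (H1) asks that for any pair of morphisms $A' \to A$ and $A'' \to A$ in $\mathbf{Art}$ with $A'' \to A$ a small surjection, the natural map
\begin{equation*}
\mathbf{FD}_D(A' \times_A A'') \to \mathbf{FD}_D(A') \times_{\mathbf{FD}_D(A)} \mathbf{FD}_D(A'')
\end{equation*}
is surjective, and (H2) asks that this map is bijective when $A = k$ and $A'' = k[\varepsilon]$. The key point is that an admissible deformation over $\Spec A$ is an honest flat family $(X,x) \subset (\C^n \times \Spec A, (0,s))$ with the extra module-theoretic condition \eqref{relderadmisdef} on $\Der(-\log X/S)$, so gluing data over a fibre product of Artin rings can be assembled using ordinary flat descent for the total space, and one then checks that the glued family still satisfies the relative logarithmic condition.

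The steps I would carry out are as follows. First, given compatible admissible deformations $(X', \phi')$ over $\Spec A'$ and $(X'', \phi'')$ over $\Spec A''$ restricting to the same deformation over $\Spec A$, one forms the ringed space whose structure sheaf is the fibre product $\mathcal{O}_{X'} \times_{\mathcal{O}_X} \mathcal{O}_{X''}$; since $A'' \to A$ is surjective this is a flat $A' \times_A A''$-algebra of the correct form (this uses the elementary fact that fibre products of flat modules over a fibre product of rings, one arrow surjective, are again flat), giving a candidate element of $\mathbf{FD}_D(A' \times_A A'')$. Second, I would verify that this candidate is admissible: because $\Der_{\C^n \times S/S}$ and $\Der(-\log X/S)$ are coherent and the construction is compatible with the surjection $A'' \to A$, the module $\Der(-\log X/S)$ for the glued family is the corresponding fibre product of the $\Der(-\log X'/S')$ and $\Der(-\log X''/S'')$, and reducing modulo the maximal ideal recovers $\Der(-\log D)$; this gives surjectivity in (H1). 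Third, for (H2) one specializes to $A = k$, $A'' = k[\varepsilon]$, where the total space over the fibre product splits as a product and the whole data is rigidly determined by its two factors, so the map is moreover injective. Finally, for the linear case one observes that all the above constructions preserve the existence of a basis of $\Der(-\log X/S)$ with linear coefficients — the fibre product of two such bases over a common one is again such a basis — so the identical argument shows $\mathbf{LFD}_D$ satisfies (H1) and (H2).

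The main obstacle I expect is the second step: showing that the relative logarithmic module $\Der(-\log X/S)$ of the glued family is genuinely the fibre product of the corresponding modules of the two given families, rather than something larger. This is where the coherence remarks and the admissibility condition \eqref{relderadmisdef} must be used carefully — a priori, taking logarithmic derivations need not commute with the fibre product of structure sheaves, and one has to exploit flatness together with the fact that the condition $\delta(f) \in (f)$ is closed under the relevant base changes to see that no spurious logarithmic vector fields appear and that no required ones are lost. Once this compatibility is established, verifying (H1) and (H2) is essentially formal, and the linear refinement adds only the bookkeeping that linearity of coefficients is preserved under all the maps involved.
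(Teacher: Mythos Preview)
Your approach is essentially the same as the paper's: form the fibre product $Y=(D,\mathcal{O}_{X'}\times_{\mathcal{O}_X}\mathcal{O}_{X''})$, invoke Schlessinger's Lemma 3.4 for flatness to get (H1), and for (H2) use that when $A=k$, $A''=k[\varepsilon]$ any $W$ restricting to $X'$ and $X''$ is canonically isomorphic to $Y$ via the universal property of the fibre product and a flatness-plus-central-fibre argument (the paper cites Hartshorne, Exercise~4.2); the linear case is handled by the same remark you make.

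The one point worth flagging is the ``obstacle'' you single out: the paper does \emph{not} carry out the verification that $Y$ satisfies the admissibility condition \eqref{relderadmisdef}. It simply asserts ``it is an element of $\mathbf{FD}_D(A'\times_A A'')$'' after establishing flatness. So your plan is, if anything, more scrupulous than the published argument on exactly the step you are worried about. If you want to fill this in, the cleanest route is probably not to compare $\Der(-\log Y/S)$ directly with a fibre product of modules, but to use Proposition~\ref{flatdefsingloc} and its converse: condition \eqref{relderadmisdef} is equivalent to the singular locus deforming flatly, i.e.\ to every relation among $f,\partial f/\partial x_i$ lifting to a relation among $F,\partial F/\partial x_i$. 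Since this holds for $X'$ and $X''$ separately and the defining equation of $Y$ is the obvious glued $F$, the lifted relations on each side patch to relations over $A'\times_A A''$, which gives admissibility of $Y$ without having to identify $\Der(-\log Y/S)$ abstractly.
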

\begin{proof} Let $A'\to A$ and $A'' \to A$ be maps in $\mathbf{Art}$ such that the latter is a small extension, see Definition 1.2 from \cite{schless}. Consider now $X\in \mathbf{FD}_D(A), X'\in \mathbf{FD}_D(A')$ and $X''\in \mathbf{FD}_D(A'')$. Define $Y:=(D, \mathcal{O}_{X'}\times_{\mathcal{O}_X} \mathcal{O}_{X''})$, by \cite{schless}, Lemma 3.4, it is flat over $A'\times_A A''$ and it is an element of $\mathbf{FD}_D(A'\times_A A'')$. Hence the map $\tau_{A', A'', A}$ of $(H1)$ is surjective.

We want to show now that $\tau_{A', A'', A}$ is a bijection in the case $A''=k[\epsilon]$ and $A=k$. Let $W\in \mathbf{FD}_D(A'\times_A A'')$ restrict to $X'$ and $X''$, then we can choose immersions $q' : X' \hookrightarrow W$ and $q'' \colon X'' \hookrightarrow W$. Since these maps are all compatible with the immersions from $D$, they agree with the chosen maps $u' : X \hookrightarrow X'$ and $u'' : X \hookrightarrow X''$, since in this case $X=D$. Now by the universal property of fibered product of rings, there is a map $Y\to W$ compatible with the above maps. Since $Y$ and $W$ are both flat over $A'\times_A A''$, and the map becomes an isomorphism when restricted to $D$, we find that, by \cite{harts}, Exercise 4.2, $Y$ is isomorphic to $W$ and hence they are equal as elements of $\mathbf{FD}_D(A'\times_A A'')$.

The previous proof works similarly also for the functor $\mathbf{LFD}_D$.
\end{proof}
\begin{Proposition}\label{flatdefsingloc} Let $(D,0)\subset(\C^n,0)$ be a germ of a free divisor. Then in any admissible deformation the singular locus of $(D,0)$ is deformed in a flat way.
 \end{Proposition}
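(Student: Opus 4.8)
The plan is to present the relative singular locus of an admissible deformation as the zeroth homology of a complex of free modules over the total space which, after restriction to the special fibre, is a genuine \emph{resolution}; flatness then follows from a base-change argument. The input about free divisors I would use is the fact (Aleksandrov, Terao; equivalent to the statement that a free divisor has Cohen--Macaulay singular locus of codimension~$2$) that for a free divisor $D=V(f)\subset\C^n$ the Jacobian ideal $J_D=(f,\partial f/\partial x_1,\dots,\partial f/\partial x_n)$, which cuts out $\mathrm{Sing}(D)$ scheme-theoretically, admits a length-two free resolution
\begin{equation*}
0\to\mathcal{O}_{\C^n}^{\,n}\xrightarrow{\;M_0\;}\mathcal{O}_{\C^n}^{\,n+1}\xrightarrow{\;(f,\nabla f)\;}\mathcal{O}_{\C^n}\to\mathcal{O}_{\C^n}/J_D\to0,
\end{equation*}
where, for a Saito matrix $[\chi_1,\dots,\chi_n]$ of $D$ with $\chi_i=\sum_j A^j_i\,\partial/\partial x_j$ and $\chi_i(f)=a_if$ (Propositions~\ref{saitocrit} and~\ref{otherfreediv}), the $i$-th column of $M_0$ is $(-a_i,A^1_i,\dots,A^n_i)^{t}$; that the composite vanishes is the identity $\sum_jA^j_if_{x_j}=\chi_i(f)=a_if$, and exactness is the content of the cited theorem.

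First I would write the admissible deformation as $X=V(F)\subset\C^n\times S$ with $\phi$ the projection, $F\in\mathcal{O}_{\C^n\times S,(0,s)}$ restricting to $f$ over $s$; flatness of $\phi$ makes $F$ a non-zerodivisor, so the relative Jacobian ideal $J_{X/S}=(F,\partial F/\partial x_1,\dots,\partial F/\partial x_n)$ is well defined and cuts out the relative singular locus $\Sigma_{X/S}$, whose fibre over $s$ is $\mathrm{Sing}(D)$. Using the admissibility condition~\eqref{relderadmisdef}, that is, the surjection $\Der(-\log X/S)\twoheadrightarrow\Der(-\log D)$, I would lift a Saito basis of $\Der(-\log D)$ to vector fields $\chi_1,\dots,\chi_n\in\Der(-\log X/S)$. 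Since $\Der(-\log X/S)\subset\Der(-\log X)$, each satisfies $\chi_i(F)=a_iF$ for a unique $a_i\in\mathcal{O}_{\C^n\times S}$; assembling the coefficient matrix and the columns $(-a_i,A^1_i,\dots,A^n_i)^{t}$ as before produces a complex
\begin{equation*}
C_\bullet:\quad0\to\mathcal{O}_{\C^n\times S}^{\,n}\xrightarrow{\;M\;}\mathcal{O}_{\C^n\times S}^{\,n+1}\xrightarrow{\;(F,\nabla_xF)\;}\mathcal{O}_{\C^n\times S}\to\mathcal{O}_{\C^n\times S}/J_{X/S}\to0
\end{equation*}
of free $\mathcal{O}_{\C^n\times S}$-modules whose reduction modulo $\mathfrak{m}_{S,s}$ is exactly the resolution of $\mathcal{O}_{\C^n}/J_D$ displayed above (reduce the relation $\chi_i(F)=a_iF$ to get $\bar\chi_i(f)=\bar a_if$), hence is exact.

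Then I would run the descent, working over an Artinian base $S$ (to which the general statement reduces by applying the local criterion of flatness to the Artinian quotients $\mathcal{O}_{S,s}/\mathfrak{m}_{S,s}^k$). Here $\mathcal{O}_{\C^n\times S}=\mathcal{O}_{\C^n,0}\otimes_\C\mathcal{O}_{S,s}$ is $\mathcal{O}_{S,s}$-free, so each term of $C_\bullet$ is $\mathcal{O}_{S,s}$-flat. An induction on the length of $\mathcal{O}_{S,s}$ --- at each step tensoring $C_\bullet$ with a socle extension $0\to\C\to\mathcal{O}_{S,s}\to\mathcal{O}_{S,s}/(t)\to0$ and chasing the long exact homology sequence --- promotes exactness of $C_\bullet\otimes_{\mathcal{O}_S}\C$ in positive degrees to exactness of $C_\bullet$ itself. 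Thus $C_\bullet$ is a free resolution of $\mathcal{O}_{\Sigma_{X/S}}=\mathcal{O}_{\C^n\times S}/J_{X/S}$ that is compatible with base change to the special fibre, so $\mathrm{Tor}^{\mathcal{O}_{S,s}}_{i\ge1}(\mathcal{O}_{\Sigma_{X/S}},\C)=H_i(C_\bullet\otimes_{\mathcal{O}_S}\C)=0$; by the local criterion of flatness $\mathcal{O}_{\Sigma_{X/S}}$ is $\mathcal{O}_{S,s}$-flat, and since its special fibre is $\mathcal{O}_{\mathrm{Sing}(D)}$, the singular locus of $(D,0)$ is deformed flatly.

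The main obstacle is the very first point: the exactness, over the special fibre, of the two-step complex built from a Saito matrix --- equivalently the theorem that a free divisor has Cohen--Macaulay singular locus of codimension~$2$ --- together with the bookkeeping ensuring that $M\bmod\mathfrak{m}_{S,s}$ really is the matrix $M_0$ of a Saito matrix of $D$ (this is precisely why one lifts a Saito \emph{basis} via~\eqref{relderadmisdef} and records $\chi_i(F)=a_iF$). Everything after that --- the acyclicity induction and the local flatness criterion --- is routine. The argument also shows a bit more: in a (linearly) admissible deformation the relative singular locus is Cohen--Macaulay of the expected codimension and its free resolution commutes with arbitrary base change.
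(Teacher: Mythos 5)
Your argument is correct and follows essentially the same route as the paper: both identify $\Der(-\log D)$ with the syzygy module of $(f,\nabla f)$ (via the Saito matrix together with the coefficients $a_i$ from $\chi_i(f)=a_if$) and use condition \eqref{relderadmisdef} to lift these relations to relations among $(F,\nabla_x F)$. The only difference is one of packaging: where the paper simply invokes the standard criterion that a deformation of $\mathcal{O}_{\C^n,0}/J_D$ is flat if and only if the relations lift (\cite{intdef}, Chapter I, Proposition 1.91), you reprove that criterion in this special case via the length-two free resolution, the Artinian induction, and the local criterion of flatness.
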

 \begin{proof} Let $f\in\mathcal{O}_{\C^n,0}$ be a defining equation for $(D,0)$ and let $\phi\colon(X,x)\to (S,s)$ be a admissible deformation of $(D,0)$. Any element of $\Der(-\log D)$ can be seen as a relation among $f,\partial f/\partial x_1,\dots,\partial f/\partial x_n$ and similarly, any element of $\Der(-\log X/S)$ can be seen as a relation among $F,\partial F /\partial x_1,\dots,\partial F/\partial x_n$, where $F\in\mathcal{O}_{\C^n\times S,(0,s)}$ is a defining equation for $(X,x)$. The requirement \eqref{relderadmisdef} of Definition \ref{defdeform} implies then that any relation among $f,\partial f/\partial x_1,\dots,\partial f/\partial x_n$ lifts to a relation among $F,\partial F /\partial x_1,\dots,\partial F/\partial x_n$ and this is equivalent to the deformation of the singular locus of $(D,0)$ being flat. See \cite{intdef}, Chapter I, Proposition 1.91. 
 \end{proof}
  \begin{Proposition}\label{freerelderlog} In the situation of Definition \ref{defdeform}, requirement \eqref{relderadmisdef} implies that $\Der(-\log X/S)$ is a locally free $\mathcal{O}_{\C^n \times S,(0,s)}$-module of rank n.
 \end{Proposition}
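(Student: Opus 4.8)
The strategy is to realize $\Der(-\log X/S)$ as the middle of a Jacobian-type four term exact sequence over $\mathcal O:=\mathcal O_{\C^n\times S,(0,s)}$, to deduce from Proposition \ref{flatdefsingloc} that $\Der(-\log X/S)$ is flat over $R:=\mathcal O_{S,s}$, and then to combine this flatness with the admissibility condition \eqref{relderadmisdef} in a flatness-and-Nakayama argument to obtain local freeness of rank $n$.

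First I would fix a defining equation $F\in\mathcal O$ of $(X,x)$, reducing to the defining equation $f$ of $(D,0)$ modulo $\mathfrak m_{S,s}$; recall that $\mathcal O$ is flat over $R$ with $\mathcal O/\mathfrak m_{S,s}\mathcal O=\mathcal O_{\C^n,0}$ and that $\Der_{\C^n\times S/S}=\bigoplus_{i=1}^n\mathcal O\,\partial/\partial x_i$ is free of rank $n$. The $\mathcal O$-linear map $\mu\colon\Der_{\C^n\times S/S}\to\mathcal O_X$, $\delta\mapsto\delta(F)\bmod F$, has kernel $\Der(-\log X/S)$ by definition, image the relative Jacobian ideal of $X$ inside $\mathcal O_X$, and cokernel $\mathcal O_\Sigma:=\mathcal O/(F,\partial F/\partial x_1,\dots,\partial F/\partial x_n)$, the local ring of the relative singular locus of $\phi$; this yields
\[
0\longrightarrow\Der(-\log X/S)\longrightarrow\Der_{\C^n\times S/S}\xrightarrow{\ \mu\ }\mathcal O_X\longrightarrow\mathcal O_\Sigma\longrightarrow 0 .
\]
Here $\mathcal O_X$ is $R$-flat because $\phi$ is flat, $\mathcal O_\Sigma$ is $R$-flat because by Proposition \ref{flatdefsingloc} the singular locus of $(D,0)$ is deformed flatly, and $\Der_{\C^n\times S/S}$ is free, hence $R$-flat. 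Splitting this sequence at the image of $\mu$ and chasing the long exact $\operatorname{Tor}^R_\bullet(-,k)$ sequences (with $k$ the residue field of $R$), the flatness of $\mathcal O_X$ and $\mathcal O_\Sigma$ first forces the image of $\mu$ to be $R$-flat, and then the flatness of $\Der_{\C^n\times S/S}$ forces $\Der(-\log X/S)$ to be $R$-flat; for the vanishing of $\operatorname{Tor}_1^R$ this is the local criterion of flatness.

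Then I would set $M:=\Der(-\log X/S)$: it is coherent, hence finitely generated over the Noetherian local ring $\mathcal O$, it is $R$-flat, and by \eqref{relderadmisdef} together with the freeness of $D$ one has $M/\mathfrak m_{S,s}M=\Der(-\log D)\cong\mathcal O_{\C^n,0}^n$. Lifting an $\mathcal O_{\C^n,0}$-basis of $M/\mathfrak m_{S,s}M$ to elements $\delta_1,\dots,\delta_n\in M$ and applying Nakayama (note $\mathfrak m_{S,s}\mathcal O\subseteq\mathfrak m_{\mathcal O}$) produces a surjection $\pi\colon\mathcal O^n\twoheadrightarrow M$ with finitely generated kernel $K$. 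Since $\mathcal O$ and $M$ are both $R$-flat, base-changing a free $R$-resolution of $k$ along $R\to\mathcal O$ identifies $\operatorname{Tor}_1^{\mathcal O}(M,\mathcal O/\mathfrak m_{S,s}\mathcal O)$ with $\operatorname{Tor}_1^R(M,k)=0$; tensoring $0\to K\to\mathcal O^n\to M\to 0$ with $\mathcal O/\mathfrak m_{S,s}\mathcal O$ then gives an injection $K/\mathfrak m_{S,s}K\hookrightarrow\mathcal O_{\C^n,0}^n$ whose image lies in the kernel of the isomorphism $\bar\pi\colon\mathcal O_{\C^n,0}^n\xrightarrow{\ \sim\ }\Der(-\log D)$ induced by $\pi$. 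Hence $K/\mathfrak m_{S,s}K=0$, so $K=0$ by Nakayama, and $M\cong\mathcal O^n$ is free of rank $n$.

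I expect the main obstacle to be the flatness of $\Der(-\log X/S)$ over $\mathcal O_{S,s}$: for an arbitrary flat deformation of the hypersurface $D$ one controls only $\mathcal O_X$, and it is precisely the admissibility requirement — through Proposition \ref{flatdefsingloc} — that makes $\mathcal O_\Sigma$, and therefore $\Der(-\log X/S)$, flat over the base; once this is secured, the passage from flatness over the base with free special fibre to freeness over $\mathcal O$ is routine.
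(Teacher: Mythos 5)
Your proof is correct and follows essentially the same route as the paper's: both identify $\Der(-\log X/S)$ with the syzygies of the relative Jacobian ideal of $F$ and derive everything from Proposition \ref{flatdefsingloc}, i.e., the $\mathcal{O}_{S,s}$-flatness of $\mathcal{O}_{\C^n\times S,(0,s)}/(F,\partial F/\partial x_1,\dots,\partial F/\partial x_n)$. The only difference is that the paper invokes the fact that a free resolution of $\mathcal{O}_{\C^n,0}/I_0$ lifts to one of $\mathcal{O}_{\C^n\times S,(0,s)}/I$, whereas you unpack that lemma into an explicit Tor-chase plus the flat-with-free-special-fibre Nakayama argument; the mathematical content is the same.
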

 \begin{proof} By Proposition \ref{flatdefsingloc}, the singular locus of $(D,0)$ is deformed flatly and so $\mathcal{O}_{\C^n\times S,(0,s)}/I$ is a flat $\mathcal{O}_{S,s}$-module and represents a deformation of $\mathcal{O}_{\C^n,0}/I_0$, where $I=(F,\partial F /\partial x_1,\dots,\partial F/\partial x_n)$ and $I_0=(f,\partial f/\partial x_1,\dots,\partial f/\partial x_n)$. Hence, a free resolution of $\mathcal{O}_{\C^n,0}/I_0$ lifts to a free resolution of $\mathcal{O}_{\C^n\times S,(0,s)}/I$. Because $(D,0)$ is free, then a free resolution of $\mathcal{O}_{\C^n\times S,(0,s)}/I$ looks like
 $$\xymatrix{0\ar[r]& \mathcal{O}_{\C^n\times S,(0,s)}^n\ar[r]&\mathcal{O}_{\C^n\times S,(0,s)}^{n+1}\ar[rrr]^-{(F,\partial F /\partial x_1,\dots,\partial F/\partial x_n)}&&&\mathcal{O}_{\C^n\times S,(0,s)}\ar[r]&\mathcal{O}_{\C^n\times S,(0,s)}/I\ar[r] &0} $$
 But as explained in Proposition \ref{flatdefsingloc}, we can identify $\Der(-\log X/S)$ with the syzygy module of $(F,\partial F /\partial x_1,\dots,\partial F/\partial x_n)$, and hence, it is locally free of rank $n$.
 \end{proof}
 \begin{Remark} In our theory, we require more than only that each fiber is a free divisor. In fact, let $(D,0)\subset(\C^n,0)$ be a singular free divisor with a quasi-homogeneous equation $f$. Then we can consider $(X,0)=(V(f-t),0)\subset (\C^n\times \C,0)$ and $\phi$ the projection on $(\C,0)$. In this case each fiber is a free divisor but this is not an admissible deformation of $(D,0)$.  
 \end{Remark}
 \begin{proof} Because $f$ is quasi-homogeneous, we can take $\chi,\sigma_1,\dots,\sigma_{n-1}$ as a basis of $\Der(-\log D)$, where $\chi=\sum_{i=1}^n\alpha_ix_i\partial /\partial x_i$ with $\alpha_1,\dots,\alpha_n\in \C$ is the Euler vector field and $\sigma_1,\dots,\sigma_{n-1}$ annihilate $f$. Hence $\chi(f)=\sum_{i=1}^n\alpha_ix_i\partial f/\partial x_i= f$. Notice that because $(X,0)$ is non-singular, it is a free divisor in $(\C^n\times\C,0)$ and so we can take as Saito matrix for $(X,0)$, the matrix
 \begin{equation*} A=
\begin{bmatrix}
		1&0&\cdots&0&0\\
		0&1&\cdots&0&0\\
		0&0&\cdots&0&0\\
		\vdots&\vdots& &\vdots& \vdots\\
		0&0&\cdots&1&0\\
		\partial f/\partial x_1&\partial f/\partial x_2&\cdots&\partial f/\partial x_n&f-t\\
\end{bmatrix}
\end{equation*}
Let $\lambda_i$ be the vector field represented by the $i$-th column of $A$. Consider now the vector fields $\sigma_i^*=\sigma_i$ seen as a vector field in $\C^n\times\C$ and $\tau_i=t\lambda_i+\partial f/\partial x_i\lambda_{n+1}-\partial f/\partial x_i\sum_{j=1}^n\alpha_jx_j\lambda_j$. Clearly, $\sigma_i^*(f-t)=\sigma_i(f)=0$ and so $\sigma_i^*\in\Der(-\log X/\C)$. Similarly, $\tau_i\in\Der(-\log X/\C)$ because $\tau_i\in\Der(-\log X)$ and its coefficient of $\partial/\partial t$ is equal to $t\partial f/\partial x_i +\partial f/\partial x_i(f-t)-\partial f/\partial x_i\sum_{j=1}^n\alpha_jx_j\partial f/\partial x_j=\partial f/\partial x_i(f-\chi(f))=0$. This implies that we have an inclusion $\langle\sigma_1^*,\dots,\sigma_{n-1}^*,\tau_1,\dots,\tau_n \rangle\subset\Der(-\log X/\C)$. However, because $\sigma_1,\dots,\sigma_{n-1}$ are the generators of $\Ann(f):=\{\delta\in\Der(-\log D)~|~\delta(f)=0\}$, then any element of $\Der(-\log X/\C)$ that is a linear combination of $\lambda_1,\dots,\lambda_{n}$ is a linear combinations of $\sigma_1^*,\dots,\sigma_{n-1}^*$. Consider now an element of $\Der(-\log X/\C)$ that can be written as a linear combination of the $\lambda_i$ involving $\lambda_{n+1}$. Because it is independent of $\partial/\partial t$, then the coefficient of $\lambda_{n+1}$ is forced to be in the Jacobian ideal of $f$. Because $t$ appear only in $\lambda_{n+1}$, this implies that, modulo the $\sigma^*_i$, it is a linear combination of $\tau_1,\dots,\tau_n$. Hence $\sigma_1^*,\dots,\sigma_{n-1}^*,\tau_1,\dots,\tau_n$ generate $\Der(-\log X/\C)$. 

Because $f$ is singular, $\partial f/\partial x_i\in(x_1,\dots,x_n)$ for all $i=1,\dots,n$ and so each $\tau_i$ has weight bigger than zero, i.e. $\deg(\partial f/\partial x_i\alpha_jx_j)-\deg(x_j)>0$. This tells us that the Euler vector field $\chi\notin \Der(-\log X/\C)/\mathfrak{m}_{\C,0}\Der(-\log X/\C)$ because $\chi$ has weight zero and is not a linear combination of $\sigma_1,\dots,\sigma_{n-1}$.
 \end{proof}
 \begin{Remark} If $f$ is non-singular, then the deformation defined in the previous Remark is an admissible deformation.
 \end{Remark}
 \begin{proof} We can suppose $f=x_1$ and we can take as Saito matrix
 \begin{equation*} 
\begin{bmatrix}
		x_1&0&0&\cdots&0\\
		0&1&0&\cdots&0\\
		0&0&1&\cdots&0\\
		\vdots&\vdots&\vdots& &\vdots\\
		0&0&0&\cdots&1\\
\end{bmatrix}
\end{equation*}
By a similar argument as the proof of the previous Remark, $\Der(-\log X/\C)$ is generated by the columns of the matrix
\begin{equation*} 
\begin{bmatrix}
		x_1-t&0&0&\cdots&0\\
		0&1&0&\cdots&0\\
		0&0&1&\cdots&0\\
		\vdots&\vdots&\vdots& &\vdots\\
		0&0&0&\cdots&1\\
		0&0&0&\cdots&0\\
\end{bmatrix}
\end{equation*}
and hence the requirement \eqref{relderadmisdef} of the Definition \ref{defdeform} is fulfilled.
 \end{proof}
 \begin{Remark}\label{trivialadmisdefgerm} Let $(i,\phi)\colon(D,0)\hookrightarrow (X,x)\to (S,s)$ be a (linearly) admissible deformation. Then it is a trivial  (linearly) admissible deformation if and only if it is trivial as deformation of $(D,0)$ as complex space germ.
 \end{Remark}
 \begin{Definition} The complex space $T_\epsilon$ consists of one point with local ring $\C[\epsilon]=\C + \epsilon \cdot \C, \epsilon^2=0$, that is, $\C[\epsilon]=\C[t]/(t^2)$, where t is an indeterminate. Thus $T_\epsilon = \Spec(\C[t]/(t^2))$.
 \end{Definition}
 \begin{Definition} An \emph{infinitesimal (linearly) admissible deformation} of a germ of a (linear) free divisor $(D,0) \subset (\C^n,0)$ is a (linearly) admissible deformation of $(D,0)$ over $T_\epsilon$.
 \end{Definition}
 \begin{Definition} Let $(D,0)\subset  (\C^n,0)$ be a germ of a free divisor. Then $\mathcal{FT}^1(D):=\mathbf{FD}_D(\C[t]/(t^2))$. Similarly if $(D,0)$ is linear, then $\mathcal{LFT}^1(D):=\mathbf{LFD}_D(\C[t]/(t^2))$.
\end{Definition}
 \begin{Proposition}\label{cartdiagram} \emph{(\cite{intdef}, Chapter II, Proposition 1.5)} Consider a commutative diagram of complex space germs
 \begin{equation*}
\xymatrix{(X_0,x) \ar[d]_{f_0}  \ar@{^{(}->}[r] &(X,x) \ar[d]^f \\
 (S_0,s) \ar@{^{(}->}[r] &(S,s)} \end{equation*}
 where the horizontal maps are closed embeddings. Assume that $f_0$ factors as
  \begin{equation*} \xymatrix{(X_0,x) \ar@{^{(}->}[r]^-{i_0} &(\C^n,0)\times (S_0,s) \ar[r]^-{p_0} &(S_0,s)} \end{equation*}
  with $i_0$ a closed embedding and $p_0$ the second projection. Then there exists a commutative diagram 
  \begin{equation*}
\xymatrix{(X_0,x) \ar@/_4pc/[dd]_{f_0} \ar@{^{(}->}[d]_{i_0}  \ar@{^{(}->}[r] &(X,x)\ar@/^4pc/[dd]^f \ar@{^{(}->}[d]^i \\
 (\C^n,0)\times (S_0,s) \ar[d]_{p_0} \ar@{^{(}->}[r] &(\C^n,0)\times (S,s) \ar[d]^p \\
 (S_0,s)\ar@{^{(}->}[r] &(S,s)} \end{equation*}
 with $i$ a closed embedding and $p$ the second projection. That is, the embedding of $f_0$ over $(S_0,s)$ extends to an embedding of $f$ over $(S,s)$.
 \end{Proposition}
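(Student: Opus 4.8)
The plan is to reformulate the statement in analytic local algebra and prove it by lifting generators. Recall the dictionary: a morphism of germs of complex spaces corresponds to a local homomorphism of analytic local rings in the opposite direction, it is a closed embedding exactly when that homomorphism is surjective, and $\mathcal{O}_{(\C^n,0)\times(T,t)}\cong\mathcal{O}_{T,t}\{x_1,\dots,x_n\}$ is the free analytic $\mathcal{O}_{T,t}$-algebra on $x_1,\dots,x_n$. Thus a closed embedding $(Z,z)\hookrightarrow(\C^n,0)\times(T,t)$ over $(T,t)$ sending $z$ to $(0,t)$ is the same datum as $n$ elements of $\mathfrak{m}_{Z,z}$ (the pullbacks of $x_1,\dots,x_n$) whose associated $\mathcal{O}_{T,t}$-algebra homomorphism to $\mathcal{O}_{Z,z}$ is onto. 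Throughout, $X_0$ is the restriction $f^{-1}(S_0)$, so that the ideal of $X_0$ in $X$ is $\mathfrak{a}\mathcal{O}_{X,x}$, where $\mathfrak{a}:=\ker(\mathcal{O}_{S,s}\to\mathcal{O}_{S_0,s})\subseteq\mathfrak{m}_{S,s}$.

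First I would take the $n$ elements $g_1,\dots,g_n\in\mathfrak{m}_{X_0,x}$ corresponding to $i_0$ under this dictionary. Since $(X_0,x)\hookrightarrow(X,x)$ is a closed embedding, $\mathcal{O}_{X,x}\to\mathcal{O}_{X_0,x}$ is onto, so I may choose lifts $\tilde{g}_1,\dots,\tilde{g}_n\in\mathfrak{m}_{X,x}$. Together with $f^{\#}\colon\mathcal{O}_{S,s}\to\mathcal{O}_{X,x}$ these determine a unique $\mathcal{O}_{S,s}$-algebra homomorphism $i^{\#}\colon\mathcal{O}_{S,s}\{x_1,\dots,x_n\}\to\mathcal{O}_{X,x}$, $x_j\mapsto\tilde g_j$, i.e. a morphism $i\colon(X,x)\to(\C^n,0)\times(S,s)$ with $p\circ i=f$ for $p$ the second projection. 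By construction $i^{\#}$ reduces modulo $\mathfrak{a}\mathcal{O}_{X,x}$ to $i_0^{\#}$, so $i$ restricts to $i_0$ on $X_0$; combining this with the hypothesis $f_0=p_0\circ i_0$ and the given commutative square, one checks that all the required squares commute (the top one with the horizontal closed embeddings, the middle one with $p_0,p$ the projections, the outer one being the given square), with $p$ the second projection.

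It remains to show that $i$ is itself a closed embedding, i.e. that $i^{\#}$ is surjective, and this is the only non-formal point. Since modulo $\mathfrak{a}\mathcal{O}_{X,x}$ the map $i^{\#}$ coincides with the surjection $i_0^{\#}$, we have $\operatorname{im}(i^{\#})+\mathfrak{a}\mathcal{O}_{X,x}=\mathcal{O}_{X,x}$; because $\operatorname{im}(i^{\#})$ is a subring containing $f^{\#}(\mathfrak{a})$, iterating this identity gives $\operatorname{im}(i^{\#})+\mathfrak{a}^{k}\mathcal{O}_{X,x}=\mathcal{O}_{X,x}$ for every $k\ge 1$, and $\mathfrak{a}\mathcal{O}_{X,x}\subseteq\mathfrak{m}_{X,x}$ since $\mathfrak{a}\subseteq\mathfrak{m}_{S,s}$. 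Surjectivity of $i^{\#}$ then follows by Nakayama's lemma: in every application of this statement in the present paper $\mathcal{O}_{S,s}$ is Artinian, so $\mathfrak{a}$ is nilpotent and it suffices to take $k$ large; for an arbitrary base germ one argues instead with the $\mathfrak{m}$-adic filtration, using that the factorization through $(\C^n,0)$ makes the special fibre $X_s$ a closed subgerm of $(\C^n,0)$ and hence $\mathcal{O}_{X_s,x}$ finite over $\C\{x_1,\dots,x_n\}$, as carried out in \cite{intdef}, Chapter II, Proposition 1.5. The main obstacle is thus entirely concentrated in this last Nakayama-type surjectivity step; the rest is the formal bookkeeping of lifting generators and chasing the diagram.
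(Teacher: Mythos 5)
The paper offers no proof of this Proposition: it is quoted verbatim from \cite{intdef} (Chapter II, Proposition 1.5), so there is no in-paper argument to compare yours against. Your proof is essentially the standard one from that source --- translate into analytic local algebras, lift the $n$ coordinate functions through the surjection $\mathcal{O}_{X,x}\to\mathcal{O}_{X_0,x}$, and then verify that the resulting $\mathcal{O}_{S,s}\{x_1,\dots,x_n\}\to\mathcal{O}_{X,x}$ is onto --- and the formal part is carried out correctly.

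Two remarks. First, you were right to insert the hypothesis that $(X_0,x)=f^{-1}(S_0,s)$, i.e.\ that $\ker(\mathcal{O}_{X,x}\to\mathcal{O}_{X_0,x})=\mathfrak{a}\mathcal{O}_{X,x}$: as literally transcribed in the paper the statement is false without it (take $S_0=S$ a reduced point, $X_0$ a reduced point inside a fat point $X$ of embedding dimension $>n$; then $i_0$ exists but $X$ does not embed into $(\C^n,0)$). This Cartesian condition is part of the setting in the cited source and holds in every application in the paper, so your reading is the intended one, but it is genuinely needed for the iteration $\operatorname{im}(i^{\#})+\mathfrak{a}^k\mathcal{O}_{X,x}=\mathcal{O}_{X,x}$ to get started. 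Second, the only step you do not prove from scratch is the passage from $\operatorname{im}(i^{\#})+\mathfrak{a}^k\mathcal{O}_{X,x}=\mathcal{O}_{X,x}$ (all $k$) to actual surjectivity over a non-Artinian base; you correctly identify that this is where the Weierstrass finiteness theorem enters (finite-dimensionality of $\mathcal{O}_{X,x}/\mathfrak{m}\mathcal{O}_{X,x}$, hence finiteness of $\mathcal{O}_{X,x}$ over $\mathcal{O}_{S,s}\{x\}$, hence Nakayama), and you defer the details to the reference. For the functors $\mathbf{FD}_D$ and $\mathbf{LFD}_D$, which live on $\mathbf{Art}$, and for the deformations over $T_\epsilon$, your nilpotence argument is already a complete and self-contained proof; only the Corollary stated over an arbitrary complex space germ $(S,s)$ requires the full analytic version. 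So: correct, same route as the cited source, with one honestly flagged black box.
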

 \begin{Corollary} Any (linearly) admissible deformation of $(D,0)=(V(f),0) \subset (\C^n,0)$ over a complex space germ $(S,s)$ is of the form $(X,(0,s))=(V(F),(0,s))\subset (\C^n \times S,(0,s))$, for some unfolding $F$ of $f$ with $\phi$ just the projection on $(S,s)$.
 \end{Corollary}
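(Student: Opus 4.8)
The plan is to first normalise the embedding of the total space so that $\phi$ literally becomes the second projection, and then to exploit flatness to see that the total space is cut out by a single function which restricts to $f$ on the special fibre. Freeness of $D$ will play no role here; all that is used is that $(D,0)$ is a hypersurface and that $\phi$ is flat with special fibre $(D,0)$.

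First I would apply Proposition \ref{cartdiagram} with $(X_0,x)=(D,0)$ and $(S_0,s)=\{\ast\}$. The structure morphism $(D,0)\to\{\ast\}$ factors as the closed embedding $i_0\colon(D,0)\hookrightarrow(\C^n,0)=(\C^n,0)\times\{\ast\}$ followed by the (trivial) projection, and the defining closed embedding $(D,0)\hookrightarrow(X,x)$ onto the special fibre fits into the commutative square required by the proposition. Hence there is a closed embedding $(X,x)\hookrightarrow(\C^n,0)\times(S,s)$, compatible with $i_0$, through which $\phi$ factors as the second projection $p$. Replacing the given embedding $(X,x)\subset(\C^n\times S,(0,s))$ by this one, I may assume from now on that $\phi$ is the restriction to $(X,x)$ of the projection $(\C^n\times S,(0,s))\to(S,s)$.

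Next I would show $(X,x)$ is a hypersurface. Write $\mathcal{O}_{X,x}=\mathcal{O}_{\C^n\times S,(0,s)}/I$ for an ideal $I$, which is finitely generated since the analytic local ring $\mathcal{O}_{\C^n\times S,(0,s)}$ is Noetherian. Flatness of $\phi$ says $\mathcal{O}_{X,x}$ is $\mathcal{O}_{S,s}$-flat, and the identification of the special fibre with $(D,0)$ gives $\mathcal{O}_{X,x}\otimes_{\mathcal{O}_{S,s}}k(s)=\mathcal{O}_{\C^n,0}/(f)$, where $k(s)=\mathcal{O}_{S,s}/\mathfrak{m}_{S,s}$. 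Tensoring $0\to I\to\mathcal{O}_{\C^n\times S,(0,s)}\to\mathcal{O}_{X,x}\to 0$ with $k(s)$ and using that the first $\mathrm{Tor}$ of $\mathcal{O}_{X,x}$ over $\mathcal{O}_{S,s}$ vanishes by flatness, I get that $I/\mathfrak{m}_{S,s}I$ injects into $\mathcal{O}_{\C^n\times S,(0,s)}\otimes_{\mathcal{O}_{S,s}}k(s)=\mathcal{O}_{\C^n,0}$ with image the kernel of $\mathcal{O}_{\C^n,0}\to\mathcal{O}_{\C^n,0}/(f)$, namely the principal ideal $(f)$. Since $f$ is a non-zero-divisor, $(f)$ is a free, in particular cyclic, $\mathcal{O}_{\C^n,0}$-module, so $I/\mathfrak{m}_{S,s}I$ is cyclic over $\mathcal{O}_{\C^n,0}$, hence $I/\mathfrak{m}_{\C^n\times S,(0,s)}I$ has length at most one; Nakayama's lemma then forces $I$ to be principal. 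Choosing the generator $F$ to be a lift of the class in $I/\mathfrak{m}_{S,s}I$ corresponding to $f$, one gets $F\equiv f \bmod \mathfrak{m}_{S,s}\mathcal{O}_{\C^n\times S,(0,s)}$, i.e. $F$ is an unfolding of $f$, and $(X,x)=(V(F),(0,s))$ with $\phi$ the projection. In the linearly admissible case nothing further is needed, since the assertion concerns only the shape of $(X,\phi)$ and not the chosen linear basis of $\Der(-\log X/S)$.

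The only mildly delicate point is the identification $I/\mathfrak{m}_{S,s}I\cong(f)$ and the resulting principality of $I$; the rest is a direct application of Proposition \ref{cartdiagram} together with Nakayama's lemma, and the flatness bookkeeping is entirely parallel to that used in the proof of Theorem \ref{isdeffunct} (compare \cite{harts}, Exercise 4.2).
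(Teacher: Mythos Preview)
Your argument is correct. The paper states this corollary without proof, presenting it as an immediate consequence of Proposition \ref{cartdiagram}; your write-up supplies precisely the standard details behind that ``immediate'': first re-embedding via the proposition so that $\phi$ is the projection, then using flatness plus Nakayama to see that the ideal of $(X,x)$ in $\mathcal{O}_{\C^n\times S,(0,s)}$ is principal with a generator reducing to $f$. This is exactly the argument one finds in \cite{intdef} around Chapter II, Proposition 1.5, which is where the paper's Proposition \ref{cartdiagram} is taken from, so you are not taking a different route but rather unpacking the reference the paper relies on.

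One small stylistic remark: when you apply Proposition \ref{cartdiagram} you should also use that the given embedding $(X,x)\subset(\C^n\times S,(0,s))$ from Definition \ref{defdeform} is not assumed compatible with the projection, so you are indeed discarding it and producing a new one; you say ``replacing the given embedding'', which is fine, but it is worth noting that this is the whole content of the step. The flatness computation and the Nakayama step are clean.
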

  \begin{Remark}\label{forminfdef} Any infinitesimal (linearly) admissible deformation of $(D,0)=(V(f),0) \subset (\C^n,0)$ is of the form $(X,0)=(V(f+ \epsilon \cdot f'),0) \subset (\C^n \times T_\epsilon,0)$, for some $f' \in  \mathcal{O}_{\C^n,0}$, where $\phi$ is just the projection on $T_\epsilon$.
 \end{Remark}
 By Remark \ref{trivialadmisdefgerm} and Chapter II, 1.4 from \cite{intdef}, we have the following:
 \begin{Remark}\label{trivial} An infinitesimal (linearly) admissible deformation $(X,0)=(V(f+ \epsilon \cdot f'),0) \to T_\epsilon$ is trivial if and only if there is an isomorphism $$\mathcal{O}_{\C^n\times T_\epsilon,0}/(f) \cong \mathcal{O}_{\C^n\times T_\epsilon,0}/(f+ \epsilon \cdot f')$$ which is the identity modulo $\epsilon$ and which is compatible with the inclusion of $\mathcal{O}_{T_\epsilon}$ in $\mathcal{O}_{\C^n\times T_\epsilon,0}$. Such an  isomorphism is induced by an automorphism $\varphi$ of $\mathcal{O}_{\C^n\times T_\epsilon,0}$, mapping $x_j \mapsto x_j + \epsilon \sigma_j(x)$ and $\epsilon \mapsto \epsilon$ such that $$(\varphi ^\ast f)=(f(x+\epsilon \cdot \sigma(x))) = (f+ \epsilon \cdot f'),$$ where $x=(x_1, \dots, x_n)$ and $\sigma=\sum_{j=1}^n\sigma_j\partial/\partial x_j$.
 \end{Remark}
 We now prove a relative Saito's Lemma in order to be able to characterise an (linearly) admissible deformation by logarithmic vector fields.
 \begin{Lemma}\label{detrelvect} Let $(S,s)$ be a complex space germ with an embedding $(S,s)\subset(\C^r,0)$ and let $t=(t_1, \dots, t_r)$ be coordinates on the ambient space $(\C^r,0)$. Let $(X,x)\subset (\C^n \times S,(0,s))$ be a (linearly) admissible deformation of a germ of a (linear) free divisor $(D,0) \subset (\C^n,0)$ and let $h_p = 0$ be a reduced equation for $(X,x)$, locally at $p=(x_0, t_0) \in (\C^n \times S,(0,s))$. Suppose $\delta'_i=\sum_{j=1}^n a_i^j(x,t)\partial / \partial x_j \in \Der_p(-\log X/S)$, $\forall ~ i=1, \dots, n$, then $\det(a_i^j)\in (h_p)\mathcal{O}_{\C^n \times S,p}$.
\end{Lemma}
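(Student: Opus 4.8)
The plan is to argue along the lines of the proof of the absolute Saito Lemma~\ref{saitolemma}: isolate $\det(a_i^j)$ by means of the adjugate matrix, and then divide out by the relative Jacobian of $X$, which is legitimate precisely because $h_p$ is a reduced equation.

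First I unwind the hypothesis. Since $\delta'_i\in\Der_p(-\log X/S)$ has no $\partial/\partial t_k$-components, the condition $\delta'_i\in\Der_p(-\log X)$ reads
\[
\delta'_i(h_p)=\sum_{j=1}^n a_i^j\,\frac{\partial h_p}{\partial x_j}=c_i\,h_p
\]
for some $c_i\in\mathcal{O}_{\C^n\times S,p}$; the derivatives $\partial h_p/\partial x_j$ make sense because $x_1,\dots,x_n$ are coordinates on the $\C^n$-factor. Setting $A=(a_i^j)$, $v=(\partial h_p/\partial x_1,\dots,\partial h_p/\partial x_n)^t$ and $c=(c_1,\dots,c_n)^t$, these $n$ identities become the single matrix equation $Av=h_p\,c$ over $\mathcal{O}_{\C^n\times S,p}$. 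Multiplying on the left by $\mathrm{adj}(A)$ and using $\mathrm{adj}(A)\,A=(\det A)\,\mathrm{Id}$ gives $(\det A)\,v=h_p\,\mathrm{adj}(A)\,c$, that is,
\[
(\det A)\,\frac{\partial h_p}{\partial x_j}\in(h_p)\,\mathcal{O}_{\C^n\times S,p}\qquad\text{for }j=1,\dots,n.
\]

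It remains to pass from this to $\det A\in(h_p)$. Consider the relative critical locus $C:=V\!\left(h_p,\partial h_p/\partial x_1,\dots,\partial h_p/\partial x_n\right)\subset X$; by Proposition~\ref{flatdefsingloc} it is a flat deformation of the singular locus of $(D,0)$, so over each fibre it has dimension $\le n-2$, whereas the fibres of $X\to S$ have dimension $n-1$. Hence $C$ contains no irreducible component of $X$, and, since $h_p$ is reduced, $\mathcal{O}_{X,p}$ has no embedded primes, so by prime avoidance the image $\overline{J}$ of the relative Jacobian ideal $(\partial h_p/\partial x_1,\dots,\partial h_p/\partial x_n)$ in $\mathcal{O}_{X,p}$ contains a nonzerodivisor. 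The displayed divisibilities say exactly that $\overline{\det A}\in\mathcal{O}_{X,p}$ is annihilated by $\overline{J}$, whence $\overline{\det A}=0$, i.e.\ $\det(a_i^j)\in(h_p)\mathcal{O}_{\C^n\times S,p}$. When the base $(S,s)$ is non-reduced (as happens for bases in $\mathbf{Art}$), one runs this argument over $S_{\mathrm{red}}$ and then bootstraps along the nilpotent filtration of $\mathcal{O}_{S,s}$ — equivalently, by induction on the length of a small extension — using the flatness of $\mathcal{O}_{X,p}$ over $\mathcal{O}_{S,s}$ to lift the relation, the base case being the reduced situation just treated.

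The heart of the matter is this last step. The adjugate manipulation is formal and identical to the absolute case; what makes the division by the Jacobian possible is that $h_p$ is reduced, and what keeps the relative Jacobian ideal free of zerodivisors in families is exactly the admissibility of the deformation through Proposition~\ref{flatdefsingloc}, i.e.\ the flatness of the deformation of the singular locus. Handling the non-reduced bases that occur in the deformation functor is the only genuinely technical point; everything else reduces to the classical argument.
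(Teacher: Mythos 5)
Your argument is correct, but it follows a genuinely different route from the paper's. The paper proves the lemma by a short geometric contradiction: if $\det(a_i^j)$ were nonzero at a point of $X$, the $n$ vector fields $\delta'_1,\dots,\delta'_n$ would be linearly independent there and, being tangent to the $(n-1)$-dimensional fibre $X_{t_0}$ through that point, would force the fibre to be $n$-dimensional, contradicting flatness; hence the determinant vanishes along $X$ and reducedness of $h_p$ upgrades pointwise vanishing to membership in $(h_p)$. You instead run the ``algebraic'' half of Saito's criterion: the adjugate identity gives $(\det A)\,\partial h_p/\partial x_j\in(h_p)$, and you divide by a nonzerodivisor of the relative Jacobian ideal in $\mathcal{O}_{X,p}$, whose existence you extract from Proposition \ref{flatdefsingloc} via a dimension count plus absence of embedded primes. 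Your version makes explicit exactly where reducedness and admissibility enter and transfers verbatim to the formal/algebraic category; the paper's version is shorter and avoids any commutative algebra beyond the Nullstellensatz (though, as written, it too only literally establishes vanishing at the single point $p$ and leaves the passage to ideal membership implicit). The one place where you are sketchier than you should be is the non-reduced base: the prime-avoidance step genuinely uses that $\mathcal{O}_{X,p}$ has no embedded primes, which fails for Artinian $S$, so the dévissage along a small extension $0\to(\epsilon)\to A'\to A\to 0$ is not optional. It does go through --- writing $\det M=h_pg+\epsilon u$ by the inductive hypothesis, using that $h_p$ is a nonzerodivisor (flatness plus $f$ a nonzerodivisor on the central fibre) to reduce the relation $\epsilon u\,\partial f/\partial x_j\in (h_p)$ to $u\,\partial f/\partial x_j\in(f)$ in $\mathcal{O}_{\C^n,x_0}$, and then applying the reduced case to $D$ --- but a sentence carrying out this reduction would be needed for a complete proof, since the Artinian bases are precisely the ones occurring in $\mathbf{FD}_D$.
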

\begin{proof} Suppose that $\det(a_i^j)$ does not vanish at $p$, hence it does not vanish in a small neighbourhood $U$ of $p$. This implies that $\delta'_1, \dots, \delta'_n$ are linearly independent in $U$. Consider now the fibre $X_{t_0}$. We have that $\widetilde{\delta'_i}=\sum_{j=1}^n a_i^j(x,t_0)\partial / \partial x_j \in \Der(-\log X_{t_0})$ and are linearly independent, but this implies that $X_{t_0}$ is $n$-dimensional, contradicting the fact that $(X,x)$ is a flat (linearly) admissible deformation of $(D,0)$, that is $(n-1)$-dimensional. 
\end{proof}
\begin{Proposition}\label{generalizsaitocrit1} Let $(S,s)$ be a complex space germ with an embedding $(S,s)\subset(\C^r,0)$ and let $t=(t_1, \dots, t_r)$ be coordinates on the ambient space $(\C^r,0)$. Let $(X,x)\subset (\C^n \times S,(0,s))$ be a (linearly) admissible deformation of a germ of a (linear) free divisor $(D,0) \subset (\C^n,0)$ and let $h_p = 0$ be a reduced equation for $(X,x)$, locally at $p=(x_0, t_0) \in (\C^n \times S,(0,s))$. Then there exist $\delta'_1,\dots,\delta'_n\in \Der_p(-\log X/S)$ with $\delta'_i=\sum_{j=1}^n a_i^j(x,t)\partial / \partial x_j$, such that  $\det(a_i^j)$ is a unit multiple of $h_p$.
\end{Proposition}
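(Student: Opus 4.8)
The plan is to choose a basis $\delta_1',\dots,\delta_n'$ of $\Der_p(-\log X/S)$ and to show that the determinant of its coefficient matrix is automatically a unit multiple of $h_p$. After shrinking to a representative on which $\Der(-\log X/S)$ is free of rank $n$ --- possible by Proposition \ref{freerelderlog} --- such a basis exists; write $\delta_i'=\sum_{j=1}^n a_i^j(x,t)\,\partial/\partial x_j$. By Lemma \ref{detrelvect} we have $\det(a_i^j)\in(h_p)\mathcal{O}_{\C^n\times S,p}$, say $\det(a_i^j)=g\cdot h_p$, so the whole statement reduces to proving that $g$ is a unit in $\mathcal{O}_{\C^n\times S,p}$.

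For this I would use the free resolution exhibited in the proof of Proposition \ref{freerelderlog}, localised at $p$. Put $\mathcal{O}=\mathcal{O}_{\C^n\times S,p}$, let $\psi$ be the row vector $(h_p,\partial h_p/\partial x_1,\dots,\partial h_p/\partial x_n)$ and $I$ the ideal it generates, and let $B$ be the $(n+1)\times n$ matrix whose columns are the vector fields $\delta_i'$ written as syzygies of $\psi$; then there is an exact sequence
\[0\longrightarrow\mathcal{O}^n\xrightarrow{B}\mathcal{O}^{n+1}\xrightarrow{\psi}\mathcal{O}\longrightarrow\mathcal{O}/I\longrightarrow 0,\]
and deleting from $B$ the row recording the $h_p$-components of the $\delta_i'$ recovers, up to sign, the matrix $(a_i^j)$. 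Now $h_p$ is a non-zerodivisor in $\mathcal{O}$ --- $(X,x)$ being a hypersurface $V(F)$ with $F$ an unfolding of the reduced equation $f$, flat over $(S,s)$ --- so $I$ has positive grade and the Hilbert--Burch theorem applies: there is a non-zerodivisor $\alpha\in\mathcal{O}$ with every component of $\psi$ equal to $\pm\alpha$ times the corresponding maximal minor of $B$. Reading this off for the component $h_p$ gives $h_p=\pm\alpha\det(a_i^j)=\pm\alpha g\,h_p$, and since $h_p$ is a non-zerodivisor, $\pm\alpha g=1$; hence $g$ (and $\alpha$) is a unit, and $\det(a_i^j)$ is a unit multiple of $h_p$, as claimed.

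The step I expect to be the real obstacle is exactly this passage from the relative to the absolute situation; if one prefers to avoid Hilbert--Burch it can be carried out geometrically instead. By Proposition \ref{flatdefsingloc} the sheaf $\mathcal{O}_{\C^n\times S}/I$ is $\mathcal{O}_S$-flat, so the resolution above specialises to a free resolution of the same shape for $\mathcal{O}_{\C^n,x_0}/J$, where $J$ is the corresponding ideal built from a reduced equation of the fibre $(X_{t_0},x_0)$; hence $X_{t_0}$ is a free divisor at $x_0$ and the restrictions of $\delta_1',\dots,\delta_n'$ form a basis of $\Der_{x_0}(-\log X_{t_0})$. By Proposition \ref{saitocrit}(ii) the determinant $\det(a_i^j(x,t_0))$ is then a reduced equation for $X_{t_0}$ at $x_0$, i.e.\ $g(x,t_0)$ is a unit in $\mathcal{O}_{\C^n,x_0}$, and since an element of a local ring is a unit precisely when it does not vanish at the point, $g$ is a unit in $\mathcal{O}$. (At the distinguished point $p=(0,s)$ one may shortcut the argument by feeding the admissibility condition \eqref{relderadmisdef} directly into Proposition \ref{saitocrit}(ii).) Whichever route one takes, everything hinges on the good behaviour of the free resolution of $I$ under specialisation, which is precisely what the flat deformation of the singular locus from Proposition \ref{flatdefsingloc} provides.
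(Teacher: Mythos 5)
Your proof is correct, and it shares its skeleton with the paper's: both choose a basis of $\Der_p(-\log X/S)$ (free of rank $n$ by Proposition \ref{freerelderlog} plus coherence), both invoke Lemma \ref{detrelvect} to write $\det(a_i^j)=g\,h_p$, and everything comes down to showing $g$ is a unit. Where you diverge is exactly at that last step. The paper argues geometrically on a representative $U$: off $X$ the relative logarithmic fields are all of $\Der_{\C^n\times S/S}$, at smooth points of $X$ one has the normal form $x_1\partial/\partial x_1,\partial/\partial x_2,\dots,\partial/\partial x_n$, so $g$ is non-vanishing outside $X_{sing}$; since $\{g=0\}$ would be a hypersurface but is contained in a set of codimension $>1$, it is empty. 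Your Hilbert--Burch argument replaces this stratified case analysis by a single algebraic identity: from the length-one free resolution of $\mathcal{O}/(h_p,\nabla h_p)$ (which is exactly what the proof of Proposition \ref{freerelderlog} supplies, and which survives localisation at nearby $p$), the component $h_p$ of $\psi$ is $\pm\alpha$ times the maximal minor $\det(a_i^j)$, whence $\pm\alpha g=1$ because $h_p$ is a non-zerodivisor (flatness of $V(F)$ over $S$ plus $f\neq 0$). This is cleaner in that it works uniformly at every point, needs no normal forms and no codimension estimate, and is insensitive to whether $S$ is reduced; its cost is importing Hilbert--Burch and the identification of $\Der_p(-\log X/S)$ with the syzygy module, both of which you justify adequately. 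One caution about your ``geometric'' alternative: for $p=(x_0,t_0)$ with $t_0\neq s$ you implicitly assume that $h_p(\cdot,t_0)$ is a reduced equation for the fibre and that the specialised syzygy module is literally $\Der_{x_0}(-\log X_{t_0})$; reducedness of nearby fibres is not automatic from reducedness of $h_p$, so that route would need an extra argument (or a restriction to the special fibre, as in your parenthetical shortcut). Since the Hilbert--Burch route is complete on its own, this does not affect the validity of the proof.
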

\begin{proof} By Proposition \ref{freerelderlog}, $\Der_p(-\log X/S)$ is a free $\mathcal{O}_{\C^n\times S,p}$-module of rank $n$. Since $\Der(-\log X/S)$ is coherent, there exists a neighbourhood $U$ of $p$ such that $\Der(-\log X/S)|_U$ is free. Let $\delta'_1,\dots,\delta'_n$ be a basis of $\Der(-\log X/S)|_U$ with $\delta'_i=\sum_{j=1}^n a_i^j(x,t)\partial / \partial x_j$. By Lemma \ref{detrelvect}, $\det(a_i^j)=g h_p$, where $g$ is a holomorphic function on $U$. Since $\partial/\partial x_1,\dots,\partial/\partial x_n$ is a basis for $p\in U\setminus X$, then $g$ does not vanish on $U\setminus X$. At a smooth point $p\in X$, we can suppose $X=V(x_1)$ and hence, we may choose as a basis of $\Der(-\log X/S)$ on $X_{reg}\cap U$ the vector fields $x_1\partial/\partial x_1,\dots,\partial/\partial x_n$. Thus $g$ does not vanish anywhere on $U\setminus(U\cap X_{sing})$, but because $\codim_U(U\cap X_{sing})>1$, then $g$ does not vanish anywhere on $U$ and so it is a unit.
\end{proof}
\begin{Lemma}\label{molteachcolumntrace} Let $R$ be a commutative ring, $A$ and $B$ be two $n\times n$ matrices and $a_1,\dots, a_n$ be the columns of $A$. Then
$$\sum_{i=1}^n\det[a_1,\dots,a_{i-1},Ba_i,a_{i+1},\dots,a_n]=\trace(B)\det(A).$$
\end{Lemma}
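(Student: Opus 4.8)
The plan is to prove the identity by a direct cofactor expansion, in a way that is valid over an arbitrary commutative ring $R$ and uses nothing beyond the Laplace expansion formulas and multilinearity of $\det$ in its columns. Write $A=(A_{jk})$, so that $a_i=(A_{1i},\dots,A_{ni})^t$, and for each pair $(j,i)$ let $C_{ji}$ be the $(j,i)$ signed cofactor of $A$, that is $(-1)^{i+j}$ times the determinant of the $(n-1)\times(n-1)$ submatrix of $A$ obtained by deleting row $j$ and column $i$. The observation driving everything is that $C_{ji}$ depends only on the entries of $A$ lying outside column $i$; hence, replacing the $i$-th column of $A$ by any vector $v\in R^n$ and expanding along that column gives $\det[a_1,\dots,a_{i-1},v,a_{i+1},\dots,a_n]=\sum_{j=1}^{n}v_j\,C_{ji}$, with the \emph{same} cofactors $C_{ji}$.

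Next I would take $v=Ba_i$, so that $v_j=(Ba_i)_j=\sum_{k=1}^{n}B_{jk}A_{ki}$, plug this into the formula above, sum over $i$, and interchange the (finite) summations to obtain
\[
\sum_{i=1}^{n}\det[a_1,\dots,Ba_i,\dots,a_n]=\sum_{j=1}^{n}\sum_{k=1}^{n}B_{jk}\Bigl(\sum_{i=1}^{n}A_{ki}\,C_{ji}\Bigr).
\]
The remaining step is the ``alien cofactor'' identity $\sum_{i=1}^{n}A_{ki}\,C_{ji}=\delta_{kj}\det A$: for $k=j$ this is precisely the Laplace expansion of $\det A$ along row $j$, and for $k\neq j$ the left-hand side is the Laplace expansion along row $j$ of the matrix obtained from $A$ by overwriting row $j$ with row $k$, a matrix with two equal rows and hence determinant $0$. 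Substituting this in collapses the inner sum and leaves $\sum_{j=1}^{n}\sum_{k=1}^{n}B_{jk}\delta_{kj}\det A=\bigl(\sum_{j=1}^{n}B_{jj}\bigr)\det A=\trace(B)\det A$, as desired.

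The only thing to be careful about is the index bookkeeping and quoting the alien-cofactor identity in the correct (row- versus column-) form; there is no genuine difficulty, and in particular no need to assume $A$ invertible or $R$ a field. For a reader who prefers a conceptual derivation, one can alternatively remark that both sides are polynomials with integer coefficients in the entries of $A$ and $B$, so it is enough to check the identity for $A\in\GL_n(\C)$; writing $Ba_i=\sum_{k=1}^{n}\left(A^{-1}BA\right)_{ki}a_k$ and expanding each determinant by multilinearity, every term with a repeated column vanishes, leaving $\sum_{i=1}^{n}\left(A^{-1}BA\right)_{ii}\det A=\trace(A^{-1}BA)\det A=\trace(B)\det A$. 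Either route gives the statement.
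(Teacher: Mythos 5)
Your proof is correct, and it takes a genuinely different route from the paper's. The paper proves the identity by differentiating the determinant: it uses the formula $d_A\det(C)=\sum_{i=1}^n\det[a_1,\dots,a_{i-1},c_i,a_{i+1},\dots,a_n]$ for the tangent map of $\det$ at $A$, observes that the left-hand side of the lemma is $d_A\det$ evaluated at $BA$, and then computes $\frac{d}{dt}\det(A+tBA)\lvert_{t=0}=\det(A)\frac{d}{dt}\det(I+tB)\lvert_{t=0}=\det(A)\trace(B)$ via multiplicativity of $\det$. Your first argument instead performs a Laplace expansion of each summand along its modified column, interchanges the finite sums, and invokes the alien-cofactor identity $\sum_{i}A_{ki}C_{ji}=\delta_{kj}\det A$. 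What your route buys is that it is literally valid over an arbitrary commutative ring $R$ as stated in the lemma, with no need to interpret $\frac{d}{dt}$ or the ``tangent map'' formally (the paper's calculus-flavoured proof is of course salvageable over any $R$ by reading the derivative as a formal derivative of polynomials in $t$, or by the polynomial-identity specialization you mention, but it is written in analytic language); what the paper's route buys is brevity and a conceptual explanation of why the trace appears, namely as $d_I\det$. Your second, alternative argument (reduce to $A\in\GL_n(\C)$ by density of invertible matrices and integrality of coefficients, write $Ba_i=\sum_k(A^{-1}BA)_{ki}a_k$, and expand by multilinearity) is closer in spirit to the paper's but still distinct, replacing differentiation by conjugation and the invariance of the trace. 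All steps in both of your arguments check out; in particular the index bookkeeping in the cofactor computation is consistent ($\sum_i A_{ki}C_{ji}$ is the $(k,j)$ entry of $A\cdot\mathrm{adj}(A)$), and no invertibility or field hypothesis is used in the main argument.
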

\begin{proof} It is know that if we consider a $n\times n$ matrix $C$ with columns $c_1,\dots,c_n$, then
$$d_A\det(C)=\sum_{i=1}^n\det[a_1,\dots,a_{i-1},c_i,a_{i+1},\dots,a_n],$$
where $d$ is the tangent map.
Then we have the following equalities
$$\sum_{i=1}^n\det[a_1,\dots,a_{i-1},Ba_i,a_{i+1},\dots,a_n]=d_A\det(BA)=\frac{d}{dt}(\det(A+tBA))\lvert_{t=0}=$$
$$\det(A)\frac{d}{dt}(\det(I+tB))\lvert_{t=0}=\det(A)d_I\det(B)=\det(A)\trace(B).$$
\end{proof}

\begin{Lemma}\label{relatversaitlem} Let $(S,s)$ be a complex space germ  with an embedding $(S,s)\subset(\C^r,0)$ and let $t=(t_1, \dots, t_r)$ be coordinates on the ambient space $(\C^r,0)$. Consider $(D,0)=(V(f),0) \subset (\C^n,0)$ a germ of a (linear) free divisor such that $\Der_{x_0}(-\log D)$ is generated by $\delta_1, \dots, \delta_n$. Let $\delta'_i=\sum_{j=1}^n a_i^j(x,t)\partial / \partial x_j$, $i=1, \dots, n$, be a system of holomorphic vector fields at $(x_0, s)\in (\C^n \times S,(0,s))$ such that
\begin{enumerate}
	\item\label{1r} $\delta'_i|_{\C^n,x_0}=\delta_i$ for all $i=1, \dots, n$;
	\item\label{2r} $[\delta'_i,\delta'_j] \in \sum_{k=1}^n \mathcal{O}_{\C^n \times S, (x_0, s)} \delta'_k$ for $i,j=1, \dots, n$;
	\item\label{3r} $\det(a_i^j)=h$ defines a reduced hypersurface $X$.
\end{enumerate}
Then for $X=\{ h(x,t)=0\}$, $\delta'_1, \dots, \delta'_n$ belongs to $\Der_{(x_0, s)}(-\log X/S)$, $\{ \delta'_1, \dots, \delta'_n\}$ is a free basis of $\Der_{(x_0, s)}(-\log X/S)$ and $X$ is a (linearly) admissible deformation of $(D,0)$ over $(S,s)$.
\end{Lemma}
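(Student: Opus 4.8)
The strategy is to prove a relative analogue of Saito's Lemma \ref{saitolemma}. Write $A=(a_i^j)$, so that $\det A=h$, set $\mathcal O:=\mathcal O_{\C^n\times S,(x_0,s)}$, and note that the $n$ generators $\delta_1,\dots,\delta_n$ of the free rank-$n$ module $\Der_{x_0}(-\log D)$ are automatically an $\mathcal O_{\C^n,x_0}$-basis. I would treat the three conclusions in order.

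First I would check that $\delta'_k\in\Der_{(x_0,s)}(-\log X/S)$, i.e. that $\delta'_k(h)\in(h)$. Differentiating $h=\det A$ one row at a time gives $\delta'_k(h)=\sum_{i=1}^n\det A_i$, where $A_i$ is obtained from $A$ by replacing its $i$-th row by $(\delta'_k a_i^1,\dots,\delta'_k a_i^n)$. Writing $[\delta'_k,\delta'_i]=\sum_m c_{ki}^m\delta'_m$ as in condition \ref{2r} and comparing $\partial/\partial x_j$-coefficients yields $\delta'_k a_i^j=\delta'_i a_k^j+\sum_m c_{ki}^m a_m^j$; substituting and using multilinearity of the determinant in the $i$-th row, the $\sum_m c_{ki}^m a_m^j$-part contributes $\bigl(\sum_i c_{ki}^i\bigr)h$ (only $m=i$ survives, the rest repeat a row), and the remaining terms equal $\sum_i\det$ of $A$ with its $i$-th row replaced by $\mathrm{row}_i(A)\,J_k$, where $J_k=(\partial a_k^j/\partial x_l)_{l,j}$. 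Transposing, this last sum is exactly the left-hand side of Lemma \ref{molteachcolumntrace} applied to $A^t$ with $B=J_k^t$, hence equals $\trace(J_k)\det A$. Therefore $\delta'_k(h)=\bigl(\sum_l\partial a_k^l/\partial x_l+\sum_i c_{ki}^i\bigr)h\in(h)$, and since $\delta'_k$ has no $\partial/\partial t$-component, $\delta'_k\in\Der_{(x_0,s)}(-\log X/S)$.

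Next I would establish flatness, the special fibre, and the basis property. By condition \ref{1r}, $h\bmod\mathfrak m_{S,s}=\det(a_i^j(x,t_0))$ is the determinant of the coefficient matrix of the basis $\delta_1,\dots,\delta_n$ of $\Der_{x_0}(-\log D)$, hence a unit times $f$ by Saito's criterion (Proposition \ref{saitocrit}); thus the special fibre of $\phi$ is $(D,0)=(V(f),0)$, and since $h\bmod\mathfrak m_{S,s}$ is a non-zerodivisor in $\mathcal O_{\C^n,0}$ and $\mathcal O$ is $\mathcal O_{S,s}$-flat, $h$ is a non-zerodivisor, so $\phi\colon V(h)\to S$ is flat. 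From $A\,\mathrm{adj}(A)=hI$ the $\delta'_i$ are $\mathcal O$-linearly independent. For generation, given $\delta\in\Der_{(x_0,s)}(-\log X/S)$ I write its coefficient vector as $cA$ with $c=(c_i)\in\mathcal O[h^{-1}]^n$ and $hc_i\in\mathcal O$ (since $hA^{-1}=\mathrm{adj}(A)$); away from the relative singular locus $\Sigma=V(h,\partial h/\partial x_1,\dots,\partial h/\partial x_n)$ each $c_i$ is regular, for where $h$ is a unit this is clear and at a point where $X$ is smooth over $S$ one takes $\mathcal O_{S,s}$-linear coordinates with $h=x_1\cdot(\mathrm{unit})$ and $\Der(-\log X/S)$ freely generated by $x_1\partial/\partial x_1,\partial/\partial x_2,\dots,\partial/\partial x_n$, so the matrix expressing the $\delta'_i$ in this basis has determinant $h/x_1$, a unit, and the $\delta'_i$ are a local basis there. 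Since $\Sigma\cap(\C^n\times\{s\})=D_{\mathrm{sing}}$ has codimension $2$ in $\C^n$ (the singular locus of a free divisor is empty or of pure codimension $2$), $\Sigma$ has codimension $\ge 2$ in $\C^n\times S$, and using reducedness of $X$ (condition \ref{3r}), equivalently that $\mathcal O$ satisfies Serre's condition $S_2$, this forces $c_i\in\mathcal O$. Hence $\{\delta'_1,\dots,\delta'_n\}$ is a free basis of $\Der_{(x_0,s)}(-\log X/S)$, and reduction modulo $\mathfrak m_{S,s}$ carries it onto the basis $\delta_1,\dots,\delta_n$ of $\Der_{x_0}(-\log D)$, which gives the isomorphism in \eqref{relderadmisdef}; combined with flatness and the identification of the special fibre, $X$ is an admissible deformation of $(D,0)$, and when the $a_i^j$ are linear in $x$ the same basis exhibits it as linearly admissible.

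The main obstacle is the generation step: one must verify carefully that the relative singular locus $\Sigma$ really has codimension $\ge 2$ in $\C^n\times S$ (so that the $S_2$/Hartogs-type extension of the coefficients $c_i$ applies) and that the smooth-point normal form for $\Der(-\log X/S)$ is legitimately available in the relative setting. Part one, the logarithmic-ness of the $\delta'_i$, is by contrast just the determinant identity of Lemma \ref{molteachcolumntrace}.
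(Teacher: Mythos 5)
Your first step (logarithmicity of the $\delta'_i$) is essentially the paper's argument: expand $\delta'_k(\det A)$ by Leibniz, split off the bracket contribution using condition \ref{2r} (each such term is $0$ or a multiple of $h$ by repeating a row/column), and recognise the remaining sum as $\trace(J_k)\det A$ via Lemma \ref{molteachcolumntrace}; you do it row-wise with an explicit coefficient identity, the paper column-wise with $\delta'_k(\delta'_j)=[\delta'_k,\delta'_j]+\delta'_j(\delta'_k)$, but it is the same computation. Your identification of the special fibre and the flatness of $h$ are also fine (the paper is terser here, and proves the condition \eqref{relderadmisdef} by a direct restriction argument rather than, as you do, by reducing a basis modulo $\mathfrak m_{S,s}$ -- both work once the basis property is in hand).

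The genuine issue is your generation step, which takes a different and more fragile route than the paper. You want to extend the meromorphic coefficients $c_i=g_i/h$ across the relative singular locus $\Sigma$, justified by ``reducedness of $X$, equivalently that $\mathcal O$ satisfies Serre's condition $S_2$''. This is not right as stated: $(S,s)$ is an arbitrary complex space germ, and in the main application ($S=T_\epsilon$) it is non-reduced, so $X\subset\C^n\times S$ is not literally reduced and $\mathcal O_{\C^n\times S,(x_0,s)}$ need not be $S_2$; what the extension actually requires is that $(h)$ has no associated primes supported in $\Sigma$, which must be extracted from flatness of $\mathcal O/(h)$ over $\mathcal O_{S,s}$ together with reducedness of the special fibre, plus a semicontinuity argument to control $\dim\Sigma$ and a relative normal form for $\Der(-\log X/S)$ at points where $X\to S$ is smooth. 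None of this is supplied, and you yourself flag it as the obstacle. The paper sidesteps the whole problem with a short algebraic trick: by Cramer, $h\sigma=\sum_i f_i\delta'_i$; by Lemma \ref{detrelvect}, $\det[\delta'_1,\dots,\sigma,\dots,\delta'_n]\in(h)$, so
$$h\det[\delta'_1,\dots,\sigma,\dots,\delta'_n]=\det[\delta'_1,\dots,h\sigma,\dots,\delta'_n]=f_i\det A=f_ih\in(h^2),$$
whence $f_i\in(h)$ (as $h$ is a non-zerodivisor by flatness) and $\sigma=\sum_i(f_i/h)\delta'_i$. This works over any base germ with no codimension, depth, or reducedness considerations; you should replace the Hartogs-type argument by it (your geometric argument is essentially the one the paper reserves for Proposition \ref{generalizsaitocrit1}, where freeness of $\Der(-\log X/S)$ is already known from Proposition \ref{freerelderlog}).
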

\begin{proof} First of all we need to show that each $\delta_k'\in\Der_{(x_0, s)}(-\log X/S)$. We have the following equalities
$$\delta_k'(h)=\delta_k'(\det[\delta_1',\dots,\delta_n'])=\sum_{j=1}^n\det[\delta_1'\dots,\delta_{j-1}',\delta_k'(\delta_j'),\delta_{j+1}',\dots,\delta_n']=$$
$$=\sum_{j=1}^n\det[\delta_1'\dots,\delta_{j-1}',[\delta_k',\delta_j']+\delta_j'(\delta_k'),\delta_{j+1}',\dots,\delta_n']= $$
$$=\sum_{j=1}^n\det[\delta_1'\dots,\delta_{j-1}',[\delta_k',\delta_j'],\delta_{j+1}',\dots,\delta_n']+\sum_{j=1}^n\det[\delta_1'\dots,\delta_{j-1}',\delta_j'(\delta_k'),\delta_{j+1}',\dots,\delta_n'].$$
By \ref{2r}, $\det[\delta_1'\dots,\delta_{j-1}',[\delta_k',\delta_j'],\delta_{j+1}',\dots,\delta_n']\in(h)\mathcal{O}_{\C^n\times S,(x_0,s)}$ for all $j=1,\dots,n$, and so the first part of the last equality is in $(h)\mathcal{O}_{\C^n\times S,(x_0,s)}$. Furthermore, if we consider the matrices $A=[\delta_1',\dots,\delta_n']$ and $B=(\partial a_k^i/\partial x_j)_{i,j=1,\dots,n}$, we can apply Lemma \ref{molteachcolumntrace} and obtain
$$\sum_{j=1}^n\det[\delta_1'\dots,\delta_{j-1}',\delta_j'(\delta_k'),\delta_{j+1}',\dots,\delta_n']=\sum_{i=1}^n\frac{\partial a_k^i}{\partial x_i}h\in(h)\mathcal{O}_{\C^n\times S,(x_0,s)}.$$
This shows that $\delta_k'(h)\in(h)\mathcal{O}_{\C^n\times S,(x_0,s)}$ and so $\delta_k'\in\Der_{(x_0, s)}(-\log X/S)$, for all $k=1,\dots,n$.

Notice now that by \ref{1r} and \ref{3r}, $h|_{\C^n,x_0}=f$. Moreover, by \ref{1r}
$$\Der_{x_0}(-\log D)\subset \Der_{(x_0,s)}(-\log X/S)/\mathfrak{m}_{S,s}\Der_{(x_0,s)}(-\log X/S).$$ Consider $\sigma\in\Der_{(x_0, s)}(-\log X/S)$ such that $\sigma|_{\C^n,x_0}\notin\Der_{x_0}(-\log D)$. But $\sigma(h)=\alpha h$ for some $\alpha\in\mathcal{O}_{\C^n\times S,(x_0, s)}$. Hence $(\sigma(h))|_{\C^n,x_0}=\sigma|_{\C^n,x_0}(f)=\alpha|_{\C^n,x_0}f$ and so $\sigma|_{\C^n,x_0}\in\Der_{x_0}(-\log D)$, but this is a contradiction. Hence $\Der_{x_0}(-\log D)=\Der_{(x_0,s)}(-\log X/S)/\mathfrak{m}_{S,s}\Der_{(x_0,s)}(-\log X/S)$ and so $X$ is a (linearly) admissible deformation of $(D,0)$ over $(S,s)$. 

Consider $\sigma \in \Der_{(x_0, s)}(-\log X/S)$. Then we want to prove that $\sigma \in \sum_{i=1}^n \mathcal{O}_{\C^n \times S,(x_0, s)} \delta'_i$. By Cramer's rule, $h \partial/\partial x_j \in  \sum_{i=1}^n \mathcal{O}_{\C^n \times S, (x_0, s)} \delta'_i$ for all $j=1, \dots, n$, hence we can consider $h\sigma =  \sum_{i=1}^n f_i \delta'_i$, for some $f_i \in \mathcal{O}_{\C^n \times S, (x_0, s)}$. By Lemma \ref{detrelvect}, we have that $\det[\delta'_1, \dots, \delta'_{i-1}, \sigma, \delta'_{i+1}, \dots, \delta'_n] \in (h)\mathcal{O}_{\C^n \times S, (x_0, s)}$. Thus $$h\det[\delta'_1, \dots, \delta'_{i-1}, \sigma, \delta'_{i+1}, \dots, \delta'_n]$$ $$=\det[\delta'_1, \dots, \delta'_{i-1}, h\sigma, \delta'_{i+1}, \dots, \delta'_n]$$ $$= \det[\delta'_1, \dots, \delta'_{i-1}, f_i\delta'_i, \delta'_{i+1}, \dots, \delta'_n]$$ $$= f_i\det[\delta'_1, \dots, \delta'_n] = f_i h \in (h^2)\mathcal{O}_{\C^n \times S, (x_0, s)}.$$ Thus $f_i \in (h)\mathcal{O}_{\C^n \times S, (x_0, s)}$ for all $i$. This show that $\sigma =\sum_{i=1}^n (f_i/h) \delta'_i \in  \sum_{i=1}^n \mathcal{O}_{\C^n \times S, (x_0, s)} \delta'_i$. \end{proof} 

Notice that if we consider $S$ to be a reduced point, then the previous Lemma is the same statement of Lemma \ref{saitolemma}.

We can now state and prove the main result of the section:

 \begin{Theorem}\label{infdefref} Let $(D,0)=(V(f),0) \subset (\C^n,0)$ be a germ of a free divisor and $\delta_1, \dots , \delta_n$ a set of generators for $\Der(-\log D)$. Any element of $\mathcal{FT}^1(D)$ can be represented by $n$ classes 
 $\tilde{\delta}_1, \dots , \tilde{\delta}_n \in \Der_{\C^n}/\Der(-\log D)$ such that the $\mathcal{O}_{\C^n \times T_\epsilon,0}$-module generated by $\delta'_1=\delta_1+\epsilon \cdot \tilde{\delta}_1, \dots , \delta'_n=\delta_n +\epsilon \cdot\tilde{ \delta}_n$ is closed under Lie brackets. If the deformation is linearly admissible, then the coefficients of all $\tilde{\delta}_i$, in any representation of an element of $\mathcal{FT}^1(D)$, must be linear functions too.
 \end{Theorem}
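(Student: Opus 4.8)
The plan is to read off the required vector fields directly from the relative logarithmic module of a deformation. Fix a class in $\mathcal{FT}^1(D)$ and choose an admissible deformation representing it; by Remark \ref{forminfdef} its total space is $(X,0)=(V(f+\epsilon f'),0)\subset(\C^n\times T_\epsilon,0)$ for some $f'\in\mathcal{O}_{\C^n,0}$, with $\phi$ the projection onto $T_\epsilon$. Write $R:=\mathcal{O}_{\C^n\times T_\epsilon,0}=\mathcal{O}_{\C^n,0}+\epsilon\,\mathcal{O}_{\C^n,0}$, a local ring, and $M:=\Der(-\log X/T_\epsilon)$. By Proposition \ref{freerelderlog} the $R$-module $M$ is free of rank $n$. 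Moreover $M=\Der(-\log X)\cap\Der_{\C^n\times T_\epsilon/T_\epsilon}$ is closed under Lie brackets: $\Der(-\log X)$ is a Lie algebra of vector fields, the module $\Der_{\C^n\times T_\epsilon/T_\epsilon}$ of vector fields with no $\partial/\partial\epsilon$-component is closed under brackets, and an intersection of two such submodules is again closed under brackets. Finally, the admissibility requirement \eqref{relderadmisdef} of Definition \ref{defdeform} says precisely that reduction modulo $\epsilon$, i.e.\ setting $\epsilon=0$ in the coefficients, identifies $M/\epsilon M$ with $\Der(-\log D)$, an identification under which $\delta_1,\dots,\delta_n$ is a basis.

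I would then lift. Choose $\delta'_i\in M$ mapping to $\delta_i$ under $M\to M/\epsilon M\cong\Der(-\log D)$; by Nakayama's lemma $\delta'_1,\dots,\delta'_n$ is an $R$-basis of $M$, so the $R$-module they generate is all of $M$, hence closed under Lie brackets. Since $\delta'_i\in\Der_{\C^n\times T_\epsilon/T_\epsilon}$ it has the form $\sum_j c^j_i(x,\epsilon)\,\partial/\partial x_j$ with $c^j_i\in R$, and as it reduces to $\delta_i$ modulo $\epsilon$ we may write $\delta'_i=\delta_i+\epsilon\,\tilde\delta_i$ with a uniquely determined $\tilde\delta_i\in\Der_{\C^n,0}$. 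Any two lifts of $\delta_i$ differ by an element of $\epsilon M$, that is, by $\epsilon$ times an element of $\Der(-\log D)$, so the class of $\tilde\delta_i$ in $\Der_{\C^n}/\Der(-\log D)$ is independent of the chosen lift; this proves the first assertion.

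For the linear statement I would first take $\delta_1,\dots,\delta_n$ to be linear vector fields, which is legitimate because $D$ is a linear free divisor: $\Der(-\log D)$ is graded by the standard weight with weight-zero part $\Der(-\log D)_0$, and any $\C$-basis of $\Der(-\log D)_0$ is an $\mathcal{O}_{\C^n,0}$-basis of $\Der(-\log D)$. A linearly admissible deformation provides, by definition, an $R$-basis $\eta_1,\dots,\eta_n$ of $M$ with coefficients linear in $x_1,\dots,x_n$; writing $\eta_k=\bar\eta_k+\epsilon\,\hat\eta_k$ with $\bar\eta_k,\hat\eta_k$ again linear, the image $\bar\eta_k$ of $\eta_k$ in $M/\epsilon M$ is a linear vector field and so lies in $\Der(-\log D)_0$, and $\bar\eta_1,\dots,\bar\eta_n$ is an $\mathcal{O}_{\C^n,0}$-basis of $\Der(-\log D)$. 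Since $\Der(-\log D)_0$ is $n$-dimensional over $\C$, both $\delta_1,\dots,\delta_n$ and $\bar\eta_1,\dots,\bar\eta_n$ are $\C$-bases of it, so the transition matrix between them lies in $\GL_n(\C)$. Expanding an arbitrary lift as $\delta'_i=\sum_k g_{ik}\eta_k$ with $(g_{ik})\in\GL_n(R)$ and $g_{ik}=g^0_{ik}+\epsilon\,g^1_{ik}$, reduction modulo $\epsilon$ gives $\delta_i=\sum_k g^0_{ik}\bar\eta_k$, and comparison with the constant transition matrix forces $(g^0_{ik})\in\GL_n(\C)$. Reading off the coefficient of $\epsilon$ then yields $\tilde\delta_i=\sum_k g^0_{ik}\hat\eta_k+\sum_k g^1_{ik}\bar\eta_k$, in which the first sum is a linear vector field and the second lies in $\Der(-\log D)$; therefore the class of $\tilde\delta_i$ in $\Der_{\C^n}/\Der(-\log D)$ has a linear representative, as claimed.

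The step I expect to be the main obstacle is this last one: controlling, in the linear case, the $\epsilon$-free part $(g^0_{ik})$ of the transition matrix between the chosen linear generators and the linear basis supplied by the linearly admissible deformation, i.e.\ the fact that $\Der(-\log D)_0$ is exactly $n$-dimensional and that two $\mathcal{O}_{\C^n,0}$-bases of $\Der(-\log D)$ consisting of linear vector fields differ by a constant matrix. The remaining ingredients — freeness of $M$ from Proposition \ref{freerelderlog}, Nakayama's lemma, and closure of logarithmic vector fields under Lie brackets — are routine.
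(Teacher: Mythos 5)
Your argument is correct for the statement as written, but it reaches it by a genuinely different route from the paper. The paper works through the determinantal characterization: by Proposition \ref{generalizsaitocrit1} the deformation $(V(f+\epsilon f'),0)$ admits a relative Saito matrix $A(\epsilon)=B+\epsilon C$ with $\det A(\epsilon)=f+\epsilon f'$, so $B$ is a Saito matrix for $(D,0)$ and the columns of $C$ give the classes $\tilde\delta_i$; bracket closure is then immediate since $\Der(-\log X/T_\epsilon)$ is a Lie algebra. You instead use Proposition \ref{freerelderlog} (freeness of $M=\Der(-\log X/T_\epsilon)$), the identification $M/\epsilon M\cong\Der(-\log D)$ coming from admissibility, and Nakayama to lift the \emph{given} generators $\delta_1,\dots,\delta_n$ to a basis of $M$. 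Your route has the advantage of working directly with the prescribed generators (the paper's proof quietly replaces them by the columns of $B$) and of making the well-definedness of the classes $\tilde\delta_i$ modulo $\Der(-\log D)$ explicit; and your treatment of the linear case, via the observation that two linear $\mathcal{O}_{\C^n,0}$-bases of $\Der(-\log D)$ differ by a matrix in $\GL_n(\C)$, supplies the detail the paper dismisses as ``trivial.'' What you do not prove — and the paper does — is the converse: that any $n$ classes $\tilde\delta_1,\dots,\tilde\delta_n$ for which the module generated by the $\delta_i+\epsilon\tilde\delta_i$ is bracket-closed actually arise from an admissible deformation (via Lemma \ref{relatversaitlem} and reducedness of $\det[\delta_1+\epsilon\tilde\delta_1,\dots,\delta_n+\epsilon\tilde\delta_n]$). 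That converse is not part of the literal statement, so your proof is complete as far as the statement goes, but it is used essentially in the proof of Theorem \ref{FT1} to identify $\ker(d^1)$ with deformations, so if you intend to rely on this theorem later you should supplement your argument with that direction.
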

 \begin{proof} 
 
Let $(X,x)\subset(\C^n\times T_\epsilon,0)$ be an infinitesimal (linearly) admissible deformation of $(D,0)$. By Remark \ref{forminfdef}, it is of the form $(X,0)=(V(f+ \epsilon \cdot f'),0) \subset (\C^n \times T_\epsilon,0)$. By Proposition \ref{generalizsaitocrit1}, the fact that $(X,0)$ is the total space of an infinitesimal (linearly) admissible deformation of $(D,0)$ implies that there exists an $n \times n$ matrix $A(\epsilon)$ with coefficients in $\C[x_1, \dots , x_n, \epsilon]/(\epsilon^2)$ such that $\det A(\epsilon)=(f+ \epsilon \cdot f')$. But $\epsilon^2=0$ implies that we can write $A(\epsilon)= B+\epsilon \cdot C$, where $B$ and $C$ are  $n \times n$ matrices with coefficients in  $\C[x_1, \dots , x_n]$. Hence $f=\det A(0)= \det B$ and so $B$ is a Saito matrix for $(D,0)$. We can then take $\delta_i$ as the columns of $B$ and $\tilde{\delta}_i$ as the columns of $C$ and this proves that the Lie algebra $\Der(-\log X/T_\epsilon)$ is generated by $\delta_1+\epsilon \cdot \tilde{\delta}_1, \dots , \delta_n +\epsilon \cdot \tilde{\delta}_n$ as required. Because $\Der(-\log X/T_\epsilon)$ is a Lie algebra, then $[\delta_i',\delta_j']\in\Der(-\log X/T_\epsilon)$ for all $i,j=1,\dots,n$, but then $[\delta'_i,\delta'_j] \in \sum_{k=1}^n \mathcal{O}_{\C^n \times S, (x_0, s)} \delta'_k$ for $i,j=1, \dots, n$.

We now consider the classes of $\tilde{\delta}_1, \dots,\tilde{\delta}_n$ modulo $\Der(-\log D)$, because if $\tilde{\delta}_1, \dots,\tilde{\delta}_n\in\Der(-\log D)$, then $f'\in(f)\mathcal{O}_{\C^n,0}$ and hence, by \cite{intdef}, Chapter II, 1.4, the deformation is trivial.

On the other hand, let $\tilde{\delta}_1, \dots , \tilde{\delta}_n \in \Der_{\C^n}/\Der(-\log D)$ be $n$ classes of vector fields such that  the $\mathcal{O}_{\C^n \times T_\epsilon,0}$-module generated by $\delta_1+\epsilon \cdot \tilde{\delta}_1, \dots , \delta_n +\epsilon \cdot \tilde{\delta}_n$ is closed under Lie brackets. The determinant of the matrix of coefficients $[\delta_1+\epsilon \cdot \tilde{\delta}_1, \dots , \delta_n +\epsilon \cdot \tilde{\delta}_n]$ is equal to $f+ \epsilon \cdot f'$ and so by Lemma \ref{relatversaitlem} it is enough to show that this determinant is reduced. First, noticed that for $\epsilon=0$ the determinant is equal to $f$ and hence is reduced. Now, reducedness is an open property and so the result holds.

The last part of the statement is trivial. \end{proof}

\subsection{The complexes $\mathcal{C}^\bullet$ and $\mathcal{C}_0^\bullet$}
We recall here the notion of the complex of Lie algebroid cohomology in the case of $\Der(-\log D)$, see  \cite{MR0154906} for the general theory.

\begin{Definition} Let $\mathcal{C}^\bullet$ be the complex with modules
$$\mathcal{C}^p:=\mathcal{H}om_{\mathcal{O}_{\C^n}}(\bigwedge^p\Der(-\log D), \Der_{\C^n}/\Der(-\log D))$$
and differentials
$$(d^p(\psi))(\delta_1 \wedge \dots \wedge \delta_{p+1}):=\sum_{i=1}^{p+1}(-1)^i[\delta_i,\psi(\delta_1 \wedge \dots \wedge \widehat{\delta_i} \wedge \dots \wedge \delta_{p+1})]  +$$ $$+\\ \sum_{1 \le i<j \le p+1}(-1)^{i+j-1}\psi([\delta_i,\delta_j]\wedge \delta_1 \wedge \dots \wedge \widehat{\delta_i}\wedge \dots \wedge \widehat{\delta_j}\wedge \dots \wedge \delta_{p+1}).$$
\end{Definition}
It is a straightforward computation to check that $d^{p+1}\circ d^p =0$, so $\mathcal{C}^\bullet$ is a complex.

\begin{Remark} Notice that $$\mathcal{C}^0 = \Der_{\C^n}/\Der(-\log D)$$ and the map $d^0$ is defined by $$d^0 \colon \mathcal{C}^0\to \mathcal{H}om_{\mathcal{O}_{\C^n}}(\Der(-\log D), \Der_{\C^n}/\Der(-\log D)) $$ $$\sigma \mapsto (\delta \mapsto [\delta,\sigma]).$$
\end{Remark}

We recall now the definition of the complex of Lie algebra cohomology from \cite{hochserre}.
\begin{Definition} Let $\mathcal{C}^\bullet_0$ be the complex defined by $$\mathcal{C}^p_0:=Hom_{\C}(\bigwedge^p\Der(-\log D)_0, (\Der_{\C^n}/\Der(-\log D))_0)$$ and the differentials $$(d^p_0(\psi))(\delta_1 \wedge \dots \wedge \delta_{p+1}):=\sum_{i=1}^{p+1}(-1)^i[\delta_i,\psi(\delta_1 \wedge \dots \wedge \hat{\delta_i} \wedge \dots \wedge \delta_{p+1})]  +$$ $$+\\ \sum_{1 \le i<j \le p+1}(-1)^{i+j-1}\psi([\delta_i,\delta_j]\wedge \delta_1 \wedge \dots \wedge \hat{\delta_i}\wedge \dots \wedge \hat{\delta_j}\wedge \dots \wedge \delta_{p+1})$$
where $(\Der_{\C^n}/\Der(-\log D))_0$ is the weight zero part of $\Der_{\C^n}/\Der(-\log D)$.
\end{Definition}
$(\mathcal{C}^\bullet_0, d^\bullet_0)$ is a well defined complex because it has the same differentials as the complex $(\mathcal{C}^\bullet, d^\bullet)$ and because $\Der(-\log D)_0$ is a Lie subalgebra of $\Der(-\log D)$.

\subsection{Infinitesimal admissible deformations}

\begin{Theorem}\label{FT1} Let $(D,0)\subset  (\C^n,0)$ be a germ of a free divisor. Then the germ at the origin of the first cohomology sheaf of the complex $\mathcal{C}^\bullet$ is isomorphic to $\mathcal{FT}^1(D)$, i.e. $ \mathcal{H}^1(\mathcal{C}^\bullet)_0\cong \mathcal{FT}^1(D).$
\end{Theorem}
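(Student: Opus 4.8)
The plan is to identify $\mathcal{FT}^1(D) = \mathbf{FD}_D(\C[t]/(t^2))$ with $\mathcal{H}^1(\mathcal{C}^\bullet)_0$ by interpreting the description of infinitesimal admissible deformations obtained in Theorem \ref{infdefref}. Recall that by Theorem \ref{infdefref}, an element of $\mathcal{FT}^1(D)$ is represented by a tuple $\tilde\delta_1,\dots,\tilde\delta_n\in\Der_{\C^n}/\Der(-\log D)$, where $\delta_1,\dots,\delta_n$ is a fixed generating set of $\Der(-\log D)$, subject to the Lie-bracket closure condition on $\delta'_i=\delta_i+\epsilon\tilde\delta_i$, and two such tuples give isomorphic deformations precisely when they differ by a trivial deformation. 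So I would first show that giving such a tuple is the same data as giving an $\mathcal{O}_{\C^n}$-linear map $\psi\colon\Der(-\log D)\to\Der_{\C^n}/\Der(-\log D)$, i.e. an element of $\mathcal{C}^1$: set $\psi(\delta_i):=\tilde\delta_i$. The point is that this assignment is well defined and $\mathcal{O}_{\C^n}$-linear exactly because of the bracket-closure condition, and conversely any element of $\mathcal{C}^1$ arises this way; I expect to show that this step uses a direct computation with the differential $d^1$.

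Next I would check that the bracket-closure condition ``$[\delta'_i,\delta'_j]\in\sum_k\mathcal{O}_{\C^n\times T_\epsilon}\delta'_k$'' is equivalent to the cocycle condition $d^1\psi=0$. Expanding $[\delta'_i,\delta'_j]=[\delta_i,\delta_j]+\epsilon([\delta_i,\tilde\delta_j]+[\tilde\delta_i,\delta_j])$ modulo $\epsilon^2$, and writing $[\delta_i,\delta_j]=\sum_k c_{ij}^k\delta_k$, the requirement that $[\delta'_i,\delta'_j]$ lie in the $\mathcal{O}_{\C^n\times T_\epsilon}$-span of the $\delta'_k$ amounts, after reducing modulo $\Der(-\log D)$, to
$$[\delta_i,\psi(\delta_j)]-[\delta_j,\psi(\delta_i)]-\psi([\delta_i,\delta_j])=0\quad\text{in }\Der_{\C^n}/\Der(-\log D),$$
which is precisely $(d^1\psi)(\delta_i\wedge\delta_j)=0$. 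I would spell out that the extra freedom in writing $[\delta'_i,\delta'_j]=\sum_k g_k^{(\epsilon)}\delta'_k$ (the $\epsilon$-part of $g_k^{(\epsilon)}$) is exactly absorbed by working modulo $\Der(-\log D)$, so no information is lost. This gives a bijection between infinitesimal admissible deformations (before quotienting by isomorphism) and $Z^1(\mathcal{C}^\bullet)=\ker d^1$.

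Then I would identify the trivial deformations with the coboundaries $B^1(\mathcal{C}^\bullet)=\image d^0$. By Remark \ref{trivial}, an infinitesimal admissible deformation $(V(f+\epsilon f'),0)$ is trivial iff there is an automorphism $x_j\mapsto x_j+\epsilon\sigma_j(x)$, $\sigma=\sum\sigma_j\partial/\partial x_j$, with $f(x+\epsilon\sigma)=f+\epsilon f'$, equivalently $\sigma(f)=f'$, i.e. $f'\in(f,\partial f/\partial x_1,\dots)$ is realized by a vector field $\sigma\in\Der_{\C^n}$. The effect of applying this coordinate change on the level of the Saito matrix is to replace $\delta_i$ by $\delta_i+\epsilon[\delta_i,\sigma]$ (this is the standard fact that conjugating a vector field by the flow of $\sigma$ to first order is the Lie bracket; I would cite or verify this via the formula in Remark \ref{trivial}), so the corresponding change of the representing cocycle $\psi$ is $\psi\mapsto\psi+d^0(\bar\sigma)$ where $\bar\sigma$ is the class of $\sigma$ in $\mathcal{C}^0=\Der_{\C^n}/\Der(-\log D)$ — exactly the $d^0$ described in the Remark after the definition of $\mathcal{C}^\bullet$. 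Hence isomorphism classes correspond to $Z^1/B^1=\mathcal{H}^1(\mathcal{C}^\bullet)$, and taking germs at $0$ gives $\mathcal{FT}^1(D)\cong\mathcal{H}^1(\mathcal{C}^\bullet)_0$.

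The main obstacle I anticipate is the careful bookkeeping in the second step: verifying that the Lie-algebra (module) closure condition for $\Der(-\log X/T_\epsilon)$ really is equivalent, with no residual constraints, to the $\mathcal{O}_{\C^n}$-linearity of $\psi$ together with $d^1\psi=0$ — in particular checking that well-definedness of $\psi$ on all of $\Der(-\log D)$ (not merely on the chosen generators $\delta_i$, which may satisfy $\mathcal{O}_{\C^n}$-relations) follows from the closure condition, and that passing to the quotient $\Der_{\C^n}/\Der(-\log D)$ exactly kills the ambiguity in the structure functions. The triviality/coboundary matching in the third step is conceptually clean but also requires the standard identification of infinitesimal coordinate changes with Lie brackets, which I would state precisely rather than re-derive. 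Everything else is formal bijection-chasing built on Theorem \ref{infdefref}, Remark \ref{forminfdef}, Remark \ref{trivial}, and Lemma \ref{relatversaitlem}.
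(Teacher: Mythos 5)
Your proposal is correct and follows essentially the same route as the paper's proof: cocycles of $d^1$ are identified with Lie-bracket-closed lifts $\delta_i+\epsilon\,\psi(\delta_i)$ via Theorem \ref{infdefref}, and coboundaries with trivial deformations via the first-order computation $\varphi^*\delta_i=\delta_i+\epsilon\,[\sigma,\delta_i]$, exactly as in the paper. The one obstacle you flag --- well-definedness of $\psi$ on generators possibly satisfying $\mathcal{O}_{\C^n}$-relations --- is vacuous here, since for a free divisor the $\delta_i$ coming from a Saito matrix form a free $\mathcal{O}_{\C^n}$-basis of $\Der(-\log D)$, so prescribing $\psi$ on them extends uniquely to an element of $\mathcal{C}^1$.
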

\begin{proof} To prove that we can identify $\mathcal{H}^1(\mathcal{C}^\bullet)_0$ with $\mathcal{FT}^1(D)$, two things have to be checked: we must first identify the elements of $\ker(d^1 \colon \mathcal{C}^1 \to \mathcal{C}^2)$ with admissible deformations of $(D,0)$. Then, we have to show that the image of $d^0 \colon \mathcal{C}^0 \to \mathcal{C}^1$ is the collection of trivial admissible deformations of $(D,0)$.

By Proposition \ref{infdefref}, we are looking for $n$ classes of vector fields $\tilde{\delta}_1, \dots , \tilde{\delta}_n \in \Der_{\C^n}/\Der(-\log D)$ such that the $\mathcal{O}_{\C^n\times T_\epsilon,0}$-module generated by the elements $\delta_1+\epsilon \cdot \tilde{\delta}_1, \dots , \delta_n +\epsilon \cdot \tilde{\delta}_n$ is closed under Lie brackets.

Take an element $\psi \in \ker(d^1)$, which means that $$\psi([\delta,\nu])-[\delta,\psi(\nu)]+[\nu,\psi(\delta)]=0 \text{ in } \Der_{\C^n}/\Der(-\log D)$$ for all $\delta,\nu \in \Der(-\log D)$. Then $\psi$ corresponds to the admissible deformation given by the $\mathcal{O}_{\C^n\times T_\epsilon,0}$-module $\mathcal{L}$ generated by $$\delta_1+\epsilon \cdot \psi(\delta_1), \dots , \delta_n +\epsilon \cdot \psi(\delta_n) .$$  By $\C$-linearity of the Lie brackets, $\mathcal{L}$ is closed under Lie brackets if and only if for any two elements $\delta+\epsilon \cdot \psi(\delta), \nu+\epsilon \cdot \psi(\nu) \in \mathcal{L}$ we have $[\delta+\epsilon \cdot \psi(\delta), \nu+\epsilon \cdot \psi(\nu)] \in \mathcal{L}$, which is equivalent to $$F:=[\delta,\nu]+\epsilon \cdot([\delta,\psi(\nu)]-[\nu,\psi(\delta)])\in \mathcal{L}.$$ Consider $G:=[\delta,\nu]+\epsilon \cdot \psi([\delta,\nu])$ which is an element of $\mathcal{L}$, so the condition $F \in \mathcal{L}$ is equivalent to $G-F \in \mathcal{L}$, that is $$ \psi([\delta,\nu])-[\delta,\psi(\nu)]+[\nu,\psi(\delta)] \in \Der(-\log D).$$ This means exactly that $\psi \in \ker(d^1)$. 

Let us consider now an infinitesimal admissible deformation $(X,0)=(V(f+ \epsilon \cdot f'),0)$. Then by the previous part of the proof, $\Der(-\log X/T_{\epsilon}) = \langle \delta_1+\epsilon \cdot \psi(\delta_1), \dots , \delta_n +\epsilon \cdot \psi(\delta_n) \rangle$ for some $\psi \in \ker(d^1)$. By Remark \ref{trivial}, $f+ \epsilon \cdot f'$ is trivial if and only if $(\varphi ^\ast f)=(f(x+\epsilon \cdot \sigma(x))) = (f+ \epsilon \cdot f')$, for some $\varphi\in\Aut(\C^n\times T_\epsilon)$. In this situation, the module of vector fields generated by $\varphi^\ast(\Der(-\log D))$ is equal to $\Der(-\log X/T_\epsilon)$, i.e. 
$$\langle D_{\varphi^{-1}(x)}\varphi(\delta_1(\varphi^{-1}(x))), \dots, D_{\varphi^{-1}(x)}\varphi(\delta_n(\varphi^{-1}(x))) \rangle  = \langle \delta_1+\epsilon \cdot \psi(\delta_1), \dots , \delta_n +\epsilon \cdot \psi(\delta_n) \rangle,$$
where for $h\colon X\to Y$, then $D_xh\colon T_xX\to T_{h(x)}Y$ is the tangent map. 
Because we can consider each vector field on $\C^n$ also as a map from $\C^n$ into itself, we have the following equalities 
$$D_{\varphi^{-1}(x)}\varphi(\delta_i(\varphi^{-1}(x))) = D_{x-\epsilon \cdot \sigma(x)}\varphi(\delta_i(x-\epsilon \cdot \sigma(x)))=$$ 
$$=\delta_i(x-\epsilon \cdot \sigma(x))+\epsilon \cdot D_{x-\epsilon \cdot \sigma(x)}\sigma(\delta_i(x-\epsilon \cdot \sigma(x)))=$$ 
$$=\delta_i(x)-\epsilon \cdot (D_x\delta_i(\sigma(x))-D_{x-\epsilon \cdot \sigma(x)}\sigma(\delta_i(x)))=$$ 
$$=\delta_i(x)+\epsilon \cdot (D_x\sigma(\delta_i(x))-D_x\delta_i(\sigma(x)))= \delta_i(x)+\epsilon \cdot [\sigma,\delta_i](x)$$ and that tells us that $\psi(\delta_i)=[\sigma,\delta_i]$, i.e. $\psi \in \image(d^0)$. \end{proof}
\begin{Lemma}\label{selfnorm} Let $D\subset  \C^n$ be a free divisor. Then $\Der(-\log D)$ is a self-normalising Lie subalgebra of $\Der_{\C^n}$. That is, if we consider $\chi \in \Der_{\C^n}$ such that $[\chi,\delta] \in \Der(-\log D)$ for all $\delta \in \Der(-\log D)$, then $\chi \in \Der(-\log D)$.
\end{Lemma}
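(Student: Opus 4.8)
The plan is to reduce the statement to a pointwise assertion and then exploit a Saito matrix together with Lemma~\ref{molteachcolumntrace}. Fix $p\in\C^n$, write $D=V(f)$ with $f\in\mathcal{O}:=\mathcal{O}_{\C^n,p}$ reduced, and note first that $\Der_p(-\log D)$ is a Lie subalgebra of $\Der_{\C^n,p}$: if $\delta(f),\delta'(f)\in(f)$ then the Leibniz rule gives $[\delta,\delta'](f)=\delta(\delta'(f))-\delta'(\delta(f))\in(f)$. Hence the inclusion of $\Der_p(-\log D)$ in its normaliser is automatic, and the entire content of the lemma is the following: given $\chi\in\Der_{\C^n,p}$ with $[\chi,\delta]\in\Der_p(-\log D)$ for every $\delta\in\Der_p(-\log D)$, one must show $\chi(f)\in(f)\mathcal{O}$, i.e. $\chi\in\Der_p(-\log D)$.

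By Saito's criterion (Proposition~\ref{saitocrit}) I would pick a basis $\delta_1,\dots,\delta_n$ of $\Der_p(-\log D)$ with $\delta_i=\sum_j a_i^j\,\partial/\partial x_j$ and, after rescaling one generator by a unit, $\det(a_i^j)=f$; let $A$ be the matrix with columns $a_i=(a_i^1,\dots,a_i^n)^t$, and write $\chi=\sum_j b^j\,\partial/\partial x_j$. Since $\chi$ acts as a derivation on the polynomial $\det A$, the identity for $d_A\det$ recorded in the proof of Lemma~\ref{molteachcolumntrace} yields
$$\chi(f)=\chi(\det A)=\sum_{i=1}^n\det[a_1,\dots,a_{i-1},\chi(a_i),a_{i+1},\dots,a_n],\qquad \chi(a_i):=(\chi(a_i^1),\dots,\chi(a_i^n))^t.$$
The key observation is that $\chi(a_i^j)-\delta_i(b^j)=[\chi,\delta_i](x_j)$, so $\chi(a_i)=v_i+Ma_i$, where $v_i$ is the coefficient vector of $[\chi,\delta_i]$ and $M=(\partial b^j/\partial x_k)_{j,k}$ is the Jacobian of $(b^1,\dots,b^n)$ (indeed $(\delta_i(b^j))_j=Ma_i$). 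By hypothesis $[\chi,\delta_i]\in\Der_p(-\log D)=\bigoplus_k\mathcal{O}\delta_k$, so $v_i=\sum_k g_{ik}a_k$ for some $g_{ik}\in\mathcal{O}$.

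Substituting and splitting the sum finishes the argument: the $v_i$-contribution is $\sum_{i,k}g_{ik}\det[a_1,\dots,a_k,\dots,a_n]$ with $a_k$ in the $i$-th slot, and only $k=i$ survives, giving $(\sum_i g_{ii})\det A=(\sum_i g_{ii})f$; the $Ma_i$-contribution is exactly $\sum_i\det[a_1,\dots,Ma_i,\dots,a_n]=\trace(M)\det A=\trace(M)f$ by Lemma~\ref{molteachcolumntrace}. Therefore $\chi(f)=\big(\sum_i g_{ii}+\trace(M)\big)f\in(f)\mathcal{O}$, as required. The only step needing care is the third one — recognising $\chi(a_i^j)-\delta_i(b^j)$ as the $j$-th coefficient of $[\chi,\delta_i]$ and arranging the two resulting sums so that Lemma~\ref{molteachcolumntrace} applies verbatim; everything else is formal. (A freeness-free alternative: on the regular part of $D$ one may choose coordinates with $f=x_1$, and the hypothesis then forces the first coefficient of $\chi$ to lie in $(x_1)$, so $\chi(f)/f$ is holomorphic on a neighbourhood of $p$ away from $D_{sing}$; since $\codim_{\C^n}D_{sing}\ge 2$ it extends by Riemann's extension theorem, and then $\chi(f)\in(f)\mathcal{O}$.)
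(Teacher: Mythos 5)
Your argument is correct, but it is not the route the paper takes. The paper's proof is local and geometric: it asserts that it suffices to check tangency of $\chi$ at the smooth points of $D$, chooses coordinates there with $f=x_1$ and Saito matrix $\mathrm{diag}(x_1,1,\dots,1)$, and reads off from the first coefficient of $[\chi,\delta_1]$ that the first coefficient of $\chi$ lies in $(x_1)$; the passage back from the smooth locus to all of $D$ is exactly the $\codim D_{sing}\ge 2$ extension you sketch in your parenthetical ``freeness-free alternative''. Your main argument instead works directly at an arbitrary point through the Saito matrix: expanding $\chi(\det A)$ column by column, using $\chi(a_i^j)=[\chi,\delta_i](x_j)+\delta_i(b^j)$ to write $\chi(a_i)=v_i+Ma_i$, and observing that the $v_i$-terms contribute $\bigl(\sum_i g_{ii}\bigr)f$ (only the diagonal survives by the alternating property) while the $Ma_i$-terms contribute $\trace(M)\,f$ by Lemma~\ref{molteachcolumntrace}. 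This is, in effect, the same determinant manipulation the paper deploys in the proof of the relative Saito lemma (Lemma~\ref{relatversaitlem}), transplanted to the self-normalising statement, and it has two advantages: it uses freeness only through the existence of a Saito basis, and it avoids the extension step across the singular locus, which the paper's proof invokes implicitly (``it is enough to show\dots'') without justification. All the individual steps check out: the column-by-column Leibniz expansion of $\chi(\det A)$ is valid because $\det$ is polynomial in the entries, the identity $(\delta_i(b^j))_j=Ma_i$ is a direct computation, and the rescaling to arrange $\det A=f$ exactly (rather than a unit multiple) is harmless, since one could equally conclude $\chi(uf)\in(uf)$ and divide out the unit.
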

\begin{proof} By the definition of $\Der(-\log D)$, it is enough to show that if we consider $p \in D$ a smooth point, then $\chi(p) \in T_pD$. Without loss of generality, we can suppose that at $p$ the divisor $D$ is defined by the equation $x_1=0$, that its Saito matrix is \begin{equation*} [\delta_1, \cdots, \delta_n]=
\begin{bmatrix}
		x_1&0&0&\cdots&0\\
		0&1&0&\cdots&0\\
		0&0&1&\cdots&0\\
		\vdots&\vdots&\vdots& &\vdots\\
		0&0&0&\cdots&1\\
\end{bmatrix}
\end{equation*}
and that $\chi(p)=\sum_{i=1}^n a_i\partial/\partial x_i$ with $a_i \in \mathcal{O}_{\C^n,p}$. In this way, we have reduced the problem to proving that $a_1 \in (x_1)\mathcal{O}_{\C^n,p}$.

By hypothesis, $[\chi,\delta] \in \Der_p(-\log D)$ for all $\delta \in \Der_p(-\log D)$, in particular $[\chi,\delta_1]=a_1\partial/\partial x_1 - \sum_{i=1}^n x_1\partial a_i/\partial x_1\partial/\partial x_i=(a_1-x_1\partial a_1/\partial x_1)\partial/\partial x_1- \sum_{i=2}^n x_1\partial a_i/\partial x_1\partial/\partial x_i \in \Der_p(-\log D)$. Hence, $(a_1-x_1\partial a_1/\partial x_1) \in (x_1)\mathcal{O}_{\C^n,p}$ and so $a_1 \in (x_1)\mathcal{O}_{\C^n,p}$ as required.\end{proof}
In a similar way we can prove the following:
\begin{Lemma}\label{selfnormzero}  Let $D\subset  \C^n$ be a linear free divisor. Then $\Der(-\log D)_0$ is a self-normalising Lie subalgebra of $(\Der_{\C^n})_0$.
\end{Lemma}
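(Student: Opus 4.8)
The statement to prove is Lemma~\ref{selfnormzero}: for a linear free divisor $D \subset \C^n$, the weight-zero part $\Der(-\log D)_0$ is self-normalising inside $(\Der_{\C^n})_0$. The proof of the preceding Lemma~\ref{selfnorm} works at a smooth point $p$ by reducing to a standard local Saito matrix and doing a one-line bracket computation. For the graded/linear version, I cannot simply localise at a smooth point, because the weight-zero condition is a global condition on $\C^n$ and the grading does not survive localisation. So the plan is to redo the argument globally, exploiting that everything in sight is polynomial and $\C^*$-homogeneous.

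The plan is as follows. Let $\chi \in (\Der_{\C^n})_0$ satisfy $[\chi, \delta] \in \Der(-\log D)_0$ for all $\delta \in \Der(-\log D)_0$. Since $D$ is linear, by the Remark recalled in the excerpt $\Der(-\log D) = \mathcal{O}_{\C^n} \cdot \Der(-\log D)_0$, so $\Der(-\log D)_0$ generates $\Der(-\log D)$ as an $\mathcal{O}_{\C^n}$-module. First I would check that the hypothesis $[\chi, \delta] \in \Der(-\log D)_0$ for all $\delta \in \Der(-\log D)_0$ upgrades to $[\chi, \delta] \in \Der(-\log D)$ for all $\delta \in \Der(-\log D)$: writing a general $\delta = \sum g_j \delta_j$ with $g_j \in \mathcal{O}_{\C^n}$ and $\delta_j \in \Der(-\log D)_0$, one has $[\chi, g_j \delta_j] = \chi(g_j)\delta_j + g_j[\chi,\delta_j]$, and since $\delta_j \in \Der(-\log D) \subset \Der(-\log D)$, $[\chi,\delta_j] \in \Der(-\log D)$ by hypothesis, and $\chi(g_j)\delta_j \in \mathcal{O}_{\C^n}\cdot\Der(-\log D)_0 = \Der(-\log D)$; so indeed $[\chi,\delta] \in \Der(-\log D)$ for all $\delta \in \Der(-\log D)$. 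Then Lemma~\ref{selfnorm} applies directly and gives $\chi \in \Der(-\log D)$. Finally, since $\chi$ has weight zero by assumption, $\chi \in \Der(-\log D) \cap (\Der_{\C^n})_0 = \Der(-\log D)_0$, which is exactly the claim.

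The only step that needs a moment's care is the very first reduction — verifying that testing brackets against the weight-zero generators $\delta_j$ suffices to conclude $[\chi,\delta]\in\Der(-\log D)$ for arbitrary $\delta\in\Der(-\log D)$ — and this is the Leibniz computation above, using crucially the linearity of $D$ in the form $\Der(-\log D)=\mathcal{O}_{\C^n}\cdot\Der(-\log D)_0$. After that, no new ideas are required: one just invokes Lemma~\ref{selfnorm} and then intersects with the weight-zero subspace. The phrase ``in a similar way'' in the excerpt presumably refers to re-running the smooth-point computation with homogeneous coefficients, but routing it through Lemma~\ref{selfnorm} as above seems cleaner and avoids repeating that argument. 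I would write it in two or three lines in exactly this order: upgrade the bracket hypothesis, apply Lemma~\ref{selfnorm}, intersect with weight zero.
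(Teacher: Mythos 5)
Your proof is correct, but it takes a different route from the one the paper intends. The paper's ``in a similar way'' points at re-running the argument of Lemma~\ref{selfnorm} in the graded setting: reduce to a smooth point, normalise the Saito matrix, and check that the coefficient $a_1$ of $\partial/\partial x_1$ in $\chi$ lies in $(x_1)$, now keeping track of weights. You instead \emph{deduce} the graded statement from the ungraded one: the hypothesis $[\chi,\delta_j]\in\Der(-\log D)_0$ for the weight-zero generators $\delta_j$, together with the characterisation $\Der(-\log D)=\mathcal{O}_{\C^n}\cdot\Der(-\log D)_0$ of linearity and the Leibniz identity $[\chi,g\delta]=\chi(g)\delta+g[\chi,\delta]$, shows that $\chi$ normalises all of $\Der(-\log D)$; Lemma~\ref{selfnorm} then gives $\chi\in\Der(-\log D)$, and since $\chi$ has weight zero, $\chi\in\Der(-\log D)\cap(\Der_{\C^n})_0=\Der(-\log D)_0$. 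Each step checks out, and the argument has the merit of isolating exactly where linearity of $D$ enters (namely, that the weight-zero part generates the full logarithmic module over $\mathcal{O}_{\C^n}$), whereas redoing the smooth-point computation would keep the two lemmas logically independent. The only blemish is the typo ``$\delta_j\in\Der(-\log D)\subset\Der(-\log D)$'', which should read $\delta_j\in\Der(-\log D)_0$, so that the hypothesis applies; the surrounding sentence makes the intent clear.
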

\begin{Proposition}\label{zerocohomol} $\mathcal{H}^0(\mathcal{C}^\bullet)=0$.
\end{Proposition}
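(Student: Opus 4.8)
The plan is to compute $\mathcal{H}^0(\mathcal{C}^\bullet) = \ker(d^0)$ directly from the description of $d^0$ recalled just above, namely $d^0(\sigma)(\delta) = [\delta,\sigma]$ for $\sigma \in \mathcal{C}^0 = \Der_{\C^n}/\Der(-\log D)$ and $\delta \in \Der(-\log D)$. An element $\sigma \in \mathcal{C}^0$ lies in $\ker(d^0)$ precisely when $[\delta,\sigma] \in \Der(-\log D)$ for every $\delta \in \Der(-\log D)$, where here $\sigma$ is understood as a representative in $\Der_{\C^n}$ and the bracket is taken in $\Der_{\C^n}$ (this is well defined modulo $\Der(-\log D)$ because $\Der(-\log D)$ is a Lie subalgebra). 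But this is exactly the self-normalising condition: such a $\sigma$ normalises $\Der(-\log D)$ inside $\Der_{\C^n}$.

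By Lemma \ref{selfnorm}, $\Der(-\log D)$ is a self-normalising Lie subalgebra of $\Der_{\C^n}$, so any $\sigma \in \Der_{\C^n}$ with $[\delta,\sigma]\in\Der(-\log D)$ for all $\delta\in\Der(-\log D)$ must itself lie in $\Der(-\log D)$. Hence $\sigma$ represents the zero class in $\mathcal{C}^0 = \Der_{\C^n}/\Der(-\log D)$, and therefore $\ker(d^0) = 0$, i.e. $\mathcal{H}^0(\mathcal{C}^\bullet) = 0$. One should note the sign: $[\delta,\sigma] = -[\sigma,\delta]$, so the condition in the definition of $d^0$ and the condition in Lemma \ref{selfnorm} coincide up to an irrelevant sign, and this causes no issue.

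There is essentially no obstacle here; the statement is an immediate corollary of Lemma \ref{selfnorm} once one unwinds the definition of $d^0$. The only point requiring a word of care is the identification of the kernel of $d^0$ as a subsheaf of $\Der_{\C^n}/\Der(-\log D)$ with the normaliser modulo $\Der(-\log D)$, which is transparent from the formula $d^0(\sigma)(\delta)=[\delta,\sigma]$ and the fact that $\Der(-\log D)$ is closed under brackets. So the proof is a single line invoking Lemma \ref{selfnorm}.
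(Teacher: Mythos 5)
Your proof is correct and is essentially identical to the paper's: both identify $\ker(d^0)$ with the normaliser of $\Der(-\log D)$ in $\Der_{\C^n}$ modulo $\Der(-\log D)$ and then invoke Lemma \ref{selfnorm} to conclude it vanishes. No gaps; your extra remarks about well-definedness and the sign are harmless elaborations of the same one-line argument.
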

\begin{proof} Consider $\sigma \in \mathcal{H}^0(\mathcal{C}^\bullet)=\ker(d^0)$. Hence, $[-,\sigma]$ is the zero map, i.e. for all $\delta \in \Der(-\log D)$ we have that $[\delta,\sigma]\in \Der(-\log D)$. Then by Lemma \ref{selfnorm}, $\sigma \in \Der(-\log D)$.\end{proof}
\begin{Proposition} Let $(D,0)\subset(\C^n,0)$ be a germ of a smooth divisor. Then $\mathcal{FT}^1(D)=0$. 
\end{Proposition}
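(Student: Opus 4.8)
The plan is to reduce to the coordinate normal form $f=x_1$ and then write down an explicit infinitesimal coordinate change trivialising an arbitrary infinitesimal admissible deformation; it will turn out that the admissibility condition plays no role here, a smooth divisor being already rigid as an ambient hypersurface.

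First, since $(D,0)$ is smooth I would choose coordinates $x_1,\dots,x_n$ on $(\C^n,0)$ so that $f=x_1$; then $(D,0)=(V(x_1),0)$ is a free divisor, with $\Der(-\log D)$ freely generated by $x_1\partial/\partial x_1,\partial/\partial x_2,\dots,\partial/\partial x_n$, so that $\mathcal{FT}^1(D)$ is meaningful. By Remark \ref{forminfdef} every class in $\mathcal{FT}^1(D)$ is represented by an infinitesimal admissible deformation of the shape $(X,0)=(V(x_1+\epsilon\cdot f'),0)\subset(\C^n\times T_\epsilon,0)$ for some $f'\in\mathcal{O}_{\C^n,0}$. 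Now I apply the triviality criterion of Remark \ref{trivial}: it suffices to produce a vector field $\sigma=\sum_{j=1}^n\sigma_j\,\partial/\partial x_j$ such that $f(x+\epsilon\cdot\sigma(x))$ generates the ideal $(f+\epsilon\cdot f')$. Since $f=x_1$ we get $f(x+\epsilon\cdot\sigma(x))=x_1+\epsilon\cdot\sigma_1(x)$, so the choice $\sigma=f'\,\partial/\partial x_1$ yields exactly $f(x+\epsilon\cdot\sigma(x))=x_1+\epsilon\cdot f'=f+\epsilon\cdot f'$. Hence the deformation is trivial, and therefore $\mathcal{FT}^1(D)=0$.

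I do not expect a genuine obstacle: the only points to check carefully are that smoothness legitimately puts $f$ in the normal form $x_1$, and that, as recorded in Remark \ref{forminfdef}, every infinitesimal admissible deformation of a hypersurface germ arises by perturbing its equation. As a sanity check one may also argue more abstractly: a smooth complex space germ is rigid (see \cite{intdef}), so any deformation of $(D,0)$ over $T_\epsilon$ is trivial as a deformation of complex space germs, whence trivial as an admissible deformation by Remark \ref{trivialadmisdefgerm}; this again gives $\mathcal{FT}^1(D)=0$. Deducing the vanishing from Theorem \ref{FT1} by computing $\mathcal{H}^1(\mathcal{C}^\bullet)_0$ directly for $D=V(x_1)$ is possible as well, but more laborious than the argument above.
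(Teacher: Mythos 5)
Your proof is correct, but it takes a genuinely different route from the paper's. The paper proves this proposition inside the cohomological framework: using the identification $\mathcal{FT}^1(D)\cong\mathcal{H}^1(\mathcal{C}^\bullet)_0$ of Theorem \ref{FT1}, it writes a cocycle as the columns of a matrix $S+\epsilon\cdot T$ (with $T$ supported in the first row, entries $g_i(x_2,\dots,x_n)$), works out the cocycle condition $g_i=-\partial g_1/\partial x_i$ explicitly, and then exhibits $\sigma=g_1\,\partial/\partial x_1$ as a preimage under $d^0$. You instead stay entirely at the level of equations and coordinate changes: Remark \ref{forminfdef} puts the deformation in the form $V(x_1+\epsilon\cdot f')$, and the explicit automorphism attached to $\sigma=f'\,\partial/\partial x_1$ trivialises it via Remark \ref{trivial}, with Remark \ref{trivialadmisdefgerm} guaranteeing that triviality as a deformation of complex space germs is the same as triviality as an admissible deformation. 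Your observation that admissibility plays no role is accurate --- your second, abstract argument (smooth germs are rigid, hence every deformation over $T_\epsilon$ is already trivial) is the cleanest formulation and is perfectly valid given Remark \ref{trivialadmisdefgerm}. What the paper's longer computation buys is a worked illustration of how to manipulate cocycles in $\mathcal{C}^\bullet$ concretely; it serves as a warm-up for the immediately following normal crossing computation, where no rigidity shortcut is available. What your argument buys is brevity and the conceptual point that the vanishing here is forced by smoothness alone, independently of the free-divisor structure.
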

\begin{proof} We can suppose $f=x_1$ and we can take as Saito matrix the matrix
 \begin{equation*} S=[\delta_1,\dots,\delta_n]=
\begin{bmatrix}
		x_1&0&0&\cdots&0\\
		0&1&0&\cdots&0\\
		0&0&1&\cdots&0\\
		\vdots&\vdots&\vdots& &\vdots\\
		0&0&0&\cdots&1\\
\end{bmatrix}.
\end{equation*}
Moreover, we can represent an element of $\mathcal{C}^1$ as the column of the $n\times n$ matrix $S+\epsilon \cdot T$, where $T$ is the matrix
 \begin{equation*} T=[\tilde{\delta}_1,\dots,\tilde{\delta}_n]=
\begin{bmatrix}
		g_1&g_2&\cdots&g_n\\
		0&0&\cdots&0\\
		\vdots&\vdots& &\vdots\\
		0&0&\cdots&0\\
\end{bmatrix} 
\end{equation*}
and $g_i=g_i(x_2,\dots,x_n)\in \mathcal{O}_{\C^n,0}$. 

Because $[\delta_i,\delta_j]=0$ for every $i,j=1,\dots,n$, then the element $S+\epsilon \cdot T$ is in the kernel of $d^1$ if and only if $g_i=-\partial g_1/\partial x_i$ for all $i=2,\dots,n$. To show that this element is zero in cohomology, it is enough to find $\sigma\in \mathcal{C}^0= \Der_{\C^n}/\Der(-\log D)$ such that $[\sigma,\delta_i]=\tilde{\delta}_i$ for all $i=1,\dots,n$, i.e. $S+\epsilon \cdot T$ is in the image of $d^0$. Consider $\sigma=g_1\partial/\partial x_1$, then it is the element we are looking for.
\end{proof}
\begin{Proposition} Let $(D,0)\subset (\C^n,0)$ be the germ of the normal crossing divisor. Then $\mathcal{FT}^1(D)=0$. 
\end{Proposition}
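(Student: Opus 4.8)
The plan is to compute the stalk $\mathcal{H}^1(\mathcal{C}^\bullet)_0$ directly; by Theorem~\ref{FT1} this is $\mathcal{FT}^1(D)$. Write $R=\mathcal{O}_{\C^n,0}$ and abbreviate $\partial_i=\partial/\partial x_i$. By Example~\ref{ncrdiv}, for the normal crossing divisor $D=V(x_1\cdots x_n)$ the module $\Der(-\log D)$ is free over $R$ with basis $\delta_i:=x_i\partial_i$, one has $[\delta_i,\delta_j]=0$ for all $i,j$, and since $\Der_{\C^n}=\bigoplus_i R\,\partial_i$,
$$N:=\Der_{\C^n}/\Der(-\log D)\;\cong\;\bigoplus_{i=1}^{n}\bigl(R/(x_i)\bigr)\,\partial_i .$$
As $\Der(-\log D)$ is free on $\delta_1,\dots,\delta_n$ and $d^1\psi$ is $R$-bilinear, a $1$-cochain is the same as a tuple $\psi(\delta_j)=\sum_k c_{kj}\,\partial_k\in N$ with $c_{kj}\in R/(x_k)$, and $\psi\in\ker d^1$ precisely when $d^1\psi$ vanishes on each $\delta_i\wedge\delta_j$.

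First I would record the differentials in these coordinates from the bracket rules
$$[\,x_i\partial_i,\,g\,\partial_k\,]=x_i\frac{\partial g}{\partial x_i}\,\partial_k\ \ (k\neq i),\qquad [\,x_i\partial_i,\,g\,\partial_i\,]=\Bigl(x_i\frac{\partial g}{\partial x_i}-g\Bigr)\partial_i .$$
For $\sigma=\sum_k a_k\partial_k\in\mathcal{C}^0=N$ (so $a_k\in R/(x_k)$, i.e.\ $\partial a_k/\partial x_k=0$) this gives, modulo $\Der(-\log D)$,
$$(d^0\sigma)(\delta_j)=[\delta_j,\sigma]=-a_j\,\partial_j+\sum_{k\neq j}x_j\frac{\partial a_k}{\partial x_j}\,\partial_k .$$
For $\psi$ as above, $[\delta_i,\delta_j]=0$ reduces the cocycle condition on $\delta_i\wedge\delta_j$ to $[\delta_i,\psi(\delta_j)]\equiv[\delta_j,\psi(\delta_i)]\pmod{\Der(-\log D)}$; comparing coefficients of $\partial_i$ for $i\neq j$ (and using $\partial c_{ii}/\partial x_i=0$) gives
$$c_{ij}=-\,x_j\,\frac{\partial c_{ii}}{\partial x_j}\qquad(i\neq j),$$
so the off-diagonal coefficients of $\psi$ are determined by the diagonal ones $c_{11},\dots,c_{nn}$.

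It then remains to exhibit every cocycle as a coboundary. Setting $\sigma:=-\sum_{j}c_{jj}\,\partial_j\in\mathcal{C}^0$, one checks coefficientwise from the displayed formula for $d^0$ and the relation just obtained that $(d^0\sigma)(\delta_j)=\psi(\delta_j)$ for every $j$: the coefficient of $\partial_j$ in $(d^0\sigma)(\delta_j)$ is $-(-c_{jj})=c_{jj}$, and the coefficient of $\partial_k$ for $k\neq j$ is $x_j\,\partial(-c_{kk})/\partial x_j=-x_j\,\partial c_{kk}/\partial x_j=c_{kj}$. Hence $\ker d^1\subseteq\image d^0$, so $\mathcal{H}^1(\mathcal{C}^\bullet)_0=0$ and therefore $\mathcal{FT}^1(D)=0$.

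I do not expect a genuine obstacle: this is a short bookkeeping computation once $N$, $d^0$ and $d^1$ are written out in coordinates. The only points needing care are that the cocycle condition may be tested on the $R$-basis $\{\delta_i\}$ of $\Der(-\log D)$ (because $d^1\psi$ is $R$-bilinear, being a $2$-cochain), and that each $c_{kj}$ represents a class in $R/(x_k)$, so that every derivative $\partial c_{kk}/\partial x_j$ occurring above is independent of $x_k$ and all the congruences make sense modulo the appropriate $(x_k)$.
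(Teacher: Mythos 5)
Your computation is correct and follows essentially the same route as the paper: identify $\Der_{\C^n}/\Der(-\log D)$ with $\bigoplus_k (R/(x_k))\partial_k$, use $[\delta_i,\delta_j]=0$ to reduce the cocycle condition to the relation $c_{ij}=-x_j\,\partial c_{ii}/\partial x_j$ determining the off-diagonal coefficients from the diagonal ones, and then exhibit the cocycle as $d^0$ of the diagonal vector field (your $\sigma$ differs from the paper's only by a sign, which is immaterial for membership in $\image d^0$). The paper phrases the same argument in terms of the Saito matrix $S+\epsilon T$ rather than cochain coordinates, but the content is identical.
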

\begin{proof} Let $f=x_1\cdots x_n$ be a defining equation for $D$.
We can take as Saito matrix
 \begin{equation*} S=[\delta_1,\dots,\delta_n]=
\begin{bmatrix}
		x_1&0&\cdots&0\\
		0&x_2&\cdots&0\\
		\vdots&\vdots& &\vdots\\
		0&0&\cdots&x_n\\
\end{bmatrix}.
\end{equation*}
Moreover, we can represent an element of $\mathcal{C}^1$ as columns of the $n\times n$ matrix $S+\epsilon \cdot T$, where $T$ is the matrix
 \begin{equation*} T=[\tilde{\delta}_1,\dots,\tilde{\delta}_n]=
\begin{bmatrix}
		g_{1,1}&g_{1,2}&\cdots&g_{1,n}\\
		g_{2,1}&g_{2,2}&\cdots&g_{2,n}\\
		\vdots&\vdots& &\vdots\\
		g_{n,1}&g_{n,2}&\cdots&g_{n,n}\\
\end{bmatrix} 
\end{equation*}
and $g_{i,j}=g_{i,j}(x_1,\dots,\hat{x}_i,\dots,x_n)\in \mathcal{O}_{\C^n}$. 

Because $[\delta_i,\delta_j]=0$ for every $i,j=1,\dots,n$, then the element represented by $S+\epsilon \cdot T$ is in the kernel of $d^1$ if and only if $A_{i,j}=-[\delta_i,\tilde{\delta}_j]+[\delta_j,\tilde{\delta}_i]\in\Der(-\log D)$ for all $i,j=1,\dots,n$. Let us suppose that $i<j$, then
\begin{equation*} A_{i,j}=
\begin{bmatrix}
	-x_i\partial g_{1,j}/\partial x_i\\
	\vdots \\
	g_{i,j}\\
	\vdots\\
	-x_i\partial g_{j,j}/\partial x_i\\
	\vdots\\
	-x_i\partial g_{n,j}/\partial x_i\\
\end{bmatrix} +
\begin{bmatrix}
	x_j\partial g_{1,i}/\partial x_j\\
	\vdots \\
	x_j\partial g_{i,i}/\partial x_j\\
	\vdots\\
	-g_{j,i}\\
	\vdots\\
	x_j\partial g_{n,i}/\partial x_j\\
\end{bmatrix} 
\end{equation*}
Now, $A_{i,j}\in \Der(-\log D)$ for all $i,j=1,\dots,n$ if and only if $A_{i,j}=0$ if and only if
 \begin{equation*} T=
\begin{bmatrix}
		g_{1,1}&-x_2\partial g_{1,1}/\partial x_2&\cdots&-x_n \partial g_{1,1}/\partial x_n\\
		-x_1\partial g_{2,2}/\partial x_1&g_{2,2}&\cdots&-x_n \partial g_{2,2}/\partial x_n\\
		\vdots&\vdots& &\vdots\\
		-x_1\partial g_{n,n}/\partial x_1&-x_2 \partial g_{n,n}/\partial x_2&\cdots&g_{n,n}\\
\end{bmatrix}. 
\end{equation*}
 To show that this element is zero in cohomology, it is enough to find $\sigma\in \mathcal{C}^0= \Der_{\C^n}/\Der(-\log D)$ such that $[\sigma,\delta_i]=\tilde{\delta}_i$ for all $i=1,\dots,n$, i.e. $S+\epsilon \cdot T$ is in the image of $d^0$. Consider
 \begin{equation*} \sigma=
\begin{bmatrix}
	g_{1,1}\\
	\vdots \\
	g_{n,n}\\
\end{bmatrix}
\end{equation*}
then it is the element we are looking for.
\end{proof}
\begin{Remark} There exist free divisors such that $\mathcal{FT}^1(D)\ne0$. 
\end{Remark}
\begin{proof} Consider $f=xy(x-y)(x+y)\in \C[x,y]$ and the germ of a free divisor $(D,0)=(V(f),0)\subset (\C^2,0)$ with Saito matrix \begin{equation*} A=
\begin{bmatrix}
		x&0\\
		y&x^2y-y^3\\
\end{bmatrix}.
\end{equation*}
To find an infinitesimal admissible deformation for $(D,0)$ we have to find a non zero element  $\alpha \in \mathcal{H}^1(\mathcal{C}^\bullet)_0=\mathcal{FT}^1(D).$ Let $\alpha$ be defined by the columns of the following matrix \begin{equation*} B=
\begin{bmatrix}
		0&0\\
		0&xy^2-y^3\\
\end{bmatrix}.
\end{equation*}
this is an element of $\mathcal{H}^1(\mathcal{C}^\bullet)_0$ that describes the infinitesimal admissible deformation $X=V(xy(x-y)(x+(1+\epsilon)y))=V(f+\epsilon(x^2y^2-xy^3)) \subset \C^2\times T_\epsilon$. This infinitesimal admissible deformation is non-trivial because it is a non-trivial deformation of $f$ as a germ of function because $x^2y^2-xy^3$ is not in the Jacobian ideal of $f$, see \cite{intdef}, Chapter II, 1.4. \end{proof}

\subsection{Infinitesimal linearly admissible deformations}
\begin{Theorem} Let $(D,0)\subset  (\C^n,0)$ be a germ of a linear free divisor. Then the germ at the origin of the first cohomology sheaf of the complex $\mathcal{C}^\bullet_0$ is isomorphic to $\mathcal{LFT}^1(D)$, i.e. $ H^1(\mathcal{C}^\bullet_0)_0\cong \mathcal{LFT}^1(D).$
\end{Theorem}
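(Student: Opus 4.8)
The plan is to mimic the proof of Theorem \ref{FT1} verbatim, simply restricting everything to weight zero. The key point is that, by definition, a linearly admissible deformation over $T_\epsilon$ is an admissible deformation $(X,0)=(V(f+\epsilon f'),0)$ whose relative logarithmic derivation module has a basis with linear coefficients; by the argument in the proof of Theorem \ref{infdefref}, writing the Saito matrix $A(\epsilon)=B+\epsilon C$, the matrix $B$ is a Saito matrix of $(D,0)$ and, since $D$ is linear, we may choose $B$ with linear entries and then $C$ must have linear entries as well. Hence the classes $\tilde\delta_i=\delta_i'-\delta_i$ have linear coefficients, i.e.\ they are weight zero, so they define elements of $(\Der_{\C^n}/\Der(-\log D))_0$, and the condition that $\langle\delta_1+\epsilon\tilde\delta_1,\dots,\delta_n+\epsilon\tilde\delta_n\rangle$ be closed under Lie brackets becomes exactly the cocycle condition $d^1_0(\psi)=0$ for the $\C$-linear map $\psi\colon\Der(-\log D)_0\to(\Der_{\C^n}/\Der(-\log D))_0$ sending $\delta_i\mapsto\tilde\delta_i$. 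Here one uses that $\Der(-\log D)=\mathcal{O}_{\C^n}\cdot\Der(-\log D)_0$ for a linear free divisor, so a weight zero cocycle on $\Der(-\log D)_0$ extends $\mathcal{O}_{\C^n}$-linearly and the bracket-closedness over $\mathcal{O}_{\C^n\times T_\epsilon,0}$ is equivalent to bracket-closedness on the generators.

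Concretely I would proceed in two steps, exactly parallel to Theorem \ref{FT1}. First, identify $\ker(d^1_0)$ with the set of infinitesimal linearly admissible deformations of $(D,0)$: given $\psi\in\ker(d^1_0)$, form the module $\mathcal{L}$ generated by the $\delta_i+\epsilon\psi(\delta_i)$; by $\C$-linearity of the bracket, $\mathcal{L}$ is closed under Lie brackets iff for all $\delta,\nu\in\Der(-\log D)_0$ one has $\psi([\delta,\nu])-[\delta,\psi(\nu)]+[\nu,\psi(\delta)]\in\Der(-\log D)$, which is precisely $d^1_0(\psi)=0$; and by the relative Saito Lemma \ref{relatversaitlem} (together with reducedness being open, so that $\det[\delta_i+\epsilon\tilde\delta_i]=f+\epsilon f'$ is reduced since its $\epsilon=0$ reduction $f$ is), this module is indeed $\Der(-\log X/T_\epsilon)$ for a genuine linearly admissible deformation $(X,0)=(V(f+\epsilon f'),0)$, the linearity of the basis being automatic from the linearity of the $\delta_i$ and $\tilde\delta_i$. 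Second, identify $\image(d^0_0)$ with the trivial such deformations: by Remark \ref{trivial} the deformation $V(f+\epsilon f')$ is trivial iff $f(x+\epsilon\sigma(x))=f+\epsilon f'$ for some vector field $\sigma$, and by the same tangent-map computation as in the proof of Theorem \ref{FT1} this forces $\psi(\delta_i)=[\sigma,\delta_i]$; the only extra remark is that, comparing weight zero parts, $\sigma$ may be taken of weight zero, i.e.\ $\sigma\in(\Der_{\C^n})_0$, so its class lies in $\mathcal{C}^0_0=(\Der_{\C^n}/\Der(-\log D))_0$ and $\psi=d^0_0(\sigma/\!\!\sim)$.

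The only genuinely new ingredient compared with Theorem \ref{FT1} is the weight-zero bookkeeping, and this is where I expect the one real subtlety: one must check that, in a \emph{linearly} admissible deformation over $T_\epsilon$, the triviality can be witnessed by a weight zero $\sigma$ — equivalently, that the triviality question for linearly admissible deformations is governed by the weight zero part of $\Der_{\C^n}/\Der(-\log D)$ rather than the whole module. This follows because $f$ is homogeneous of degree $n$, hence $f'$ appearing in a linearly admissible deformation is also forced to be homogeneous of degree $n$ (as $\det(B+\epsilon C)=f+\epsilon f'$ with $B,C$ having linear entries), so the equation $f(x+\epsilon\sigma(x))=f+\epsilon f'$ only involves the degree-one-homogeneous, i.e.\ weight zero, part of $\sigma$; the higher and lower weight components of any trivialising $\sigma$ act trivially and can be discarded. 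Apart from this, Lemma \ref{selfnormzero} guarantees the analogue $\mathcal{H}^0(\mathcal{C}^\bullet_0)=0$ is available if needed, and every other step is the word-for-word weight zero transcription of the proof of Theorem \ref{FT1}, so I would keep the exposition brief and refer back to that proof for the repeated computations.
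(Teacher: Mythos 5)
Your argument is correct and is essentially the paper's own proof: the paper disposes of this theorem in one line by citing Theorem \ref{FT1} together with the second part of Theorem \ref{infdefref} (the linearity of the $\tilde{\delta}_i$), and your proposal is precisely the unwinding of that citation, restricting the two steps of the proof of Theorem \ref{FT1} to weight zero. Your extra care with the weight-zero reduction of the trivialising vector field $\sigma$ (using that $f$ and $f'$ are homogeneous of degree $n$, so the components of $\sigma$ of nonzero weight land in $\Der(-\log D)$ and can be discarded) is a detail the paper leaves implicit, and it is handled correctly.
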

\begin{proof} This is a consequence of Theorem \ref{FT1} and the second part of Theorem \ref{infdefref}. \end{proof}
\begin{Corollary} Let $(D,0)\subset (\C^n,0)$ be a germ of a linear free divisor. Then the functor $\mathbf{LFD}_D$ satisfies Schlessinger condition (H3) from \cite{schless}.
\end{Corollary}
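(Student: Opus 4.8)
The plan is to recall Schlessinger's condition (H3) and show it follows from finite-dimensionality of the tangent space $\mathcal{LFT}^1(D) = \mathbf{LFD}_D(\C[t]/(t^2))$ together with the fact (just established) that this tangent space has a vector space structure coming from the cohomology group $H^1(\mathcal{C}^\bullet_0)_0$. Recall that (H3) requires $\dim_k t_F < \infty$, where $t_F = F(k[\epsilon])$ is the tangent space of the functor $F = \mathbf{LFD}_D$; combined with (H1) and (H2), already verified in Theorem \ref{isdeffunct}, this gives that $F$ has a hull. So the entire content to prove is that $H^1(\mathcal{C}^\bullet_0)_0$ is a finite-dimensional $\C$-vector space.

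First I would observe that the complex $\mathcal{C}^\bullet_0$ is, by its very definition, a complex of \emph{finite-dimensional} $\C$-vector spaces: indeed $\Der(-\log D)_0$ is by Definition the finite-dimensional Lie subalgebra of weight-zero logarithmic vector fields, and $(\Der_{\C^n}/\Der(-\log D))_0$ is the weight-zero part of $\Der_{\C^n}/\Der(-\log D)$, which is again finite-dimensional (the weight-zero vector fields in $\Der_{\C^n}$ are spanned by the $x_i\,\partial/\partial x_j$, a space of dimension $n^2$, and we are passing to a subquotient of it). Hence each $\mathcal{C}^p_0 = \Hom_\C(\bigwedge^p\Der(-\log D)_0,(\Der_{\C^n}/\Der(-\log D))_0)$ is finite-dimensional, and therefore so is every cohomology group, in particular $H^1(\mathcal{C}^\bullet_0)_0$. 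By the preceding Theorem this is isomorphic to $\mathcal{LFT}^1(D) = \mathbf{LFD}_D(\C[t]/(t^2))$, which is precisely the tangent space $t_{\mathbf{LFD}_D}$; its $\C$-linear structure is the one transported from the cohomology group (one should remark in passing that the vector-space structure on $t_F$ provided by (H1)+(H2) agrees with this one, which is Schlessinger's standard observation). Thus $\dim_\C t_{\mathbf{LFD}_D} < \infty$, which is condition (H3).

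The main (and essentially only) subtlety is making sure the identification of the abstract tangent functor value with the cohomology group respects the relevant vector-space structures, so that finiteness of one yields finiteness of the other — but this is immediate from the isomorphism in the previous Theorem, since that isomorphism is constructed $\C$-linearly (it sends the deformation with Saito matrix $S + \epsilon T$ to the cochain whose columns are those of $T$). One could alternatively phrase this without invoking a structure-compatibility statement at all: (H3) literally only asks for $t_F$ to be finite-dimensional as a set-theoretic quotient with its canonical $k$-module structure, and being in bijection with a finite-dimensional $\C$-vector space already forces finite dimensionality. So the proof is genuinely short: cite Theorem \ref{isdeffunct} for (H1), (H2); cite the preceding Theorem for $\mathcal{LFT}^1(D)\cong H^1(\mathcal{C}^\bullet_0)_0$; note that $\mathcal{C}^\bullet_0$ consists of finite-dimensional $\C$-vector spaces because $\Der(-\log D)_0$ and $(\Der_{\C^n}/\Der(-\log D))_0$ are finite-dimensional; conclude (H3).

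I do not expect any real obstacle here; if anything, the one place to be careful is to state clearly \emph{why} $(\Der_{\C^n}/\Der(-\log D))_0$ is finite-dimensional — namely that weight-zero elements of $\Der_{\C^n}$ form the finite-dimensional space $\bigoplus_{i,j}\C\, x_i\partial/\partial x_j \cong \mathfrak{gl}_n(\C)$, and a subquotient of a finite-dimensional space is finite-dimensional — and, for completeness, to remind the reader that (H1)+(H2)+(H3) is exactly Schlessinger's criterion guaranteeing the existence of a hull, which is the motivation for checking (H3) in the first place.
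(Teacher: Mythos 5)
Your proof is correct and follows essentially the same route as the paper, which simply cites the preceding Theorem together with the fact that the cohomology of a finite-dimensional Lie algebra with coefficients in a finite-dimensional representation is finite-dimensional. Your version merely spells out why $\Der(-\log D)_0$ and $(\Der_{\C^n}/\Der(-\log D))_0$ are finite-dimensional, which is a welcome but inessential elaboration.
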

\begin{proof} This is a consequence of the previous Theorem and of the fact that the cohomology of a finite dimensional Lie algebra is finite dimensional.
\end{proof}
\begin{Corollary} Let $(D,0)\subset (\C^n,0)$ be a germ of a linear free divisor. Then $\mathbf{LFD}_D$ has a hull.
\end{Corollary}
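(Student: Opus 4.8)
The plan is to deduce this directly from Schlessinger's criterion (\cite{schless}): a functor $F\colon\mathbf{Art}\to\mathbf{Set}$ with $F(k)$ a one-point set admits a hull --- a formally versal deformation --- as soon as it satisfies conditions (H1), (H2) and (H3). So I would simply verify these three hypotheses for $F=\mathbf{LFD}_D$ and then quote the criterion.

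First I would observe that $\mathbf{LFD}_D(\C)$ is the set of isomorphism classes of linearly admissible deformations of $(D,0)$ over $\Spec\C$, and the only such deformation is the trivial one, so $\mathbf{LFD}_D(\C)=\{\ast\}$. Conditions (H1) and (H2) are already established in Theorem \ref{isdeffunct}. Condition (H3) asks that the tangent space $\mathbf{LFD}_D(\C[t]/(t^2))=\mathcal{LFT}^1(D)$ be finite-dimensional over $\C$; this is precisely the content of the preceding Corollary, whose proof identifies $\mathcal{LFT}^1(D)$ with $H^1(\mathcal{C}^\bullet_0)_0$ and notes that $\mathcal{C}^\bullet_0$ is the Chevalley--Eilenberg complex of the finite-dimensional Lie algebra $\Der(-\log D)_0$ with coefficients in the finite-dimensional module $(\Der_{\C^n}/\Der(-\log D))_0$, hence has finite-dimensional cohomology.

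Having checked (H1)--(H3), Schlessinger's theorem then produces a complete local $\C$-algebra $R$ with residue field $\C$ together with a formal element $\hat\xi\in\widehat{\mathbf{LFD}}_D(R)$ inducing a smooth morphism of functors $h_R\to\mathbf{LFD}_D$ which is bijective on tangent spaces; by definition $(R,\hat\xi)$ is a hull of $\mathbf{LFD}_D$, i.e. the asserted formally versal deformation. There is essentially no obstacle here: all the substantive work has already been carried out in Theorem \ref{isdeffunct} and the preceding Corollary, and the present statement is only their assembly through Schlessinger's criterion. The one caveat worth recording is that one obtains merely a hull and not a pro-representable functor, since condition (H4) of \cite{schless} has not been verified and in general will fail.
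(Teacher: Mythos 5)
Your proposal is correct and follows exactly the paper's route: conditions (H1) and (H2) come from Theorem \ref{isdeffunct}, condition (H3) from the preceding Corollary via finite-dimensionality of Lie algebra cohomology of $\Der(-\log D)_0$, and Schlessinger's Theorem 2.11 then yields the hull. The additional remarks about $\mathbf{LFD}_D(\C)=\{\ast\}$ and the failure of (H4) are accurate but not needed beyond what the paper records.
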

\begin{proof} This is a consequence of Theorem 2.11 from \cite{schless}, Theorem \ref{isdeffunct} and the previous Corollary. 
\end{proof}
\begin{Proposition} $H^0(\mathcal{C}^\bullet_0)=0$.
\end{Proposition}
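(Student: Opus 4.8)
The plan is to mirror the proof of Proposition \ref{zerocohomol} almost verbatim, with the self-normalising statement for $\Der(-\log D)$ replaced by its weight-zero counterpart, Lemma \ref{selfnormzero}.

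First I would unwind the definitions. Since $\mathcal{C}^0_0 = \operatorname{Hom}_{\C}(\bigwedge^0\Der(-\log D)_0, (\Der_{\C^n}/\Der(-\log D))_0) = (\Der_{\C^n}/\Der(-\log D))_0$ and the differential $d^0_0$ sends $\sigma \mapsto (\delta \mapsto [\delta,\sigma])$, an element of $H^0(\mathcal{C}^\bullet_0) = \ker(d^0_0)$ is a class $\bar\sigma$ that we may represent by an honest weight-zero vector field $\sigma \in (\Der_{\C^n})_0$: because $D$ is linear, both $\Der_{\C^n}$ and $\Der(-\log D) = \mathcal{O}_{\C^n}\cdot\Der(-\log D)_0$ are graded submodules, so $(\Der_{\C^n}/\Der(-\log D))_0$ is exactly weight-zero vector fields modulo $\Der(-\log D)_0$. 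The vanishing condition $d^0_0(\bar\sigma)=0$ then says precisely that $[\delta,\sigma] \in \Der(-\log D)$ for every $\delta \in \Der(-\log D)_0$.

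Next I would note that the bracket of two weight-zero vector fields is again of weight zero, so in fact $[\delta,\sigma] \in \Der(-\log D)\cap(\Der_{\C^n})_0 = \Der(-\log D)_0$ for all $\delta \in \Der(-\log D)_0$. Applying Lemma \ref{selfnormzero}, which asserts that $\Der(-\log D)_0$ is a self-normalising Lie subalgebra of $(\Der_{\C^n})_0$, yields $\sigma \in \Der(-\log D)_0$, hence $\bar\sigma = 0$ in $(\Der_{\C^n}/\Der(-\log D))_0$. This gives $H^0(\mathcal{C}^\bullet_0)=0$.

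The only step needing a word of care — and the sole, quite mild, obstacle — is the identification of $(\Der_{\C^n}/\Der(-\log D))_0$ with weight-zero vector fields modulo $\Der(-\log D)_0$, i.e. that a weight-zero representative can always be chosen; this is where linearity of $D$ (gradedness of $\Der(-\log D)$) is used. Once that is in place, the result is a purely formal consequence of Lemma \ref{selfnormzero}, exactly as Proposition \ref{zerocohomol} follows from Lemma \ref{selfnorm}.
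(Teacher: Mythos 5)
Your proposal is correct and follows exactly the route the paper takes: the paper's proof is literally ``like the proof of Proposition \ref{zerocohomol} but using Lemma \ref{selfnormzero}.'' The extra care you take in identifying $(\Der_{\C^n}/\Der(-\log D))_0$ with weight-zero vector fields modulo $\Der(-\log D)_0$ and noting that brackets of weight-zero fields stay in weight zero is a sensible elaboration of the same argument.
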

\begin{proof} Like the proof of Proposition \ref{zerocohomol} but using Lemma \ref{selfnormzero}.\end{proof}
\begin{Definition} Let $M$ be a vector space and let $\mathfrak{g}$ be a Lie algebra. A \emph{representation of $\mathfrak{g}$ in $M$} is a homomorphism $\varrho$ of $\mathfrak{g}$ in $\mathfrak{gl}(M)$.
\end{Definition}

In what follows, we will refer both to the homomorphism $\varrho$ and to the vector space $M$ as representations of $\mathfrak{g}$.

\begin{Remark} $\mathcal{LFT}^1(D)$ is the first Lie algebra cohomolgy of $\Der(-\log D)_0$ with coefficients in the non-trivial representation $(\Der_{\C^n}/\Der(-\log D))_0$. 
\end{Remark}

We collect now some results from \cite{dix}, \cite{hochserre} and \cite{wei} about Lie algebras and Lie algebra cohomology, that will allow us to compute $\mathcal{LFT}^1(D)$ more easily in the case of germs of reductive linear free divisors.
\begin{Proposition}\label{reducrepr}\emph{(\cite{dix}, Corollary 1.6.4)} Let $\mathfrak{g}$ be a reductive Lie algebra and let $\varrho$ be a finite dimensional representation of $\mathfrak{g}$. Then the following condition are equivalent
\begin{enumerate}
	\item $\varrho$ is semisimple;
	\item for all $a$ in the centre of $\mathfrak{g}$, $\varrho(a)$ is semisimple.
\end{enumerate}
\end{Proposition}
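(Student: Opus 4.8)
The plan is to reduce everything to the structure theorem for reductive Lie algebras together with Weyl's complete reducibility theorem. Write $\mathfrak{g}=\mathfrak{z}\oplus\mathfrak{s}$, where $\mathfrak{z}$ is the centre and $\mathfrak{s}=[\mathfrak{g},\mathfrak{g}]$ is semisimple; this is exactly the structure theorem for reductive Lie algebras over $\C$. Since $\mathfrak{z}$ and $\mathfrak{s}$ are ideals with $[\mathfrak{z},\mathfrak{s}]=0$, giving a finite dimensional representation $\varrho$ of $\mathfrak{g}$ on a space $M$ is the same as giving commuting actions of $\mathfrak{z}$ and of $\mathfrak{s}$ on $M$, and the $\mathfrak{g}$-submodules of $M$ are precisely the subspaces stable under both.

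For the implication $(1)\Rightarrow(2)$, suppose $M=\bigoplus_i V_i$ with each $V_i$ an irreducible $\mathfrak{g}$-submodule. If $a$ lies in the centre of $\mathfrak{g}$, then $\varrho(a)$ commutes with all of $\varrho(\mathfrak{g})$, so its restriction to each $V_i$ is an endomorphism of the $\mathfrak{g}$-module $V_i$; by Schur's lemma over the algebraically closed field $\C$ this restriction is a scalar. Hence $\varrho(a)$ is diagonalisable, i.e.\ semisimple.

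For $(2)\Rightarrow(1)$, pick a basis $a_1,\dots,a_k$ of $\mathfrak{z}$. The operators $\varrho(a_1),\dots,\varrho(a_k)$ pairwise commute because $\mathfrak{z}$ is abelian, and each is semisimple by hypothesis, so they are simultaneously diagonalisable. Consequently $M=\bigoplus_{\chi}M_\chi$, the sum running over linear forms $\chi\colon\mathfrak{z}\to\C$ of the common eigenspaces $M_\chi=\{m\in M\mid \varrho(a)m=\chi(a)m \text{ for all } a\in\mathfrak{z}\}$. Since $[\mathfrak{g},\mathfrak{z}]=0$, each $M_\chi$ is a $\mathfrak{g}$-submodule, and on $M_\chi$ the centre acts by scalars, so the $\mathfrak{g}$-submodules of $M_\chi$ coincide with its $\mathfrak{s}$-submodules. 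By Weyl's theorem the finite dimensional $\mathfrak{s}$-module $M_\chi$ decomposes as a direct sum of irreducible $\mathfrak{s}$-submodules, which are therefore irreducible $\mathfrak{g}$-submodules. Summing over $\chi$ exhibits $M$ as a direct sum of irreducible $\mathfrak{g}$-modules, so $\varrho$ is semisimple.

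The genuine inputs are Weyl's complete reducibility theorem (used for the semisimple factor $\mathfrak{s}$, and the place where characteristic zero is essential) and the simultaneous diagonalisation of a commuting family of semisimple operators. The only point that requires a moment's care is the identification of $\mathfrak{g}$-submodule structure with $\mathfrak{s}$-submodule structure on a fixed eigenspace $M_\chi$: this is what makes the centre inert, and it is precisely this reduction that is not available when $\mathfrak{g}$ fails to be reductive.
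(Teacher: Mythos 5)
Your argument is correct. The paper offers no proof of this statement: it is quoted verbatim as Corollary 1.6.4 of Dixmier's book on enveloping algebras. Your proof --- splitting $\mathfrak{g}=\mathfrak{z}\oplus[\mathfrak{g},\mathfrak{g}]$, using Schur's lemma for $(1)\Rightarrow(2)$, and for $(2)\Rightarrow(1)$ simultaneously diagonalising the centre and then applying Weyl's complete reducibility to each common eigenspace viewed as an $[\mathfrak{g},\mathfrak{g}]$-module --- is the standard argument and is essentially the one in the cited reference, so there is nothing to compare against within the paper itself.
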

We will use the following celebrated theorem of Hochschild and Serre
\begin{Theorem}\label{isocoho} \emph{(\cite{hochserre}, Theorem 10)} Let $\mathfrak{g}$ be a reductive Lie algebra of finite dimension over $\C$. Let $M$ be a finite dimensional semisimple representation of $\mathfrak{g}$ such that $M^{\mathfrak{g}}=(0)$, where $M^{\mathfrak{g}}$ is the submodule of $M$ on which $\mathfrak{g}$ acts trivially. Then $H^n(\mathfrak{g},M)=0$ for all $n\ge 0$.
\end{Theorem}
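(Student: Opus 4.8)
The plan is to reduce the statement to two classical vanishing facts — the acyclicity of the Chevalley--Eilenberg complex of an abelian Lie algebra with non-trivial coefficients, and Whitehead's vanishing theorem for semisimple Lie algebras — by exploiting the structure of reductive Lie algebras together with the complete reducibility of $M$.

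First I would use that a finite-dimensional reductive complex Lie algebra splits as a direct product $\mathfrak{g}=\mathfrak{z}\oplus\mathfrak{s}$, where $\mathfrak{z}$ is the centre and $\mathfrak{s}=[\mathfrak{g},\mathfrak{g}]$ is semisimple. Since $M$ is semisimple it is a finite direct sum of irreducible $\mathfrak{g}$-modules, and Lie algebra cohomology is additive in the coefficients, so it suffices to prove the claim when $M$ is irreducible. By Schur's lemma $\mathfrak{z}$ then acts on $M$ through a character $\lambda\in\mathfrak{z}^{*}$; as $\mathfrak{z}$ acts by scalars, the $\mathfrak{g}$-submodules of $M$ are exactly its $\mathfrak{s}$-submodules, so $M$ is, as a $\mathfrak{g}=\mathfrak{z}\oplus\mathfrak{s}$-module, the outer tensor product of the character $\C_{\lambda}$ with an irreducible $\mathfrak{s}$-module $L$. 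The hypothesis $M^{\mathfrak{g}}=0$ says precisely that $\lambda\neq0$ or $L$ is non-trivial.

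Next I would apply the Künneth formula for Lie algebra cohomology over a field (where there are no $\mathrm{Tor}$-terms): the Chevalley--Eilenberg complex of $\mathfrak{g}=\mathfrak{z}\oplus\mathfrak{s}$ with coefficients in $\C_{\lambda}\otimes L$ is the tensor product of the complexes for $(\mathfrak{z},\C_{\lambda})$ and $(\mathfrak{s},L)$, so
\[
H^{n}(\mathfrak{g},M)\cong\bigoplus_{p+q=n}H^{p}(\mathfrak{z},\C_{\lambda})\otimes_{\C}H^{q}(\mathfrak{s},L).
\]
If $\lambda\neq0$, the Chevalley--Eilenberg complex of the abelian algebra $\mathfrak{z}$ with coefficients in $\C_{\lambda}$ is $(\bigwedge^{\bullet}\mathfrak{z}^{*},\ \lambda\wedge(-))$, the Koszul complex of the non-zero form $\lambda$, which is acyclic; hence $H^{p}(\mathfrak{z},\C_{\lambda})=0$ for all $p$. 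If instead $\lambda=0$, then $L$ is non-trivial, and Whitehead's theorem gives $H^{q}(\mathfrak{s},L)=0$ for all $q$: the Casimir element of $\mathfrak{s}$ acts on the irreducible $L$ by a non-zero scalar while it acts as zero on $H^{q}(\mathfrak{s},L)$, being null-homotopic on the cochain complex. In either case every summand on the right-hand side vanishes, so $H^{n}(\mathfrak{g},M)=0$ for all $n\geq 0$.

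One could equally run the Hochschild--Serre spectral sequence of the ideal $\mathfrak{s}\subset\mathfrak{g}$, with $E_{2}^{p,q}=H^{p}(\mathfrak{z},H^{q}(\mathfrak{s},M))$; complete reducibility gives $H^{q}(\mathfrak{s},M)\cong H^{q}(\mathfrak{s},\C)\otimes M^{\mathfrak{s}}$, and since $(M^{\mathfrak{s}})^{\mathfrak{z}}=M^{\mathfrak{g}}=0$ no trivial $\mathfrak{z}$-character occurs in $M^{\mathfrak{s}}$, so the same Koszul acyclicity kills every $E_{2}$-term. The main obstacle is the higher Whitehead vanishing $H^{q}(\mathfrak{s},L)=0$ for a non-trivial irreducible $\mathfrak{s}$-module $L$ and all $q$ — this is exactly where semisimplicity of $\mathfrak{s}$ enters, through the Casimir operator and Weyl's complete reducibility theorem (see \cite{dix}, \cite{hochserre}, \cite{wei}); everything else is bookkeeping with the decomposition of $M$ and the cohomology of abelian Lie algebras.
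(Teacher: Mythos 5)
The paper does not prove this statement at all: it is quoted verbatim as Theorem 10 of Hochschild--Serre and used as a black box, so there is no internal argument to compare yours against. Your proof is correct and is essentially the standard modern proof of that classical theorem. The reduction chain is sound: reductive $\mathfrak{g}=\mathfrak{z}\oplus[\mathfrak{g},\mathfrak{g}]$, additivity of cohomology over the irreducible summands of the semisimple module $M$, Schur's lemma (valid over $\C$) to split an irreducible $M$ as $\C_{\lambda}\boxtimes L$, the K\"unneth decomposition of the Chevalley--Eilenberg complex for a direct sum of Lie algebras acting on an outer tensor product, and then the two vanishing inputs --- exactness of the Koszul complex $(\bigwedge^{\bullet}\mathfrak{z}^{*},\lambda\wedge(-))$ for $\lambda\neq0$ (including degree $0$), and Whitehead's vanishing $H^{q}(\mathfrak{s},L)=0$ for $L$ irreducible non-trivial. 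The case split ($\lambda\neq0$ versus $\lambda=0$ forcing $L$ non-trivial) does exhaust the hypothesis $M^{\mathfrak{g}}=0$. The only point stated loosely is the Casimir step: one should either use the Casimir attached to the trace form of $L$ (after quotienting by $\ker L$) or note that the Killing--Casimir scalar $\langle\lambda,\lambda+2\rho\rangle$ is non-zero for non-trivial highest weight, and the "acts as zero on cohomology" claim is cleanest via the identity $\sum_i\theta_{x_i}\theta_{y^i}=d\bigl(\sum_i\iota_{x_i}\theta_{y^i}\bigr)+\bigl(\sum_i\iota_{x_i}\theta_{y^i}\bigr)d$; but this is standard bookkeeping inside a correctly cited classical lemma, not a gap. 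Your alternative sketch via the Hochschild--Serre spectral sequence of the ideal $[\mathfrak{g},\mathfrak{g}]$ is in fact closer in spirit to the original source's own derivation.
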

In order to apply the previous theorem, we need the following:
\begin{Lemma}\label{centrelfd} Let $D\subset\C^n$ be a reductive linear free divisor. Then all the elements in the centre of $\Der(-\log D)_0$ are diagonalizable.
\end{Lemma}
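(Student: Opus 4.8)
The plan is to reduce the statement to a question about the standard representation of the reductive group $G_D^\circ$ and then apply Proposition \ref{reducrepr}.

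First I would recall, via Lemma \ref{gdlemma}, that the assignment $A\mapsto xA^t\partial^t$ is a linear isomorphism of $\mathfrak{g}_D$ onto $\Der(-\log D)_0$; a direct bracket computation shows it is a Lie algebra anti-isomorphism, so in particular it carries the centre of $\mathfrak{g}_D$ onto the centre of $\Der(-\log D)_0$. Moreover a weight-zero vector field $xA^t\partial^t$ is diagonalizable (as a derivation of $\C[x_1,\dots,x_n]$, equivalently as the infinitesimal generator of a linear $\C^*$-action on $\C^n$) if and only if $A\in\mathfrak{gl}_n(\C)$ is a diagonalizable matrix. Thus it suffices to prove that every matrix in the centre of $\mathfrak{g}_D$ is diagonalizable in $\mathfrak{gl}_n(\C)$.

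Next I would view the tautological inclusion $\mathfrak{g}_D\hookrightarrow\mathfrak{gl}_n(\C)$, which is the differential of $G_D^\circ\hookrightarrow GL_n(\C)$, as a finite dimensional representation $\varrho$ of $\mathfrak{g}_D$ on $\C^n$. By Lemma \ref{reductgroup}, since $D$ is reductive, $G_D^\circ$ is a connected reductive algebraic group, hence every finite dimensional rational $G_D^\circ$-module is semisimple; because $G_D^\circ$ is connected and we are in characteristic zero, $\C^n$ is semisimple also as a $\mathfrak{g}_D$-module, i.e.\ the representation $\varrho$ is semisimple.

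Finally, $\mathfrak{g}_D\cong\Der(-\log D)_0$ is by hypothesis a reductive Lie algebra, so Proposition \ref{reducrepr} applies with $\mathfrak{g}=\mathfrak{g}_D$ and this $\varrho$: the semisimplicity of $\varrho$ forces $\varrho(a)$ to be semisimple for every $a$ in the centre of $\mathfrak{g}_D$. Equivalently, every central element of $\mathfrak{g}_D$ is a diagonalizable matrix, and therefore, by the first paragraph, every element of the centre of $\Der(-\log D)_0$ is a diagonalizable vector field. The only delicate point is the bookkeeping of that first paragraph — checking that the identification $\mathfrak{g}_D\cong\Der(-\log D)_0$ is a Lie algebra (anti-)isomorphism matching centres, that diagonalizability of the vector field is exactly diagonalizability of its coefficient matrix, and the standard transfer of semisimplicity of $\C^n$ from $G_D^\circ$ to $\mathfrak{g}_D$.
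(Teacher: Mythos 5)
Your proof is correct, but it takes a different route from the paper's. The paper also starts from Lemma \ref{reductgroup} to get that $G^\circ_D$ is a reductive algebraic group, but then argues purely group-theoretically: the centre $Z_{G^\circ_D}$ of a reductive group consists of semisimple transformations, and the Lie algebra of its identity component coincides with the centre of $\mathfrak{g}_D$, so central elements of $\mathfrak{g}_D$ are diagonalizable matrices. You instead pass through representation theory: complete reducibility of the rational $G^\circ_D$-module $\C^n$ (characteristic zero, $G^\circ_D$ connected reductive) gives semisimplicity of the tautological representation of $\mathfrak{g}_D$, and then the implication ``$\varrho$ semisimple $\Rightarrow$ $\varrho(a)$ semisimple for central $a$'' of Proposition \ref{reducrepr} finishes the job. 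This is a legitimate and clean alternative; it is mildly amusing that you use one direction of Proposition \ref{reducrepr} to prove the Lemma, while the paper uses the Lemma together with the \emph{other} direction of the same Proposition to establish Proposition \ref{reductionlfd} — there is no circularity, since Proposition \ref{reducrepr} is an external result of Dixmier. What the paper's route buys is brevity, at the cost of invoking the structure theory of centres of reductive groups and the identification $\mathrm{Lie}(Z_{G^\circ_D})=Z_{\mathfrak{g}_D}$; what your route buys is that it only needs complete reducibility plus Dixmier's criterion, and your first paragraph (the anti-isomorphism $A\mapsto xA^t\partial^t$ matching centres, and the equivalence of diagonalizability of the vector field with that of its coefficient matrix) makes explicit a translation that the paper leaves implicit.
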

\begin{proof} By definition $\mathfrak{g}_D=\{A ~| ~xA^t\partial^t \in \Der(-\log D)_0 \}$ is a reductive Lie algebra and hence by Lemma \ref{reductgroup} and by Lemma 3.6, (2) of \cite{grmondsch},  $G^\circ_D$ is a reductive Lie group. Hence by definition, the centre $Z_{G^\circ_D}$ of $G^\circ_D$ is composed of semisimple transformations. Moreover, the Lie algebra of the identity component of $Z_{G^\circ_D}$ coincides with $Z_{\mathfrak{g}_D}$ the centre of $\mathfrak{g}_D$ and hence it is composed of diagonalizable elements.\end{proof}
\begin{Proposition}\label{reductionlfd} Let $D\subset\C^n$ be a reductive linear free divisor. Then the representation of $\Der(-\log D)_0$ in $(\Der_{\C^n}/\Der(-\log D))_0$ is semisimple.
\end{Proposition}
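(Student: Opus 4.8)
The plan is to reduce the semisimplicity of the representation of $\Der(-\log D)_0$ on $M:=(\Der_{\C^n}/\Der(-\log D))_0$ to the criterion of Proposition \ref{reducrepr}. Since $\Der(-\log D)_0\cong\mathfrak{g}_D$ is reductive by hypothesis, that criterion says it suffices to show that for every element $a$ in the centre $Z_{\mathfrak{g}_D}$ of $\mathfrak{g}_D$, the operator $\varrho(a)$ acting on $M$ is semisimple, i.e. diagonalizable. So the whole proof comes down to a statement about the action of a single diagonalizable element.

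First I would recall the concrete description of $M$. Both $(\Der_{\C^n})_0$ and $\Der(-\log D)_0$ are finite-dimensional vector spaces of weight-zero vector fields; by the Remark following the definition of linear free divisors, a weight-zero vector field on $\C^n$ is of the form $xA\partial^t$ for a constant matrix $A$, so $(\Der_{\C^n})_0\cong\mathfrak{gl}_n(\C)$, and $\Der(-\log D)_0$ corresponds to the subspace $\mathfrak g_D$ (up to the transpose convention of Lemma \ref{gdlemma}). The adjoint action of $\Der(-\log D)_0$ on $(\Der_{\C^n})_0$, under these identifications, is (a form of) the adjoint/commutator action of $\mathfrak g_D\subset\mathfrak{gl}_n(\C)$ on $\mathfrak{gl}_n(\C)$, which preserves the subspace $\mathfrak g_D$; hence $M$ is the quotient representation $\mathfrak{gl}_n(\C)/\mathfrak g_D$ with the induced action.

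Next, fix $a$ in the centre of $\Der(-\log D)_0$. By Lemma \ref{centrelfd}, the corresponding matrix is diagonalizable as an element of $\mathfrak{gl}_n(\C)$. Therefore $\ad(a)$ acting on $\mathfrak{gl}_n(\C)$ is diagonalizable: if $a=\operatorname{diag}(\lambda_1,\dots,\lambda_n)$ in a suitable basis, then the elementary matrices $E_{ij}$ are eigenvectors of $\ad(a)$ with eigenvalues $\lambda_i-\lambda_j$. A diagonalizable (semisimple) endomorphism of a vector space restricts to a semisimple endomorphism of any invariant subspace and descends to a semisimple endomorphism of the quotient by any invariant subspace. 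Since $\mathfrak g_D$ is $\ad(a)$-invariant (as $a\in\mathfrak g_D$ and $\mathfrak g_D$ is a Lie subalgebra), the induced operator $\varrho(a)$ on $M=\mathfrak{gl}_n(\C)/\mathfrak g_D$ is semisimple. This holds for every central $a$, so by Proposition \ref{reducrepr} the representation $M$ of $\Der(-\log D)_0$ is semisimple, completing the proof.

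The main obstacle is really just bookkeeping: making sure the identifications of $(\Der_{\C^n})_0$ with $\mathfrak{gl}_n(\C)$ and of $\Der(-\log D)_0$ with $\mathfrak g_D$ (keeping track of the transpose in Lemma \ref{gdlemma}) are compatible with the Lie bracket, so that the adjoint action of the algebra on the ambient space is genuinely the matrix commutator action and $M$ is genuinely a quotient representation. Once that dictionary is in place, the "semisimple restricts/descends to semisimple" fact is standard linear algebra over $\C$ and Lemma \ref{centrelfd} supplies exactly the diagonalizability needed to feed into Proposition \ref{reducrepr}. There is no analytic difficulty here — everything takes place in finite-dimensional spaces of constant-coefficient vector fields.
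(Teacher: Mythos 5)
Your proof is correct and follows the same route as the paper, whose proof of this proposition is simply the one-line citation of Proposition \ref{reducrepr} together with Lemma \ref{centrelfd}. You have merely filled in the (correct) linear-algebra details that the paper leaves implicit: identifying the representation as the quotient $\mathfrak{gl}_n(\C)/\mathfrak{g}_D$ under the adjoint action and observing that $\ad$ of a diagonalizable matrix is semisimple on $\mathfrak{gl}_n(\C)$ and hence on the quotient.
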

\begin{proof} This is a consequence of Proposition \ref{reducrepr} and Lemma \ref{centrelfd}.\end{proof}
\begin{Theorem}\label{zerot1} Let $(D,0)\subset(\C^n,0)$ be a germ of a reductive linear free divisor. Then $\mathcal{LFT}^1(D)=0$.
\end{Theorem}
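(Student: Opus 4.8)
The plan is to combine the cohomological interpretation of $\mathcal{LFT}^1(D)$ with the Hochschild--Serre vanishing theorem. By the earlier identification, $\mathcal{LFT}^1(D) \cong H^1(\mathcal{C}^\bullet_0)_0$, which by the Remark preceding is precisely the first Lie algebra cohomology $H^1(\Der(-\log D)_0, M)$ of the finite dimensional Lie algebra $\mathfrak{g} := \Der(-\log D)_0$ with coefficients in the representation $M := (\Der_{\C^n}/\Der(-\log D))_0$. So everything reduces to checking that the three hypotheses of Theorem \ref{isocoho} are met: $\mathfrak{g}$ is reductive of finite dimension over $\C$; $M$ is a finite dimensional semisimple representation; and $M^{\mathfrak{g}} = (0)$.

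The first hypothesis is immediate: by definition of a reductive linear free divisor, $\mathfrak{g}_D$ is reductive, and since $\Der(-\log D)_0 \cong \mathfrak{g}_D$ as Lie algebras (via $A \mapsto xA^t\partial^t$, using Lemma \ref{gdlemma}), $\mathfrak{g}$ is a finite dimensional reductive Lie algebra over $\C$. The second hypothesis, semisimplicity of $M$, is exactly the content of Proposition \ref{reductionlfd}. For the third hypothesis, I would argue that $M^{\mathfrak{g}} = (0)$ by invoking the self-normalising property of $\Der(-\log D)_0$ inside $(\Der_{\C^n})_0$ established in Lemma \ref{selfnormzero}: an element $\bar\chi \in M = (\Der_{\C^n})_0/\Der(-\log D)_0$ fixed by the whole adjoint action satisfies $[\delta,\chi] \in \Der(-\log D)_0$ for all $\delta \in \Der(-\log D)_0$, whence $\chi \in \Der(-\log D)_0$ by self-normalisation, i.e. $\bar\chi = 0$. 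This is precisely the reasoning already used in the proof that $H^0(\mathcal{C}^\bullet_0)=0$, reinterpreted as the statement $M^{\mathfrak{g}}=(0)$.

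With all three hypotheses in place, Theorem \ref{isocoho} gives $H^n(\mathfrak{g},M)=0$ for all $n \ge 0$, in particular $H^1(\mathfrak{g},M)=0$, and therefore $\mathcal{LFT}^1(D)=0$. The only subtle point — and the one I would write most carefully — is the identification of $M^{\mathfrak{g}}$ with the fixed vectors under the representation $\varrho$: one must make sure the representation on the quotient is genuinely the adjoint/bracket action, so that "acting trivially" means exactly "$[\delta,\chi]\equiv 0 \bmod \Der(-\log D)_0$ for all $\delta$", which is what makes Lemma \ref{selfnormzero} applicable. Everything else is a direct citation of results already proved in the excerpt, so I expect no serious obstacle beyond bookkeeping.
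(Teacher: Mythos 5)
Your proposal is correct and follows exactly the paper's argument: identify $\mathcal{LFT}^1(D)$ with the first Lie algebra cohomology of $\Der(-\log D)_0$ with coefficients in $(\Der_{\C^n}/\Der(-\log D))_0$, verify reductivity, semisimplicity (Proposition \ref{reductionlfd}), and the vanishing of the fixed submodule via the self-normalising Lemma \ref{selfnormzero}, then apply Theorem \ref{isocoho}. The paper's own proof is just a compressed version of this, so there is nothing to add beyond your (correct) bookkeeping of the hypotheses.
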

\begin{proof} By Lemma \ref{selfnormzero}, $(\Der_{\C^n}/\Der(-\log D))_0^{\Der(-\log D)_0}=0$ and hence by Theorem \ref{isocoho}, $\mathcal{LFT}^1(D)=0$.\end{proof}
\begin{Corollary} Let $(D,0)\subset(\C^n,0)$ be a germ of a reductive linear free divisor. Then it is formally rigid.
\end{Corollary}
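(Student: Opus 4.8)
The plan is to derive formal rigidity directly from the two facts already established for germs of reductive linear free divisors: that $\mathbf{LFD}_D$ admits a hull $(R,\xi)$ (this uses Schlessinger's conditions (H1)--(H3), verified above, together with Theorem 2.11 of \cite{schless}), and that its tangent space $\mathcal{LFT}^1(D)=\mathbf{LFD}_D(\C[t]/(t^2))\cong H^1(\mathcal{C}^\bullet_0)_0$ vanishes by Theorem \ref{zerot1}.

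First I would use the defining property of a hull: the smooth morphism of functors $h_R\to\mathbf{LFD}_D$ induces a \emph{bijection} between the tangent space of $h_R$, which is canonically dual to $\mathfrak{m}_R/\mathfrak{m}_R^2$, and $\mathcal{LFT}^1(D)=0$. Hence $\mathfrak{m}_R/\mathfrak{m}_R^2=0$, and since $R$ is a complete local Noetherian $\C$-algebra with residue field $\C$, Nakayama's lemma gives $R=\C$. Next I would show that a deformation functor having $\C$ as a hull is the trivial functor on $\mathbf{Art}$: smoothness of $h_\C\to\mathbf{LFD}_D$ means that for every small extension $A'\to A$ the map $h_\C(A')\to h_\C(A)\times_{\mathbf{LFD}_D(A)}\mathbf{LFD}_D(A')$ is surjective; since each $h_\C(A)$ is a single point, an induction on the length of $A$ — writing an arbitrary surjection in $\mathbf{Art}$ as a composite of small extensions — forces $\mathbf{LFD}_D(A)=\{\ast\}$ for all $A\in\mathbf{Art}$.

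Thus every linearly admissible deformation of $(D,0)$ over an Artinian base is trivial, and, passing to the inverse limit over the Artinian truncations of a complete local base, so is every formal linearly admissible deformation; this is exactly the statement that $(D,0)$ is \emph{formally rigid}. I do not expect a genuine obstacle here: the entire content of the corollary is packed into Theorem \ref{zerot1}, and the only point requiring mild care is the passage from Artinian to formal bases, which is automatic once $\mathbf{LFD}_D$ is known to be a one-point functor on $\mathbf{Art}$.
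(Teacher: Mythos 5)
Your argument is correct and is exactly the standard Schlessinger bookkeeping that the paper leaves implicit: the Corollary is stated without proof as an immediate consequence of Theorem \ref{zerot1}, and your route (hull $R$ with $\mathfrak{m}_R/\mathfrak{m}_R^2$ dual to $\mathcal{LFT}^1(D)=0$, hence $R=\C$, hence $\mathbf{LFD}_D$ is the one-point functor by induction over small extensions) is the intended one. No gaps.
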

The statement of Theorem \ref{zerot1} is false if we consider non-reductive germs of linear free divisors. In fact, Brian Pike suggested us the following
\begin{Example}Consider $f=x_5(x_4^4-2x_5x_4^2x_3+x_5^2x_3^2+2x_5^2x_4x_2-2x_5^3x_1)\in\C[x_1,\dots, x_5]$ as a defining equation of the germ of a linear free divisor $(D,0)\subset(\C^5,0)$. Then we can consider the Saito matrix
 \begin{equation*} 
\begin{bmatrix}
		x_4&x_3&x_2&x_1&0\\
		x_5&x_4&0&0&x_2\\
		0&x_5&2x_4&-x_3&2x_3\\
		0&0&x_5&-2x_4&3x_4\\
		0&0&0&-3x_5&4x_5\\
\end{bmatrix}.
\end{equation*}
Consider $\sigma=16x_1\partial/\partial x_1+11x_2\partial/\partial x_2+6x_3\partial/\partial x_3+x_4\partial/\partial x_4-4x_5\partial/\partial x_5$, then $\sigma\in\Ann(D)$ and $\trace(\sigma)=30$, hence, by Lemma \ref{reductrace}, $(D,0)$ is the germ of a non-reductive linear free divisor.

To find an infinitesimal linearly admissible deformation for $(D,0)$ we have to find a non-zero element  $\alpha \in \mathcal{H}^1(\mathcal{C}_0^\bullet)_0=\mathcal{LFT}^1(D).$ Let $\alpha$ be defined by the columns of the following matrix
\begin{equation*} 
\begin{bmatrix}
		0&0&0&0&0\\
		0&0&2x_3&0&0\\
		0&0&-2x_4&0&0\\
		0&0&0&0&0\\
		0&0&0&0&0\\
\end{bmatrix}.
\end{equation*}
This is an element of $\mathcal{H}^1(\mathcal{C}^\bullet_0)_0$ that describes the infinitesimal linearly admissible deformation $X=V(x_5(x_4^4(1+\epsilon)-2x_5x_4^2x_3+x_5^2x_3^2+2x_5^2x_4x_2-2x_5^3x_1))=V(f+\epsilon(x_4^4x_5)) \subset \C^5\times T_\epsilon$. This infinitesimal linearly admissible deformation is non-trivial because it is a non-trivial deformation of $f$ as a germ of function, in fact $x_4^4x_5\notin J(D)$. Moreover, one can check, via a long Macaulay $2$ computation, that $\mathcal{LFT}^1(D)$ is $4$-dimensional and this element is one of its generators. For more details see \cite{Mythesis}, Appendix C.1.
\end{Example}


\subsection{The weighted homogeneous case}

\begin{Proposition}\label{t1weighthomogneq} Let $(D,0)\subset(\C^n,0)$ be a germ of a free divisor defined by a weighted homogeneous polynomial of degree $k$. Then an element of $\mathcal{FT}^1(D)$ can be represented by $f'\in \C[x_1,\dots,x_n]_k$, where $\C[x_1,\dots,x_n]_k$ is the space of polynomial of weighted degree $k$. 
\end{Proposition}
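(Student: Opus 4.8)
The plan is to exploit the weighted-homogeneity to put an additional grading on the whole complex $\mathcal{C}^\bullet$ and to show that, for computing $\mathcal{FT}^1(D)$, only the right-weight part matters. First I would recall from Remark \ref{forminfdef} that any element of $\mathcal{FT}^1(D)$ is represented by an unfolding $f + \epsilon\cdot f'$ with $f'\in\mathcal{O}_{\C^n,0}$, and that two such represent the same class precisely when their difference lies (up to the Jacobian-type equivalence of Remark \ref{trivial}) in the appropriate trivial subspace. The key point is that $f$ has weighted degree $k$, so its partial derivatives $\partial f/\partial x_i$ are weighted homogeneous of degree $k - w_i$ where $w_i = \deg x_i$, and the Euler vector field $\chi = \sum w_i x_i \partial/\partial x_i$ lies in $\Der(-\log D)$ with $\chi(f) = k f$.

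Next I would argue that one may assume $f'$ is itself weighted homogeneous of degree $k$. Decompose $f' = \sum_\ell f'_\ell$ into its weighted-homogeneous pieces. The strategy is to show that each piece $f'_\ell$ with $\ell\neq k$ gives a trivial infinitesimal deformation, i.e. lies in the image of $d^0$ in the sense of Theorem \ref{FT1}. Concretely, for a homogeneous $g$ of degree $\ell$, one checks that $V(f + \epsilon g)$ is trivial as a deformation of $f$: since $\chi$ acts on the graded ring by the grading and $f$ is an eigenvector of eigenvalue $k$, the Euler-type operator $\frac{1}{\ell}(\text{applied suitably})$ — more precisely, the vector field $\sigma$ built from $\frac{1}{k-\ell}$ times the relevant combination — satisfies $\sigma(f) = g$ modulo $(f)$ when $\ell\neq k$, hence by Remark \ref{trivial} and the identification $\psi(\delta_i)=[\sigma,\delta_i]$ from the proof of Theorem \ref{FT1}, the class is zero. (When $f'_\ell$ has degree $\ell = k$ this rescaling trick fails, which is exactly why degree $k$ survives.) This reduces every class to one represented by an $f'\in\C[x_1,\dots,x_n]_k$: first the non-polynomial / higher-order tail is cut because a convergent power series of pure weighted degree $k$ is automatically a polynomial (only finitely many monomials have a given weighted degree), and then all off-degree pieces are removed by the triviality argument just sketched.

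The step I expect to be the main obstacle is making precise the claim "$f'_\ell$ of degree $\ell \ne k$ is a trivial admissible deformation" — one must verify not merely that $f + \epsilon f'_\ell$ is trivial as a deformation of the \emph{function} $f$, but that the corresponding cocycle $\psi\in\ker(d^1)$ is a coboundary in $\mathcal{C}^\bullet$, i.e. that the deforming vector field $\sigma = \frac{1}{k-\ell}\,(\text{the vector field with } \sigma(f)=f'_\ell)$ genuinely gives $\psi(\delta_i)=[\sigma,\delta_i]$ in $\Der_{\C^n}/\Der(-\log D)$. This follows from the computation in the proof of Theorem \ref{FT1}, but one should check carefully that $\sigma$ can be chosen so that $\sigma(f) = f'_\ell$ holds on the nose (not just modulo $(f)$): since $f'_\ell \in (\partial f/\partial x_1,\dots,\partial f/\partial x_n)$ would be needed in general, and here we only know $f'_\ell$ is arbitrary of degree $\ell$, one instead uses that $\frac1{k}\chi(f)=f$ together with the homogeneity to write $\sigma$ explicitly — this is the routine-but-delicate part. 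Finally I would note the converse direction, that every $f'\in\C[x_1,\dots,x_n]_k$ does arise from some element of $\mathcal{FT}^1(D)$, is immediate from Remark \ref{forminfdef} since any such $f'$ defines an infinitesimal admissible deformation $V(f+\epsilon f')$, completing the identification.
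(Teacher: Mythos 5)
There is a genuine gap in the central reduction step. You claim that for \emph{every} weighted homogeneous $g$ of degree $\ell\ne k$ the deformation $V(f+\epsilon g)$ is trivial, to be witnessed by a vector field $\sigma$ with $\sigma(f)\equiv g \pmod{(f)}$ obtained by a $\tfrac{1}{k-\ell}$ rescaling. No such $\sigma$ can exist in general: for any vector field $\sigma$ one has $\sigma(f)\in(\partial f/\partial x_1,\dots,\partial f/\partial x_n)$, so $\sigma(f)\equiv g\pmod{(f)}$ forces $g$ to lie in the Tjurina ideal, which equals the Jacobian ideal $J(f)$ by quasi-homogeneity. For instance, with $f=x^5+y^5$ and $g=x^3y^3$ (degree $6\ne 5$) one has $g\notin J(f)=(x^4,y^4)$, so $f+\epsilon g$ is a nontrivial first-order deformation of the function germ and your rescaling trick cannot trivialise it. The part you flag as ``routine-but-delicate'' is in fact impossible as stated. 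What saves the proposition is that $f+\epsilon g$ with $g\notin J(f)$ of off-degree is not \emph{admissible} at all, and your argument never uses admissibility beyond Remark \ref{forminfdef}.

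The missing idea is to exploit condition \eqref{relderadmisdef}: admissibility forces the Euler field $\chi$ (normalised so that $\chi(f)=f$) to lift to a relative logarithmic field $\chi+\epsilon\chi'$ of the total space, so $(\chi+\epsilon\chi')(f+\epsilon f')=(1+\epsilon\alpha)(f+\epsilon f')$ for some $\alpha$. Comparing $\epsilon$-coefficients gives $\chi'(f)+\chi(f')=\alpha f+f'$; if $f'$ is weighted homogeneous with $\chi(f')=\beta f'$ this reads $(1-\beta)f'=\chi'(f)-\alpha f\in J(f)$. Hence either $\beta=1$ (i.e.\ $f'$ has weighted degree $k$) or $f'\in J(f)$ and the deformation is trivial; applying this to each homogeneous piece of $f'$ gives the reduction. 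Note also that your closing ``converse'' --- that every $f'\in\C[x_1,\dots,x_n]_k$ arises from an element of $\mathcal{FT}^1(D)$ --- is false (the Remark following Theorem \ref{t1weighthomog2} gives a free divisor in $\C^3$ with $\dim_\C\C[x,y,z]_{12}/J(D)\cap\C[x,y,z]_{12}=3$ but $\mathcal{FT}^1(D)=0$); fortunately the proposition does not require it, as it only asserts that classes \emph{can be represented} by degree-$k$ polynomials.
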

\begin{proof} Let $f$ be a defining equation for $(D,0)$. Because $f$ is weighted homogeneous, then there exists $\chi\in\Der(-\log D)$ such that $\chi(f)=f$. 

Consider $(X,x)$ an infinitesimal admissible deformation of $(D,0)$. By Remark \ref{forminfdef}, we can suppose it is defined by the equation $f+\epsilon\cdot f'$, where $f'\in\mathcal{O}_{\C^n,0}$. Suppose that $f'$ is weighted homogeneous of degree $\beta$. Because $(X,x)$ is admissible, it means that $\chi$ lifts and so there exists $\chi'\in\Der_{\C^n}$ such that $(\chi+\epsilon\cdot\chi')(f+\epsilon\cdot f')=(1+\epsilon\cdot\alpha)(f+\epsilon\cdot f')$ and so $\chi'(f)+\chi(f')=\alpha f+f'$, for some $\alpha\in\mathcal{O}_{\C^n,0}$. Because $f'$ is weighted homogeneous of degree $\beta$, then $\chi(f')=\beta f'$. Hence, the previous expression becomes $(\chi'-\alpha)f=(1-\beta)f'$. However, $(\chi'-\alpha)f$ lies in the Tyurina ideal of $f$ which is equal to the Jacobian ideal of $D$ due to the quasi-homogeneity of $f$ and so $(1-\beta)f'$ is in the Jacobian ideal of $D$. If $f'$ is in the Jacobian ideal, then the admissible deformation is trivial, by  \cite{intdef}, Chapter II, 1.4, otherwise $\beta=1$ and so $f'$ is of weighted degree $k$.

If $f'$ is not weighted homogeneous, we can apply the previous argument to each of its weighted homogeneous parts.
\end{proof} 
\begin{Lemma}\label{monomialbasist1} Let $(D,0)\subset(\C^n,0)$ be a germ of a free divisor defined by a weighted homogeneous polynomial. Then a basis of $\mathcal{FT}^1(D)$ can be chosen to be made of monomials.
\end{Lemma}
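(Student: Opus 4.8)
The plan is to bootstrap from Proposition \ref{t1weighthomogneq}. Let $k=\deg f$. By that proposition every class in $\mathcal{FT}^1(D)$ is represented by some $f'\in\C[x_1,\dots,x_n]_k$, and, by the argument in its proof (which uses that $f$ is quasi-homogeneous, so that the Tyurina ideal coincides with the Jacobian ideal $J(D)$), such an $f'$ represents the zero class exactly when $f'\in J(D)$. Hence there is a surjection of $\C$-vector spaces
$$\pi\colon V\longrightarrow\mathcal{FT}^1(D),\qquad f'\longmapsto[\,V(f+\epsilon f')\,],$$
where $V\subseteq\C[x_1,\dots,x_n]_k$ is the (finite-dimensional, since the weights are positive) subspace of those $f'$ for which $f+\epsilon f'$ defines an admissible deformation, and $\ker\pi=V\cap J(D)$. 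Thus $\mathcal{FT}^1(D)$ is realised as the subquotient $V/(V\cap J(D))$ of the monomial-spanned space $\C[x_1,\dots,x_n]_k$.

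The key step is then to show that $V$ is spanned by the monomials it contains; the cleanest route is to prove $V=\C[x_1,\dots,x_n]_k$, i.e. that $f+\epsilon f'$ is admissible for \emph{every} weighted-homogeneous $f'$ of degree $k$. Pick a basis $\chi=\delta_1,\delta_2,\dots,\delta_n$ of $\Der(-\log D)$ with $\chi$ the Euler field normalised so that $\chi(f)=f$; since $f'$ has the same weighted degree as $f$ we get $\chi(f')=f'$, so $\chi$ lifts unchanged to $\Der(-\log X/T_\epsilon)$. For the remaining generators, writing $\delta_i(f)=c_if$, one checks that $\delta_i$ lifts to some $\delta_i+\epsilon\eta_i\in\Der(-\log X/T_\epsilon)$ precisely when $\delta_i(f')-c_if'\in J(D)$. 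Now $J(D)$ is stable under $\Der(-\log D)$: for $\delta=\sum_\ell a_\ell\,\partial/\partial x_\ell\in\Der(-\log D)$ one has $\delta(\partial f/\partial x_j)=\partial(\delta f)/\partial x_j-\sum_\ell(\partial a_\ell/\partial x_j)(\partial f/\partial x_\ell)\in J(D)$, so each twisted operator $\delta_i-c_i$ descends to $\mathcal{O}_{\C^n,0}/J(D)$, and liftability of $\delta_i$ becomes the statement that this descended operator annihilates the image of $\C[x_1,\dots,x_n]_k$; for $\chi$ this is automatic, and for the $\delta_i$ with $c_i=0$ it reduces to a computation in the finitely many graded pieces of $\mathcal{O}_{\C^n,0}/J(D)$ involved. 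Granting $V=\C[x_1,\dots,x_n]_k$ (or, more weakly, just that $V$ is a monomial subspace), $\pi$ is defined on all monomials of weighted degree $k$.

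The conclusion is then pure linear algebra: the images under $\pi$ of the monomials $x^\alpha\in V$ span the finite-dimensional space $\mathcal{FT}^1(D)$, so a subset of these monomial classes forms a basis, and the corresponding monomials are the asserted basis. I expect the genuine obstacle to be the middle step — controlling the descended operators $\delta_i-c_i$ on $\mathcal{O}_{\C^n,0}/J(D)$, equivalently showing that every degree-$k$ polynomial direction is admissible. This is exactly the point at which both hypotheses (that $D$ is free and that $f$ is weighted homogeneous) must be used, just as in Proposition \ref{t1weighthomogneq}; without them $V$ need not be monomial-spanned and the statement can fail.
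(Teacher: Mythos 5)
Your reduction is set up correctly and agrees with the paper's framework: by Proposition \ref{t1weighthomogneq}, $\mathcal{FT}^1(D)\cong V/(V\cap J(D))$ where $V\subseteq\C[x_1,\dots,x_n]_k$ is the subspace of admissible directions, so the lemma amounts to showing that $V$ is spanned modulo $J(D)$ by classes of monomials. The problem is the key step. The ``cleanest route'' you propose, $V=\C[x_1,\dots,x_n]_k$, is false in general: the example given in the paper immediately after Theorem \ref{t1weighthomog2} (the weighted homogeneous free divisor of degree $12$ with weights $(2,3,4)$ in $\C^3$) has $\dim_\C\C[x,y,z]_{12}/J(D)\cap\C[x,y,z]_{12}=3$ but $\mathcal{FT}^1(D)=0$, so there $V\subsetneq\C[x,y,z]_{12}$ and most degree-$12$ directions are \emph{not} admissible. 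Your liftability criterion $\delta_i(f')-c_if'\in J(D)$ is essentially right, but the operators $\delta_i-c_i$ for a weighted-homogeneous basis of $\Der(-\log D)$ are not diagonal on monomials (a logarithmic field with a term like $x_1\partial/\partial x_2$ mixes monomials), so the subspace of $\C[x_1,\dots,x_n]_k$ they cut out has no formal reason to be spanned by the monomials it contains. The fallback you offer --- ``or, more weakly, just that $V$ is a monomial subspace'' --- is precisely the content of the lemma, and your proof only ``grants'' it. That is a genuine gap, located exactly at the step you yourself identify as the obstacle.

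It is worth noting how this compares with the paper: the paper's proof is the single sentence that there is a good $\C^*$-action, i.e., that all the modules and maps in $\mathcal{C}^\bullet$ are graded by weighted degree. That grading argument produces a basis of $\mathcal{FT}^1(D)$ consisting of \emph{weighted-homogeneous} classes, but since the entire space $\C[x_1,\dots,x_n]_k$ sits in a single weight space for the one-parameter action defined by the weights, the $\C^*$-action alone does not separate individual monomials either. So the passage from ``represented by a weighted-homogeneous polynomial of degree $k$'' to ``represented by a monomial'' is the real content of the statement, and your argument, like the paper's one-liner, does not supply it; unlike the paper's, however, your argument commits to an intermediate claim ($V=\C[x_1,\dots,x_n]_k$) that is demonstrably false.
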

\begin{proof} This is because we have a good $\C^*$-action.
\end{proof}

\begin{Corollary}\label{t1weighthomogn} Let $(D,0)\subset(\C^n,0)$ be a germ of a free divisor defined by a weighted homogeneous polynomial of degree $k$ with non-zero weights $(a_1,\dots,a_n)$. Then $$\dim_\C\mathcal{FT}^1(D)\le \dim_{\C}\C[x_1,\dots,x_n]_k/J(D)\cap\C[x_1,\dots,x_n]_k,$$ where $J(D)$ is the Jacobian ideal of $D$. 
\end{Corollary}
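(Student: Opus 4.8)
The plan is to exhibit an injective $\C$-linear map
$$\mathcal{FT}^1(D)\hookrightarrow \C[x_1,\dots,x_n]_k\big/\big(J(D)\cap\C[x_1,\dots,x_n]_k\big),$$
after which the asserted inequality is immediate. First recall that, by Theorem \ref{isdeffunct}, $\mathbf{FD}_D$ satisfies Schlessinger's conditions (H1) and (H2), so that $\mathcal{FT}^1(D)=\mathbf{FD}_D(\C[t]/(t^2))$ carries a canonical structure of $\C$-vector space; in the description of Remark \ref{forminfdef} the sum of the classes of $(V(f+\epsilon f_1'),0)$ and $(V(f+\epsilon f_2'),0)$ is the class of $(V(f+\epsilon(f_1'+f_2')),0)$, and multiplication by $\lambda\in\C$ replaces $f'$ by $\lambda f'$. (This is also visible from Theorem \ref{infdefref}: the defining function of a deformation is $f'=\sum_i\det[\delta_1,\dots,\psi(\delta_i),\dots,\delta_n]$, a $\C$-linear expression in the representing cocycle $\psi$.)

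I would then define the map as follows: given $\xi\in\mathcal{FT}^1(D)$, use Proposition \ref{t1weighthomogneq} to represent $\xi$ by an infinitesimal admissible deformation $(V(f+\epsilon f'),0)$ with $f'\in\C[x_1,\dots,x_n]_k$, and send $\xi$ to the class of $f'$ in $\C[x_1,\dots,x_n]_k/(J(D)\cap\C[x_1,\dots,x_n]_k)$. For well-definedness one must check that two weighted homogeneous representatives $f_1',f_2'$ of the same class differ by an element of $J(D)$. The corresponding deformations are isomorphic over $T_\epsilon$ by an isomorphism which is the identity modulo $\epsilon$, hence induced by an automorphism $x_j\mapsto x_j+\epsilon\sigma_j(x)$ as in Remark \ref{trivial}; expanding $(\varphi^\ast(f+\epsilon f_2'))=(f+\epsilon f_1')$, using $\epsilon f_2'(x+\epsilon\sigma)=\epsilon f_2'(x)$, and comparing coefficients of $\epsilon$ yields $f_1'-f_2'\in(f)+J(D)$. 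Here weighted homogeneity enters through the Euler relation $kf=\sum_i a_i x_i\,\partial f/\partial x_i$, which puts $f\in J(D)$, so $(f)+J(D)=J(D)$; since $f_1'-f_2'$ is homogeneous of weighted degree $k$ it then lies in $J(D)\cap\C[x_1,\dots,x_n]_k$. Linearity of the map is immediate from the vector space structure recalled above.

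For injectivity, if $\xi$ is sent to $0$ then some representative $(V(f+\epsilon f'),0)$ has $f'\in J(D)$, and then, exactly as in the proof of Proposition \ref{t1weighthomogneq} (the Tyurina ideal of $f$ again coinciding with $J(D)$ by quasi-homogeneity, and using \cite{intdef}, Chapter II, 1.4), the deformation is trivial, i.e. $\xi=0$. This yields the stated bound. I expect the well-definedness step to be the only genuine point: it is exactly where one passes from ``isomorphic deformations'' to ``the difference lies in the Jacobian ideal'', and this works precisely because the Euler relation forces $(f)\subseteq J(D)$ — without quasi-homogeneity one would obtain only the larger Tyurina ideal. (Alternatively, and equivalently, the estimate follows directly from Lemma \ref{monomialbasist1}: choosing a monomial basis $m_1,\dots,m_s\in\C[x_1,\dots,x_n]_k$ of $\mathcal{FT}^1(D)$, any relation $\sum_j\lambda_j m_j\in J(D)$ would, by additivity, give $\sum_j\lambda_j\,[V(f+\epsilon m_j)]=[V(f+\epsilon\sum_j\lambda_j m_j)]=0$, contradicting linear independence of the basis, so $s\le\dim_\C\C[x_1,\dots,x_n]_k/(J(D)\cap\C[x_1,\dots,x_n]_k)$.)
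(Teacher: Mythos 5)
Your proof is correct and follows essentially the same route as the paper, which simply cites Lemma \ref{monomialbasist1}, Proposition \ref{t1weighthomogneq}, and the fact that $J(D)$ defines only trivial deformations; your parenthetical monomial-basis argument at the end is literally the paper's proof, and your main argument is a careful elaboration of the same idea (the well-definedness check via the Euler relation $(f)\subseteq J(D)$ being the detail the paper leaves implicit).
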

\begin{proof} It is a consequence of Lemma \ref{monomialbasist1}, Proposition \ref{t1weighthomogneq} and  that $J(D)$ defines only trivial deformations.
\end{proof} 
\begin{Corollary} Let $(D,0)\subset(\C^n,0)$ be a germ of a free divisor defined by a weighted homogeneous polynomial. Then $\mathbf{FD}_D$ has a hull.
\end{Corollary}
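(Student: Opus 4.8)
The goal is to prove that $\mathbf{FD}_D$ has a hull when $(D,0)$ is defined by a weighted homogeneous polynomial. By Theorem~\ref{isdeffunct}, the functor $\mathbf{FD}_D$ already satisfies Schlessinger's conditions (H1) and (H2), so by Theorem~2.11 of \cite{schless} it suffices to verify condition (H3), namely that the tangent space $\mathcal{FT}^1(D) = \mathbf{FD}_D(\C[t]/(t^2))$ is finite dimensional over $\C$. The plan is therefore to reduce the statement to a finiteness result already available in the excerpt.

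The key input is Corollary~\ref{t1weighthomogn}, which gives the bound
$$\dim_\C\mathcal{FT}^1(D)\le \dim_{\C}\bigl(\C[x_1,\dots,x_n]_k/(J(D)\cap\C[x_1,\dots,x_n]_k)\bigr),$$
valid whenever the weights $(a_1,\dots,a_n)$ are all non-zero. First I would observe that a germ of a weighted homogeneous free divisor of degree $k$ can always be taken with strictly positive weights: the weighted homogeneity is defined by a good $\C^*$-action, and the existence of an Euler vector field $\chi\in\Der(-\log D)$ with $\chi(f)=f$ means we may choose the weights $a_i>0$. Then $\C[x_1,\dots,x_n]_k$, the space of polynomials of weighted degree exactly $k$, is a finite dimensional $\C$-vector space (there are only finitely many monomials $x_1^{b_1}\cdots x_n^{b_n}$ with $\sum a_ib_i=k$ once all $a_i>0$). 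Consequently the quotient appearing on the right-hand side of the inequality is finite dimensional, and hence so is $\mathcal{FT}^1(D)$.

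With (H1), (H2) from Theorem~\ref{isdeffunct} and (H3) now established, Schlessinger's theorem (Theorem~2.11 of \cite{schless}) applies directly and yields a hull for $\mathbf{FD}_D$, i.e.\ a formally versal deformation. I do not expect any genuine obstacle here: the entire content has been front-loaded into Corollary~\ref{t1weighthomogn} and the earlier verification of Schlessinger's axioms. The only point requiring a word of care is the reduction to positive weights so that $\C[x_1,\dots,x_n]_k$ is genuinely finite dimensional; if one allows zero weights this space can be infinite dimensional and the argument breaks, which is precisely why Corollary~\ref{t1weighthomogn} was stated with the non-zero weights hypothesis. Since a germ of a weighted homogeneous divisor carries a good $\C^*$-action by definition, this reduction is automatic, and the proof is complete.
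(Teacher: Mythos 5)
Your proof is correct and follows essentially the same route as the paper: it verifies Schlessinger's condition (H3) via the finiteness bound of Corollary~\ref{t1weighthomogn}, combines this with (H1) and (H2) from Theorem~\ref{isdeffunct}, and invokes Theorem~2.11 of \cite{schless}. The extra remark about reducing to positive weights so that $\C[x_1,\dots,x_n]_k$ is genuinely finite dimensional is a sensible clarification that the paper leaves implicit.
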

\begin{proof} By Corollary \ref{t1weighthomogn}, condition (H3) from \cite{schless} is satisfied. Then the result follows from Theorem \ref{isdeffunct} and Theorem 2.11 from \cite{schless}.
\end{proof}
Because each germ of a linear free divisor $(D,0)\subset(\C^n,0)$ is defined by a homogeneous equation of degree $n$, we have the following:
\begin{Corollary} Let $(D,0)\subset(\C^n,0)$ be a germ of a linear free divisor. Then $\mathbf{FD}_D$ has a hull.
\end{Corollary}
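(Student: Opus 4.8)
The plan is to reduce this statement to the immediately preceding Corollary, which asserts that $\mathbf{FD}_D$ has a hull whenever $(D,0)$ is a germ of a free divisor defined by a weighted homogeneous polynomial. The only thing I would need to verify is that every germ of a linear free divisor is of this type, in fact for the standard (unweighted) grading, and this is exactly the sentence preceding the Corollary.

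First I would recall that, by definition of a linear free divisor, $\Gamma(\C^n,\Der(-\log D))$ admits a $\C[\C^n]$-basis $\chi_1,\dots,\chi_n$ whose coefficients with respect to $\partial/\partial x_1,\dots,\partial/\partial x_n$ are all homogeneous linear forms. By Saito's criterion (Proposition \ref{saitocrit}, part ii), the determinant of the associated Saito matrix $[\chi_1,\dots,\chi_n]$ is a reduced defining equation $f$ for $D$. Since each entry of this $n\times n$ matrix is a linear form, its determinant is a homogeneous polynomial of degree $n$; hence $f\in\C[x_1,\dots,x_n]$ is homogeneous of degree $n$, i.e.\ weighted homogeneous of degree $k=n$ for the weights $(a_1,\dots,a_n)=(1,\dots,1)$, which are certainly non-zero.

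Then I would simply invoke the preceding Corollary with this $k$ and these weights. Concretely, Corollary \ref{t1weighthomogn} gives the finite bound $\dim_\C\mathcal{FT}^1(D)\le \dim_\C\C[x_1,\dots,x_n]_n/(J(D)\cap\C[x_1,\dots,x_n]_n)$, so Schlessinger's condition (H3) holds for $\mathbf{FD}_D$; combined with Theorem \ref{isdeffunct} (conditions (H1) and (H2)) and Theorem 2.11 of \cite{schless}, this yields a hull. There is essentially no obstacle here, since all the content is already contained in the weighted homogeneous case; the only point requiring (trivial) care is confirming that linearity of the vector-field coefficients forces $f$ to be genuinely homogeneous of degree $n$, which is precisely what makes the previous Corollary applicable.
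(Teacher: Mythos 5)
Your proposal is correct and is exactly the paper's argument: the paper derives this Corollary from the weighted homogeneous case by noting (in the sentence immediately preceding the statement) that a linear free divisor is defined by a homogeneous equation of degree $n$, which is what you verify via the determinant of the Saito matrix. No issues.
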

By Corollary \ref{planecurvefree}, every reduced curve is a free divisor. Then:
\begin{Theorem}\label{t1weighthomog2} Let $(D,0)\subset(\C^2,0)$ be a reduced curve germ defined by a weighted homogeneous polynomial of degree $k$. Then $$\mathcal{FT}^1(D)\cong\C[x,y]_k/J(D)\cap\C[x,y]_k.$$ 
\end{Theorem}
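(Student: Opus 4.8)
The plan is to upgrade the inequality of Corollary~\ref{t1weighthomogn} to an isomorphism by exhibiting, for each $f'\in\C[x,y]_k$, an explicit admissible deformation with total space $V(f+\epsilon\cdot f')$. Fix a weighted homogeneous defining equation $f$ of $(D,0)$ of degree $k$ with non-zero weights $(a,b)$, and introduce the Euler field $\chi:=\tfrac{a}{k}x\,\partial/\partial x+\tfrac{b}{k}y\,\partial/\partial y$, which satisfies $\chi(f)=f$, together with the Hamiltonian field $\eta:=(\partial f/\partial y)\,\partial/\partial x-(\partial f/\partial x)\,\partial/\partial y$, which satisfies $\eta(f)=0$. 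The matrix of coefficients of $(\chi,\eta)$ has determinant $-\tfrac{1}{k}\bigl(ax\,\partial f/\partial x+by\,\partial f/\partial y\bigr)=-f$, so by Saito's criterion (Proposition~\ref{saitocrit}) $\{\chi,\eta\}$ is a basis of $\Der(-\log D)$.

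Next I would fix $f'\in\C[x,y]_k$, set $F:=f+\epsilon\cdot f'$ (again weighted homogeneous of degree $k$, with $\epsilon$ of weight zero), and form the relative vector fields $\chi':=\chi$ and $\eta':=(\partial F/\partial y)\,\partial/\partial x-(\partial F/\partial x)\,\partial/\partial y$ on $\C^2\times T_\epsilon$. Then $\chi'(F)=F$ and $\eta'(F)=0$, the matrix of coefficients of $(\chi',\eta')$ has determinant $-F$, and since $F$ specialises to the reduced polynomial $f$ at $\epsilon=0$ while reducedness is an open condition, $-F$ is a reduced equation for $X:=V(F)$. Moreover $\eta'$ is weighted homogeneous of weighted degree $k-a-b$, whence $[\chi',\eta']=\tfrac{k-a-b}{k}\eta'$, while $[\chi',\chi']=[\eta',\eta']=0$; thus the $\mathcal{O}_{\C^2\times T_\epsilon,0}$-module generated by $\chi'$ and $\eta'$ is closed under Lie brackets. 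Since $(\chi',\eta')$ restricts to $(\chi,\eta)$ at $\epsilon=0$, the relative Saito Lemma~\ref{relatversaitlem} then shows that $X=V(F)$ is an admissible deformation of $(D,0)$ over $T_\epsilon$, with $\Der(-\log X/T_\epsilon)=\langle\chi',\eta'\rangle$.

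This produces a map $\Phi\colon\C[x,y]_k\to\mathcal{FT}^1(D)$, $f'\mapsto[V(f+\epsilon\cdot f')]$. Under the identification $\mathcal{FT}^1(D)\cong\mathcal{H}^1(\mathcal{C}^\bullet)_0$ of Theorem~\ref{FT1}, $\Phi(f')$ is the class of the cocycle $\psi$ with $\psi(\chi)=0$ and $\psi(\eta)=(\partial f'/\partial y)\,\partial/\partial x-(\partial f'/\partial x)\,\partial/\partial y$ modulo $\Der(-\log D)$, so $\Phi$ is $\C$-linear; it is surjective by Proposition~\ref{t1weighthomogneq}. For the kernel I would use Remark~\ref{trivial} (and \cite{intdef}, Chapter~II, 1.4): $\Phi(f')=0$ iff $V(f+\epsilon\cdot f')$ is trivial iff $f'$ lies in the Tyurina ideal $(f)+J(D)$ of $f$; but $f$ is quasi-homogeneous, so $f=\tfrac{1}{k}\bigl(ax\,\partial f/\partial x+by\,\partial f/\partial y\bigr)\in J(D)$ and this ideal equals $J(D)$. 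Hence $\ker\Phi=J(D)\cap\C[x,y]_k$ and $\Phi$ descends to the desired isomorphism $\C[x,y]_k/\bigl(J(D)\cap\C[x,y]_k\bigr)\xrightarrow{\ \sim\ }\mathcal{FT}^1(D)$.

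I expect the second step --- checking that $V(f+\epsilon\cdot f')$ is genuinely an admissible deformation for \emph{every} $f'\in\C[x,y]_k$ --- to be the heart of the argument, and it is exactly where both hypotheses are used essentially: a plane curve germ is automatically free (Corollary~\ref{planecurvefree}), so no freeness obstruction can survive in the deformed fibre, and weighted homogeneity furnishes the explicit Euler--Hamiltonian pair $(\chi',\eta')$ whose determinant, bracket relations and specialisation at $\epsilon=0$ are immediate, so that Lemma~\ref{relatversaitlem} applies directly. A smaller delicate point is the replacement of the Tyurina ideal by the Jacobian ideal in the triviality criterion, which again rests on quasi-homogeneity; surjectivity, $\C$-linearity and the computation of the kernel are then routine.
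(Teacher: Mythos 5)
Your proposal is correct and follows essentially the same route as the paper: one direction is Proposition \ref{t1weighthomogneq}, and the other is the explicit lifting of the Euler field $\chi$ and the Hamiltonian field $\eta$ to $\chi$ and $\eta'=(\partial F/\partial y)\,\partial/\partial x-(\partial F/\partial x)\,\partial/\partial y$ for $F=f+\epsilon f'$, followed by quotienting by $J(D)\cap\C[x,y]_k$ to kill the trivial deformations (using quasi-homogeneity to replace the Tyurina ideal by the Jacobian ideal). Your write-up is in fact more detailed than the paper's, which simply asserts that the two lifts exist, whereas you verify the hypotheses of Lemma \ref{relatversaitlem} (determinant, reducedness, closure under brackets) explicitly.
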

\begin{proof} Let $f$ be a defining equation for $(D,0)$. Because $f$ is weighted homogeneous, then there exists $\chi\in\Der_{\C^n}$ such that $\chi(f)=f$. Let $\delta=\partial f/\partial x\partial /\partial y-\partial f/\partial y\partial /\partial x$. Because $D$ has an isolated singularity, then $\delta, \chi$ form a basis of $\Der(-\log D)$.

By Proposition \ref{t1weighthomogneq}, we know that if $(X,x)$ is an infinitesimal admissible deformation of $(D,0)$ defined by $f+\epsilon\cdot f'$, then $f'\in\C[x,y]_k$.

On the other hand, let $f'\in\C[x,y]_k$, then consider $(X,0)$ defined by $f+\epsilon\cdot f'=F$, then it  is an infinitesimal admissible deformation because both $\delta$ and $\chi$  lift. In fact, we can consider $\delta'=\partial F/\partial x\partial /\partial y-\partial F/\partial y\partial /\partial x$ and $\chi$ as elements of $\Der(-\log X/T_\epsilon)$.

We have to go modulo $J(D)\cap\C[x,y]_k$ to avoid trivial admissible deformations.
\end{proof}
\begin{Remark} The previous Theorem is false in higher dimension.
\end{Remark}
\begin{proof} Consider $f=4x^3y^2-16x^4z+27y^4-144xy^2z+128x^2z^2-256z^3\in\C[x,y,z]$. It is weighted homogeneous of degree 12 with weights $(2,3,4)$ and it defines a germ of a free divisor $(D,0)\subset(\C^3,0)$. A Macaulay 2 computation shows that $\dim_\C\C[x,y,z]_{12}/J(D)\cap\C[x,y,z]_{12}=3$ but $\mathcal{FT}^1(D)=0$.
\end{proof}

\begin{Corollary} Let $(D,0)\subset(\C^2,0)$ be a germ of a free divisor defined by a homogeneous polynomial of degree $k$. Then $\dim_\C\mathcal{FT}^1(D)=k-3$ if $k\ge3$, and is zero otherwise.
\end{Corollary}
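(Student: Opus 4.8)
The plan is to reduce the statement, via Theorem~\ref{t1weighthomog2}, to a Hilbert series computation for the Milnor algebra of a homogeneous plane curve singularity. A homogeneous polynomial of degree $k$ is in particular weighted homogeneous of degree $k$ for the weights $(1,1)$, so Theorem~\ref{t1weighthomog2} applies and yields
$$\mathcal{FT}^1(D)\cong\C[x,y]_k/\bigl(J(D)\cap\C[x,y]_k\bigr),$$
where $J(D)=(\partial f/\partial x,\partial f/\partial y)$; the right hand side is precisely the degree-$k$ graded piece $M_k$ of the graded ring $M:=\C[x,y]/J(D)$, so it suffices to compute $\dim_\C M_k$.

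Since $D$ is a free divisor, $f$ is reduced, hence a product of $k$ pairwise non-proportional linear forms; for $k\ge2$ the curve $V(f)$ is singular only at the origin, so that $V(\partial f/\partial x,\partial f/\partial y)=\{0\}$ and therefore $\partial f/\partial x$ and $\partial f/\partial y$, both homogeneous of degree $k-1$, form a regular sequence in $\C[x,y]$. I would then invoke the Koszul complex on this regular sequence, which is a graded free resolution
$$0\longrightarrow\C[x,y](-2(k-1))\longrightarrow\C[x,y](-(k-1))^{\oplus2}\longrightarrow\C[x,y]\longrightarrow M\longrightarrow 0,$$
and pass to Hilbert series to get
$$\sum_{j\ge0}(\dim_\C M_j)t^j=\frac{(1-t^{k-1})^2}{(1-t)^2}=(1+t+\cdots+t^{k-2})^2.$$
The coefficient of $t^k$ in this square is the number of pairs $(a,b)$ with $0\le a,b\le k-2$ and $a+b=k$, namely $k-3$ when $k\ge3$; for $k=1$ and $k=2$ it is $0$, in accordance with the smooth and normal crossing cases treated above, where $\mathcal{FT}^1(D)=0$. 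This produces the stated formula.

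I expect the only substantive point to be the regular sequence claim, that is, that reducedness of the homogeneous form $f$ forces the singular locus of $V(f)$ to consist of the origin alone, so $\partial f/\partial x$ and $\partial f/\partial y$ form a system of parameters in $\C[x,y]$. Once that is in place, the Koszul resolution, the Hilbert series identity and the extraction of the coefficient of $t^k$ are routine; the only extra care is the direct verification of the low-degree cases $k\le2$, where the degree-$k$ piece lies at or below the degree $2(k-1)$ that carries the first Koszul syzygy.
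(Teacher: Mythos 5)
Your proposal is correct and follows essentially the same route as the paper: both reduce via Theorem \ref{t1weighthomog2} to computing the degree-$k$ piece of $\C[x,y]/J(D)$, and both hinge on the fact that the isolated singularity forces $\partial f/\partial x,\partial f/\partial y$ to be a regular sequence whose only syzygy is the Koszul one in degree $k-1$. The sole difference is bookkeeping: the paper counts directly that $x\partial f/\partial x, x\partial f/\partial y, y\partial f/\partial x, y\partial f/\partial y$ are four linearly independent elements of $\C[x,y]_k$ (giving $k+1-4=k-3$), whereas you extract the same number as the coefficient of $t^k$ in the Hilbert series $(1-t^{k-1})^2/(1-t)^2$ coming from the graded Koszul resolution.
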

\begin{proof} If $k=1$, then $J(D)=\C[x,y]$ and if $k=2$, then $J(D)=(x,y)$ and so, by Theorem \ref{t1weighthomog2}, in both cases $\mathcal{FT}^1(D)=0$. 

Let us suppose now that $k\ge3$. We have that $\dim_\C\C[x,y]_k=k+1$ and that $J(D)\cap\C[x,y]_k$ gives us 4 relations: $x\partial f/\partial x, x\partial f/\partial y, y\partial f/\partial x, y\partial f/\partial y$. Because $(D,0)$ is an isolated singularity, then $\partial f/\partial x,\partial f/\partial y$ form a regular sequence and so the Koszul relation generates the relations between the partial derivative of $f$. Because the Koszul relation is of degree $k-1>1$, then $x\partial f/\partial x, x\partial f/\partial y, y\partial f/\partial x, y\partial f/\partial y$ are linearly independent. Hence, $\dim_\C\C[x,y]_k/J(D)\cap\C[x,y]_k=k+1-4=k-3$. We conclude by Theorem \ref{t1weighthomog2}.
\end{proof}
\begin{Example} 
\begin{enumerate}
\item Consider $f=xy(x-y)(x+y)\in \C[x,y]$ and let $(D,0)=(V(f),0)\subset (\C^2,0)$. Then $\mathcal{FT}^1(D)$ is $1$-dimensional and it is generated by $x^2y^2$.
\item Consider $f=x^5+y^4\in \C[x,y]$ and the germ of a free divisor $(D,0)=(V(f),0)\subset (\C^2,0)$. A direct computation shows that $\mathcal{FT}^1(D)=0$ and so it is formally rigid.
\end{enumerate}
\end{Example}
\begin{Remark} Let $(D,0)\subset(\C^n,0)$ be a germ of a free divisor defined by a weighted homogeneous polynomial. Then we can compute the cohomology of $\mathcal{C}^\bullet$ degree by degree, because each module  and map involved is degree preserving.
\end{Remark}
\begin{Theorem} Let $(D,0)\subset(\C^n,0)$ be a germ of a free divisor defined by a weighted homogeneous polynomial. Then $\mathcal{FT}^1(D)\cong (\mathcal{H}^1(\mathcal{C}^\bullet)_0)_0$, where $(\mathcal{H}^1(\mathcal{C}^\bullet)_0)_0$ is the weight zero part of $\mathcal{H}^1(\mathcal{C}^\bullet)_0$.
\end{Theorem}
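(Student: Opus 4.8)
The plan is to prove that, because $f$ is weighted homogeneous, the graded module $\mathcal{H}^1(\mathcal{C}^\bullet)_0$ is concentrated in weight $0$; combined with the isomorphism $\mathcal{H}^1(\mathcal{C}^\bullet)_0\cong\mathcal{FT}^1(D)$ of Theorem \ref{FT1} this gives the statement. By the previous Remark all the modules and differentials of $\mathcal{C}^\bullet$ are graded, so $\mathcal{H}^1(\mathcal{C}^\bullet)_0=\bigoplus_{w}(\mathcal{H}^1(\mathcal{C}^\bullet)_0)_w$, and it is enough to show $(\mathcal{H}^1(\mathcal{C}^\bullet)_0)_w=0$ for every $w\ne0$.

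To do this I would first choose a homogeneous Saito matrix. Let $k=\deg f$ and let $(a_1,\dots,a_n)$ be the weights of the variables. Since $\Der(-\log D)$ is a graded free $\mathcal{O}_{\C^n}$-module it admits a homogeneous basis $\delta_1,\dots,\delta_n$; write $d_i$ for the weight of $\delta_i$. Expanding the determinant $\det[\delta_1,\dots,\delta_n]=f$ into homogeneous summands shows $\sum_{i=1}^n d_i+\sum_{j=1}^n a_j=k$. Now let $\psi\in\mathcal{C}^1$ be a homogeneous cocycle of weight $w$, and put $\tilde\delta_i:=\psi(\delta_i)$, a vector field of weight $d_i+w$. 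By Theorem \ref{infdefref} the class of $\psi$ corresponds to the infinitesimal admissible deformation with total space $V(f+\epsilon\cdot f')$, where the first-order expansion of $\det[\delta_1+\epsilon\tilde\delta_1,\dots,\delta_n+\epsilon\tilde\delta_n]$ gives
$$f'=\sum_{i=1}^n\det[\delta_1,\dots,\delta_{i-1},\tilde\delta_i,\delta_{i+1},\dots,\delta_n].$$
Counting degrees exactly as for $\det[\delta_1,\dots,\delta_n]$ above, replacing the $i$-th column (of weight $d_i$) by $\tilde\delta_i$ (of weight $d_i+w$) raises the weight of the determinant by $w$, so each summand, hence $f'$ itself, is weighted homogeneous of degree $k+w$.

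I would then invoke the argument in the proof of Proposition \ref{t1weighthomogneq}: if an infinitesimal admissible deformation of $(D,0)$ is given by a non-zero weighted homogeneous $f'$, then either it is trivial or $\deg f'=k$, since otherwise a non-zero scalar multiple of $f'$ would lie in the Jacobian ideal of $f$ (by that computation, using the Euler relation $\chi(f)=f$), forcing triviality by \cite{intdef}, Chapter II, 1.4. Consequently, if $[\psi]\ne0$ in $\mathcal{H}^1(\mathcal{C}^\bullet)_0$ — equivalently, if the associated deformation is non-trivial, which in particular forces $f'\ne0$ — then $k+w=k$, i.e.\ $w=0$. Hence $(\mathcal{H}^1(\mathcal{C}^\bullet)_0)_w=0$ for $w\ne0$, so $\mathcal{H}^1(\mathcal{C}^\bullet)_0=(\mathcal{H}^1(\mathcal{C}^\bullet)_0)_0$, and Theorem \ref{FT1} yields $\mathcal{FT}^1(D)\cong(\mathcal{H}^1(\mathcal{C}^\bullet)_0)_0$.

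The only slightly delicate part is the weight bookkeeping: that one can take the basis $\delta_1,\dots,\delta_n$ homogeneous, that $\sum_i d_i+\sum_j a_j=k$, and that substituting a weight-$(d_i+w)$ vector field into the $i$-th column of the Saito matrix shifts the weight of the determinant by exactly $w$ — all routine once set up carefully. Conceptually the whole statement is an instance of the Cartan homotopy formula $L_\chi=d\circ\iota_\chi+\iota_\chi\circ d$ on $\mathcal{C}^\bullet$ for the weight-zero Euler field $\chi\in\Der(-\log D)$ with $\chi(f)=f$: since $L_\chi$ acts as multiplication by $w$ on the weight-$w$ part of each $\mathcal{C}^p$, any cocycle of non-zero weight is automatically a coboundary. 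The proof could equally be organised around this identity, at the cost of checking Cartan's formula for the Lie algebroid cohomology complex $\mathcal{C}^\bullet$.
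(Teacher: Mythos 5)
Your proof is correct and rests on essentially the same ingredients as the paper's: the graded structure of $\mathcal{C}^\bullet$ noted in the preceding Remark, the correspondence of Theorem \ref{FT1}, and the degree constraint $\deg f'=k$ coming from the Euler-field computation of Proposition \ref{t1weighthomogneq}. The only difference is one of direction -- the paper shows every deformation class admits a weight-zero cocycle representative, while you show every homogeneous cocycle of weight $w\neq 0$ gives a trivial deformation -- and your explicit determinant expansion of $f'$ actually makes the weight bookkeeping cleaner than the paper's assertion that the $\tilde{\delta}_i$ can be chosen homogeneous of the same weight as the $\delta_i$.
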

\begin{proof} Let $f$ be a defining equation for $(D,0)$ weighted homogeneous of degree $k$ and let $(X,x)$ be an infinitesimal admissible deformation of $(D,0)$. By Proposition \ref{t1weighthomogneq}, we can suppose $(X,x)$ has defining equation $f+\epsilon\cdot f'$, with $f'$ weighted homogeneous of degree $k$.

Because $\Der(-\log D)$ is a graded module, we can consider $\delta_1,\dots,\delta_n\in\Der(-\log D)$ a weighted homogeneous basis. By Proposition \ref{generalizsaitocrit1}, $\Der(\log X/T_\epsilon)$ is generated by $\delta_1+\epsilon\cdot\tilde{\delta}_1,\dots,\delta_n+\epsilon\cdot\tilde{\delta}_n$ such that the determinant of their coefficients is $f+\epsilon\cdot f'$. Because $f$ and $f'$ are both weighted homogenous of the same degree, then each $\tilde{\delta}_i$ is weighted homogeneous of the same degree as $\delta_i$, for all $i=1,\dots,n$. 

As seen in the proof of Theorem \ref{FT1}, there exists $\psi\in\mathcal{C}^1$ such that $\psi(\delta_i)=\tilde{\delta}_i$. So by the previous argument $\psi$ is a weight-preserving map and so represents an element of  $(\mathcal{H}^1(\mathcal{C}^\bullet)_0)_0$.
\end{proof}
\begin{Corollary}\label{isot1lfd} Let $(D,0)\subset(\C^n,0)$ be a germ of a linear free divisor. Then $\mathcal{FT}^1(D)\cong\mathcal{LFT}^1(D)$.
\end{Corollary}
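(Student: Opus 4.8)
The plan is to splice together the two cohomological descriptions established just above. Since a germ of a linear free divisor $(D,0)\subset(\C^n,0)$ is cut out by a homogeneous polynomial of degree $n$, in particular by a weighted homogeneous one, the preceding Theorem applies and gives $\mathcal{FT}^1(D)\cong(\mathcal{H}^1(\mathcal{C}^\bullet)_0)_0$, the weight-zero part of the germ at the origin of the first Lie algebroid cohomology. The Theorem identifying $\mathcal{LFT}^1(D)$ gives $\mathcal{LFT}^1(D)\cong H^1(\mathcal{C}^\bullet_0)_0$, where the subscript $0$ recording the germ at the origin is immaterial since $\mathcal{C}^\bullet_0$ is a complex of finite-dimensional $\C$-vector spaces. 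Thus it suffices to identify the weight-zero part of $\mathcal{H}^1(\mathcal{C}^\bullet)$ with $H^1(\mathcal{C}^\bullet_0)$.

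First I would show that, for a linear free divisor, the weight-zero subcomplex $(\mathcal{C}^\bullet)_0$ of $\mathcal{C}^\bullet$ coincides term by term with $\mathcal{C}^\bullet_0$. Recall that $D$ is linear free if and only if $\Der(-\log D)=\mathcal{O}_{\C^n}\cdot\Der(-\log D)_0$; choose a homogeneous $\mathcal{O}_{\C^n}$-basis $\delta_1,\dots,\delta_n$ of $\Der(-\log D)$ consisting of weight-zero (linear) vector fields. Any weight-zero logarithmic vector field, being $\sum_i a_i\delta_i$ with each $a_i$ homogeneous of weight $0$, has $a_i\in\C$; hence $\delta_1,\dots,\delta_n$ is simultaneously a $\C$-basis of $\Der(-\log D)_0$ and $\Der(-\log D)\cong\mathcal{O}_{\C^n}\otimes_\C\Der(-\log D)_0$ as graded $\mathcal{O}_{\C^n}$-modules. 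Taking $p$-th exterior powers gives $\bigwedge^p\Der(-\log D)\cong\mathcal{O}_{\C^n}\otimes_\C\bigwedge^p\Der(-\log D)_0$, concentrated in weight $0$, so by tensor-hom adjunction $\mathcal{C}^p=\mathcal{H}om_{\mathcal{O}_{\C^n}}(\bigwedge^p\Der(-\log D),\Der_{\C^n}/\Der(-\log D))\cong\mathcal{H}om_\C(\bigwedge^p\Der(-\log D)_0,\Der_{\C^n}/\Der(-\log D))$; since the source sits in weight $0$, the weight-zero part of this is $Hom_{\C}(\bigwedge^p\Der(-\log D)_0,(\Der_{\C^n}/\Der(-\log D))_0)=\mathcal{C}^p_0$, using that the graded exact sequence $0\to\Der(-\log D)\to\Der_{\C^n}\to\Der_{\C^n}/\Der(-\log D)\to0$ identifies $(\Der_{\C^n}/\Der(-\log D))_0$ with $(\Der_{\C^n})_0/\Der(-\log D)_0$. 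The two differentials agree because they are given by literally the same formula, the Lie algebroid differential restricting to the Lie algebra differential on the subalgebra $\Der(-\log D)_0$.

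Once $(\mathcal{C}^\bullet)_0=\mathcal{C}^\bullet_0$ is in hand, the rest is formal: $\mathcal{C}^\bullet$ is a complex of graded modules with degree-preserving differentials, so its cohomology is graded and the weight-zero summand of $\mathcal{H}^1(\mathcal{C}^\bullet)$ equals $H^1((\mathcal{C}^\bullet)_0)=H^1(\mathcal{C}^\bullet_0)$; since this is a finite-dimensional vector space it is unaffected by passage to the germ at the origin. Chaining the isomorphisms, $\mathcal{FT}^1(D)\cong(\mathcal{H}^1(\mathcal{C}^\bullet)_0)_0\cong H^1(\mathcal{C}^\bullet_0)\cong\mathcal{LFT}^1(D)$. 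I expect the only genuine work to be the grading bookkeeping of the middle paragraph — in particular the point that $\Der(-\log D)_0$ is exactly the $\C$-span of a homogeneous $\mathcal{O}_{\C^n}$-basis of $\Der(-\log D)$, which is precisely what makes the exterior powers and Hom-modules in $(\mathcal{C}^\bullet)_0$ match those defining $\mathcal{C}^\bullet_0$ on the nose; everything else is an application of results already proved.
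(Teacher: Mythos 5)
Your argument is correct and is essentially the paper's proof: the paper derives the corollary from the same two preceding theorems and simply asserts that $(\mathcal{H}^1(\mathcal{C}^\bullet)_0)_0=\mathcal{H}^1(\mathcal{C}^\bullet_0)_0$, which is exactly the grading bookkeeping you spell out via $\Der(-\log D)\cong\mathcal{O}_{\C^n}\otimes_\C\Der(-\log D)_0$. Your middle paragraph just makes explicit what the paper leaves as ``clear.''
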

\begin{proof} It is clear that $(\mathcal{H}^1(\mathcal{C}^\bullet)_0)_0=\mathcal{H}^1(\mathcal{C}^\bullet_0)_0$.
\end{proof}
\begin{Corollary} Let $(D,0)\subset(\C^n,0)$ be a germ of a reductive linear free divisor. Then it is formally rigid also as free divisor.
\end{Corollary}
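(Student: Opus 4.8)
The plan is to deduce this statement directly from the results already assembled, by combining the comparison between admissible and linearly admissible first-order deformations with the Lie-algebra-cohomology vanishing, and then feeding $\mathcal{FT}^1(D)=0$ into Schlessinger's machinery.

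First I would observe that, since $(D,0)$ is in particular a germ of a linear free divisor, Corollary \ref{isot1lfd} provides an isomorphism $\mathcal{FT}^1(D)\cong\mathcal{LFT}^1(D)$: every first-order admissible deformation of $(D,0)$ is represented by a cocycle in $\mathcal{C}^\bullet$ which, because the defining equation of $(D,0)$ is homogeneous of degree $n$, can be chosen weight-preserving and hence actually lies in $\mathcal{C}^\bullet_0$, so that it already defines a linearly admissible deformation. Next I would invoke Theorem \ref{zerot1}: reductivity of $\mathfrak{g}_D$ gives, via the semisimplicity of the representation $(\Der_{\C^n}/\Der(-\log D))_0$ (Proposition \ref{reductionlfd}) and the Hochschild--Serre vanishing Theorem \ref{isocoho} applied with this module, whose invariants vanish by the self-normalising Lemma \ref{selfnormzero}, that $\mathcal{LFT}^1(D)=0$. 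Combining the two yields $\mathcal{FT}^1(D)=0$.

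Finally, to pass from the vanishing of the first-order deformation space to formal rigidity \emph{as a free divisor}, I would recall that $\mathbf{FD}_D$ satisfies Schlessinger's conditions (H1) and (H2) by Theorem \ref{isdeffunct}, and (H3) because $\mathcal{FT}^1(D)$ is finite-dimensional (in fact zero); hence $\mathbf{FD}_D$ has a hull, as already recorded above for every linear free divisor. The tangent space of this hull is $\mathcal{FT}^1(D)=0$, which forces the hull to be $\Spec\C$, so that $(D,0)$ admits no non-trivial formal admissible deformation as a free divisor, i.e.\ it is formally rigid also as a free divisor.

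I do not expect a genuine obstacle here: the statement is essentially a corollary. The only point deserving care is the last step, where one must use the hull/pro-representability formalism rather than the abstract vanishing of $\mathcal{FT}^1(D)$ alone; but since the tangent space of the hull already vanishes, the hull is a reduced point regardless of the obstruction space, so no issue with higher-order obstructions can arise, and the conclusion is immediate.
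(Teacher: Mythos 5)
Your argument is correct and is essentially the paper's own proof: the paper likewise deduces the result by combining Corollary \ref{isot1lfd} ($\mathcal{FT}^1(D)\cong\mathcal{LFT}^1(D)$) with Theorem \ref{zerot1} ($\mathcal{LFT}^1(D)=0$ in the reductive case). Your additional remarks on the hull being a reduced point are a harmless elaboration of what the paper leaves implicit.
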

\begin{proof} This is a consequence of Theorem \ref{zerot1} and Corollary \ref{isot1lfd}.
\end{proof}

\section{Properties of the cohomology}

\subsection{Constructibility of the cohomology}

As we have seen in the previous section, the cohomology of the complex $\mathcal{C}^\bullet$ plays an important role in the theory of admissible deformations for a germ of a free divisor. From Schlessinger's Theorem 2.11 from \cite{schless}, we know that the main point in proving the existence of a hull is the finiteness of this cohomology. The following subsection is devoted to study this problem.

\begin{Definition} Let $X$ be a $n$-dimensional complex manifold. We denote by $\mathcal{D}_X$ the sheaf of differential operators on X and by $\mathcal{G}r_{F^\bullet}(\mathcal{D}_X)$ the sheaf on $T^*X$ of graded rings associated with the filtration $F^\bullet$ by the order of $\sigma(P)$ the principal symbol of a differential operator P.
\end{Definition}
\begin{Definition} Let $D\subset \C^n$ be a divisor defined by the ideal $I$. We define the $\mathcal{V}$-filtration relative to $D$ on $\mathcal{D}_{\C^n}$ by $$\mathcal{V}^D_k(\mathcal{D}_{\C^n}):=\{P\in \mathcal{D}_{\C^n}~|~P(I^j)\subset I^{j-k}~\forall j\in\Z \} $$ for all $k\in \Z$, where $I^j=\mathcal{O}_{\C^n}$ when $j$ is negative. Similarly, we define $$\mathcal{V}^D_k(\mathcal{D}_{\C^n,x}):=\{P\in \mathcal{D}_{\C^n,x}~|~P(f^j)\subset f^{j-k}~\forall j\in\Z \},$$ where $f$ is a local equation for $D$ at $x$. If there is no confusion, we denote $\mathcal{V}^D_k(\mathcal{D}_{\C^n})$ and $\mathcal{V}^D_k(\mathcal{D}_{\C^n,x})$ simply by $\mathcal{V}_k(\mathcal{D}_{\C^n})$ and $\mathcal{V}_k(\mathcal{D}_{\C^n,x})$, respectively.
\end{Definition}
\begin{Definition} A \emph{logarithmic differential operator} is an element of $\mathcal{V}_0(\mathcal{D}_{\C^n})$.
\end{Definition}
\begin{Remark} We have 
$$\Der(-\log D)=\Der_{\C^n}\cap \mathcal{V}_0(\mathcal{D}_{\C^n})=\mathcal{G}r^1_{F^\bullet}(\mathcal{V}_0(\mathcal{D}_{\C^n})),$$ and  
$$F^1(\mathcal{V}_0(\mathcal{D}_{\C^n}))=\mathcal{O}_{\C^n} \oplus \Der(-\log D).$$
\end{Remark}
\begin{proof} The first equality comes directly from the definitions. The second one is a consequence of the fact that $F^1(\mathcal{D}_{\C^n})=\mathcal{O}_{\C^n} \oplus \Der_{\C^n}$.\end{proof}
\begin{Definition} Let $\mathcal{M}$ be a $\mathcal{O}_{\C^n}$-module. A \emph{connection} on $\mathcal{M}$ with logarithmic poles along $D$ or a \emph{logarithmic connection} on $\mathcal{M}$, is a homomorphism over $\C$ $$\nabla\colon \mathcal{M} \to \Omega^1(\log D)\otimes \mathcal{M},$$ that verifies Leibniz's identity $$\nabla(hm)=dh\otimes m+h\nabla(m)$$ for any $h\in \mathcal{O}_{\C^n}$ and $m\in \mathcal{M}$, where $d$ is the exterior derivative over $\mathcal{O}_{\C^n}$. For any $q\in\mathbb{N}$ we will denote $\Omega^q(\log D)\otimes \mathcal{M}$ by $\Omega^q(\log D)(\mathcal{M})$.
\end{Definition}
\begin{Definition} Let $\mathcal{M}$ be a $\mathcal{O}_{\C^n}$-module with $\nabla$ a logarithmic connection. We can define the following left $\mathcal{O}_{\C^n}$-linear morphism $$\nabla'\colon\Der(-\log D)\to \mathcal{E}nd_{\C}(\mathcal{M})$$ $$\delta \mapsto \nabla_{\delta}$$ where $\nabla_{\delta}(m):=\langle\delta, \nabla(m)\rangle$.
\end{Definition}
\begin{Remark} The morphism $\nabla'$ verifies Leibniz's condition $$\nabla_{\delta}(hm)=\delta(h)m+h\nabla_{\delta}(m)$$ for any  $\delta\in\Der(-\log D)$, $h\in \mathcal{O}_{\C^n}$ and $m\in \mathcal{M}$.
\end{Remark}
\begin{Remark} Given  a left $\mathcal{O}_{\C^n}$-linear morphism $$\nabla'\colon\Der(-\log D)\to \mathcal{E}nd_{\C}(\mathcal{M})$$ verifying Leibniz's condition, we define $$\nabla\colon \mathcal{M}\to \Omega^1(\log D)(\mathcal{M}) $$ with $\nabla(m)$ the element of $\Omega^1(\log D)(\mathcal{M})=\mathcal{H}om_{\mathcal{O}_{\C^n}}(\Der(-\log D),\mathcal{M})$ such that $\nabla(m)(\delta)=\nabla'(\delta)(m)$.
\end{Remark}
\begin{Definition} A logarithmic connection $\nabla$ is \emph{integrable} if, for each $\delta,\delta'\in \Der(-\log D)$, it verifies
$$\nabla_{[\delta,\delta']}=[\nabla_{\delta},\nabla_{\delta'}] ,$$ where $[~,~]$ represents the Lie bracket in $\Der(-\log D)$ and the commutator in $\mathcal{E}nd_{\C}(\mathcal{M})$. 
\end{Definition}
\begin{Example}\label{connectder} Consider $\mathcal{M}=\Der_{\C^n}/\Der(-\log D)$. Then we can introduce on $\mathcal{M}$ the integrable logarithmic  connection defined by $\nabla_{\delta}:=[\delta,-]$. If we take $\mathcal{M}=\Der(-\log D)$ or $\Der_{\C^n}$, then $\nabla_{\delta}=[\delta,-]$ does not in general define a connection on $\mathcal{M}$ because it is not $\mathcal{O}_{\C^n}$-linear in $\delta$.
\end{Example}
\begin{Proposition}\label{equivintconmod}\emph{(\cite{calderon}, Corollary 2.2.6)} Let $D\subset\C^n$ be a free divisor and let $\mathcal{M}$ be a $\mathcal{O}_{\C^n}$-module. An integrable logarithmic connection on $\mathcal{M}$ gives rise to a left $\mathcal{V}_0(\mathcal{D}_{\C^n})$-module structure on $\mathcal{M}$ and vice versa.
\end{Proposition}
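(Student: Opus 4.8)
The plan is to recognize that both a left $\mathcal{V}_0(\mathcal{D}_{\C^n})$-module structure on $\mathcal{M}$ extending its $\mathcal{O}_{\C^n}$-module structure, and an integrable logarithmic connection on $\mathcal{M}$, are two packagings of the same object: a module over the Lie--Rinehart algebra $(\mathcal{O}_{\C^n},\Der(-\log D))$. The decisive structural fact, and the one place where freeness of $D$ is used in an essential way, is that for a free divisor $\mathcal{V}_0(\mathcal{D}_{\C^n})$ is generated in order $\le 1$: since $F^1(\mathcal{V}_0(\mathcal{D}_{\C^n}))=\mathcal{O}_{\C^n}\oplus\Der(-\log D)$ and, by the analysis in \cite{calderon} (which leans on Saito's criterion and the local freeness of $\Der(-\log D)$), the associated graded ring $\Gr_{F^\bullet}\bigl(\mathcal{V}_0(\mathcal{D}_{\C^n})\bigr)$ is the symmetric algebra $\Sym_{\mathcal{O}_{\C^n}}\bigl(\Der(-\log D)\bigr)$, the canonical map from the Rinehart enveloping algebra $U_{\mathcal{O}_{\C^n}}\bigl(\Der(-\log D)\bigr)$ to $\mathcal{V}_0(\mathcal{D}_{\C^n})$ is a filtered isomorphism. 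Establishing this generation/enveloping-algebra statement is the main obstacle; once it is in hand, the rest of the argument is formal.

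Granting it, the forward direction runs as follows. An integrable logarithmic connection $\nabla$ on $\mathcal{M}$ produces the $\mathcal{O}_{\C^n}$-linear map $\nabla'\colon\Der(-\log D)\to\mathcal{E}nd_{\C}(\mathcal{M})$, $\delta\mapsto\nabla_\delta$, which satisfies the Leibniz rule $\nabla_\delta(hm)=\delta(h)m+h\nabla_\delta(m)$ and, by integrability, the bracket identity $\nabla_{[\delta,\delta']}=[\nabla_\delta,\nabla_{\delta'}]$. These data, together with the map $h\mapsto(m\mapsto hm)$ giving the $\mathcal{O}_{\C^n}$-action, are precisely the axioms of a $(\mathcal{O}_{\C^n},\Der(-\log D))$-module, so by the universal property of $U_{\mathcal{O}_{\C^n}}\bigl(\Der(-\log D)\bigr)\cong\mathcal{V}_0(\mathcal{D}_{\C^n})$ they extend uniquely to a left $\mathcal{V}_0(\mathcal{D}_{\C^n})$-module structure on $\mathcal{M}$ restricting to the given $\mathcal{O}_{\C^n}$-structure.

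Conversely, suppose $\mathcal{M}$ carries a left $\mathcal{V}_0(\mathcal{D}_{\C^n})$-module structure whose restriction along $\mathcal{O}_{\C^n}\hookrightarrow\mathcal{V}_0(\mathcal{D}_{\C^n})$ is the given one. Define $\nabla_\delta:=(m\mapsto\delta\cdot m)$ for $\delta\in\Der(-\log D)\subset F^1(\mathcal{V}_0(\mathcal{D}_{\C^n}))$. Then $\mathcal{O}_{\C^n}$-linearity of $\delta\mapsto\nabla_\delta$ follows from $(h\delta)\cdot m=h\cdot(\delta\cdot m)$; the Leibniz rule follows from the relation $\delta h-h\delta=\delta(h)$ valid in $\mathcal{V}_0(\mathcal{D}_{\C^n})$; and integrability follows because the bracket $[\delta,\delta']$ of logarithmic vector fields is exactly the commutator $\delta\delta'-\delta'\delta$ computed inside $\mathcal{V}_0(\mathcal{D}_{\C^n})$. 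Hence $\nabla'$ determines an integrable logarithmic connection $\nabla$ on $\mathcal{M}$. Finally, because $\mathcal{V}_0(\mathcal{D}_{\C^n})$ is generated as a ring by $\mathcal{O}_{\C^n}$ and $\Der(-\log D)$, a $\mathcal{V}_0(\mathcal{D}_{\C^n})$-module structure is completely determined by the $\mathcal{O}_{\C^n}$-action and the operators $\nabla_\delta$; this shows the two constructions are mutually inverse and yields the claimed equivalence.
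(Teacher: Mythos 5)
The paper gives no proof of this proposition --- it is quoted directly from \cite{calderon}, Corollary 2.2.6 --- and your reconstruction follows essentially the argument of that source: the equivalence is formal once one knows that the canonical map $U_{\mathcal{O}_{\C^n}}(\Der(-\log D))\to\mathcal{V}_0(\mathcal{D}_{\C^n})$ from the Lie--Rinehart enveloping algebra is an isomorphism, equivalently that $\Gr_{F^\bullet}(\mathcal{V}_0(\mathcal{D}_{\C^n}))\cong\Sym_{\mathcal{O}_{\C^n}}(\Der(-\log D))$. You correctly isolate this as the one point where freeness of $D$ is essential, and although you defer its proof to \cite{calderon} rather than supplying it, with that input granted both directions of your argument are correct.
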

We now explain a condition that allows us to put a structure of $\mathcal{V}_0(\mathcal{D}_{\C^n})$-module on $\Der(-\log D)$ and $\Der_{\C^n}$.

Fix $D\subset\C^n$ a free divisor and $\delta_i=\sum_{j=1}^na_{ij}\partial/\partial x_j$, $i=1,\dots, n$ a basis for $\Der(-\log D)$, where $a_{ij}\in \mathcal{O}_{\C^n}$ for $i,j=1,\dots,n$. We know that $\Der(-\log D)$ forms a Lie subalgebra of $\Der_{\C^n}$, hence we can write $$[\delta_i,\delta_j]=\sum_{k=1}^nb_{jk}^i\delta_k$$ where $b_{jk}^{i}\in \mathcal{O}_{\C^n}$ for all $i,j,k=1,\dots,n$ and similarly we can write $$[\delta_i,\partial/\partial x_j]=\sum_{k=1}^nc_{jk}^i\frac{\partial}{\partial x_k}$$ where $c_{jk}^{i}\in \mathcal{O}_{\C^n}$ for all $i,j,k=1,\dots,n$. In this way we obtain the data of $2n$ matrices $B_i=(b_{jk}^i)$ and $C_i=(c_{jk}^i)$ of holomorphic function on $\C^n$. Let us write $\delta \cdot \partial:=[\delta,\partial]$ for any derivation $\partial$ and any logarithmic derivation $\delta$. Then we have
$$\delta_i \cdot \underline{\delta}^t=B_i\underline{\delta}^t, ~~1\le i \le n$$ and
$$\delta_i \cdot \underline{\partial}^t=C_i\underline{\partial}^t, ~~1\le i \le n$$ 
where $\underline{\delta}=(\delta_1,\dots,\delta_n)$ and $\underline{\partial}=(\partial/\partial x_1,\dots,\partial/\partial x_n)$.
\begin{Lemma}\label{lemintconder} For $i,j=1,\dots,n$ we have that $$\delta_i(C_j)-\delta_j(C_i)+[C_j,C_i]=\sum_{k=1}^n b^i_{jk}C_k$$ if and only if $$\sum_{k=1}^na_{kr}\frac{\partial(b^i_{jk})}{\partial x_l} =0, ~~\forall~ i,l,r=1,\dots,n.$$
\end{Lemma}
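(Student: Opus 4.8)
The plan is to compute both sides of the first identity directly in terms of the structure data $a_{ij}$, $b^i_{jk}$, $c^i_{jk}$, and then reorganize the result so that the equivalence with the stated condition on the $b^i_{jk}$ becomes visible. First I would expand $\delta_i(C_j)$: each entry of $C_j$ is $c^j_{rk}\in\mathcal{O}_{\C^n}$, and $\delta_i(C_j)$ is the matrix with entries $\delta_i(c^j_{rk})=\sum_l a_{il}\,\partial(c^j_{rk})/\partial x_l$. So the left-hand side of the first identity is a matrix of holomorphic functions whose $(r,k)$-entry is $\delta_i(c^j_{rk})-\delta_j(c^i_{rk})+\sum_s(c^j_{rs}c^i_{sk}-c^i_{rs}c^j_{sk})$, and I want to show this equals $\sum_k' b^i_{jk'}c^{k'}_{rk}$ precisely when $\sum_k a_{kr}\,\partial(b^i_{jk})/\partial x_l=0$ for all $i,l,r$.

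The key step is the Jacobi identity applied to the triple $(\delta_i,\delta_j,\partial/\partial x_r)$. Writing $\delta\cdot\partial$ for $[\delta,\partial]$ as in the text, the Jacobi identity reads
$$\delta_i\cdot(\delta_j\cdot \underline{\partial}^t)-\delta_j\cdot(\delta_i\cdot\underline{\partial}^t)=[\delta_i,\delta_j]\cdot\underline{\partial}^t.$$
Using $\delta_j\cdot\underline{\partial}^t=C_j\underline{\partial}^t$ and $\delta_i\cdot\underline{\delta}^t=B_i\underline{\delta}^t$, the left side becomes $\delta_i(C_j)\underline{\partial}^t+C_j(C_i\underline{\partial}^t)-\delta_j(C_i)\underline{\partial}^t-C_i(C_j\underline{\partial}^t)=\bigl(\delta_i(C_j)-\delta_j(C_i)+[C_j,C_i]\bigr)\underline{\partial}^t$ — here I must be careful with the order of factors in the commutator, since $C_j$ acts on the column $C_i\underline{\partial}^t$ by left multiplication, which is why $[C_j,C_i]$ rather than $[C_i,C_j]$ appears, matching the statement. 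The right side is $[\delta_i,\delta_j]\cdot\underline{\partial}^t=\bigl(\sum_k b^i_{jk}\delta_k\bigr)\cdot\underline{\partial}^t$. Now $\bigl(\sum_k b^i_{jk}\delta_k\bigr)\cdot\underline{\partial}^t=\sum_k b^i_{jk}(\delta_k\cdot\underline{\partial}^t)-\sum_k \underline{\partial}^t(b^i_{jk})\delta_k$ — this is the crucial point: the operator $\sum_k b^i_{jk}\delta_k$ is $\mathcal{O}_{\C^n}$-linear combination of the $\delta_k$, and bracketing with $\partial/\partial x_l$ produces both the "expected" term $\sum_k b^i_{jk}C_k\underline{\partial}^t$ and a correction term coming from $\partial/\partial x_l$ differentiating the coefficients $b^i_{jk}$ and hitting the $\delta_k=\sum_r a_{kr}\partial/\partial x_r$.

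Carrying this out, the correction term has $\partial/\partial x_l$-component (for the $l$-th entry of the column $\underline{\partial}^t$, after reading off coefficients of $\partial/\partial x_r$) equal to $-\sum_k (\partial b^i_{jk}/\partial x_l)\,a_{kr}$. Hence the Jacobi identity gives
$$\delta_i(C_j)-\delta_j(C_i)+[C_j,C_i]=\sum_{k=1}^n b^i_{jk}C_k-E,$$
where $E$ is the matrix with $(l,r)$-entry $\sum_k a_{kr}\,\partial(b^i_{jk})/\partial x_l$. Since the Jacobi identity always holds, this equality is an identity; therefore the first displayed equation in the Lemma holds if and only if $E=0$, i.e. if and only if $\sum_k a_{kr}\,\partial(b^i_{jk})/\partial x_l=0$ for all $i,l,r$, which is exactly the second displayed condition. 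I expect the main obstacle to be purely bookkeeping: keeping straight the index placement (which index of $C_j$ is the row and which is the column), getting the sign and the order in $[C_j,C_i]$ right, and correctly identifying the correction term $E$ when expanding $[\sum_k b^i_{jk}\delta_k,\partial/\partial x_l]$; once the Jacobi identity is written in matrix form these are all routine, but each requires care.
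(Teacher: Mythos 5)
Your argument is correct and is essentially the paper's own proof: both rest on the Jacobi identity for the triple $\delta_i,\delta_j,\partial/\partial x_l$, with the observation that $[\sum_k b^i_{jk}\delta_k,\partial/\partial x_l]$ produces, besides the expected $\sum_k b^i_{jk}C_k$ term, the correction $-\sum_k \bigl(\partial b^i_{jk}/\partial x_l\bigr)a_{kr}$, so that the first identity holds iff that correction vanishes. The only difference is presentational (you keep everything in matrix form, the paper checks the $(l,r)$-entry directly), and your signs and index bookkeeping check out.
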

\begin{proof}We first notice that by definition $c^i_{jk}=-\partial(a_{ik})/\partial x_j$. The first equality is an equality between matrices, hence we can check it entry by entry. Let $1\ge l,r\ge 0$. We now check the entry $(l,r)$. In this case the expression becomes
$$-\delta_i(\frac{\partial(a_{jr})}{\partial x_l})+\delta_j(\frac{\partial(a_{ir})}{\partial x_l})+\sum_{k=1}^n \frac{\partial(a_{jk})}{\partial x_l} \frac{\partial(a_{ir})}{\partial x_k}+ $$ $$-\sum_{k=1}^n \frac{\partial(a_{ik})}{\partial x_l} \frac{\partial(a_{jr})}{\partial x_k}= -\sum_{k=1}^n b_{jk}^{i}\frac{\partial(a_{kr})}{\partial x_l}.$$
Consider now the Jacobi identity
$$[[\delta_i,\delta_j],\frac{\partial}{\partial x_l}]+[[\delta_j,\frac{\partial}{\partial x_l}],\delta_i]+[[\frac{\partial}{\partial x_l},\delta_i],\delta_j]=0. $$
The coefficient of $\partial/\partial x_r$ of the previous expression is
$$\delta_i(\frac{\partial(a_{jr})}{\partial x_l})-\delta_j(\frac{\partial(a_{ir})}{\partial x_l})-\sum_{k=1}^n \frac{\partial(a_{jk})}{\partial x_l} \frac{\partial(a_{ir})}{\partial x_k}+ $$ $$+\sum_{k=1}^n \frac{\partial(a_{ik})}{\partial x_l} \frac{\partial(a_{jr})}{\partial x_k} - \sum_{k=1}^n b_{jk}^{i}\frac{\partial(a_{kr})}{\partial x_l}-\sum_{k=1}^n a_{kr}\frac{\partial(b_{jk}^{i})}{\partial x_l}=0.$$
Hence, the first equality is satisfied if and only if
$$\sum_{k=1}^n a_{kr}\frac{\partial(b_{jk}^{i})}{\partial x_l}=0.$$
\end{proof}
\begin{Proposition}\label{intconder} We can define a structure of left $\mathcal{V}_0(\mathcal{D}_{\C^n})$-module on $\Der_{\C^n}$ if 
$$\sum_{k=1}^na_{kr}\frac{\partial(b^i_{jk})}{\partial x_l} =0, ~~\forall~i, l,r=1,\dots,n.$$
\end{Proposition}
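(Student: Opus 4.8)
The plan is to use the characterization, established in Proposition \ref{equivintconmod}, that on a free divisor an integrable logarithmic connection on an $\mathcal{O}_{\C^n}$-module is the same datum as a left $\mathcal{V}_0(\mathcal{D}_{\C^n})$-module structure. So it suffices to produce an \emph{integrable} logarithmic connection on $\Der_{\C^n}$. As noted in Example \ref{connectder}, the naive candidate $\nabla_\delta := [\delta,-]$ fails to be $\mathcal{O}_{\C^n}$-linear in $\delta$, so I will instead define the connection directly in terms of the chosen basis $\delta_1,\dots,\delta_n$ of $\Der(-\log D)$ and the matrices $C_i$: set $\nabla_{\delta_i}(\partial/\partial x_j) := \sum_k c_{jk}^i\,\partial/\partial x_k$, i.e. $\nabla_{\delta_i}(\underline\partial^t) = C_i\underline\partial^t$, and extend. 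The point is that writing an element of $\Der(-\log D)$ as $\delta=\sum_i g_i\delta_i$ and an element of $\Der_{\C^n}$ as $\tau=\sum_j h_j\,\partial/\partial x_j$, the formula
\[
\nabla_\delta(\tau) := \sum_i g_i\Bigl(\delta_i(h_j)\tfrac{\partial}{\partial x_j}\text{-corrected}\Bigr)
\]
— more precisely $\nabla_\delta(\tau) = \sum_{i} g_i\bigl(\,[\delta_i,\tau] \text{ rewritten via } C_i\,\bigr)$, which I will package as: $\nabla_\delta(h\,\partial/\partial x_j) = \delta(h)\,\partial/\partial x_j + h\sum_i g_i c_{jk}^i\,\partial/\partial x_k$ — manifestly satisfies the Leibniz rule $\nabla_\delta(h\tau) = \delta(h)\tau + h\nabla_\delta(\tau)$ by construction, and is $\mathcal{O}_{\C^n}$-linear in $\delta$ because we have made the dependence on $\delta$ go through the coefficients $g_i$ only after having fixed the purely $\mathcal{O}_{\C^n}$-linear maps $\tau\mapsto\sum_i g_i C_i(\tau)$. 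This is exactly the well-definedness discussion that the paragraph preceding Lemma \ref{lemintconder} is setting up with the $2n$ matrices $B_i, C_i$.

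Next I would verify integrability, i.e. $\nabla_{[\delta,\delta']} = [\nabla_\delta,\nabla_{\delta'}]$ for all $\delta,\delta'\in\Der(-\log D)$. By $\mathcal{O}_{\C^n}$-bilinearity (using the Leibniz rule to absorb coefficient functions, since the curvature $R(\delta,\delta') := [\nabla_\delta,\nabla_{\delta'}] - \nabla_{[\delta,\delta']}$ is $\mathcal{O}_{\C^n}$-bilinear and $\mathcal{O}_{\C^n}$-linear as an endomorphism of $\Der_{\C^n}$) it is enough to check the identity on basis elements, i.e. to show $R(\delta_i,\delta_j)=0$ as an endomorphism of $\Der_{\C^n}$, and in fact it suffices to evaluate on $\partial/\partial x_l$. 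Expanding $\nabla_{\delta_i}\nabla_{\delta_j}(\partial/\partial x_l) - \nabla_{\delta_j}\nabla_{\delta_i}(\partial/\partial x_l) - \nabla_{[\delta_i,\delta_j]}(\partial/\partial x_l)$ in the basis $\partial/\partial x_r$, and using $[\delta_i,\delta_j] = \sum_k b_{jk}^i\delta_k$, the coefficient of $\partial/\partial x_r$ becomes precisely
\[
\delta_i(c_{lr}^j) - \delta_j(c_{lr}^i) + \sum_k\bigl(c_{lk}^j c_{kr}^i - c_{lk}^i c_{kr}^j\bigr) - \sum_k b_{jk}^i c_{lr}^k,
\]
which is the $(l,r)$-entry of the matrix equation $\delta_i(C_j) - \delta_j(C_i) + [C_j,C_i] = \sum_k b_{jk}^i C_k$. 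By Lemma \ref{lemintconder}, this holds if and only if the hypothesis $\sum_k a_{kr}\,\partial(b_{jk}^i)/\partial x_l = 0$ holds for all $i,l,r$. Hence under the stated hypothesis the connection is integrable.

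Finally, invoking Proposition \ref{equivintconmod} with $\mathcal{M}=\Der_{\C^n}$ (legitimate since $D$ is free), the integrable logarithmic connection just constructed endows $\Der_{\C^n}$ with a left $\mathcal{V}_0(\mathcal{D}_{\C^n})$-module structure, which is the assertion of the Proposition. I expect the main obstacle to be bookkeeping rather than conceptual: one must be scrupulous about the sign conventions in $c_{jk}^i = -\partial a_{ik}/\partial x_j$ and the ordering in $[C_j,C_i]$ versus $[C_i,C_j]$ so that the curvature computation lands exactly on the combination appearing in Lemma \ref{lemintconder}, and one must justify carefully that $\nabla$ as defined does not depend on the way a logarithmic vector field is expressed in the basis — but this last point is automatic once the Leibniz rule and $\mathcal{O}_{\C^n}$-linearity in $\delta$ are in place, since then $\nabla_\delta$ for general $\delta$ is forced by its values on the $\delta_i$.
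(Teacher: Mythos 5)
Your proposal is correct and follows essentially the same route as the paper: the paper likewise defines the action of each basis element $\delta_i$ on $\Der_{\C^n}$ via the matrices $C_i$ (i.e.\ $\delta_i\bullet\underline{\partial}^t:=C_i\underline{\partial}^t$), observes that compatibility with the relations $[\delta_i,\delta_j]=\sum_k b^i_{jk}\delta_k$ --- which is exactly your integrability/curvature check --- reduces to the matrix identity $\delta_i(C_j)-\delta_j(C_i)+[C_j,C_i]=\sum_k b^i_{jk}C_k$, and then concludes by Lemma \ref{lemintconder}. Your explicit invocation of Proposition \ref{equivintconmod} and the well-definedness remark (which is immediate since $\delta_1,\dots,\delta_n$ is a free basis) only make explicit what the paper leaves implicit.
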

\begin{proof} To define a structure of left $\mathcal{V}_0(\mathcal{D}_{\C^n})$-module on $\Der_{\C^n}$, we define the action of $\delta_i$ on any derivation $\partial$ by $$\delta_i \bullet\partial:=[\delta_i,\partial],$$ or in other words $$\delta_i \bullet \underline{\partial}^t:=C_i\underline{\partial}^t, ~~1\le i\le n. $$ The structure just introduced is a $\mathcal{V}_0(\mathcal{D}_{\C^n})$-module structure if and only if $$(\delta_i\delta_j-\delta_j\delta_i) \bullet \underline{\partial}^t=(\sum_{k=1}^nb^i_{jk}\delta_k) \bullet \underline{\partial}^t.$$ An easy computation shows us that this is true if and only if $$\delta_i(C_j)-\delta_j(C_i)+[C_j,C_i]=\sum_{k=1}^n b^i_{jk}C_k$$ hence we can conclude by Lemma \ref{lemintconder}.
\end{proof}
\begin{Remark} Notice that the action on $\Der_{\C^n}$ of any logarithmic derivation $\delta=\sum_{k=1}^n\beta_k\delta_k$ is given by
$$\delta\bullet  \underline{\partial}^t=\sum_{k=1}^n\beta_kC_k\underline{\partial}^t.$$
\end{Remark}
\begin{Lemma} For $i,j=1,\dots,n$, 
$$\sum_{k=1}^na_{lk}\frac{\partial(b^i_{jr})}{\partial x_k} =0, ~~\forall~ i,l,r=1,\dots,n$$ if and only if $$\delta_i(B_j)-\delta_j(B_i)+[B_j,B_i]=\sum_{k=1}^n b^i_{jk}B_k.$$
\end{Lemma}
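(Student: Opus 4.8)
The plan is to follow the pattern of the proof of Lemma~\ref{lemintconder}, with the auxiliary coordinate field $\partial/\partial x_l$ there replaced throughout by the logarithmic field $\delta_l$, and the Jacobi identity $[[\delta_i,\delta_j],\partial/\partial x_l]+\cdots=0$ replaced by $[[\delta_i,\delta_j],\delta_l]+[[\delta_j,\delta_l],\delta_i]+[[\delta_l,\delta_i],\delta_j]=0$. Since $\delta_i(B_j)-\delta_j(B_i)+[B_j,B_i]-\sum_{k=1}^n b^i_{jk}B_k$ is a matrix with entries in $\mathcal{O}_{\C^n}$, and $\delta_1,\dots,\delta_n$ is an $\mathcal{O}_{\C^n}$-basis of $\Der(-\log D)$ (Saito's criterion), the displayed equality holds if and only if every $(l,r)$-entry of that matrix vanishes; so I fix $l,r$ and compute this entry.

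First I would expand the $(l,r)$-entry of $\delta_i(B_j)-\delta_j(B_i)+[B_j,B_i]-\sum_k b^i_{jk}B_k$ directly in terms of the structure constants, obtaining $\delta_i(b^j_{lr})-\delta_j(b^i_{lr})+\sum_k(b^j_{lk}b^i_{kr}-b^i_{lk}b^j_{kr})-\sum_k b^i_{jk}b^k_{lr}$. Then I would write out the Jacobi identity above, expand each triple bracket using $[\delta_a,\delta_b]=\sum_c b^a_{bc}\delta_c$ together with the rule $[g\delta,\delta']=g[\delta,\delta']-\delta'(g)\delta$, and read off the coefficient of $\delta_r$. The crucial point is that the inner brackets are $\mathcal{O}_{\C^n}$-valued, not scalar-valued, so the expansion of $[[\delta_i,\delta_j],\delta_l]=[\sum_k b^i_{jk}\delta_k,\delta_l]$ contributes the single first-order term $-\delta_l(b^i_{jr})$ to the coefficient of $\delta_r$ -- exactly the analogue of the term $-\sum_k a_{kr}\partial(b^i_{jk})/\partial x_l$ in Lemma~\ref{lemintconder}.

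Substituting the Jacobi relation to eliminate $\sum_k b^i_{jk}b^k_{lr}$ from the entry computed above, and then using the antisymmetry relations $b^a_{bc}=-b^b_{ac}$ (which come from $[\delta_a,\delta_b]=-[\delta_b,\delta_a]$), the superfluous quadratic terms cancel in pairs and one is left with the clean identity
\[
\bigl(\delta_i(B_j)-\delta_j(B_i)+[B_j,B_i]-\sum_k b^i_{jk}B_k\bigr)_{lr}=-\delta_l(b^i_{jr})=-\sum_{k=1}^n a_{lk}\,\frac{\partial(b^i_{jr})}{\partial x_k}.
\]
The asserted equivalence then follows at once. A slicker route to the same identity is to introduce the $\C$-linear operators $\nabla_a:=[\delta_a,-]$ on $\Der(-\log D)$: on the one hand $([\nabla_i,\nabla_j]-\sum_k b^i_{jk}\nabla_k)(\underline{\delta}^t)=(\delta_i(B_j)-\delta_j(B_i)+[B_j,B_i]-\sum_k b^i_{jk}B_k)\,\underline{\delta}^t$ by the Leibniz rule and the definition of the $B_a$, and on the other hand, applying this operator to $\delta_l$ and using the Jacobi identity $[\delta_i,[\delta_j,\delta_l]]-[\delta_j,[\delta_i,\delta_l]]=[[\delta_i,\delta_j],\delta_l]$ gives $-\sum_k\delta_l(b^i_{jk})\delta_k$ in the $l$-th slot; comparing coefficients yields the same formula.

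\textbf{Main obstacle.} There is no conceptual difficulty; the work is entirely in the index bookkeeping -- keeping straight which slot of $b^i_{jk}$ receives the derivation $\delta_l$, and invoking the antisymmetry of the structure constants at precisely the right places so that the spurious quadratic terms disappear. The only structural inputs are the Jacobi identity and the freeness of $\Der(-\log D)$, the latter being what makes the term-by-term comparison of coefficients legitimate.
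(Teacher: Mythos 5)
Your proposal is correct and is exactly the argument the paper intends: the paper's proof of this lemma is literally ``similar to the proof of Lemma \ref{lemintconder}'', and you carry out precisely that analogue, replacing $\partial/\partial x_l$ by $\delta_l$ in the Jacobi identity, reading off the coefficient of $\delta_r$ (legitimate by freeness of the basis), and using $b^a_{bc}=-b^b_{ac}$ to cancel the extra quadratic terms so that the $(l,r)$-entry reduces to $-\delta_l(b^i_{jr})=-\sum_k a_{lk}\,\partial(b^i_{jr})/\partial x_k$. The computation checks out.
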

\begin{proof} This is similar to the proof of Lemma \ref{lemintconder}.
\end{proof}
\begin{Proposition}\label{intconderlog} We can define a structure of left $\mathcal{V}_0(\mathcal{D}_{\C^n})$-module on $\Der(-\log D)$ if $$\sum_{k=1}^na_{lk}\frac{\partial(b^i_{jr})}{\partial x_k} =0, ~~\forall~ i,l,r=1,\dots,n.$$
\end{Proposition}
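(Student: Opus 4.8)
The plan is to imitate the proof of Proposition~\ref{intconder} almost verbatim, replacing $\Der_{\C^n}$ by $\Der(-\log D)$ and the matrices $C_i$ by the Lie-bracket structure matrices $B_i=(b^i_{jk})$. On the $\mathcal{O}_{\C^n}$-basis $\delta_1,\dots,\delta_n$ of $\Der(-\log D)$ I would define the action of a generator $\delta_i$ by
$$\delta_i\bullet\delta_j:=[\delta_i,\delta_j]=\sum_{k=1}^n b^i_{jk}\delta_k,\qquad\text{i.e.}\qquad\delta_i\bullet\underline{\delta}^t:=B_i\underline{\delta}^t,$$
extend it to an arbitrary logarithmic derivation $\delta=\sum_k\beta_k\delta_k$ by $\delta\bullet\underline{\delta}^t:=\sum_k\beta_kB_k\underline{\delta}^t$ (thereby forcing $\mathcal{O}_{\C^n}$-linearity in $\delta$), let $\mathcal{O}_{\C^n}$ act by multiplication, and impose the Leibniz rule $\delta_i\bullet(hm)=\delta_i(h)m+h(\delta_i\bullet m)$. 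As in the Remark after Proposition~\ref{intconder}, this data is exactly a logarithmic connection $\nabla$ on the $\mathcal{O}_{\C^n}$-module $\Der(-\log D)$ with $\nabla_{\delta_i}=[\delta_i,-]$: although $[-,-]$ is not $\mathcal{O}_{\C^n}$-linear in its first argument on $\Der(-\log D)$ (Example~\ref{connectder}), declaring $\nabla$ to be $\mathcal{O}_{\C^n}$-linear in the first slot once the basis is fixed repairs this, and the Leibniz identity in the second slot is automatic from the Leibniz identity for brackets.

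By Proposition~\ref{equivintconmod} it then suffices to check that $\nabla$ is integrable, i.e.\ $\nabla_{[\delta,\delta']}=[\nabla_\delta,\nabla_{\delta'}]$ for all $\delta,\delta'\in\Der(-\log D)$; a short computation with the Leibniz rule shows this need only be verified on the basis, evaluated on $\underline{\delta}^t$. Applying the Leibniz rule twice I get
$$[\nabla_{\delta_i},\nabla_{\delta_j}]\,\underline{\delta}^t=\nabla_{\delta_i}(B_j\underline{\delta}^t)-\nabla_{\delta_j}(B_i\underline{\delta}^t)=\bigl(\delta_i(B_j)-\delta_j(B_i)+[B_j,B_i]\bigr)\underline{\delta}^t,$$
whereas $\nabla_{[\delta_i,\delta_j]}\,\underline{\delta}^t=\bigl(\sum_k b^i_{jk}B_k\bigr)\underline{\delta}^t$. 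Since $\delta_1,\dots,\delta_n$ are linearly independent over $\mathcal{O}_{\C^n}$, integrability is equivalent to the matrix identity $\delta_i(B_j)-\delta_j(B_i)+[B_j,B_i]=\sum_k b^i_{jk}B_k$ for all $i,j$.

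Finally I would invoke the Lemma immediately preceding this Proposition, which asserts precisely that this matrix identity holds for all $i,j$ if and only if $\sum_k a_{lk}\,\partial(b^i_{jr})/\partial x_k=0$ for all $i,l,r$ — i.e.\ the entrywise form $\delta_l(b^i_{jr})=0$ of the stated hypothesis. Hence, under that hypothesis, $\nabla$ is integrable and $\Der(-\log D)$ acquires a left $\mathcal{V}_0(\mathcal{D}_{\C^n})$-module structure. I do not expect a genuine obstacle: the only step that needs a moment's care is the passage from ``$\mathcal{V}_0(\mathcal{D}_{\C^n})$-module'' to ``integrable logarithmic connection'', which is exactly the content of Proposition~\ref{equivintconmod} and lets one avoid manipulating the defining relations of $\mathcal{V}_0(\mathcal{D}_{\C^n})$ directly; the rest is the same bookkeeping as in Proposition~\ref{intconder} and the Lemmas preceding it.
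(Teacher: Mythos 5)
Your proposal is correct and follows exactly the route the paper intends: the paper's own proof is literally ``as the proof of Proposition \ref{intconder}'', i.e.\ define $\delta_i\bullet\underline{\delta}^t:=B_i\underline{\delta}^t$, reduce the module (equivalently, integrability) condition to the matrix identity $\delta_i(B_j)-\delta_j(B_i)+[B_j,B_i]=\sum_k b^i_{jk}B_k$, and invoke the immediately preceding Lemma to translate this into the stated vanishing $\sum_k a_{lk}\,\partial(b^i_{jr})/\partial x_k=0$. Your added care about $\mathcal{O}_{\C^n}$-linearity in the first slot and the appeal to Proposition \ref{equivintconmod} only make explicit what the paper leaves implicit.
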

\begin{proof} As the proof of Proposition \ref{intconder}.
\end{proof}
\begin{Corollary} Let $D\subset\C^n$ be a linear free divisor. Then $\Der_{\C^n}$ and $\Der(-\log D)$ are left $\mathcal{V}_0(\mathcal{D}_{\C^n})$-modules.
\end{Corollary}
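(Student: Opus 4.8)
Given a linear free divisor $D\subset\C^n$, the plan is to deduce the statement directly from the two sufficient conditions already isolated in Propositions \ref{intconder} and \ref{intconderlog}. Fix a basis $\delta_1,\dots,\delta_n$ of $\Der(-\log D)$, write $\delta_i=\sum_{j=1}^n a_{ij}\partial/\partial x_j$, and let $b^i_{jk}\in\mathcal{O}_{\C^n}$ be the structure constants determined by $[\delta_i,\delta_j]=\sum_{k=1}^n b^i_{jk}\delta_k$. By Proposition \ref{intconder}, $\Der_{\C^n}$ becomes a left $\mathcal{V}_0(\mathcal{D}_{\C^n})$-module once $\sum_{k=1}^n a_{kr}\,\partial(b^i_{jk})/\partial x_l=0$ for all $i,l,r$, and by Proposition \ref{intconderlog}, $\Der(-\log D)$ becomes one once $\sum_{k=1}^n a_{lk}\,\partial(b^i_{jr})/\partial x_k=0$ for all $i,l,r$. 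Both conditions only involve the partial derivatives of the $b^i_{jk}$, so it suffices to argue that, for a suitable choice of basis, each $b^i_{jk}$ is a constant.

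Here I would use linearity of $D$ to choose the $\delta_i$ inside the finite-dimensional Lie algebra $\Der(-\log D)_0$ of weight-zero logarithmic vector fields; equivalently, the $a_{ij}$ are homogeneous linear forms, so each $\delta_i$ has weight $0$ for the grading $\deg x_i=1$, $\deg\partial/\partial x_i=-1$. Since the Lie bracket is homogeneous of weight $0$, the bracket $[\delta_i,\delta_j]$ again has weight $0$, hence lies in $\Der(-\log D)_0=\C\cdot\delta_1\oplus\cdots\oplus\C\cdot\delta_n$; using uniqueness of the expansion in the basis $\delta_1,\dots,\delta_n$ one sees that $b^i_{jk}$ is homogeneous of degree $\deg[\delta_i,\delta_j]-\deg\delta_k=0$, i.e. $b^i_{jk}\in\C$. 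Consequently $\partial(b^i_{jk})/\partial x_l=0$ for all indices, both displayed conditions hold term by term, and Propositions \ref{intconder} and \ref{intconderlog} yield the desired $\mathcal{V}_0(\mathcal{D}_{\C^n})$-module structures on $\Der_{\C^n}$ and $\Der(-\log D)$.

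I do not anticipate any serious obstacle: the content of the argument is entirely concentrated in the observation that, in a linear (hence weight-zero) basis, the structure constants are forced to be scalars by the grading, which is what trivializes the criteria of the preceding propositions. The only point to state carefully is that the expansion of $[\delta_i,\delta_j]$ in the chosen basis is unique, so that homogeneity of the left-hand side transfers to homogeneity of the coefficients.
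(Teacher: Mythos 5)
Your proof is correct and follows the same route as the paper: the paper's proof is the one-line observation that for a linear free divisor the structure constants satisfy $b^i_{jk}\in\C$, so both displayed conditions of Propositions \ref{intconder} and \ref{intconderlog} hold trivially. Your grading argument (weight-zero basis, bracket preserves weight, uniqueness of expansion forces the coefficients to be degree-zero, hence constant) is just the explicit justification of that assertion.
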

\begin{proof} In this case $b^i_{jk}\in\C$ and so the two previous conditions are trivially fulfilled.
\end{proof}
\begin{Corollary} Let $D\subset\C^2$ be a free divisor defined by a weighted homogeneous equation. Then $\Der_{\C^2}$ and $\Der(-\log D)$ are left $\mathcal{V}_0(\mathcal{D}_{\C^2})$-modules.
\end{Corollary}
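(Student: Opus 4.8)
The plan is to write down an explicit basis of $\Der(-\log D)$ whose structure constants are \emph{constant}, so that the hypotheses of Propositions \ref{intconder} and \ref{intconderlog} become vacuous, exactly as in the preceding corollary for linear free divisors.

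Fix coordinates $x,y$ on $\C^2$ and let $f$ be a weighted homogeneous defining equation for $D$, of weighted degree $k$ and positive weights $(a,b)$. First I would introduce $\delta_1:=ax\,\partial/\partial x+by\,\partial/\partial y$, the Euler vector field, and $\delta_2:=-(\partial f/\partial y)\,\partial/\partial x+(\partial f/\partial x)\,\partial/\partial y$, the Hamiltonian vector field of $f$. The determinant of the matrix of coefficients of $(\delta_1,\delta_2)$ equals $ax\,\partial f/\partial x+by\,\partial f/\partial y=kf$ by the weighted Euler relation, which is a reduced equation for $D$ since $D$ is reduced; moreover $\delta_2$ is weighted homogeneous of weighted degree $m:=k-a-b$, so that $[\delta_1,\delta_2]=m\,\delta_2$, while $[\delta_1,\delta_1]=[\delta_2,\delta_2]=0$. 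Hence the hypotheses of Saito's criterion (Proposition \ref{saitocrit}(ii), or Lemma \ref{saitolemma}) are met, and $\{\delta_1,\delta_2\}$ is a free basis of $\Der_p(-\log D)$ at every point $p$ --- in particular this re-proves freeness, in accordance with Corollary \ref{planecurvefree}.

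Writing $[\delta_i,\delta_j]=\sum_k b^i_{jk}\delta_k$ with respect to this basis, the only nonzero structure constants are $b^1_{22}=m$ and $b^2_{12}=-m$; in particular every $b^i_{jk}$ is a constant, so $\partial(b^i_{jk})/\partial x_l=0$ for all indices. The sufficient conditions $\sum_k a_{kr}\,\partial(b^i_{jk})/\partial x_l=0$ of Proposition \ref{intconder} and $\sum_k a_{lk}\,\partial(b^i_{jr})/\partial x_k=0$ of Proposition \ref{intconderlog} are therefore trivially satisfied, and those propositions yield the required left $\mathcal{V}_0(\mathcal{D}_{\C^2})$-module structures on $\Der_{\C^2}$ and on $\Der(-\log D)$.

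The only real content is the verification that $\{\delta_1,\delta_2\}$ is a basis, i.e. that $kf$ is a reduced local equation for $D$ at every point and that $[\delta_1,\delta_2]$ lies in the submodule generated by $\delta_1,\delta_2$; both are immediate from reducedness and weighted homogeneity, so no genuine obstacle remains. One could instead avoid the explicit basis and argue directly from the good $\C^*$-action as in the linear case, but the Euler/Hamiltonian basis makes the constancy of the $b^i_{jk}$ transparent.
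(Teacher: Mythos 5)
Your proof is correct and follows essentially the same route as the paper: choose the basis of $\Der(-\log D)$ consisting of the weighted Euler field and an annihilating field, observe that the only nontrivial bracket is $[\chi,\delta]=\alpha\delta$ with $\alpha\in\C$ so that all structure constants $b^i_{jk}$ are constants, and conclude that the sufficient conditions of Propositions \ref{intconder} and \ref{intconderlog} hold vacuously. The only difference is that you make the annihilating field explicit as the Hamiltonian field of $f$ and verify the basis property via Saito's criterion, whereas the paper simply asserts the existence of such a basis; this adds useful detail but does not change the argument.
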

\begin{proof} Because $D$ is defined by $f$ a weighted homogenous equation and because $\Der(-\log D)$ is a free $\mathcal{O}_{\C^2}$-module of rank 2, then we can choose $\chi,\delta$ as a basis of $\Der(-\log D)$, where $\chi$ is an Euler vector field and $\delta(f)=0$. Then $[\chi,\delta]=\alpha\delta$, where $\alpha\in\C$ and so all the $b^i_{jk}\in\C$. Hence  the two previous conditions are trivially fulfilled.
\end{proof}
\begin{Definition} Define the complex
$$\Omega^\bullet(\log D)(\Der_{\C^n}/\Der(-\log D)) :=\Omega^\bullet(\log D)\otimes_{\mathcal{O}_{\C^n}}(\Der_{\C^n}/\Der(-\log D)) $$ with differentials $$\nabla^p\colon\Omega^p(\log D)(\Der_{\C^n}/\Der(-\log D))\to \Omega^{p+1}(\log D)(\Der_{\C^n}/\Der(-\log D)) $$ given by $$\nabla^p(\omega \otimes \delta):=d\omega \otimes \delta+(-1)^p\omega \wedge \nabla(\delta),$$ where $d$ is the usual exterior derivative on $\Omega^\bullet(\log D)$ and $\nabla(\delta)$ is the element of $\Omega^1(\log D)\otimes_{\mathcal{O}_{\C^n}}(\Der_{\C^n}/\Der(-\log D))$ such that $[\nu,\delta]=\nu\cdot \nabla(\delta)$ for all $\nu\in \Der(-\log D)$.
\end{Definition}
\begin{Theorem}\label{isomcompl} There is an isomorphism of complexes of sheaves of complex vector spaces between $\Omega^\bullet(\log D)(\Der_{\C^n}/\Der(-\log D))$ and $\mathcal{C}^\bullet$, defined by $$\gamma^p\colon \Omega^p(\log D)(\Der_{\C^n}/\Der(-\log D))\to \mathcal{C}^p$$  $$\gamma^p(\omega_1\wedge\cdots\wedge\omega_p\otimes\delta)(\delta_1\wedge\cdots\wedge\delta_p):=\det(\omega_i\cdot\delta_j)_{1\le i,j\le p}\delta .$$
\end{Theorem}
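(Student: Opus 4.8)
The statement is an instance of the standard identification, for a Lie--Rinehart algebra (Lie algebroid) $\mathcal{L}$ over $\mathcal{O}_{\C^n}$ acting on an $\mathcal{O}_{\C^n}$-module $\mathcal{M}$, of its ``de Rham'' complex $\Omega^\bullet_{\mathcal{L}}(\mathcal{M})$ with its Chevalley--Eilenberg complex $\mathcal{H}om_{\mathcal{O}_{\C^n}}(\bigwedge^\bullet\mathcal{L},\mathcal{M})$; compare \cite{MR0154906}. Here $\mathcal{L}=\Der(-\log D)$, which is locally free of rank $n$ because $D$ is a free divisor, and $\mathcal{M}=\Der_{\C^n}/\Der(-\log D)$ carries the integrable logarithmic connection $\nabla_\delta=[\delta,-]$ of Example \ref{connectder}. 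The first step is to recognise $\gamma^p$ as the composite of the two canonical $\mathcal{O}_{\C^n}$-linear isomorphisms
\[
\Omega^p(\log D)\otimes_{\mathcal{O}_{\C^n}}\mathcal{M}\ \cong\ \mathcal{H}om_{\mathcal{O}_{\C^n}}\Bigl(\bigwedge^p\Der(-\log D),\mathcal{O}_{\C^n}\Bigr)\otimes_{\mathcal{O}_{\C^n}}\mathcal{M}\ \cong\ \mathcal{H}om_{\mathcal{O}_{\C^n}}\Bigl(\bigwedge^p\Der(-\log D),\mathcal{M}\Bigr)=\mathcal{C}^p,
\]
where the first isomorphism combines Saito's duality (\cite{saito}, Lemma 1.6) with Saito's criterion (Proposition \ref{saitocrit}), which for a free divisor yields $\Omega^p(\log D)=\bigwedge^p\Omega^1(\log D)$ together with local bases $\{\omega_{i_1}\wedge\cdots\wedge\omega_{i_p}\}$ dual to local bases $\{\delta_{i_1}\wedge\cdots\wedge\delta_{i_p}\}$ of $\bigwedge^p\Der(-\log D)$, and the second is the evaluation isomorphism, valid since $\bigwedge^p\Der(-\log D)$ is locally free. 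Chasing a decomposable element through this composite reproduces exactly the formula $\gamma^p(\omega_1\wedge\cdots\wedge\omega_p\otimes\delta)(\delta_1\wedge\cdots\wedge\delta_p)=\det(\omega_i\cdot\delta_j)\,\delta$; that the assignment is alternating and $\mathcal{O}_{\C^n}$-multilinear in the $\omega_i$, $\mathcal{O}_{\C^n}$-linear in $\delta$ and balanced, hence descends to $\Omega^p(\log D)\otimes_{\mathcal{O}_{\C^n}}\mathcal{M}$, is immediate from the properties of the determinant. Thus each $\gamma^p$ is an isomorphism of sheaves of $\mathcal{O}_{\C^n}$-modules, equal to the identity in degree $0$.

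\textbf{Compatibility with the differentials.} It remains to prove $\gamma^{p+1}\circ\nabla^p=d^p\circ\gamma^p$. Both sides are $\C$-linear and the $\gamma^q$ are $\mathcal{O}_{\C^n}$-isomorphisms, so it suffices to check the identity locally, evaluating on a decomposable generator $\eta=\omega_1\wedge\cdots\wedge\omega_p\otimes\delta$ and on $\delta_1\wedge\cdots\wedge\delta_{p+1}$ with $\delta_1,\dots,\delta_{p+1}$ drawn from a local basis of $\Der(-\log D)$. Two facts drive the computation: first, the exterior derivative on $\Omega^\bullet(\log D)$ is the restriction of the usual one on meromorphic forms, so Cartan's formula expresses $(d(\omega_1\wedge\cdots\wedge\omega_p))(\delta_1\wedge\cdots\wedge\delta_{p+1})$ as an alternating sum of ``derivative'' terms $\pm\,\delta_i\bigl(\det(\omega_a\cdot\delta_b)_{b\neq i}\bigr)$ and ``bracket'' terms $\pm\,(\omega_1\wedge\cdots\wedge\omega_p)\bigl([\delta_i,\delta_j]\wedge\delta_1\wedge\cdots\widehat{\delta_i}\cdots\widehat{\delta_j}\cdots\bigr)$; second, $\nabla(\delta)$ is characterised by $\gamma^1(\nabla(\delta))(\nu)=[\nu,\delta]\bmod\Der(-\log D)$ for $\nu\in\Der(-\log D)$. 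Expanding $\nabla^p(\eta)=d(\omega_1\wedge\cdots\wedge\omega_p)\otimes\delta+(-1)^p(\omega_1\wedge\cdots\wedge\omega_p)\wedge\nabla(\delta)$ and applying $\gamma^{p+1}$: the summand $d(\omega_1\wedge\cdots)\otimes\delta$ produces the derivative terms and the bracket terms, each multiplied by $\delta$, while the summand $(-1)^p(\omega_1\wedge\cdots)\wedge\nabla(\delta)$ produces, by the characterisation of $\nabla(\delta)$, the terms $\sum_i(-1)^i\det(\omega_a\cdot\delta_b)_{b\neq i}\,[\delta_i,\delta]$. On the other side, the first sum defining $d^p(\gamma^p(\eta))(\delta_1\wedge\cdots\wedge\delta_{p+1})$ is $\sum_i(-1)^i[\delta_i,\det(\omega_a\cdot\delta_b)_{b\neq i}\,\delta]$, which by the Leibniz rule for $[\delta_i,-]$ splits into precisely the ``derivative$\,\times\delta$'' terms and the ``$\det\,\times[\delta_i,\delta]$'' terms; the second sum defining $d^p$ matches the bracket terms of Cartan after a Laplace expansion of the determinant along the column in which $[\delta_i,\delta_j]$ has been inserted. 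The three families of terms therefore coincide, giving $\gamma^{p+1}\circ\nabla^p=d^p\circ\gamma^p$.

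\textbf{Conclusion and main obstacle.} Since $\gamma^\bullet$ is given by a uniform formula, is an $\mathcal{O}_{\C^n}$-linear isomorphism on every open set over which $\Der(-\log D)$ is free, and commutes with the differentials there, it is an isomorphism of complexes of sheaves of $\C$-vector spaces. The only genuinely delicate point is the sign bookkeeping in the second paragraph --- in particular the Laplace-expansion step that turns the $\sum_{i<j}$ part of Cartan's formula into the $\psi([\delta_i,\delta_j]\wedge\cdots)$ part of $d^p$ --- which is exactly the computation underlying Rinehart's identification; everything else is formal, and one may alternatively appeal to \cite{MR0154906} directly once $\gamma^\bullet$ has been identified in the first step with the canonical comparison map.
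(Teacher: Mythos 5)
Your proof is correct, but it follows a genuinely different route from the paper's. The paper does not verify the chain-map property of $\gamma^\bullet$ by hand: it invokes Theorem 3.2.1 of \cite{calderon}, which gives an isomorphism $\psi^\bullet$ from $\Omega^\bullet(\log D)(\Der_{\C^n}/\Der(-\log D))$ to the dual of the logarithmic Spencer complex $\mathcal{H}om_{\mathcal{V}_0(\mathcal{D}_{\C^n})}(\mathcal{V}_0(\mathcal{D}_{\C^n})\otimes_{\mathcal{O}_{\C^n}}\bigwedge^\bullet\Der(-\log D),\,\Der_{\C^n}/\Der(-\log D))$, then factors $\psi^p=\lambda^p\circ\gamma^p$ through the tautological isomorphism $\lambda^p$ induced by extension of scalars $\mathcal{O}_{\C^n}\to\mathcal{V}_0(\mathcal{D}_{\C^n})$, checks that $\lambda^\bullet$ is a chain isomorphism, and concludes that $\gamma^\bullet$ must be one as well. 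You instead carry out the standard Lie--Rinehart/Chevalley--Eilenberg comparison directly: $\gamma^p$ is an $\mathcal{O}_{\C^n}$-isomorphism because Saito duality plus freeness identify $\Omega^p(\log D)$ with $\mathcal{H}om_{\mathcal{O}_{\C^n}}(\bigwedge^p\Der(-\log D),\mathcal{O}_{\C^n})$, and compatibility with the differentials is a Cartan-formula/Leibniz computation. Both arguments are sound; what you gain is a self-contained proof that needs neither the $\mathcal{V}_0(\mathcal{D}_{\C^n})$-module formalism nor the external citation, while the paper's detour through the Spencer complex is not gratuitous --- the identification of $\mathcal{C}^\bullet$ with $\boldsymbol{R}\mathcal{H}om_{\mathcal{D}_{\C^n}}(\mathcal{O}_{\C^n},\mathcal{D}_{\C^n}\stackrel{L}{\otimes}_{\mathcal{V}_0(\mathcal{D}_{\C^n})}\mathcal{E}_0[D])$ set up here is exactly what the constructibility argument of Theorem \ref{constructcohomol} consumes later. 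One small caution: you rightly flag the sign bookkeeping as the delicate point, and note that the paper's $d^p$ carries the signs $(-1)^i$ and $(-1)^{i+j-1}$ (the negative of the most common Chevalley--Eilenberg convention), so your Cartan-formula expansion must be matched against that convention, not the textbook one; this affects only global signs and not the validity of the argument.
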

\begin{proof} Applying Theorem 3.2.1 from \cite{calderon} in our case, we deduce that there is an isomorphism $\psi^\bullet$ between the complex $$\Omega^\bullet(\log D)(\Der_{\C^n}/\Der(-\log D))$$ and the dual of the logarithmic Spencer complex 
$$\mathcal{H}om_{\mathcal{V}_0(\mathcal{D}_{\C^n})}(\mathcal{V}_0(\mathcal{D}_{\C^n})\otimes_{\mathcal{O}_{\C^n}}\bigwedge^\bullet\Der(-\log D),\Der_{\C^n}/\Der(-\log D)).$$ 
The isomorphism is defined locally by
$$\psi^p((\omega_1\wedge\cdots\wedge\omega_p)\otimes \delta)(P\otimes(\delta_1\wedge\cdots\wedge\delta_p)):=P \cdot \det(\omega_i\cdot \delta_j)_{1\le i,j \le p} \cdot \delta.$$ On the other hand, we can write $\psi^p=\lambda^p\circ \gamma^p$, where $\lambda^p$ is the isomorphism $$\lambda^p\colon \mathcal{C}^p\to \mathcal{H}om_{\mathcal{V}_0(\mathcal{D}_{\C^n})}(\mathcal{V}_0(\mathcal{D}_{\C^n})\otimes_{\mathcal{O}_{\C^n}}\bigwedge^p\Der(-\log D),\Der_{\C^n}/\Der(-\log D)),$$ defined by $$\lambda^p(\alpha)(P\otimes(\delta_1\wedge\cdots\wedge\delta_p)):=P\cdot\alpha(\delta_1\wedge\cdots\wedge\delta_p).$$ 
A direct computation shows that $\lambda^p$ commutes with the differentials and hence defines an isomorphism of complexes.

Hence, also $\gamma^p$ is an isomorphism and commutes with the differentials and therefore defines an isomorphism of complexes.
\end{proof}
\begin{Definition} Let $D\subset \C^n$ be a divisor. We say that $D$ is a \emph{Koszul free divisor} at $x$ if it is free at $x$ and if there exists a basis $\delta_1,\dots,\delta_n$ of $\Der_x(-\log D)$ such that the sequence of symbols $\sigma(\delta_1),\dots,\sigma(\delta_n)$ is regular in $\mathcal{G}r_{F^\bullet}(\mathcal{D}_{\C^n})_x$. If $D$ is a Koszul free divisor at every point, we simply say that it is a Koszul free divisor.
\end{Definition}
Notice that for a free divisor $D$, to be Koszul is equivalent to being holonomic in the sense of Definition 3.8 from \cite{saito}, i.e. the logarithmic stratification of $D$ is locally finite. See \cite{grmondsch}, Theorem 7.4.
\begin{Example}\label{examplekoszulfree}
\begin{enumerate}
\item \emph{(\cite{locqhomkosz}, Example 2.8, 3))} Each reduced divisor $D\subset\C^2$ is Koszul free.
\item The normal crossing divisor of Example \ref{ncrdiv} is Koszul free.
\item \emph{(\cite{locqhomkosz}, Example 2.8, 5))} Consider the free divisor $D=V(2^8z^3 - 2^7x^2z^2 + 2^4x^4z + 2^43^2xy^2z - 2^2x^3y^2 - 3^3y^4)\subset \C^3$ with Saito matrix
 \begin{equation*} A=[\delta_1,\delta_2,\delta_3]=
\begin{bmatrix}
		6y&4x^2-48z&2x\\
		8z-2x^2&12xy&3y\\
		-xy&9y^2-16xz&4z\\
\end{bmatrix}.
\end{equation*}
Then the sequence of symbols $\sigma(\delta_1),\sigma(\delta_2),\sigma(\delta_3)$ is regular in $\Gr_{F^\bullet}(\mathcal{D}_{\C^n})$.
\item  \emph{(\cite{locqhomkosz}, Example 4.2)} Consider the free divisor $D=V(xy(x+y)(y+xz))\subset \C^3$ with Saito matrix
 \begin{equation*} 
\begin{bmatrix}
		x&4x^2&0\\
		y&-y^2&0\\
		0&-z(x+y)&xz+y\\
\end{bmatrix}
\end{equation*}
Then $D$ is not Koszul free.
\end{enumerate}
\end{Example}
\begin{Theorem}\label{constructcohomol} Let $(D,0)\subset(\C^n,0)$ be a germ of a Koszul free divisor such that $\sum_{k=1}^na_{kl}\partial(b^i_{jk})/\partial x_r =0$ for $i,j,l,r=1,\dots,n$ and $\sum_{l=1}^na_{kl}\partial(b^i_{jr})/\partial x_l =0,$ for $i,j,k,r=1,\dots,n$. Then all $\mathcal{H}^i(\mathcal{C}^\bullet)$ are constructible sheaves of finite dimensional complex vector spaces.
\end{Theorem}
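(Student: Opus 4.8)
The plan is to transport the problem, via Theorem \ref{isomcompl}, into $\mathcal{D}$-module theory and then invoke Kashiwara's constructibility theorem. First I would note that the two hypotheses on the structure constants $b^i_{jk}$ are, by Propositions \ref{intconder} and \ref{intconderlog}, exactly what is needed to endow both $\Der_{\C^n}$ and $\Der(-\log D)$, hence also the $\mathcal{O}_{\C^n}$-coherent quotient $\mathcal{M}:=\Der_{\C^n}/\Der(-\log D)$, with a structure of left $\mathcal{V}_0(\mathcal{D}_{\C^n})$-module compatible with the integrable logarithmic connection $\nabla_\delta=[\delta,-]$ of Example \ref{connectder}; by Proposition \ref{equivintconmod} this is the same datum as an integrable logarithmic connection on $\mathcal{M}$. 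By Theorem \ref{isomcompl} the complex $\mathcal{C}^\bullet$ is then isomorphic, as a complex of sheaves of $\C$-vector spaces, to the logarithmic de Rham complex $\Omega^\bullet(\log D)(\mathcal{M})$, so it suffices to prove constructibility and finite-dimensionality of the cohomology of the latter.

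Next I would pass from $\mathcal{V}_0(\mathcal{D}_{\C^n})$ to $\mathcal{D}_{\C^n}$. Since $D$ is Koszul free, the regularity of the sequence of symbols $\sigma(\delta_1),\dots,\sigma(\delta_n)$ in $\mathcal{G}r_{F^\bullet}(\mathcal{D}_{\C^n})$ makes the logarithmic Spencer complex a finite free resolution of $\mathcal{O}_{\C^n}$ over $\mathcal{V}_0(\mathcal{D}_{\C^n})$ and gives $\mathcal{V}_0(\mathcal{D}_{\C^n})$ good homological behaviour with respect to $\mathcal{D}_{\C^n}$ (Calder\'on-Moreno \cite{calderon}, together with the Koszul-freeness results of \cite{locqhomkosz} and \cite{grmondsch}). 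The comparison theorem in that setting identifies, in the derived category of sheaves of $\C$-vector spaces, the logarithmic de Rham complex $\Omega^\bullet(\log D)(\mathcal{M})$ with the usual analytic de Rham complex $\mathrm{DR}(\mathcal{N})$ of the induced left $\mathcal{D}_{\C^n}$-module $\mathcal{N}:=\mathcal{D}_{\C^n}\otimes^{\mathbf{L}}_{\mathcal{V}_0(\mathcal{D}_{\C^n})}\mathcal{M}$, and Koszul-freeness forces this derived tensor product to be concentrated in degree zero, so $\mathcal{N}$ is an honest coherent $\mathcal{D}_{\C^n}$-module (supported on $D$, since $\mathcal{M}$ is).

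I would then check that $\mathcal{N}$ is holonomic. Its characteristic variety is controlled by the logarithmic characteristic variety of $\mathcal{M}$, and Koszul-freeness is precisely the statement that, locally, $\mathcal{G}r_{F^\bullet}(\mathcal{V}_0(\mathcal{D}_{\C^n}))$ is the complete intersection cut out by the regular sequence $\sigma(\delta_1),\dots,\sigma(\delta_n)$; this makes the relevant characteristic variety of the correct (Lagrangian) dimension, and since $\mathcal{M}$ is $\mathcal{O}_{\C^n}$-coherent the induced $\mathcal{D}$-module $\mathcal{N}$ inherits a Lagrangian, hence holonomic, characteristic variety. Finally, by Kashiwara's constructibility theorem the de Rham complex $\mathrm{DR}(\mathcal{N})$ of a holonomic $\mathcal{D}_{\C^n}$-module has cohomology sheaves that are constructible with finite-dimensional stalks; transporting this back through the isomorphisms of the previous two paragraphs shows that every $\mathcal{H}^i(\mathcal{C}^\bullet)$ is a constructible sheaf of finite-dimensional complex vector spaces.

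The main obstacle is the middle step: showing that $\mathcal{N}=\mathcal{D}_{\C^n}\otimes^{\mathbf{L}}_{\mathcal{V}_0(\mathcal{D}_{\C^n})}\mathcal{M}$ is a single holonomic $\mathcal{D}$-module rather than a genuine complex. This is exactly the place where Koszul-freeness (not merely freeness) of $D$ is used, through the regularity of the sequence of symbols and the resulting finiteness and flatness properties of $\mathcal{V}_0(\mathcal{D}_{\C^n})$; once $\mathcal{N}$ is known to be holonomic, constructibility is a black-box application of classical $\mathcal{D}$-module theory.
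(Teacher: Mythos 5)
Your overall strategy matches the paper's: use the two hypotheses together with Propositions \ref{intconder} and \ref{intconderlog} to make $\Der_{\C^n}$ and $\Der(-\log D)$ into left $\mathcal{V}_0(\mathcal{D}_{\C^n})$-modules, identify $\mathcal{C}^\bullet$ with the logarithmic de Rham complex of $\mathcal{M}=\Der_{\C^n}/\Der(-\log D)$ via Theorem \ref{isomcompl}, pass to $\mathcal{D}_{\C^n}$-modules by the derived tensor product over $\mathcal{V}_0(\mathcal{D}_{\C^n})$, use Koszul-freeness to get holonomicity, and conclude by constructibility of the de Rham complex. However, there is a genuine gap at exactly the step you flag as ``the main obstacle'': you want $\mathcal{N}=\mathcal{D}_{\C^n}\otimes^{\mathbf{L}}_{\mathcal{V}_0(\mathcal{D}_{\C^n})}\mathcal{M}$ to be concentrated in degree zero and holonomic, but the concentration and holonomicity results of Calder\'on-Moreno--Narv\'aez-Macarro (\cite{caldnarv}, Proposition 1.2.3) that Koszul-freeness buys you apply to \emph{integrable logarithmic connections}, i.e.\ to $\mathcal{O}_{\C^n}$-locally free modules with a $\mathcal{V}_0$-structure. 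The quotient $\mathcal{M}$ is a torsion sheaf supported on $D$, so these results do not apply to it directly, and your appeal to ``the relevant characteristic variety'' of $\mathcal{M}$ being Lagrangian is not substantiated by anything in the cited machinery.

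The paper avoids needing this entirely. It keeps the short exact sequence $0\to\Der(-\log D)\to\Der_{\C^n}\to\mathcal{M}\to 0$ as a $\mathcal{V}_0$-resolution of $\mathcal{M}$ by the two locally free logarithmic connections, twists by $\mathcal{O}_{\C^n}[D]$ (a step you omit, but which is required for the comparison isomorphism of \cite{caldnarv}, Corollary 3.1.5, identifying $\Omega^\bullet(\log D)(\mathcal{M})$ with $\boldsymbol{R}\mathcal{H}om_{\mathcal{D}_{\C^n}}(\mathcal{O}_{\C^n},\mathcal{D}_{\C^n}\otimes^{\mathbf{L}}_{\mathcal{V}_0}\mathcal{M}[D])$), and applies Proposition 1.2.3 only to the two locally free terms. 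The derived tensor product with $\mathcal{M}[D]$ is then represented by a \emph{two-term complex} whose entries are honest holonomic $\mathcal{D}_{\C^n}$-modules; it is never claimed to be a single module. Since $\boldsymbol{R}\mathcal{H}om_{\mathcal{D}_{\C^n}}(\mathcal{O}_{\C^n},-)$ of a bounded complex with holonomic terms is constructible with finite-dimensional stalks, this already gives the theorem. So the fix for your gap is not to prove holonomicity of $\mathcal{N}$ but to weaken the intermediate claim: work with the two-term complex of induced holonomic modules coming from the resolution, and apply the constructibility theorem to that complex.
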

\begin{proof} Write $\mathcal{E}_0=\Der_{\C^n}/\Der(-\log D)$, $\mathcal{E}_1=\Der_{\C^n}$ and $\mathcal{E}_2=\Der(-\log D)$.  Using the assumptions, we deduce from Proposition \ref{intconder} and \ref{intconderlog}, that we can consider the short exact sequence $$0\to \mathcal{E}_2\to\mathcal{E}_1\to\mathcal{E}_0\to 0$$ as a resolution of the $\mathcal{V}_0(\mathcal{D}_{\C^n})$-module $\mathcal{E}_0$. By twisting with $\mathcal{O}_{\C^n}[D]$, we find another $\mathcal{V}_0(\mathcal{D}_{\C^n})$-resolution $$0\to \mathcal{E}_2[D]\to\mathcal{E}_1[D]\to\mathcal{E}_0[D]\to 0.$$ By \cite{caldnarv}, Proposition 1.2.3, the complexes $\mathcal{D}_{\C^n} \stackrel{L}{\otimes}_{\mathcal{V}_0(\mathcal{D}_{\C^n})}\mathcal{E}_i[D]$, for $i=1,2$ are concentrated in degree zero. Hence, we can compute the complex $\mathcal{D}_{\C^n} \stackrel{L}{\otimes}_{\mathcal{V}_0(\mathcal{D}_{\C^n})}\mathcal{E}_0[D]$ through the above resolution as $$\mathcal{D}_{\C^n} \otimes_{\mathcal{V}_0(\mathcal{D}_{\C^n})}\mathcal{E}_2[D]\to \mathcal{D}_{\C^n} \otimes_{\mathcal{V}_0(\mathcal{D}_{\C^n})}\mathcal{E}_1[D].$$ By \cite{caldnarv}, Proposition 1.2.3, the above complex is holonomic in each degree and we deduce that $\boldsymbol{R}\mathcal{H}om_{\mathcal{D}_{\C^n}}(\mathcal{O}_{\C^n},\mathcal{D}_{\C^n} \stackrel{L}{\otimes}_{\mathcal{V}_0(\mathcal{D}_{\C^n})}\mathcal{E}_0[D])$ is constructible. By Theorem \ref{isomcompl} and by noticing that the isomorphism of \cite{caldnarv}, Corollary 3.1.5, is true for any $\mathcal{V}_0(\mathcal{D}_{\C^n})$-module, we have the following isomorphisms
$$\mathcal{C}^\bullet\cong \Omega^\bullet(\log D)(\mathcal{E}_0)\cong \boldsymbol{R}\mathcal{H}om_{\mathcal{D}_{\C^n}}(\mathcal{O}_{\C^n},\mathcal{D}_{\C^n} \stackrel{L}{\otimes}_{\mathcal{V}_0(\mathcal{D}_{\C^n})}\mathcal{E}_0[D])$$
and hence, we can conclude.
\end{proof}
\begin{Corollary} Let $(D,0)\subset(\C^n,0)$ be a germ of a Koszul free divisor such that $\sum_{k=1}^na_{kl}\partial(b^i_{jk})/\partial x_r =0$ and $\sum_{k=1}^na_{lk}\partial(b^i_{jr})/\partial x_k =0,$ for $i,j,l,r=1,\dots,n$. Then $\mathbf{FD}_D$ has a hull.
\end{Corollary}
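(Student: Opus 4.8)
The plan is to deduce this as a formal consequence of the finiteness of the first cohomology of $\mathcal{C}^\bullet$ together with Schlessinger's criterion. First I would observe that the two hypotheses of the Corollary, $\sum_{k=1}^n a_{kl}\,\partial(b^i_{jk})/\partial x_r = 0$ and $\sum_{k=1}^n a_{lk}\,\partial(b^i_{jr})/\partial x_k = 0$ for all $i,j,l,r$, are exactly the conditions appearing in Theorem \ref{constructcohomol} (the summation index has merely been renamed), so that Theorem \ref{constructcohomol} applies directly. It therefore follows that all the cohomology sheaves $\mathcal{H}^i(\mathcal{C}^\bullet)$ are constructible sheaves of finite-dimensional complex vector spaces; in particular the stalk $\mathcal{H}^1(\mathcal{C}^\bullet)_0$ is a finite-dimensional $\C$-vector space, since each stalk of a constructible sheaf of finite-dimensional vector spaces is finite-dimensional.

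Next I would translate this into a statement about the deformation functor. By Theorem \ref{FT1} there is an isomorphism $\mathcal{H}^1(\mathcal{C}^\bullet)_0 \cong \mathcal{FT}^1(D) = \mathbf{FD}_D(\C[t]/(t^2))$, so the finite-dimensionality just obtained says precisely that the tangent space $\mathbf{FD}_D(\C[t]/(t^2))$ of the functor $\mathbf{FD}_D$ is finite-dimensional over $\C$. This is Schlessinger's condition (H3). Moreover, by Theorem \ref{isdeffunct} the functor $\mathbf{FD}_D$ already satisfies Schlessinger's conditions (H1) and (H2). Hence $\mathbf{FD}_D$ satisfies (H1), (H2) and (H3).

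Finally I would invoke Schlessinger's Theorem 2.11 from \cite{schless}: a covariant functor $\mathbf{Art}\to\mathbf{Set}$ with distinguished point and satisfying (H1), (H2) and (H3) admits a hull, i.e. a formally versal deformation. Applying this to $\mathbf{FD}_D$ yields the conclusion. I do not expect any genuine obstacle here: the argument is a straightforward chain of citations to the results already established in the excerpt, and the only point deserving a word of care is checking that the two index conditions in the Corollary coincide with those of Theorem \ref{constructcohomol}, and that constructibility with finite-dimensional fibres gives the finite-dimensionality of the single stalk at $0$ that feeds into condition (H3).
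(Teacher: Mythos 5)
Your proposal is correct and follows exactly the paper's own argument: Theorem \ref{constructcohomol} gives constructibility and hence finite-dimensionality of the stalk $\mathcal{H}^1(\mathcal{C}^\bullet)_0\cong\mathcal{FT}^1(D)$, which is condition (H3), and combined with Theorem \ref{isdeffunct} and Schlessinger's Theorem 2.11 this yields the hull. The paper states this in two sentences; you have merely spelled out the intermediate steps (including the harmless renaming of summation indices), so there is nothing to correct.
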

\begin{proof} By Theorem \ref{constructcohomol}, condition (H3) from \cite{schless} is satisfied. Then the result follows from Theorem \ref{isdeffunct} and Theorem 2.11 from \cite{schless}.
\end{proof}
\begin{Corollary} Let $(D,0)\subset(\C^2,0)$ be a germ of a free divisor defined by a weighted homogeneous equation. Then $\mathbf{FD}_D$ has a hull.
\end{Corollary}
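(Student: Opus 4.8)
The plan is to deduce this as the two-dimensional specialization of the preceding Corollary: I would check that a germ $(D,0)\subset(\C^2,0)$ of a free divisor defined by a weighted homogeneous equation automatically satisfies both hypotheses imposed there on a Koszul free divisor, namely (i) that it is Koszul free, and (ii) that for a suitable basis $\delta_1,\delta_2$ of $\Der(-\log D)$ the structure constants $b^i_{jk}$ satisfy $\sum_k a_{kl}\,\partial(b^i_{jk})/\partial x_r=0$ and $\sum_k a_{lk}\,\partial(b^i_{jr})/\partial x_k=0$.

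For (i): by Corollary \ref{planecurvefree} every reduced plane curve germ is free, and by Example \ref{examplekoszulfree}(1) every reduced divisor in $\C^2$ is in fact Koszul free, so this hypothesis is automatic and nothing needs to be done.

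For (ii) I would exploit weighted homogeneity exactly as in the proof of the earlier Corollary that $\Der_{\C^2}$ and $\Der(-\log D)$ carry left $\mathcal{V}_0(\mathcal{D}_{\C^2})$-module structures. Writing $f$ for a defining equation, weighted homogeneous of degree $k$, one has that $\Der(-\log D)$ is free of rank $2$ and $D$ has at worst an isolated singularity, so a basis is furnished by an Euler field $\chi$ (normalized so that $\chi(f)=f$) together with the weighted homogeneous field $\delta=(\partial f/\partial x)\,\partial/\partial y-(\partial f/\partial y)\,\partial/\partial x$. Then $[\chi,\chi]=[\delta,\delta]=0$ and $[\chi,\delta]=\alpha\delta$ with $\alpha\in\C$ (the weighted degree of $\delta$ divided by $k$), so all the $b^i_{jk}$ lie in $\C$. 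Hence every $\partial(b^i_{jk})/\partial x_r$ vanishes and the two displayed sums are identically zero.

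Having verified both hypotheses, I would conclude by invoking the preceding Corollary — equivalently, by Theorem \ref{constructcohomol} all $\mathcal{H}^i(\mathcal{C}^\bullet)$ are then constructible sheaves of finite dimensional vector spaces, which is Schlessinger's condition (H3), and combining this with conditions (H1) and (H2) from Theorem \ref{isdeffunct}, Theorem 2.11 of \cite{schless} produces a hull for $\mathbf{FD}_D$. There is essentially no genuine obstacle here: the only point meriting a moment's care is the construction of the basis $\{\chi,\delta\}$ and the observation that $[\chi,\delta]$ is a \emph{constant} multiple of $\delta$, which is immediate from weighted homogeneity; and, to be scrupulous, one should note the argument is unaffected if $D$ happens to be smooth, where one may instead take the two coordinate vector fields $x_1\partial/\partial x_1$ and $\partial/\partial x_2$, again with vanishing structure constants.
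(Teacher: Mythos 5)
Your proposal is correct and follows essentially the same route as the paper: invoke Example \ref{examplekoszulfree} to get Koszul freeness, choose the basis $\{\chi,\delta\}$ with $\chi$ an Euler field and $\delta$ annihilating $f$, observe $[\chi,\delta]=\alpha\delta$ with $\alpha\in\C$ so that all $b^i_{jk}$ are constants and the two vanishing conditions hold trivially, and then apply the preceding Corollary. Your extra remarks (the explicit Hamiltonian form of $\delta$ and the smooth case) are harmless elaborations of what the paper leaves implicit.
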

\begin{proof} By Example \ref{examplekoszulfree}, $(D,0)$ is Koszul. Because $(D,0)$ is defined by $f$ a weighted homogenous equation then we can choose $\chi,\delta$ as a basis of $\Der(-\log D)$, where $\chi$ is an Euler vector field and $\delta(f)=0$. Then $[\chi,\delta]=\alpha\delta$, where $\alpha\in\C$ and so all the $b^i_{jk}\in\C$. Hence all the hypothesis of previous Corollary are fulfilled.
\end{proof}
\begin{Corollary} Let $(D,0)\subset(\C^n,0)$ be a germ of a Koszul linear free divisor. Then all $\mathcal{H}^i(\mathcal{C}^\bullet)$ are constructible sheaves of finite dimensional complex vector spaces. 
\end{Corollary}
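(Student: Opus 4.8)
The plan is to reduce the statement to Theorem \ref{constructcohomol} by checking that its two vanishing hypotheses hold automatically for any linear free divisor. First I would recall that, since $D$ is linear, by the Remark characterizing linear free divisors one has $\Der(-\log D)=\mathcal{O}_{\C^n}\cdot\Der(-\log D)_0$, so we may take the basis $\delta_1,\dots,\delta_n$ of $\Der(-\log D)$ to consist of weight-zero logarithmic vector fields, all of whose coefficients $a_{ij}$ are linear. Because $\Der(-\log D)_0$ is a finite-dimensional Lie subalgebra of $\Der_{\C^n}$, the brackets $[\delta_i,\delta_j]$ are again weight-zero logarithmic vector fields, hence $\C$-linear combinations of $\delta_1,\dots,\delta_n$; that is, the structure constants $b^i_{jk}$ all lie in $\C$.

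Consequently $\partial(b^i_{jk})/\partial x_r=0$ for all indices, so both conditions $\sum_{k}a_{kl}\,\partial(b^i_{jk})/\partial x_r=0$ and $\sum_{l}a_{kl}\,\partial(b^i_{jr})/\partial x_l=0$ are trivially satisfied — this is exactly the content of the Corollary asserting that $\Der_{\C^n}$ and $\Der(-\log D)$ are left $\mathcal{V}_0(\mathcal{D}_{\C^n})$-modules whenever $D$ is a linear free divisor. Since $D$ is moreover Koszul free by hypothesis, every hypothesis of Theorem \ref{constructcohomol} is met, and I would conclude directly that each $\mathcal{H}^i(\mathcal{C}^\bullet)$ is a constructible sheaf of finite-dimensional complex vector spaces.

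There is essentially no obstacle: the statement is a specialization of Theorem \ref{constructcohomol}, the only point to verify being that linearity forces the $b^i_{jk}$ to be constant. If a self-contained argument were wanted, one could instead re-run the proof of Theorem \ref{constructcohomol} directly: the short exact sequence $0\to\Der(-\log D)\to\Der_{\C^n}\to\Der_{\C^n}/\Der(-\log D)\to 0$ is a sequence of left $\mathcal{V}_0(\mathcal{D}_{\C^n})$-modules (the connection $\nabla_\delta=[\delta,-]$ being integrable on all three terms once the $b^i_{jk}$ are constants, by Propositions \ref{intconder} and \ref{intconderlog}), so after twisting by $\mathcal{O}_{\C^n}[D]$, applying \cite{caldnarv}, Proposition 1.2.3, using the comparison with the logarithmic Spencer complex from Theorem \ref{isomcompl}, and invoking preservation of constructibility under $\boldsymbol{R}\mathcal{H}om_{\mathcal{D}_{\C^n}}(\mathcal{O}_{\C^n},-)$ applied to a holonomic complex, one obtains the claim.
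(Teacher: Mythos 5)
Your proposal is correct and follows essentially the same route as the paper: the paper's proof likewise cites Theorem \ref{constructcohomol} together with the observation that linearity forces the structure constants $b^i_{jk}$ to lie in $\C$, so that the two differential conditions hold trivially. Your additional justification that the $b^i_{jk}$ are constants (weight-zero brackets of weight-zero fields landing in the finite-dimensional Lie algebra $\Der(-\log D)_0$) is a correct elaboration of the point the paper leaves implicit.
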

\begin{proof} This follows from Theorem \ref{constructcohomol} and the fact that if $(D,0)$ is linear then $b^i_{jk}\in\C$. 
\end{proof}
The author is not aware if there exists a subclass of the Koszul free divisor that fulfil the assumptions of Theorem \ref{constructcohomol}. However, we know that not all Koszul free divisor satisfies them. A direct computation shows that the last Koszul free divisor described in Example \ref{examplekoszulfree} does not fulfil them.

Moreover, the author thinks that the approach used to put a logarithmic connection on $\Der_{\C^n}$ and $\Der(-\log D)$ is a particular case of the notion of integrability up to homotopy, see \cite{abad2009representations}.

\subsection{Propagation of Deformations}
In this final subsection, we prove a result which highlights the difference between the theory of admissible deformations and the classical deformation theory of singularities.

We suppose that $(D,0)\subset (\C^n,0)$ is a germ of a free divisor such that there exists a germ of a free divisor $(D',0)\subset (\C^{n-1},0)$ such that $(D,0)=(D' \times \C,0)$, i.e. there exists a defining equation for $D$ in $\C[[x_1,\dots,x_{n-1}]]$.

\begin{Theorem}\emph{(Corollary \ref{propdefisocohom})} There is an isomorphism of sheaves $$\pi^{-1}\mathcal{H}^i(\mathcal{C}_{D'}^\bullet)\cong\mathcal{H}^i(\mathcal{C}_D^\bullet) $$
where $\pi\colon (D,0)\to (D',0)$ is the projection on the first factor of $(D,0)=(D'\times \C,0)$. In particular, we have $$\pi^{-1}\mathcal{FT}^1(D')\cong\mathcal{FT}^1(D).$$
\end{Theorem}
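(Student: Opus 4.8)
The plan is to exploit the product structure $(D,0)=(D'\times\C,0)$ to present $\mathcal{C}_D^\bullet$ as the total complex of a two-row double complex built out of $\pi^{*}\mathcal{C}_{D'}^\bullet$ and the de Rham complex of the extra factor $\C$ in the coordinate $x_n$, and then to run the attached spectral sequence. Let $f\in\mathcal{O}_{\C^{n-1},0}$ be a reduced equation for $(D',0)$; the same $f$, viewed in $\mathcal{O}_{\C^n,0}$, defines $(D,0)$. Since $\d f/\d x_n=0$ and $\pi\colon\C^n\to\C^{n-1}$ is flat, pulling back along $\pi$ the exact sequences defining $\Der(-\log D')$ and $\mathcal{E}_0':=\Der_{\C^{n-1}}/\Der(-\log D')$ yields
$$\Der(-\log D)=\pi^{*}\Der(-\log D')\ \oplus\ \mathcal{O}_{\C^n}\cdot\d/\d x_n,\qquad \mathcal{E}_0:=\Der_{\C^n}/\Der(-\log D)\cong\pi^{*}\mathcal{E}_0'.$$
Fixing an $\mathcal{O}_{\C^{n-1}}$-basis $\delta_1,\dots,\delta_{n-1}$ of $\Der(-\log D')$ with $x_n$-independent coefficients and setting $\delta_n:=\d/\d x_n$, the three facts that drive everything are: (i) $[\delta_i,\delta_n]=0$ for all $i$, so $\delta_n$ is central in the Lie algebroid $\Der(-\log D)$; (ii) the structure constants $b_{ij}^k$ of $\Der(-\log D')$ lie in $\mathcal{O}_{\C^{n-1}}$, hence do not involve $x_n$; (iii) the connection $\nabla_{\delta_n}=[\delta_n,-]$ on $\mathcal{E}_0=\pi^{*}\mathcal{E}_0'$ is simply $\d/\d x_n$ acting on coefficients.

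Splitting $\bigwedge^p\Der(-\log D)$ according to whether the factor $\delta_n$ occurs gives a natural identification $\mathcal{C}_D^p\cong\pi^{*}\mathcal{C}_{D'}^p\oplus\pi^{*}\mathcal{C}_{D'}^{p-1}$, using that $\bigwedge^p$ and $\mathcal{H}om_{\mathcal{O}}(\bigwedge^p\Der(-\log D'),-)$ commute with the flat pullback $\pi^{*}$ (here one needs $\Der(-\log D')$ locally free, i.e. $D'$ free). Inspecting the Chevalley--Eilenberg differential $d^p$ of $\mathcal{C}_D^\bullet$ summand by summand and using (i)--(iii), one checks that under this identification it equals
$$\begin{pmatrix}\pi^{*}d_{D'} & 0\\[2pt] \pm\,\d/\d x_n & \pm\,\pi^{*}d_{D'}\end{pmatrix}\colon\quad \pi^{*}\mathcal{C}_{D'}^p\oplus\pi^{*}\mathcal{C}_{D'}^{p-1}\ \longrightarrow\ \pi^{*}\mathcal{C}_{D'}^{p+1}\oplus\pi^{*}\mathcal{C}_{D'}^{p};$$
the vanishing of the off-diagonal block $\pi^{*}\mathcal{C}_{D'}^{p-1}\to\pi^{*}\mathcal{C}_{D'}^{p+1}$ comes from centrality (all terms $\psi([\delta_i,\delta_n]\wedge\cdots)$ are zero), while the two diagonal entries are genuinely $\pm\pi^{*}d_{D'}$ — not $x_n$-dependent perturbations of it — by (ii) together with the identification $[\delta_i,-]=\pi^{*}(\nabla^{D'}_{\delta_i})$ of (iii). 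Hence $\mathcal{C}_D^\bullet=\mathrm{Tot}(K^{\bullet,\bullet})$, where $K^{s,0}=K^{s,1}=\pi^{*}\mathcal{C}_{D'}^{s}$, the horizontal differential is $\pm\pi^{*}d_{D'}$ and the vertical one is $\pm\,\d/\d x_n\colon K^{s,0}\to K^{s,1}$.

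Now take cohomology in the vertical direction first. The vertical complex in column $s$ is $\pi^{*}\mathcal{C}_{D'}^{s}\xrightarrow{\d/\d x_n}\pi^{*}\mathcal{C}_{D'}^{s}$. Since $\mathcal{O}_{\C^n,p}$ is flat over $\mathcal{O}_{\C^{n-1},\pi(p)}$, tensoring the exact sequence $0\to\mathcal{O}_{\C^{n-1}}\to\mathcal{O}_{\C^n}\xrightarrow{\d/\d x_n}\mathcal{O}_{\C^n}\to0$ with the stalk of $\mathcal{C}_{D'}^{s}$ remains exact, so this column has cohomology $\pi^{-1}\mathcal{C}_{D'}^{s}$ in degree $0$ and $0$ in degree $1$. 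Thus $E_1$ is concentrated in the row $t=0$, where it is $\pi^{-1}\mathcal{C}_{D'}^\bullet$ with differential induced by $\pi^{*}d_{D'}$; as $d_{D'}$ has $x_n$-independent structure constants it commutes with $\d/\d x_n$, hence restricts to $\pi^{-1}d_{D'}$ on $\ker(\d/\d x_n)=\pi^{-1}\mathcal{C}_{D'}^{s}$. Therefore $E_2^{s,0}=\pi^{-1}\mathcal{H}^s(\mathcal{C}_{D'}^\bullet)$ by exactness of $\pi^{-1}$, and $E_2^{s,1}=0$; with a single nonzero row the spectral sequence degenerates at $E_2$ with no extension problem, giving $\mathcal{H}^i(\mathcal{C}_D^\bullet)\cong\pi^{-1}\mathcal{H}^i(\mathcal{C}_{D'}^\bullet)$. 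Specializing to $i=1$ and identifying $\mathcal{FT}^1$ with the first cohomology sheaf as in Theorem \ref{FT1} yields $\pi^{-1}\mathcal{FT}^1(D')\cong\mathcal{FT}^1(D)$.

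The one delicate step is the bookkeeping in the second paragraph: proving that the Chevalley--Eilenberg differential of $\mathcal{C}_D^\bullet$ splits, with respect to the product decomposition, into exactly these four blocks with the stated signs — in particular that the horizontal blocks are literally $\pi^{*}d_{D'}$. This is where centrality of $\d/\d x_n$, the $x_n$-independence of the $b_{ij}^k$, and the identification of the induced connection on $\mathcal{E}_0=\pi^{*}\mathcal{E}_0'$ are all genuinely used. The remaining ingredients — flatness of $\pi$, the two-term de Rham computation along $\C$, and the collapse of a one-row spectral sequence — are routine.
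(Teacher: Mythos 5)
Your proposal is correct and follows essentially the same route as the paper: the same splitting $\Der(-\log D)=\pi^{*}\Der(-\log D')\oplus\mathcal{O}_{\C^n}\,\partial/\partial x_n$, the same identification $\mathcal{C}^p_D\cong\pi^{*}\mathcal{C}^p_{D'}\oplus\pi^{*}\mathcal{C}^{p-1}_{D'}$, and the same block form of the differential (the paper computes the second component as $\tilde d^{p-1}(\phi)+(-1)^{p+1}[\partial/\partial x_n,\psi(-)]$, matching your matrix). The only difference is packaging: where you run the spectral sequence of the two-row double complex and kill the top row by surjectivity of $\partial/\partial x_n$, the paper exhibits the inclusion $J\colon\mathcal{C}^\bullet_{D'}\hookrightarrow\mathcal{C}^\bullet_D$ and writes down the explicit primitive $\Lambda=\sum_{i\ge1}x_n^i\bigl((-1)^p\phi_{i-1}/i,0\bigr)$ for a cocycle in its cokernel --- the same integration in $x_n$ that makes your columns acyclic.
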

Observe that in the ordinary deformation theory of singularities, if  $(D,0)=(D' \times \C,0)$ and $T^1_{(D',0)}$ is non-zero then $T^1_{(D,0)}$ is infinite dimensional. See \cite{intdef}, Chapter II, 1.4. 

\begin{Lemma} In this situation $$\Der(-\log D)=(\Der(-\log D')\otimes_{\mathcal{O}_{\C^{n-1},0}}\mathcal{O}_{\C^n,0})\oplus \mathcal{O}_{\C^n,0}\frac{\partial}{\partial x_n}$$ and $$\Der_{\C^n}/\Der(-\log D)=\Der_{\C^{n-1}}/\Der(-\log D')\otimes_{\mathcal{O}_{\C^{n-1},0}}\mathcal{O}_{\C^n,0}.$$ Hence, if $\delta \in \Der(-\log D)$, it can be written as $\delta=(\delta',h\partial / \partial x_n)$, where $\delta' \in \Der(-\log D')\otimes_{\mathcal{O}_{\C^{n-1},0}}\mathcal{O}_{\C^n,0}$ and $h\in \mathcal{O}_{\C^n,0}$.
\end{Lemma}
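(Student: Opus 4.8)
The plan is to exhibit an explicit $\mathcal{O}_{\C^n,0}$-basis of $\Der(-\log D)$ adapted to the product structure and read off both formulae from it. Fix a defining equation $f\in\C[[x_1,\dots,x_{n-1}]]$ of $D'$. Since $\mathcal{O}_{D,0}=\mathcal{O}_{\C^n,0}/(f)$ is isomorphic to $\mathcal{O}_{D',0}\{x_n\}$ and $\mathcal{O}_{D',0}$ is reduced ($f$ being a reduced equation of $D'$), $\mathcal{O}_{D,0}$ is reduced too: a nonzero nilpotent $\phi=\sum_{k}\phi_kx_n^k$ would have its lowest nonzero coefficient $\phi_{k_0}$ satisfy $\phi_{k_0}^m=0$, hence $\phi_{k_0}=0$, a contradiction. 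So $f$ is a reduced equation for $D=D'\times\C$ in $\mathcal{O}_{\C^n,0}$. As $D'$ is a free divisor, choose a Saito matrix $S$ of $D'$ (Proposition~\ref{saitocrit}(ii) and the subsequent definition); its columns $\epsilon_1,\dots,\epsilon_{n-1}$ form an $\mathcal{O}_{\C^{n-1},0}$-basis of $\Der(-\log D')$ and $\det S$ is a unit times $f$. Viewing each $\epsilon_j$ as a vector field on $\C^n$ with vanishing $\partial/\partial x_n$-component, one has $\epsilon_j(f)\in(f)\mathcal{O}_{\C^{n-1},0}\subseteq(f)\mathcal{O}_{\C^n,0}$, so $\epsilon_j\in\Der(-\log D)$; and $\partial/\partial x_n\in\Der(-\log D)$ because $\partial f/\partial x_n=0$ (this is where $f\in\C[[x_1,\dots,x_{n-1}]]$ is used).

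Next I would invoke Saito's criterion. The $n\times n$ coefficient matrix of $\epsilon_1,\dots,\epsilon_{n-1},\partial/\partial x_n$ with respect to $\partial/\partial x_1,\dots,\partial/\partial x_n$ is block diagonal with blocks $S$ and $1$, so its determinant is a unit times $f$, a reduced equation of $D$; by Proposition~\ref{saitocrit}(ii) these $n$ vector fields form an $\mathcal{O}_{\C^n,0}$-basis of $\Der(-\log D)$, whence
$$\Der(-\log D)=\Big(\bigoplus_{j=1}^{n-1}\mathcal{O}_{\C^n,0}\,\epsilon_j\Big)\oplus\mathcal{O}_{\C^n,0}\,\frac{\partial}{\partial x_n}.$$
Because $\{\epsilon_j\}$ is an $\mathcal{O}_{\C^{n-1},0}$-basis of $\Der(-\log D')$, the first summand is precisely the image of the natural injective map $\Der(-\log D')\otimes_{\mathcal{O}_{\C^{n-1},0}}\mathcal{O}_{\C^n,0}\to\Der_{\C^n}$, which is the first displayed identity of the statement.

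For the second identity, write $\Der_{\C^n}=\big(\bigoplus_{i=1}^{n-1}\mathcal{O}_{\C^n,0}\,\partial/\partial x_i\big)\oplus\mathcal{O}_{\C^n,0}\,\partial/\partial x_n$, the first summand being $\Der_{\C^{n-1}}\otimes_{\mathcal{O}_{\C^{n-1},0}}\mathcal{O}_{\C^n,0}$. Quotienting by the decomposition of $\Der(-\log D)$ above, the $\partial/\partial x_n$-summands cancel, and since $\mathcal{O}_{\C^n,0}$ is flat over $\mathcal{O}_{\C^{n-1},0}$ (so that $\Der(-\log D')\otimes_{\mathcal{O}_{\C^{n-1},0}}\mathcal{O}_{\C^n,0}\hookrightarrow\Der_{\C^{n-1}}\otimes_{\mathcal{O}_{\C^{n-1},0}}\mathcal{O}_{\C^n,0}$), right-exactness of $-\otimes_{\mathcal{O}_{\C^{n-1},0}}\mathcal{O}_{\C^n,0}$ identifies the quotient with $(\Der_{\C^{n-1}}/\Der(-\log D'))\otimes_{\mathcal{O}_{\C^{n-1},0}}\mathcal{O}_{\C^n,0}$. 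The closing sentence of the statement is then just the elementwise reading of the direct sum for $\Der(-\log D)$.

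The only genuinely delicate point I anticipate is ensuring that Saito's criterion truly applies — i.e.\ that $f$ remains a reduced equation on $\C^n$ and that the $n$ chosen vector fields exhaust $\Der(-\log D)$ rather than merely generating a submodule. A variant that bypasses Saito's criterion: take $\delta=\sum_i a_i\partial/\partial x_i\in\Der(-\log D)$, expand $a_i=\sum_{k\ge0}a_{i,k}x_n^k$ with $a_{i,k}\in\mathcal{O}_{\C^{n-1},0}$, and use that, $f$ being a reduced equation of $D=D'\times\C$, divisibility $\delta(f)=\sum_i a_i\,\partial f/\partial x_i\in(f)\mathcal{O}_{\C^n,0}$ can be checked coefficientwise in $x_n$; this forces $\sum_i a_{i,k}\partial/\partial x_i\in\Der(-\log D')$ for every $k$, hence $\delta-a_n\partial/\partial x_n\in\Der(-\log D')\otimes_{\mathcal{O}_{\C^{n-1},0}}\mathcal{O}_{\C^n,0}$, which yields the decomposition directly.
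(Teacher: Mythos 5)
Your argument is correct. The paper actually states this Lemma with no proof at all, so there is nothing to compare against; your route — checking that $f$ stays a reduced equation on $\C^n$ (via reducedness of $\mathcal{O}_{D',0}\{x_n\}$), observing that $\epsilon_1,\dots,\epsilon_{n-1},\partial/\partial x_n$ are logarithmic with block-diagonal coefficient matrix of determinant a unit times $f$, and then invoking Saito's criterion (Proposition~\ref{saitocrit}(ii)) to conclude they are a basis — is the natural and complete way to fill that gap, and the passage to the quotient via right-exactness of $-\otimes_{\mathcal{O}_{\C^{n-1},0}}\mathcal{O}_{\C^n,0}$ is fine. The only cosmetic slip is in your final ``variant'': the index in $\sum_i a_{i,k}\partial/\partial x_i\in\Der(-\log D')$ should run only over $i=1,\dots,n-1$, since the $\partial/\partial x_n$-coefficient is unconstrained; as written the displayed conclusion $\delta-a_n\partial/\partial x_n\in\Der(-\log D')\otimes_{\mathcal{O}_{\C^{n-1},0}}\mathcal{O}_{\C^n,0}$ is the correct statement.
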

To distinguish between the complexes for $(D,0)$ and for $(D',0)$ we will denote them respectively by $(\mathcal{C}^\bullet_{D},d^\bullet_{D})$ and $(\mathcal{C}^\bullet_{D'},d^\bullet_{D'})$.
\begin{Proposition}\label{isopropdefcomplex} There is an isomorphism $$\varrho\colon \bigwedge^p \Der(-\log D)\to (\mathcal{O}_{\C^n,0} \otimes_{\mathcal{O}_{\C^{n-1},0}}\bigwedge^p \Der(-\log D'))$$ $$\oplus(\mathcal{O}_{\C^n,0} \otimes_{\mathcal{O}_{\C^{n-1},0}}\bigwedge^{p-1} \Der(-\log D'))$$ $$(\delta_1 \wedge \cdots\wedge \delta_p)=(\delta'_1,h_1\frac{\partial}{\partial x_n})\wedge \cdots \wedge (\delta'_p,h_p\frac{\partial}{\partial x_n}) \mapsto $$ $$(\delta'_1\wedge \cdots \wedge \delta'_p,\sum_{k=1}^p (-1)^{p-k}h_k\delta'_1\wedge \cdots \wedge \widehat{\delta'_k} \wedge \cdots \wedge \delta'_p) $$
\end{Proposition}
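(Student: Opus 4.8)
The plan is to realise $\varrho$ as a composition of elementary module isomorphisms and then read off the stated formula by expanding a decomposable wedge. By the preceding Lemma, $\Der(-\log D)=M\oplus N$ as $\mathcal{O}_{\C^n,0}$-modules, where $M:=\Der(-\log D')\otimes_{\mathcal{O}_{\C^{n-1},0}}\mathcal{O}_{\C^n,0}$ is free of rank $n-1$ (because $D'$ is free) and $N:=\mathcal{O}_{\C^n,0}\,\partial/\partial x_n$ is free of rank one. The standard decomposition of the exterior power of a direct sum gives $\bigwedge^p\Der(-\log D)=\bigoplus_{i+j=p}\bigl(\bigwedge^i M\bigr)\otimes_{\mathcal{O}_{\C^n,0}}\bigl(\bigwedge^j N\bigr)$; since $N$ has rank one, $\bigwedge^j N=0$ for $j\ge 2$, so only the summands $(i,j)=(p,0)$ and $(i,j)=(p-1,1)$ survive, yielding a canonical splitting $\bigwedge^p\Der(-\log D)\cong \bigwedge^p M\ \oplus\ \bigl(\bigwedge^{p-1}M\bigr)\otimes_{\mathcal{O}_{\C^n,0}}N$.

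Two further isomorphisms then finish the construction. Trivialising $N$ by its generator $\partial/\partial x_n$ gives $\bigl(\bigwedge^{p-1}M\bigr)\otimes_{\mathcal{O}_{\C^n,0}}N\xrightarrow{\sim}\bigwedge^{p-1}M$, $\omega\otimes g\,\partial/\partial x_n\mapsto g\omega$; and since $\Der(-\log D')$ is $\mathcal{O}_{\C^{n-1},0}$-free, the natural map $\mathcal{O}_{\C^n,0}\otimes_{\mathcal{O}_{\C^{n-1},0}}\bigwedge^q\Der(-\log D')\xrightarrow{\sim}\bigwedge^q M$ is an isomorphism for every $q$ (both sides free, with matching wedge-of-basis bases). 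Composing the splitting above with the inverses of these two maps (for $q=p$ and $q=p-1$) produces a map $\varrho$ with the asserted source and target which, being a composition of isomorphisms, is itself an isomorphism.

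It remains only to check that this composite is given by the displayed formula. Writing $\delta_i=(\delta'_i,h_i\,\partial/\partial x_n)$ and expanding $\bigwedge_{i=1}^p\bigl(\delta'_i+h_i\,\partial/\partial x_n\bigr)$ by multilinearity, every term involving two or more factors $\partial/\partial x_n$ vanishes. The unique term with no such factor is $\delta'_1\wedge\cdots\wedge\delta'_p\in\bigwedge^p M$, matching the first component. For each $k$, the term in which the $k$-th slot contributes $h_k\,\partial/\partial x_n$ is $h_k\,\delta'_1\wedge\cdots\wedge\delta'_{k-1}\wedge\partial/\partial x_n\wedge\delta'_{k+1}\wedge\cdots\wedge\delta'_p$; moving $\partial/\partial x_n$ past the $p-k$ factors $\delta'_{k+1},\dots,\delta'_p$ contributes the sign $(-1)^{p-k}$, so this term equals $(-1)^{p-k}h_k\bigl(\delta'_1\wedge\cdots\wedge\widehat{\delta'_k}\wedge\cdots\wedge\delta'_p\bigr)\wedge\partial/\partial x_n$, which the trivialisation of $N$ sends to $(-1)^{p-k}h_k\,\delta'_1\wedge\cdots\wedge\widehat{\delta'_k}\wedge\cdots\wedge\delta'_p$. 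Summing over $k$ gives the second component $\sum_{k=1}^p(-1)^{p-k}h_k\,\delta'_1\wedge\cdots\wedge\widehat{\delta'_k}\wedge\cdots\wedge\delta'_p$, as claimed.

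There is no genuine obstacle here; the only delicate point is the sign bookkeeping in the expansion, and one must also note that the formula on decomposable elements automatically defines an $\mathcal{O}_{\C^n,0}$-multilinear alternating map (hence descends to $\bigwedge^p\Der(-\log D)$) precisely because it coincides with the composite just described. If one prefers to avoid invoking the abstract splitting, one can instead write the inverse of $\varrho$ explicitly, sending $(\omega,\eta)\in\bigl(\mathcal{O}_{\C^n,0}\otimes_{\mathcal{O}_{\C^{n-1},0}}\bigwedge^p\Der(-\log D')\bigr)\oplus\bigl(\mathcal{O}_{\C^n,0}\otimes_{\mathcal{O}_{\C^{n-1},0}}\bigwedge^{p-1}\Der(-\log D')\bigr)$ to $\omega+\eta\wedge\partial/\partial x_n$ under the inclusions $\bigwedge^\bullet M\hookrightarrow\bigwedge^\bullet\Der(-\log D)$, and verify that the two composites are the identity by the same multilinear expansion.
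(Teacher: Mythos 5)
Your proof is correct and follows essentially the same route as the paper's, which simply invokes the decomposition $\bigwedge^p(A\oplus B)=\bigoplus_{i+j=p}\bigwedge^i A\otimes_R\bigwedge^j B$ together with the vanishing of $\bigwedge^{j}$ of a rank-one summand for $j\ge 2$; you have merely spelled out the trivialisation of the rank-one factor and the sign bookkeeping, both of which check out.
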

\begin{proof} This is because $\bigwedge^p \mathcal{O}_{\C^n,0}=0$ for $p \ge 2$ and because in general, if $R$ is a commutative ring and $A$ and $B$ are $R$-modules, then $$\bigwedge^p(A\oplus B)= \bigoplus_{i+j=p}(\bigwedge^i A \otimes_R \bigwedge^j B).$$ \end{proof}
As a consequence
\begin{Corollary} With the hypotheses of Proposition \ref{isopropdefcomplex} $$\mathcal{C}^p_D=\mathcal{H}om_{\mathcal{O}_{\C^n,0}}(\mathcal{O}_{\C^n,0}\otimes_{\mathcal{O}_{\C^{n-1},0}}\bigwedge^p\Der(-\log D'), \Der_{\C^n}/\Der(-\log D))$$ $$\oplus  \mathcal{H}om_{\mathcal{O}_{\C^n}}(\mathcal{O}_{\C^n,0}\otimes_{\mathcal{O}_{\C^{n-1},0}}\bigwedge^{p-1}\Der(-\log D'), \Der_{\C^n}/\Der(-\log D))=$$ $$=\mathcal{C}^p_{D'}\otimes_{\mathcal{O}_{\C^{n-1},0}}\mathcal{O}_{\C^n,0} \oplus \mathcal{C}^{p-1}_{D'}\otimes_{\mathcal{O}_{\C^{n-1},0}}\mathcal{O}_{\C^n,0}.$$
\end{Corollary}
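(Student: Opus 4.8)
The plan is to deduce both displayed equalities by purely formal manipulations: one feeds the explicit isomorphism $\varrho$ of Proposition \ref{isopropdefcomplex} into the contravariant functor $\mathcal{H}om_{\mathcal{O}_{\C^n,0}}(-,\Der_{\C^n}/\Der(-\log D))$, and then rewrites each resulting Hom-module by extension of scalars along the ring extension $\mathcal{O}_{\C^{n-1},0}\hookrightarrow\mathcal{O}_{\C^n,0}$; no geometry beyond the preceding Lemma enters. Concretely, applying $\mathcal{H}om_{\mathcal{O}_{\C^n,0}}(-,\Der_{\C^n}/\Der(-\log D))$ to $\varrho$ and using that $\mathcal{H}om$ carries a finite direct sum in its first argument to the corresponding direct sum of Hom-modules gives at once
$$\mathcal{C}^p_D\cong\mathcal{H}om_{\mathcal{O}_{\C^n,0}}\Bigl(\mathcal{O}_{\C^n,0}\otimes_{\mathcal{O}_{\C^{n-1},0}}\bigwedge^p\Der(-\log D'),\ \Der_{\C^n}/\Der(-\log D)\Bigr)\ \oplus\ \mathcal{H}om_{\mathcal{O}_{\C^n,0}}\Bigl(\mathcal{O}_{\C^n,0}\otimes_{\mathcal{O}_{\C^{n-1},0}}\bigwedge^{p-1}\Der(-\log D'),\ \Der_{\C^n}/\Der(-\log D)\Bigr),$$
which is the first claimed equality.

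For the second equality I would treat each summand separately; set $M=\bigwedge^p\Der(-\log D')$ and $N=\Der_{\C^n}/\Der(-\log D)$. The Hom--tensor adjunction for the extension $\mathcal{O}_{\C^{n-1},0}\hookrightarrow\mathcal{O}_{\C^n,0}$ (restriction of scalars being right adjoint to extension of scalars) gives
$$\mathcal{H}om_{\mathcal{O}_{\C^n,0}}\bigl(\mathcal{O}_{\C^n,0}\otimes_{\mathcal{O}_{\C^{n-1},0}}M,\ N\bigr)\ \cong\ \mathcal{H}om_{\mathcal{O}_{\C^{n-1},0}}(M,\ N),$$
with $N$ now regarded as an $\mathcal{O}_{\C^{n-1},0}$-module by restriction. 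The preceding Lemma identifies $N$ with $\bigl(\Der_{\C^{n-1}}/\Der(-\log D')\bigr)\otimes_{\mathcal{O}_{\C^{n-1},0}}\mathcal{O}_{\C^n,0}$. Since $D'$ is free, $\Der(-\log D')$ is $\mathcal{O}_{\C^{n-1},0}$-free of rank $n-1$, so $M$ is $\mathcal{O}_{\C^{n-1},0}$-free of rank $\binom{n-1}{p}$, in particular finitely presented; and $\mathcal{O}_{\C^n,0}\cong\mathcal{O}_{\C^{n-1},0}[[x_n]]$ is flat over $\mathcal{O}_{\C^{n-1},0}$. Therefore $\mathcal{H}om$ out of $M$ commutes with the base change $-\otimes_{\mathcal{O}_{\C^{n-1},0}}\mathcal{O}_{\C^n,0}$, and
$$\mathcal{H}om_{\mathcal{O}_{\C^{n-1},0}}\Bigl(M,\ \bigl(\Der_{\C^{n-1}}/\Der(-\log D')\bigr)\otimes_{\mathcal{O}_{\C^{n-1},0}}\mathcal{O}_{\C^n,0}\Bigr)\ \cong\ \mathcal{H}om_{\mathcal{O}_{\C^{n-1},0}}\bigl(M,\ \Der_{\C^{n-1}}/\Der(-\log D')\bigr)\otimes_{\mathcal{O}_{\C^{n-1},0}}\mathcal{O}_{\C^n,0}\ =\ \mathcal{C}^p_{D'}\otimes_{\mathcal{O}_{\C^{n-1},0}}\mathcal{O}_{\C^n,0},$$
the last step being the definition of $\mathcal{C}^p_{D'}$. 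Running the same chain of isomorphisms with $p-1$ in place of $p$ handles the remaining summand, and assembling the two gives the asserted identity.

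Everything here is formal, so the only real point requiring attention is bookkeeping: at each stage one must keep track of which of the two module structures ($\mathcal{O}_{\C^n,0}$ versus $\mathcal{O}_{\C^{n-1},0}$) is in force on $\Der_{\C^n}/\Der(-\log D)$, and one must record the flatness of $\mathcal{O}_{\C^n,0}$ over $\mathcal{O}_{\C^{n-1},0}$ that legitimises the base-change step for the finitely presented module $M$. This is the only mild obstacle, and it is dispatched by the identifications already established in the preceding Lemma and Proposition. (If one later wants this decomposition to be compatible with the differentials of $\mathcal{C}^\bullet_D$ and $\mathcal{C}^\bullet_{D'}$, as is needed for the propagation theorem, one additionally checks naturality of $\varrho$ and of the two adjunctions in $M$; but for the module-level statement claimed in this Corollary nothing further is required.)
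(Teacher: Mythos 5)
Your argument is correct and is exactly the derivation the paper intends: the Corollary is stated there without proof, merely ``as a consequence'' of Proposition \ref{isopropdefcomplex}, and your two steps (applying $\mathcal{H}om_{\mathcal{O}_{\C^n,0}}(-,\Der_{\C^n}/\Der(-\log D))$ to $\varrho$, then Hom--tensor adjunction plus base change for the free module $\bigwedge^p\Der(-\log D')$ using the identification of $\Der_{\C^n}/\Der(-\log D)$ from the preceding Lemma) fill in precisely the omitted routine verification.
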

\begin{Remark}\label{element} It is possible to write an element $\Gamma\in\mathcal{C}^p_D$ for $p>0$ as $$\Gamma=(\psi,\phi)=(\sum_{i\ge 0}x_n^i\psi_i,\sum_{i\ge 0}x_n^i\phi_i)=\sum_{i\ge 0}x_n^i(\psi_i,\phi_i)$$ with $\psi_i\in\mathcal{C}^p_{D'}$ and $\phi_i\in\mathcal{C}^{p-1}_{D'}$.

\end{Remark}
By Remark \ref{element}, we can describe the differentials
\begin{Corollary} The differentials have the following expression $$\tilde{d}^p \colon \mathcal{H}om_{\mathcal{O}_{\C^n,0}}(\mathcal{O}_{\C^n,0}\otimes_{\mathcal{O}_{\C^{n-1},0}}\bigwedge^p\Der(-\log D'), \Der_{\C^n}/\Der(-\log D)) \to$$ $$\to \mathcal{H}om_{\mathcal{O}_{\C^n,0}}(\mathcal{O}_{\C^n,0}\otimes_{\mathcal{O}_{\C^{n-1},0}}\bigwedge^{p+1}\Der(-\log D'), \Der_{\C^n}/\Der(-\log D))$$ where $$( \tilde{d}^p(\psi))(\sigma_1 \wedge \cdots \wedge \sigma_{p+1}):=\sum_{i\ge 0}x_n^i(d^p_{D'}(\psi_i))(\sigma_1 \wedge \cdots \wedge \sigma_{p+1}).$$
\end{Corollary}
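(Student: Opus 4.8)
The plan is to read off $d^p_D$ directly from its definition, using the block decomposition $\mathcal{C}^p_D\cong(\mathcal{C}^p_{D'}\otimes_{\mathcal{O}_{\C^{n-1},0}}\mathcal{O}_{\C^n,0})\oplus(\mathcal{C}^{p-1}_{D'}\otimes_{\mathcal{O}_{\C^{n-1},0}}\mathcal{O}_{\C^n,0})$ of the previous Corollary and the $x_n$-expansion $\Gamma=(\psi,\phi)=\sum_{i\ge0}x_n^i(\psi_i,\phi_i)$ of Remark \ref{element}. Although $d^p_D$ itself is only $\C$-linear, its value $d^p_D(\Gamma)$ lives in $\mathcal{C}^{p+1}_D$ and hence is an $\mathcal{O}_{\C^n,0}$-linear map $\bigwedge^{p+1}\Der(-\log D)\to\Der_{\C^n}/\Der(-\log D)$, so it is completely determined by its values on an $\mathcal{O}_{\C^n,0}$-module generating set of $\bigwedge^{p+1}\Der(-\log D)$. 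Fixing an $\mathcal{O}_{\C^{n-1},0}$-basis $\delta'_1,\dots,\delta'_{n-1}$ of $\Der(-\log D')$, Proposition \ref{isopropdefcomplex} shows that such a set is furnished by the wedges $\delta'_{i_1}\wedge\cdots\wedge\delta'_{i_{p+1}}$ together with the wedges $\delta'_{i_1}\wedge\cdots\wedge\delta'_{i_p}\wedge\partial/\partial x_n$; it therefore suffices to evaluate $d^p_D(\Gamma)$ on these two families.

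First I would treat the purely primed generators $\delta'_{i_1}\wedge\cdots\wedge\delta'_{i_{p+1}}$. The points that make this easy are that each $\delta'_k$ has coefficients in $\mathcal{O}_{\C^{n-1},0}$ and no $\partial/\partial x_n$-component — so $\delta'_k(x_n)=0$ and $\delta'_k$ preserves the subspace of vector fields with $x_n$-independent coefficients — and that $[\delta'_i,\delta'_j]\in\Der(-\log D')$. Substituting $\Gamma=\sum_i x_n^i\psi_i$ into the defining formula for $d^p$, each bracket $[\delta'_i,\psi_j(\cdots)]$ pulls the scalar $x_n^j$ straight out and becomes a bracket computed inside $\Der_{\C^{n-1}}$, and each term $\psi_j([\delta'_i,\delta'_j]\wedge\cdots)$ is the value of the $D'$-cochain $\psi_j$ on a wedge in $\bigwedge^p\Der(-\log D')$; the $\phi$-component of $\Gamma$ never enters because the $\Lambda^{p-1}$-part of $\varrho$ vanishes on all-primed wedges. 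Collecting the powers of $x_n$, the whole expression reassembles as $\sum_{i\ge0}x_n^i(d^p_{D'}(\psi_i))(\delta'_{i_1}\wedge\cdots\wedge\delta'_{i_{p+1}})$, which is precisely the asserted formula for $\tilde d^p$, and one sees at once that this output has no component in the second summand of $\mathcal{C}^{p+1}_D$. This is the bulk of the statement.

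It remains to evaluate $d^p_D(\Gamma)$ on the generators $\delta'_{i_1}\wedge\cdots\wedge\delta'_{i_p}\wedge h\,\partial/\partial x_n$, which controls the off-diagonal part of $d^p_D$. Here one uses that $[\delta'_k,h\,\partial/\partial x_n]=\delta'_k(h)\,\partial/\partial x_n$ (with no correction term in a $\Der(-\log D')$-direction, precisely because the coefficients of $\delta'_k$ do not depend on $x_n$), that $[h\,\partial/\partial x_n,h'\,\partial/\partial x_n]=(h\,\partial h'/\partial x_n-h'\,\partial h/\partial x_n)\,\partial/\partial x_n$, and that modulo $\Der(-\log D)$ the operator $[\partial/\partial x_n,-]$ on $\Der_{\C^n}/\Der(-\log D)$ is just differentiation of coefficients with respect to $x_n$. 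I expect the main obstacle to be the sign and combinatorial bookkeeping imposed by the isomorphism $\varrho$ of Proposition \ref{isopropdefcomplex}, whose components carry the factors $(-1)^{p-k}h_k$: one must track how $\psi_j$ and $\phi_j$ act on the $\varrho$-decomposition of a wedge containing the $\partial/\partial x_n$-factor, and verify that the exterior-derivative contribution and the $x_n$-differentiation contribution line up with compatible signs. Granting this, one finds that, in the displayed block decomposition, $d^p_D$ is block lower-triangular: its component $\psi\mapsto\psi'$ is $\tilde d^p$, its component $\phi\mapsto\psi'$ is zero, its component $\phi\mapsto\phi'$ is again the componentwise-$D'$ differential, and its component $\psi\mapsto\phi'$ is $\pm\partial/\partial x_n$. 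Equivalently, $\mathcal{C}^\bullet_D$ is the mapping cone of $\partial/\partial x_n$ acting on the complex $\mathcal{C}^\bullet_{D'}\otimes_{\mathcal{O}_{\C^{n-1},0}}\mathcal{O}_{\C^n,0}$.

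As a conceptual cross-check, and an alternative route, one can pass through the isomorphism $\gamma^\bullet$ of Theorem \ref{isomcompl} identifying $\mathcal{C}^\bullet_D$ with $\Omega^\bullet(\log D)(\Der_{\C^n}/\Der(-\log D))$: since $D=D'\times\C$ one has $\Omega^1(\log D)=(\Omega^1(\log D')\otimes_{\mathcal{O}_{\C^{n-1},0}}\mathcal{O}_{\C^n,0})\oplus\mathcal{O}_{\C^n,0}\,dx_n$, and the connection $\nabla_\delta=[\delta,-]$ restricts on $\Der(-\log D')$ to the connection for $D'$ and sends $\partial/\partial x_n$ to $x_n$-differentiation on the $\mathcal{O}_{\C^n,0}$-factor. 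Thus $\Omega^\bullet(\log D)(\Der_{\C^n}/\Der(-\log D))$ is the (completed) tensor product of $\Omega^\bullet(\log D')(\Der_{\C^{n-1}}/\Der(-\log D'))$ with the one-variable de Rham complex of $\mathcal{O}_{\C^n,0}$ over $\mathcal{O}_{\C^{n-1},0}$, and $\tilde d^p$ is exactly the differential in the first tensor factor. This both explains why the $x_n$-degree is preserved on that block and, through the Poincar\'e lemma in the variable $x_n$, gives the cohomology isomorphism asserted in the main Theorem of this subsection.
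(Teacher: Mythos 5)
Your proposal is correct and follows essentially the same route as the paper: you read the formula off by evaluating $d^p_D(\Gamma)$ on purely primed wedges, where $\delta'_k(x_n)=0$ and $[\delta'_k,\delta'_l]\in\Der(-\log D')$ let the powers of $x_n$ factor out so the expression collapses to $\sum_{i\ge 0}x_n^i\,d^p_{D'}(\psi_i)$, exactly as the paper does (it treats this Corollary as immediate from the $x_n$-expansion of Remark \ref{element} and performs the same two-family evaluation, including your generators containing $\partial/\partial x_n$ via $[\nu_i,\partial/\partial x_n]=0$, in the proof of the Proposition that follows). Your mapping-cone/logarithmic de Rham cross-check is a pleasant extra perspective but is not needed for the statement.
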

\begin{Proposition} The differential on $\mathcal{C}^\bullet_{D}$ is given by $$d^p_D\colon \mathcal{C}^p_{D} \to \mathcal{C}^{p+1}_{D} $$ $$(\psi, \phi) \mapsto (\tilde{d}^p(\psi),\tilde{d}^{p-1}(\phi)+(-1)^{p+1}[\frac{\partial}{\partial x_n},\psi(-)]).$$
\end{Proposition}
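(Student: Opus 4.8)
The differential $d^p_D$ is by construction $\mathcal{O}_{\C^n,0}$-linear, and (via the decomposition of $\bigwedge^\bullet\Der(-\log D)$ given by $\varrho$) so is the map written on the right-hand side; note here that $\delta'\mapsto[\partial/\partial x_n,\psi(\delta')]$ is $\mathcal{O}_{\C^{n-1},0}$-linear on $\bigwedge^p\Der(-\log D')$, since the coefficients of $\Der(-\log D')$ and of its basis elements involve no $x_n$, hence it extends to an element of $\mathcal{C}^p_{D'}\otimes_{\mathcal{O}_{\C^{n-1},0}}\mathcal{O}_{\C^n,0}$ and the right-hand side really lies in $\mathcal{C}^{p+1}_D$. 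So it is enough to check the identity after evaluating on an $\mathcal{O}_{\C^n,0}$-basis of the free module $\bigwedge^{p+1}\Der(-\log D)$. Fix an $\mathcal{O}_{\C^{n-1},0}$-basis $e_1,\dots,e_{n-1}$ of $\Der(-\log D')$; by the product decomposition of $\Der(-\log D)$ recalled above, $e_1,\dots,e_{n-1},\partial/\partial x_n$ is an $\mathcal{O}_{\C^n,0}$-basis of $\Der(-\log D)$, and the $(p+1)$-fold wedges of these form a basis of $\bigwedge^{p+1}\Der(-\log D)$. Each such basis element is of one of two types: (a) a wedge $\sigma_1\wedge\cdots\wedge\sigma_{p+1}$ with all $\sigma_k$ among the $e_i$, or (b) a wedge $\sigma_1\wedge\cdots\wedge\sigma_p\wedge\partial/\partial x_n$ with all $\sigma_k$ among the $e_i$. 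I will use repeatedly that, since the $e_i$ only involve $x_1,\dots,x_{n-1}$ and $\partial/\partial x_1,\dots,\partial/\partial x_{n-1}$, one has $[e_i,e_j]\in\Der(-\log D')$, $[e_i,\partial/\partial x_n]=0$ and $[e_i,x_n^{\ell}m]=x_n^{\ell}[e_i,m]$ for any $m$; and that for $\sigma\in\Der(-\log D')$ the connection $[\sigma,-]$ on $\Der_{\C^n}/\Der(-\log D)=(\Der_{\C^{n-1}}/\Der(-\log D'))\otimes_{\mathcal{O}_{\C^{n-1},0}}\mathcal{O}_{\C^n,0}$ is the extension of scalars of the connection used in $\mathcal{C}^\bullet_{D'}$.

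Write $\psi=\sum_{\ell\ge0}x_n^{\ell}\psi_\ell$ and $\phi=\sum_{\ell\ge0}x_n^{\ell}\phi_\ell$ as in Remark~\ref{element}, and let $\Psi\in\mathcal{C}^p_D$ be the map corresponding to the pair $(\psi,\phi)$, so $\Psi=(\psi\circ\mathrm{pr}_1+\phi\circ\mathrm{pr}_2)\circ\varrho$. Evaluate $(d^p_D\Psi)$ on a basis element of type (a). Then $\varrho$ sends $\sigma_1\wedge\cdots\wedge\sigma_{p+1}$ and all its $p$-fold subwedges into the first summand, the brackets $[\sigma_i,\sigma_j]$ stay in $\Der(-\log D')$, so only $\psi$ enters the defining formula of $d^p_D$, and each term $[\sigma_i,\psi(-)]$ equals $\sum_{\ell}x_n^{\ell}[\sigma_i,\psi_\ell(-)]$. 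Comparing term by term with the differential of $\mathcal{C}^\bullet_{D'}$ gives $\sum_{\ell}x_n^{\ell}(d^p_{D'}\psi_\ell)(\sigma_1\wedge\cdots\wedge\sigma_{p+1})=(\tilde{d}^p\psi)(\sigma_1\wedge\cdots\wedge\sigma_{p+1})$. Hence the first component of the pair attached to $d^p_D\Psi$ is $\tilde{d}^p\psi$.

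For a basis element of type (b), $\sigma_1\wedge\cdots\wedge\sigma_p\wedge\partial/\partial x_n$, the map $\varrho$ sends this element and each of its $p$-fold subwedges containing $\partial/\partial x_n$ into the second summand — and since $\partial/\partial x_n$ occupies the last slot, the sign $(-1)^{p-k}$ in the formula for $\varrho$ equals $1$ — whereas the single subwedge $\sigma_1\wedge\cdots\wedge\sigma_p$ not containing $\partial/\partial x_n$ goes to the first summand. Therefore, in the defining sum of $d^p_D\Psi$: the terms $(-1)^i[\delta_i,\Psi(\cdots\widehat{\delta_i}\cdots)]$ with $i\le p$ produce $\sum_{i=1}^p(-1)^i[\sigma_i,\phi(\sigma_1\wedge\cdots\widehat{\sigma_i}\cdots\wedge\sigma_p)]$; the term $i=p+1$ produces $(-1)^{p+1}[\partial/\partial x_n,\psi(\sigma_1\wedge\cdots\wedge\sigma_p)]$; in the double sum the terms with $j=p+1$ vanish because $[\sigma_i,\partial/\partial x_n]=0$, while those with $i<j\le p$ produce $\sum_{1\le i<j\le p}(-1)^{i+j-1}\phi([\sigma_i,\sigma_j]\wedge\sigma_1\wedge\cdots\widehat{\sigma_i}\cdots\widehat{\sigma_j}\cdots\wedge\sigma_p)$. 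Expanding $\phi=\sum_\ell x_n^{\ell}\phi_\ell$ and recognising the differential of $\mathcal{C}^\bullet_{D'}$, the $\phi$-terms add up to $(\tilde{d}^{p-1}\phi)(\sigma_1\wedge\cdots\wedge\sigma_p)$; hence the second component of $d^p_D\Psi$ is $\tilde{d}^{p-1}(\phi)+(-1)^{p+1}[\partial/\partial x_n,\psi(-)]$, exactly as claimed. (For $p=0$ one has $\phi=0$, $\mathcal{C}^{-1}_{D'}=0$ and $\tilde{d}^{-1}=0$, and the formula reduces consistently to $d^0_D\psi=(\tilde{d}^0\psi,-[\partial/\partial x_n,\psi])$.)

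The only genuine difficulty is the sign bookkeeping: one must keep track of the $(-1)^{p-k}$ signs built into $\varrho$ and verify that, once the factor $\partial/\partial x_n$ has been moved to the last slot of every relevant subwedge, they combine with the $(-1)^i$ and $(-1)^{i+j-1}$ of the Lie algebroid differential so as to reproduce precisely $d^\bullet_{D'}$ (coefficientwise in the powers of $x_n$) together with the single extra term $(-1)^{p+1}[\partial/\partial x_n,\psi(-)]$ coming from the $i=p+1$ summand. Everything else — the vanishing of brackets with $\partial/\partial x_n$, the commutation of the $e_i$ with multiplication by $x_n^{\ell}$, and the compatibility of connections — is immediate from the product structure $(D,0)=(D'\times\C,0)$.
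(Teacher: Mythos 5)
Your proof is correct and follows essentially the same route as the paper: both evaluate the Lie algebroid differential separately on wedges lying entirely in $\Der(-\log D')$ and on wedges containing $\partial/\partial x_n$ in the last slot, use $[\sigma_i,\partial/\partial x_n]=0$ to kill the extra double-sum terms, and identify the $i=p+1$ term as $(-1)^{p+1}[\partial/\partial x_n,\psi(-)]$ while the remaining terms reassemble into $\tilde{d}^p(\psi)$ and $\tilde{d}^{p-1}(\phi)$. Your extra care with the $\mathcal{O}_{\C^n,0}$-linearity and the signs in $\varrho$ is a welcome but inessential refinement of the paper's argument.
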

\begin{proof} Consider $\Gamma=(\psi,\phi)\in \mathcal{C}^p_D$. 
We want now to compute $d^p_D(\Gamma)\in \mathcal{C}^{p+1}_D$. By Remark \ref{element}, we need to check it only on $\Der(-\log D')$, hence 
\begin{equation*}
(d^p_D(\Gamma))(\sigma_1\wedge \cdots \wedge \sigma_{p+1}, \nu_1 \wedge \cdots \wedge \nu_p)= \end{equation*}
\begin{equation}\label{cohomologin2part}= (d^p_D(\Gamma))(\sigma_1\wedge \cdots \wedge \sigma_{p+1})+ (d^p_D(\Gamma))(\nu_1\wedge \cdots \wedge \nu_p),
\end{equation}
where $\sigma_i,\nu_j\in\Der(-\log D')$.

 We now look at the first part of the right hand side of the previous equality $$(d^p_D(\Gamma))(\sigma_1\wedge \cdots \wedge \sigma_{p+1})=$$ $$=\sum_{i=1}^{p+1}(-1)^i[\sigma_i,\Gamma(\sigma_1 \wedge \dots \wedge \widehat{\sigma_i} \wedge \dots \wedge \sigma_{p+1})]  +$$ $$+\\ \sum_{1 \le i<j \le p+1}(-1)^{i+j-1}\Gamma([\sigma_i,\sigma'_j]\wedge \sigma_1 \wedge \dots \wedge \widehat{\sigma_i}\wedge \dots \wedge \widehat{\sigma_j}\wedge \dots \wedge \sigma_{p+1})=$$ $$=(\tilde{d}^p(\psi))(\sigma_1\wedge \cdots \wedge \sigma_{p+1}).$$ Consider now the second part of equation \eqref{cohomologin2part} from above. Put $\nu_{p+1}:=\partial / \partial x_n$. Then we have $$\varrho((\nu_1,0\cdot\frac{\partial}{\partial x_n})\wedge \cdots \wedge(\nu_p,0\cdot\frac{\partial}{\partial x_n})\wedge (0,1\cdot\frac{\partial}{\partial x_n}))=(0,\nu_1\wedge \cdots \wedge \nu_p) $$and hence $$(d^p_D(\Gamma))(\nu_1\wedge \cdots \wedge \nu_p)=(d^p_D(\Gamma))(\nu_1\wedge \cdots \wedge \nu_{p+1}) =$$ $$=\sum_{i=1}^{p+1}(-1)^i[\nu_i,\Gamma(\nu_1 \wedge \dots \wedge \widehat{\nu_i} \wedge \dots \wedge \nu_{p+1})]  +$$ $$+\\ \sum_{1 \le i<j \le p+1}(-1)^{i+j-1}\Gamma([\nu_i,\nu_j]\wedge \nu_1 \wedge \dots \wedge \widehat{\nu_i}\wedge \dots \wedge \widehat{\nu_j}\wedge \dots \wedge \nu_{p+1}).$$  We now look at the first part of the right hand side of the previous equality, it is equal to $$\sum_{i=1}^{p+1}(-1)^i[\nu_i,\Gamma(\nu_1 \wedge \dots \wedge \widehat{\nu_i} \wedge \dots \wedge \nu_{p+1})]  =$$ $$=(-1)^{p+1}[\frac{\partial}{\partial x_n}, \Gamma(\nu_1 \wedge \cdots \wedge \nu_p)]+\sum_{i=1}^{p}(-1)^i[\nu_i,\Gamma(\nu_1 \wedge \dots \wedge \widehat{\nu_i} \wedge \dots \wedge\nu_p \wedge \frac{\partial}{\partial x_n})]  =$$ $$=(-1)^{p+1}[\frac{\partial}{\partial x_n}, \psi(\nu_1 \wedge \cdots \wedge \nu_p)]+\sum_{i=1}^{p}(-1)^i[\nu_i,\phi(\nu_1 \wedge \dots \wedge \widehat{\nu_i} \wedge \dots \wedge\nu_p)].$$ Moreover $$ \sum_{1 \le i<j \le p+1}(-1)^{i+j-1}\Gamma([\nu_i,\nu_j]\wedge \nu_1 \wedge \dots \wedge \widehat{\nu_i}\wedge \dots \wedge \widehat{\nu_j}\wedge \dots \wedge \nu_{p+1})=$$ $$\sum_{i=1}^p (-1)^{i+p} \Gamma([\nu_i,\frac{\partial}{\partial x_n}],\wedge \nu_1 \wedge \cdots \wedge \widehat{\nu_i} \wedge \cdots \wedge \nu_p)+$$ $$+\\ \sum_{1 \le i<j \le p}(-1)^{i+j-1}\Gamma([\nu_i,\nu_j]\wedge \nu_1 \wedge \dots \wedge \widehat{\nu_i}\wedge \dots \wedge \widehat{\nu_j}\wedge \dots \wedge \nu_p\wedge \frac{\partial}{\partial x_n})=$$ $$=\\ \sum_{1 \le i<j \le p}(-1)^{i+j-1}\phi([\nu_i,\nu_j]\wedge \nu_1 \wedge \dots \wedge \widehat{\nu_i}\wedge \dots \wedge \widehat{\nu_j}\wedge \dots \wedge \nu_p).$$ Here the important point is that $[\nu_i,\partial / \partial x_n]=0$ because $\nu_i\in \Der(-\log D')$. Hence $$(d^p_D(\Gamma))(\nu_1\wedge \cdots \wedge \nu_p)=$$ $$=(-1)^{p+1}[\frac{\partial}{\partial x_n}, \psi(\nu_1 \wedge \cdots \wedge \nu_p)]+\sum_{i=1}^{p}(-1)^i[\nu_i,\phi(\nu_1 \wedge \dots \wedge \widehat{\nu_i} \wedge \dots \wedge\nu_p)] +$$ $$+\\ \sum_{1 \le i<j \le p}(-1)^{i+j-1}\phi([\nu_i,\nu_j]\wedge \nu_1 \wedge \dots \wedge \widehat{\nu_i}\wedge \dots \wedge \widehat{\nu_j}\wedge \dots \wedge \nu_p)=$$ $$=(-1)^{p+1}[\frac{\partial}{\partial x_n}, \psi(\nu_1 \wedge \cdots \wedge \nu_p)]+(\tilde{d}^{p-1}(\phi))(\nu_1\wedge \cdots \wedge \nu_p).$$ \end{proof}
\begin{Proposition} We can rewrite the differential as $$d^p_D\colon \mathcal{C}^p_{D} \to \mathcal{C}^{p+1}_{D} $$ $$(\psi, \phi) \mapsto \sum_{i \ge 0} x_n^i (d^p_{D'}(\psi_i),d^{p-1}_{D'}(\phi_i)+(-1)^{p+1}(i+1)\psi_{i+1}).$$
\end{Proposition}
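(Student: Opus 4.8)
The plan is to take the formula for $d^p_D$ from the previous Proposition, namely $(\psi,\phi)\mapsto\bigl(\tilde d^p(\psi),\,\tilde d^{p-1}(\phi)+(-1)^{p+1}[\partial/\partial x_n,\psi(-)]\bigr)$, and to expand each of its three constituents in powers of $x_n$ by means of the decomposition $\psi=\sum_{i\ge 0}x_n^i\psi_i$, $\phi=\sum_{i\ge 0}x_n^i\phi_i$ of Remark \ref{element}. The first two constituents require nothing new: the immediately preceding Corollary describing the operators $\tilde d^p$ already gives $\tilde d^p(\psi)=\sum_{i\ge 0}x_n^i\,d^p_{D'}(\psi_i)$ and, in exactly the same way applied to $\phi\in\mathcal{C}^{p-1}_{D'}\otimes_{\mathcal{O}_{\C^{n-1},0}}\mathcal{O}_{\C^n,0}$, $\tilde d^{p-1}(\phi)=\sum_{i\ge 0}x_n^i\,d^{p-1}_{D'}(\phi_i)$. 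Thus the entire statement reduces to computing the $x_n$-expansion of the term $[\partial/\partial x_n,\psi(-)]$.

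For this, fix $\sigma\in\bigwedge^p\Der(-\log D')$, so that $\psi(\sigma)=\sum_{i\ge 0}x_n^i\,\psi_i(\sigma)$. Since each $\psi_i$ is defined over $\C^{n-1}$, the value $\psi_i(\sigma)$ lies in $\Der_{\C^{n-1}}/\Der(-\log D')$ and hence has a representative $\sum_{j<n}g_j\,\partial/\partial x_j$ with $g_j\in\mathcal{O}_{\C^{n-1},0}$ and no $\partial/\partial x_n$-component. A short bracket computation then gives $[\partial/\partial x_n,\,x_n^i g_j\,\partial/\partial x_j]=i\,x_n^{i-1}g_j\,\partial/\partial x_j$, because $\partial/\partial x_n$ annihilates $g_j$ and $\partial/\partial x_j$ annihilates the coefficient $1$ of $\partial/\partial x_n$. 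Summing over $i$ and reindexing yields $[\partial/\partial x_n,\psi(\sigma)]=\sum_{i\ge 1}i\,x_n^{i-1}\psi_i(\sigma)=\sum_{i\ge 0}(i+1)\,x_n^i\,\psi_{i+1}(\sigma)$, i.e.\ the coefficient of $x_n^i$ in $[\partial/\partial x_n,\psi(-)]$ is $(i+1)\psi_{i+1}$ as an element of $\mathcal{C}^p_{D'}$. I would record here the small point that makes this legitimate modulo $\Der(-\log D)$: since $\partial/\partial x_n\in\Der(-\log D)$ and $\Der(-\log D)$ is self-normalising by Lemma \ref{selfnorm}, the operator $[\partial/\partial x_n,-]$ preserves $\Der(-\log D)$ and so descends to $\Der_{\C^n}/\Der(-\log D)$, which makes the expansion independent of the chosen representative.

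Assembling the three pieces in the decomposition $\mathcal{C}^{p+1}_D=\bigoplus_{i\ge 0}x_n^i\,\mathcal{C}^{p+1}_{D'}\ \oplus\ \bigoplus_{i\ge 0}x_n^i\,\mathcal{C}^{p}_{D'}$, the first component of $d^p_D(\psi,\phi)$ is $\sum_{i\ge 0}x_n^i\,d^p_{D'}(\psi_i)$ and the second is $\sum_{i\ge 0}x_n^i\bigl(d^{p-1}_{D'}(\phi_i)+(-1)^{p+1}(i+1)\psi_{i+1}\bigr)$, which is precisely the asserted formula. The only non-formal ingredient is the bracket identity $[\partial/\partial x_n,x_n^i v]=i\,x_n^{i-1}v$ for $x_n$-independent vector fields $v$ with no $\partial/\partial x_n$-component, together with its compatibility with passing to the quotient by $\Der(-\log D)$; everything else is bookkeeping with the already-established splitting of $\mathcal{C}^\bullet_D$ and with the expression for $\tilde d^\bullet$. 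I do not expect any genuine obstacle beyond being careful with this bracket computation and with the reindexing $i\mapsto i+1$.
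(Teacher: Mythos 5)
Your proof is correct and is exactly the computation the paper intends: expand $\psi$ and $\phi$ in powers of $x_n$, apply the formula for $\tilde d^\bullet$ to the first two terms, and observe that $[\partial/\partial x_n,\,x_n^i v]=i\,x_n^{i-1}v$ for $v$ pulled back from $\C^{n-1}$, which after reindexing gives the $(i+1)\psi_{i+1}$ term. One small correction: the fact that $[\partial/\partial x_n,-]$ descends to $\Der_{\C^n}/\Der(-\log D)$ follows simply from $\partial/\partial x_n\in\Der(-\log D)$ and $\Der(-\log D)$ being a Lie subalgebra (cf.\ Example \ref{connectder}), not from the self-normalising property of Lemma \ref{selfnorm}, which is the converse implication.
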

\begin{Definition} We define the morphism $J$ to be the inclusion $$J\colon\mathcal{H}om_{\mathcal{O}_{\C^{n-1},0}}(\bigwedge^p\Der(-\log D'), \Der_{\C^{n-1}}/\Der(-\log D'))=\mathcal{C}_{D'}^p \hookrightarrow$$ $$\mathcal{C}_D^p=\mathcal{H}om_{\mathcal{O}_{\C^n,0}}(\mathcal{O}_{\C^n,0}\otimes_{\mathcal{O}_{\C^{n-1},0}}\bigwedge^p\Der(-\log D'), \Der_{\C^n}/\Der(-\log D))\\ \oplus$$ $$\mathcal{H}om_{\mathcal{O}_{\C^n,0}}(\mathcal{O}_{\C^n,0}\otimes_{\mathcal{O}_{\C^{n-1},0}}\bigwedge^{p-1}\Der(-\log D'), \Der_{\C^n}/\Der(-\log D))=$$ $$=\mathcal{C}^p_{D'}\otimes_{\mathcal{O}_{\C^{n-1}}}\mathcal{O}_{\C^n,0} \oplus \mathcal{C}^{p-1}_{D'}\otimes_{\mathcal{O}_{\C^{n-1},0}}\mathcal{O}_{\C^n,0}$$ $$ \psi \mapsto x_n^0(\psi,0).$$
\end{Definition}
All the previous work was devoted  proving that in order to compute the cohomology of $D$ it is enough to compute that of $D'$:  
\begin{Theorem} The morphism $J$ is a quasi-isomorphism.
\end{Theorem}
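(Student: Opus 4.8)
The plan is to work entirely with the explicit picture of $\mathcal{C}^\bullet_D$ assembled above. Recall from Remark~\ref{element} that a degree-$p$ element is a convergent power series $\Gamma=\sum_{i\ge0}x_n^i(\psi_i,\phi_i)$ with $\psi_i\in\mathcal{C}^p_{D'}$ and $\phi_i\in\mathcal{C}^{p-1}_{D'}$, and that by the preceding Proposition the differential is
\[
d^p_D(\Gamma)=\sum_{i\ge0}x_n^i\Bigl(d^p_{D'}(\psi_i),\ d^{p-1}_{D'}(\phi_i)+(-1)^{p+1}(i+1)\psi_{i+1}\Bigr).
\]
Conceptually this exhibits $\mathcal{C}^\bullet_D$ as the mapping cone of the operator ``$\partial/\partial x_n$'' on the complex $\mathcal{C}^\bullet_{D'}\otimes_{\mathcal{O}_{\C^{n-1},0}}\mathcal{O}_{\C^n,0}$ with its coefficientwise differential; since $\partial/\partial x_n$ is surjective on that complex with kernel exactly the $x_n$-independent classes, the long exact sequence of the cone would collapse to an isomorphism in every degree. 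It is quicker, however, to verify the two halves of the statement by hand. For injectivity on $\mathcal{H}^p$: if $\psi\in\mathcal{C}^p_{D'}$ is a $d^\bullet_{D'}$-cocycle with $J(\psi)=x_n^0(\psi,0)=d^{p-1}_D(\eta,\zeta)$, write $(\eta,\zeta)=\sum_{i\ge0}x_n^i(\eta_i,\zeta_i)$ and compare the coefficient of $x_n^0$ in the first component of the differential formula; this gives $d^{p-1}_{D'}(\eta_0)=\psi$, so $[\psi]$ already vanishes in $\mathcal{H}^p(\mathcal{C}^\bullet_{D'})$.

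For surjectivity, let $\Gamma=\sum_{i\ge0}x_n^i(\psi_i,\phi_i)$ be a $d^p_D$-cocycle. Reading off $d^p_D(\Gamma)=0$ coefficientwise yields $d^p_{D'}(\psi_i)=0$ for all $i$ — so $\psi_0$ is a cocycle in $\mathcal{C}^p_{D'}$ — together with the recursion $\psi_{i+1}=(-1)^p(i+1)^{-1}d^{p-1}_{D'}(\phi_i)$, valid for every $i\ge0$. One then sets
\[
\Gamma':=\left(\sum_{i\ge1}\frac{(-1)^p}{i}\,\phi_{i-1}\,x_n^i,\ 0\right)\in\mathcal{C}^{p-1}_D,
\]
a legitimate element since dividing the coefficients of a convergent power series in $x_n$ by $i$ preserves convergence. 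Substituting $\Gamma'$ into the differential formula, the first components come out equal to $\psi_i$ for $i\ge1$ and to $0$ in degree $x_n^0$ (this is precisely the recursion), while the second components come out equal to the $\phi_i$ (using $(-1)^{2p}=1$); hence $d^{p-1}_D(\Gamma')=\Gamma-J(\psi_0)$ and therefore $[\Gamma]=J_\bullet[\psi_0]$.

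These two verifications show that $J$ induces an isomorphism on every cohomology sheaf of $\mathcal{C}^\bullet_D$; carrying out the same computation at an arbitrary point of $D=D'\times\C$, where the local product structure is again of the required form, then upgrades it to the isomorphism of sheaves $\pi^{-1}\mathcal{H}^i(\mathcal{C}^\bullet_{D'})\cong\mathcal{H}^i(\mathcal{C}^\bullet_D)$ of Corollary~\ref{propdefisocohom}, and in particular to $\pi^{-1}\mathcal{FT}^1(D')\cong\mathcal{FT}^1(D)$. There is no serious obstacle; the only places demanding care are keeping the signs $(-1)^{p+1}$ and $(-1)^p$ in the differential consistent through the bookkeeping, and confirming that the series $\Gamma'$ obtained by ``integrating in $x_n$'' really is an element of $\mathcal{C}^{p-1}_D$.
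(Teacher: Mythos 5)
Your proof is correct and takes essentially the same route as the paper: the paper proves the statement by showing $\coker(J)$ is acyclic, exhibiting exactly your primitive $\Gamma'$ (called $\Lambda$ there, obtained by ``integrating in $x_n$''), and your explicit injectivity check on cohomology is just the hands-on unpacking of the resulting long exact sequence. The sign bookkeeping and the recursion $d^{p-1}_{D'}(\phi_i)=(-1)^p(i+1)\psi_{i+1}$ match the paper's, so no gap.
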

\begin{proof} It is enough to show that the cokernel of $J$ is acyclic. Consider then $$\Gamma= \sum_{i \ge 1}x_n^i(\psi_i, \phi_i) + (0,\phi_0) \in \coker(J)$$ and suppose that $\Gamma \in \ker(d^p_D)$. Then we have $d^p_{D'}(\psi_i)=0$ and $d^p_{D'}(\phi_i)=(-1)^p(i+1)\psi_{i+1}$ for all $i\ge 0$. Define now $$\Lambda := \sum_{i\ge 1}x_n^i(\frac{(-1)^p\phi_{i-1}}{i},0)\in \mathcal{C}^{p-1}_D$$ then we have that $$d^{p-1}_D(\Lambda)=\sum_{i\ge 1}x_n^i(d^{p-1}_{D'}\Lambda_i,(-1)^p (i+1)\Lambda_{i+1})=\Gamma.$$ Hence, $\Gamma$ vanishes in cohomology. \end{proof}
\begin{Corollary}\label{propdefisocohom} There is an isomorphism of sheaves $$\pi^{-1}\mathcal{H}^i(\mathcal{C}_{D'}^\bullet)\cong\mathcal{H}^i(\mathcal{C}_D^\bullet) $$
where $\pi\colon (D,0)\to (D',0)$ is the projection on the first factor of $(D,0)=(D'\times \C,0)$. In particular, we have $$\pi^{-1}\mathcal{FT}^1(D')\cong\mathcal{FT}^1(D).$$
\end{Corollary}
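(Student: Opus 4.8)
The plan is to read the statement straight off the preceding Theorem, the only subtlety being that $\mathcal{C}^\bullet_{D'}$ and $\mathcal{C}^\bullet_D$ are complexes of sheaves on the \emph{different} spaces $(D',0)$ and $(D,0)$, so the quasi-isomorphism $J$ has to be interpreted over the projection $\pi$.

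First I would note that every construction of this subsection --- the splitting $\mathcal{C}^p_D=\mathcal{C}^p_{D'}\otimes_{\mathcal{O}_{\C^{n-1},0}}\mathcal{O}_{\C^n,0}\oplus\mathcal{C}^{p-1}_{D'}\otimes_{\mathcal{O}_{\C^{n-1},0}}\mathcal{O}_{\C^n,0}$, the differential, the inclusion $J$, and the homotopy $\Lambda$ --- is purely local and makes sense at every point where $D$ splits as $D'\times\C$, hence over a whole neighbourhood of the origin. In particular, at a point $p=(p',t)$ with $\pi(p)=p'$, the stalk $(\pi^{-1}\mathcal{C}^\bullet_{D'})_p$ equals $(\mathcal{C}^\bullet_{D'})_{p'}$, and $J$ identifies it with the subcomplex $\{x_n^0(\psi,0)\}$ of $(\mathcal{C}^\bullet_D)_p$. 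Using the explicit differential $d^p_D(\psi,\phi)=\sum_{i\ge0}x_n^i(d^p_{D'}(\psi_i),d^{p-1}_{D'}(\phi_i)+(-1)^{p+1}(i+1)\psi_{i+1})$ one checks that $d^p_D\circ J=J\circ d^p_{D'}$, so $J$ is a morphism of complexes of sheaves of $\C$-vector spaces $\pi^{-1}\mathcal{C}^\bullet_{D'}\to\mathcal{C}^\bullet_D$.

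By the Theorem, $J$ is a quasi-isomorphism: the homotopy $\Lambda=\sum_{i\ge1}x_n^i((-1)^p\phi_{i-1}/i,0)$ kills $\coker(J)$, and dividing the coefficients $\phi_{i-1}$ by $i$ does not destroy convergence, so $\Lambda$ is a genuinely holomorphic homotopy. A quasi-isomorphism of complexes of sheaves induces isomorphisms on all cohomology sheaves, whence $\mathcal{H}^i(\pi^{-1}\mathcal{C}^\bullet_{D'})\cong\mathcal{H}^i(\mathcal{C}^\bullet_D)$. Since $\pi^{-1}$ is an exact functor it commutes with the formation of cohomology sheaves, i.e. $\mathcal{H}^i(\pi^{-1}\mathcal{C}^\bullet_{D'})=\pi^{-1}\mathcal{H}^i(\mathcal{C}^\bullet_{D'})$, and this yields the asserted isomorphism. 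The final clause is the case $i=1$ together with the identification $\mathcal{H}^1(\mathcal{C}^\bullet)_0\cong\mathcal{FT}^1$ of Theorem \ref{FT1}.

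The hard part is already behind us --- it is the structural computation carried out in the Theorem. The one place where care is needed here is purely formal: checking that the image of $J$, a priori merely a subsheaf of a sheaf on $\C^n$, really is isomorphic as a \emph{complex} of sheaves to $\pi^{-1}\mathcal{C}^\bullet_{D'}$, differentials included; this is exactly the verification with the explicit formula for $d^p_D$ indicated above, and it presents no genuine difficulty.
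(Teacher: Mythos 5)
Your proposal is correct and follows exactly the paper's route: the corollary is deduced from the preceding Theorem (that $J$ is a quasi-isomorphism) together with the exactness of $\pi^{-1}$, which is all the paper says in its one-line proof. Your additional checks --- that $J$ intertwines the differentials, that the homotopy $\Lambda$ remains holomorphic after dividing by $i$, and that $\pi^{-1}$ commutes with taking cohomology sheaves --- are exactly the details the paper leaves implicit.
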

\begin{proof} This follows because $\pi^{-1}$ is an exact functor.
\end{proof}

\bibliography{bibliothesis}{}
\bibliographystyle{plain}

\end{document}